\DeclareSymbolFont{bbold}{U}{bbold}{m}{n}
\DeclareSymbolFontAlphabet{\mathbbold}{bbold}
\DeclareMathOperator{\pr}{pr}
\DeclareMathOperator{\ad}{adm.}
\DeclareMathOperator{\SF}{sf}
\DeclareMathOperator{\cyc}{cyc}
\DeclareMathOperator{\can}{can}
\DeclareMathOperator{\disc}{disc}
\DeclareMathOperator{\level}{level}
\DeclareMathOperator{\conj}{Conj}
\DeclareMathOperator{\ns}{ns}
\DeclareMathOperator{\s}{s}
\DeclareMathOperator{\aut}{Aut}
\DeclareMathOperator{\frob}{Frob}
\DeclareMathOperator{\Char}{char}
\DeclareMathOperator{\tors}{tors}
\DeclareMathOperator{\GL}{GL}
\DeclareMathOperator{\SL}{SL}
\DeclareMathOperator{\PSL}{PSL}
\DeclareMathOperator{\PGL}{PGL}
 \DeclareMathOperator{\non}{non-}
\DeclareMathOperator{\tr}{tr}
\chardef\bslash=`\\ 
\begin{document}

\restylefloat{table}

\newtheorem{Theorem}{Theorem}[section]

\newtheorem{example}[Theorem]{Example}
\newtheorem{cor}[Theorem]{Corollary}
\newtheorem{goal}[Theorem]{Goal}

\newtheorem{Conjecture}[Theorem]{Conjecture}
\newtheorem{guess}[Theorem]{Guess}

\newtheorem{exercise}[Theorem]{Exercise}
\newtheorem{Question}[Theorem]{Question}
\newtheorem{lemma}[Theorem]{Lemma}
\newtheorem{property}[Theorem]{Property}
\newtheorem{proposition}[Theorem]{Proposition}
\newtheorem{ax}[Theorem]{Axiom}
\newtheorem{claim}[Theorem]{Claim}

\newtheorem{nTheorem}{Surjectivity Theorem}

\theoremstyle{definition}
\newtheorem{Definition}[Theorem]{Definition}
\newtheorem{problem}[Theorem]{Problem}
\newtheorem{question}[Theorem]{Question}
\newtheorem{Example}[Theorem]{Example}

\newtheorem{remark}[Theorem]{Remark}
\newtheorem{diagram}{Diagram}
\newtheorem{Remark}[Theorem]{Remark}
\newcommand{\diagref}[1]{diagram~\ref{#1}}
\newcommand{\thmref}[1]{Theorem~\ref{#1}}
\newcommand{\secref}[1]{Section~\ref{#1}}
\newcommand{\subsecref}[1]{Subsection~\ref{#1}}
\newcommand{\lemref}[1]{Lemma~\ref{#1}}
\newcommand{\corref}[1]{Corollary~\ref{#1}}
\newcommand{\exampref}[1]{Example~\ref{#1}}
\newcommand{\remarkref}[1]{Remark~\ref{#1}}
\newcommand{\corlref}[1]{Corollary~\ref{#1}}
\newcommand{\claimref}[1]{Claim~\ref{#1}}
\newcommand{\defnref}[1]{Definition~\ref{#1}}
\newcommand{\propref}[1]{Proposition~\ref{#1}}
\newcommand{\prref}[1]{Property~\ref{#1}}
\newcommand{\itemref}[1]{(\ref{#1})}
\newcommand{\ul}[1]{\underline{#1}}


\newcommand{\CE}{\mathcal{E}}
\newcommand{\CG}{\mathcal{G}}\newcommand{\CV}{\mathcal{V}}
\newcommand{\CL}{\mathcal{L}}
\newcommand{\CM}{\mathcal{M}}
\newcommand{\A}{\mathcal{A}}
\newcommand{\CO}{\mathcal{O}}
\newcommand{\B}{\mathcal{B}}
\newcommand{\CS}{\mathcal{S}}
\newcommand{\CX}{\mathcal{X}}
\newcommand{\CY}{\mathcal{Y}}
\newcommand{\CT}{\mathcal{T}}
\newcommand{\CW}{\mathcal{W}}
\newcommand{\CJ}{\mathcal{J}}

\newcommand\myeq{\mathrel{\stackrel{\makebox[0pt]{\mbox{\normalfont\tiny def}}}
{\Longleftrightarrow}}}
\newcommand{\st}{\sigma}
\renewcommand{\k}{\varkappa}
\newcommand{\Frac}{\mbox{Frac}}
\newcommand{\XC}{\mathcal{X}}
\newcommand{\wt}{\widetilde}
\newcommand{\wh}{\widehat}
\newcommand{\mk}{\medskip}
\renewcommand{\sectionmark}[1]{}
\renewcommand{\Im}{\operatorname{Im}}
\renewcommand{\Re}{\operatorname{Re}}
\newcommand{\la}{\langle}
\newcommand{\ra}{\rangle}
\newcommand{\LND}{\mbox{LND}}
\newcommand{\Pic}{\mbox{Pic}}
\newcommand{\lnd}{\mbox{lnd}}
\newcommand{\GLND}{\mbox{GLND}}\newcommand{\glnd}{\mbox{glnd}}
\newcommand{\Der}{\mbox{DER}}\newcommand{\DER}{\mbox{DER}}
\renewcommand{\th}{\theta}
\newcommand{\ve}{\varepsilon}
\newcommand{\1}{^{-1}}
\newcommand{\iy}{\infty}
\newcommand{\iintl}{\iint\limits}
\newcommand{\capl}{\operatornamewithlimits{\bigcap}\limits}
\newcommand{\cupl}{\operatornamewithlimits{\bigcup}\limits}
\newcommand{\suml}{\sum\limits}
\newcommand{\ord}{\operatorname{ord}}
\newcommand{\gal}{\operatorname{Gal}}
\newcommand{\bk}{\bigskip}
\newcommand{\fc}{\frac}
\newcommand{\g}{\gamma}
\newcommand{\be}{\beta}
\newcommand{\dl}{\delta}
\newcommand{\Dl}{\Delta}
\newcommand{\lm}{\lambda}
\newcommand{\Lm}{\Lambda}
\newcommand{\om}{\omega}
\newcommand{\ov}{\overline}
\newcommand{\vp}{\varphi}
\newcommand{\kap}{\varkappa}

\newcommand{\Vp}{\Phi}
\newcommand{\Varphi}{\Phi}
\newcommand{\BC}{\mathbb{C}}
\newcommand{\C}{\mathbb{C}}\newcommand{\BP}{\mathbb{P}}
\newcommand{\BQ}{\mathbb {Q}}
\newcommand{\BM}{\mathbb{M}}
\newcommand{\mbh}{\mathbb{H}}
\newcommand{\BR}{\mathbb{R}}\newcommand{\BN}{\mathbb{N}}
\newcommand{\BZ}{\mathbb{Z}}\newcommand{\BF}{\mathbb{F}}
\newcommand{\BA}{\mathbb {A}}
\renewcommand{\Im}{\operatorname{Im}}
\newcommand{\idd}{\operatorname{id}}
\newcommand{\ep}{\epsilon}
\newcommand{\tp}{\tilde\partial}
\newcommand{\doe}{\overset{\text{def}}{=}}
\newcommand{\supp} {\operatorname{supp}}
\newcommand{\loc} {\operatorname{loc}}
\newcommand{\de}{\partial}
\newcommand{\z}{\zeta}
\renewcommand{\a}{\alpha}
\newcommand{\G}{\Gamma}
\newcommand{\der}{\mbox{DER}}

\newcommand{\Spec}{\operatorname{Spec}}
\newcommand{\Sym}{\operatorname{Sym}}
\newcommand{\Aut}{\operatorname{Aut}}

\newcommand{\Idd}{\operatorname{Id}}

\newcommand{\tG}{\widetilde G}
\newcommand{\F}{\mathbb{F}}
\newcommand{\Q}{\mathbb{Q}}
\newcommand{\Z}{\mathbb{Z}}
\newcommand{\XG}{\text{N}_{s}(5)'}
\newcommand{\tB}{\text{B}}
\newcommand{\Gal}{\text{Gal}}
\newcommand{\cX}{\mathcal{X}}
\newcommand{\Inn}{\text{Inn}}
\newcommand{\bP}{\mathbf{P}}
\newcommand{\FX}{\mathfrac {X}}
\newcommand{\FV}{\mathfrac {V}}
\newcommand{\SX}{\mathcal {X}}
\newcommand{\SV}{\mathcal {V}}
\newcommand{\SO}{\mathcal {O}}
\newcommand{\SD}{\mathcal {D}}
\newcommand{\Sr}{\rho}
\newcommand{\SR}{\mathcal {R}}
\newcommand{\cl}{\mathcal{C}}
\newcommand{\ok}{\mathcal{O}_K}
\newcommand{\ab}{\mathcal{AB}}

\setcounter{equation}{0} \setcounter{section}{0}

\newcommand{\ds}{\displaystyle}
\newcommand{\gl}{\lambda}
\newcommand{\gL}{\Lambda}
\newcommand{\gge}{\epsilon}
\newcommand{\gG}{\Gamma}
\newcommand{\ga}{\alpha}
\newcommand{\gb}{\beta}
\newcommand{\gd}{\delta}
\newcommand{\gD}{\Delta}
\newcommand{\gs}{\sigma}
\newcommand{\mbq}{\mathbb{Q}}
\newcommand{\mbr}{\mathbb{R}}
\newcommand{\mbz}{\mathbb{Z}}
\newcommand{\mbc}{\mathbb{C}}
\newcommand{\mbn}{\mathbb{N}}
\newcommand{\mbp}{\mathbb{P}}
\newcommand{\mbf}{\mathbb{F}}
\newcommand{\mbe}{\mathbb{E}}
\newcommand{\lcm}{\text{lcm}\,}
\newcommand{\mf}[1]{\mathfrak{#1}}
\newcommand{\ol}[1]{\overline{#1}}
\newcommand{\mc}[1]{\mathcal{#1}}
\newcommand{\mb}[1]{\mathbb{#1}}
\newcommand{\nequiv}{\equiv\hspace{-.07in}/\;}
\newcommand{\bnequiv}{\equiv\hspace{-.13in}/\;}

\renewcommand{\thefootnote}{\fnsymbol{footnote}} 
\footnotetext{\emph{Key words and phrases:} Elliptic curves, Galois representations}     
\renewcommand{\thefootnote}{\arabic{footnote}}

\renewcommand{\thefootnote}{\fnsymbol{footnote}} 
\footnotetext{\emph{2010 Mathematics Subject Classification:} Primary 11G05, 11F80}     
\renewcommand{\thefootnote}{\arabic{footnote}} 

\title{Elliptic curves with missing Frobenius traces}
\author{Nathan Jones and Kevin Vissuet}


\date{}

\begin{abstract}
Let $E$ be an elliptic curve defined over $\mbq$.  In 1976, Lang and Trotter conjectured an asymptotic formula for the number $\pi_{E,r}(X)$ of primes $p \leq X$ of good reduction for which the Frobenius trace at $p$ associated to $E$ is equal to a given fixed integer $r$.  We investigate elliptic curves $E$  over $\mbq$ that have a missing Frobenius trace, i.e. for which the counting function $\pi_{E,r}(X)$ remains bounded as $X \rightarrow \infty$, for some $r \in \mbz$.  In particular, we classify all elliptic curves $E$ over $\mbq(t)$ that have a missing Frobenius trace.
\end{abstract}

\maketitle

\section{Introduction and statement of results} \label{introduction}

Let $E$ be an elliptic curve over $\mbq$ of conductor $N_E$.  For a prime $p$ not dividing $N_E$, we consider the Frobenius trace $a_p(E) \in \mbz$ associated to $p$, which satisfies the equation
\[
\#E(\mbf_p) = p + 1 - a_p(E).
\]
The following conjecture, formulated by S. Lang and H. Trotter in 1976, articulates one distributional aspect of the infinite sequence $\left( a_p(E) : p \nmid N_E \right)$.  Specifically, it states a precise asymptotic formula for the counting function
\begin{equation} \label{defofpisubErofX}
\pi_{E,r}(X) := \# \{ p \leq X : p \nmid N_E, \, a_p(E) = r \}.
\end{equation}
\begin{Conjecture} \label{LTConjecture}
Let $E$ be an elliptic curve over $\mbq$ without complex multiplication, let $r \in \mbz$ and define the quantity $\pi_{E,r}(X)$ by \eqref{defofpisubErofX}.  There exists a constant $C_{E,r} \geq 0$ so that, as $X \rightarrow \infty$, we have
\begin{equation} \label{LTasymptoticfromconj}
\pi_{E,r}(X) \sim C_{E,r} \frac{\sqrt{X}}{\log X}.
\end{equation}
\end{Conjecture}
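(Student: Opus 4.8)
This is the conjecture of Lang and Trotter, which remains open; what follows is therefore not a proof of \eqref{LTasymptoticfromconj} but a sketch of the probabilistic heuristic that produces the predicted shape $C_{E,r}\sqrt{X}/\log X$, together with the reason an honest proof is out of reach. \emph{Step 1 (one prime).} For $p\nmid N_E$ write $a_p(E)=2\sqrt{p}\cos\theta_p$ with $\theta_p\in[0,\pi]$; by the Sato--Tate theorem for non-CM elliptic curves over $\mbq$ the angles $\theta_p$ equidistribute with respect to $\mu_{ST}=\tfrac{2}{\pi}\sin^2\theta\,d\theta$. Near $\theta=\pi/2$ consecutive admissible integer values of $a_p(E)$ correspond to angles spaced by $\asymp p^{-1/2}$, so the ``probability'' that $a_p(E)$ equals a fixed $r$ is the $\mu_{ST}$-mass of such an interval, namely $\sim\tfrac{1}{\pi\sqrt{p}}$; this already fixes the order of magnitude, predicting $\pi_{E,r}(X)\asymp\sum_{p\le X}p^{-1/2}$.

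\emph{Step 2 (the constant $C_{E,r}$).} The event $a_p(E)=r$ is not independent of the pair $(a_p(E)\bmod m,\ p\bmod m)$, which for every $m$ is controlled by the image of the adelic representation $\rho_E\colon\gal(\ol{\mbq}/\mbq)\to\GL_2(\wh{\mbz})$. One multiplies the density from Step 1 by local correction factors recording, for each prime $\ell$, the proportion of matrices in the $\ell$-adic image of trace $\equiv r$, normalized against the proportion expected when the residue of $p$ is unconstrained; by Serre's open image theorem these factors are $1+O(\ell^{-2})$ for all but finitely many $\ell$, so their product converges to $c_{E,r}\ge 0$, and one sets $C_{E,r}=\tfrac{2}{\pi}c_{E,r}$. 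The vanishing $c_{E,r}=0$ is forced by a finite-level congruence obstruction — the basic case being that for some $m$ the finite group $\rho_{E,m}(\gal(\ol{\mbq}/\mbq))\subseteq\GL_2(\mbz/m\mbz)$ contains no element of trace $r$ — which in turn forces $\pi_{E,r}(X)$ to remain bounded. This is the \emph{missing Frobenius trace} regime of the present paper: there $\pi_{E,r}(X)$ is bounded for a trivial reason, and the genuine content is the classification of the Galois images producing such an obstruction, a problem in the group theory of subgroups of $\GL_2(\wh{\mbz})$ together with the arithmetic of their realizability over $\mbq(t)$.

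\emph{Step 3 and the main obstacle.} Granting the model when $c_{E,r}>0$, one finishes by partial summation: the prime number theorem and Abel summation give $\sum_{p\le X}p^{-1/2}\sim 2\sqrt{X}/\log X$, whence $\pi_{E,r}(X)\approx c_{E,r}\sum_{p\le X}\tfrac{1}{\pi\sqrt{p}}\sim\tfrac{2c_{E,r}}{\pi}\cdot\tfrac{\sqrt{X}}{\log X}=C_{E,r}\tfrac{\sqrt{X}}{\log X}$, as predicted. The whole difficulty is in justifying the model: a proof must detect the \emph{exact} equality $a_p(E)=r$, an event of relative size $\asymp p^{-1/2}$ that sits below the square-root barrier, and do so uniformly while simultaneously sieving by the congruences $a_p(E)\equiv r\pmod{\ell}$ for auxiliary primes $\ell$ that must be allowed to grow with $X$. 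That would require an effective Chebotarev density theorem for the division fields $\mbq(E[\ell])$ with a power-saving error term and strong uniformity in $\ell$, which is unavailable even under GRH. Only upper bounds are known — of the shape $\pi_{E,r}(X)\ll_E X(\log\log X)^{2}/(\log X)^{2}$ unconditionally, and power-saving bounds such as $X^{7/8}$ under GRH — with no matching lower bound whenever $C_{E,r}>0$. Hence \eqref{LTasymptoticfromconj} is not proved in general, here or elsewhere; the contribution of this paper is the opposite, degenerate case $C_{E,r}=0$, where by Step 2 the problem becomes purely group-theoretic and is settled unconditionally.
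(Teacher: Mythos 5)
You correctly recognize that the statement is a conjecture and not a theorem: the paper presents it as Conjecture \ref{LTConjecture} with no proof, citing only the probabilistic model of Lang and Trotter built on the Sato--Tate distribution and the Chebotarev density theorem for division fields, which is exactly the heuristic you sketch. Your description of the constant $C_{E,r}$ (Sato--Tate archimedean factor $\tfrac{2}{\pi}$ times a convergent product of local congruence corrections governed by $\rho_E(G_\mbq)$, with vanishing equivalent to a finite-level trace obstruction) matches the paper's formula \eqref{explicitformofCEr} and the subsequent discussion, and your identification of the ``missing Frobenius trace'' regime $C_{E,r}=0$ as the actual subject of the paper is accurate. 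So your proposal is correct and aligned with the paper's framing; since nothing is being proved here, there is no divergence of method to report.
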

In case $C_{E,r} = 0$, we interpret the asymptotic of Conjecture \ref{LTConjecture} as
\begin{equation*} 
\begin{matrix} \pi_{E,r}(X) \sim 0 \\ \text{ as } X\rightarrow \infty \end{matrix} \; \myeq \; \lim_{X \rightarrow \infty} \pi_{E,r}(X) < \infty.
\end{equation*}

Conjecture \ref{LTConjecture} was developed using a probabilistic model based upon the Sato-Tate conjecture and the Chebotarev density theorem for division fields of $E$.  In spite of the Sato-Tate conjecture having been proved (see \cite{clozeletal} and \cite{taylor}), Conjecture \ref{LTConjecture} remains open, even assuming the Generalized Riemann Hypothesis (GRH).  In contrast, analogues of Conjecture \ref{LTConjecture} have been proven in the function field case (see \cite{katz}, which also gives the statement of Conjecture \ref{LTConjecture} over a general number field).  The best known upper bounds for $\pi_{E,r}(X)$ are
\[
\pi_{E,0}(X) \ll 
\begin{cases}
\frac{X^{3/4}}{(\log X)^{1/2}} & \text{ assuming GRH (see \cite{zywinascholar}, which builds upon \cite{serre1} and \cite{murtymurtysaradha}),} \\
X^{3/4} & \text{ unconditionally (see \cite{elkies1}),}
\end{cases}
\]
and, for $r \neq 0$,
\[
\pi_{E,r}(X) \ll 
\begin{cases}
\frac{X^{4/5}}{(\log X)^{3/5}} & \text{ assuming GRH, (see \cite{zywinascholar}, \cite{serre1} and \cite{murtymurtysaradha})} \\
\frac{X (\log \log X)^2}{(\log X)^2} & \text{ unconditionally (see \cite{thornterzaman}).}
\end{cases}
\]
\sloppy The only known non-trivial lower bound is due to Elkies \cite{elkies2} in the case $r=0$, who proved that $\ds \lim_{X \rightarrow \infty} \pi_{E,0}(X) = \infty$.  In fact, his work gives rise to the lower bounds
\[
\pi_{E,0}(X) \geq
\begin{cases}
\log \log X & \text{ assuming the GRH} \\
\frac{ \log \log \log X }{ (\log \log \log \log X )^{1 + \gd}} & \text{ $\forall \gd > 0$ and $x \gg_\gd 1$, unconditionally}
\end{cases}
\]
(see \cite{fouvrymurty} for the unconditional lower bound).  Conjecture \ref{LTConjecture} has also been proven to hold ``on average over $E$,'' i.e. if we average the counting function $\pi_{E,r}(X)$ over the set of all elliptic curves of height bounded by a function that grows appropriately with $X$, we obtain an asymptotic formula of the same order of magnitude as in \eqref{LTasymptoticfromconj} (but with a slightly different constant $C_r$ that is independent of $E$).  See \cite{fouvrymurty} for the case $r=0$, \cite{davidpappalardi} for $r \neq 0$, and \cite{baier} for the same result averaged over a smaller set of elliptic curves; see \cite{jones1} and also \cite{davidkoukoulopolissmith} for an analysis relating the average constant $C_r$ with the conjectural constants $C_{E,r}$.  Finally, see \cite{kevinjames} for a brief survey of some interesting variants of Conjecture \ref{LTConjecture}.

The present paper will focus on elliptic curves $E$ for which $C_{E,r} = 0$ for some $r \in \mbz$.  An explicit formula (see \eqref{explicitformofCEr} below) reveals that $C_{E,r} = 0$ can only happen when there is a \emph{congruence obstruction}, in which case it follows from the Chebotarev density theorem that
\[
C_{E,r} = 0 \; \Longrightarrow \; \{ p \text{ prime } : p \nmid N_E, \, a_p(E) = r \} \subseteq \{ p \text{ prime } : p \mid m \},
\]
for an appropriately chosen $m \in \mbn$.  In particular, if $C_{E,r} = 0$, we \emph{provably} have  $\displaystyle \lim_{X \rightarrow \infty} \pi_{E,r}(X) < \infty$. When this is the case, we will call $r$ a \textbf{\emph{missing Frobenius trace for $E$}}.

In this paper, we consider elliptic curves $E$ over $\mbq$ that have a missing Frobenius trace, the broad goal being to classify all such elliptic curves.  Here are a few examples.
\begin{example} \label{level3example}
Consider the elliptic curve $E_3$ over $\mbq$ defined by the Weierstrass equation
\[
E_3 : \; y^2 + xy + y = x^3 - x^2 - 56x + 163.
\]
The finite sequence $\left( a_p(E_{3}) \mod 3 :  p \leq 150 \text{ and } p \nmid N_{E_{3}} \right)$ is equal to
\[
(0, 2, 0, 2, 0, 2, 0, 0, 2, 2, 0, 2, 0, 0, 0, 2, 2, 0, 2, 2, 0, 0, 2, 0, 2, 0, 
2, 0, 2, 0, 0, 2, 0).
\]
We see that the residue class $1 \mod 3$ is missing from this list.  This is caused by the presence of a rational $3$-torsion point $P := (7,5) \in E_3[3]$.  Examining the effect of $P$ on the action of $\gal(\ol{\mbq}/\mbq)$ on $E_3[3]$, it follows that every $r \in \mbz$ satisfying $r \equiv 1 \mod 3$ is a missing Frobenius trace for $E_3$.
\end{example}

\begin{example} \label{level8example}
Consider the elliptic curve $E_{6}$ over $\mbq$ defined by the Weierstrass equation
\[
E_{6} : \; y^2 = x^3 - 15876x - 777924.
\]
The finite sequence $\left( a_p(E_{6}) \mod 6 :  p \leq 150 \text{ and } p \nmid N_{E_{6}} \right)$ is equal to
\[
\begin{split}
&( 4, 4, 5, 2, 5, 4, 0, 5, 0, 0, 5, 0, 2, 2, 5, 1, 2, 5, 5, 4, 0, 1, 0, 1, 0, 1, 
0, 5, 4, 0, 0, 4 ).
\end{split}
\]
We see that the residue class $3 \mod 6$ is missing from this list.  As we shall see, due to the nature of the action of $\gal(\ol{\mbq}/\mbq)$ on the $6$-torsion of $E_{6}$, every $r \in \mbz$ satisfying $r \equiv 3 \mod 6$ is a missing Frobenius trace for $E_6$.  Furthermore, $m=6$ is the smallest level for which $E_{6}$ has a missing trace modulo $m$.
\end{example}

\begin{example} \label{level28example}
Consider the elliptic curve $E_{28}$ over $\mbq$ defined by the Weierstrass equation
\[
E_{28} : \; y^2 = x^3 - 7138223372x + 232131092574192.
\]
The finite sequence $\left( a_p(E_{28}) \mod 28 :  p \leq 580 \text{ and } p \nmid N_{E_{28}} \right)$ is equal to
\[
\begin{split}
&( 0, 1, 2, 26, 25, 22, 24, 19, 10, 9, 26, 22, 2, 24, 1, 10, 6, 10, 10, 16, 21, 
21, 25, 22, 18, 23, 6, 20, 0, 19, 16, 11, 4, 17, \\
&6, 16, 21, 16, 5, 24, 19, 15, 
10, 26, 0, 14, 1, 3, 6, 14, 14, 21, 4, 18, 14, 3, 27, 14, 5, 14, 18, 21, 27, 20,
27, 16, 9, 25, \\
&24, 0, 11, 22, 6, 3, 13, 10, 25, 2, 19, 18, 21, 20, 4, 6, 3, 2, 
6, 20, 4, 12, 26, 18, 26, 21, 14, 8, 11, 26, 23, 4, 3, 16, 18).
\end{split}
\]
We see that the residue class $7 \mod 28$ is missing from this list.  As we shall see, due to the nature of the action of $\gal(\ol{\mbq}/\mbq)$ on the $28$-torsion of $E_{28}$, every $r \in \mbz$ satisfying $r \equiv 7 \mod 28$ is a missing Frobenius trace for $E_{28}$.  Furthermore, $m=28$ is the smallest level for which $E_{28}$ has a missing trace modulo $m$.
\end{example}

Of the above three examples, the elliptic curve $E_3$ has a congruence obstruction at the prime level $3$, whereas $E_6$ and $E_{28}$ have obstructions at composite levels.  Examples like $E_3$ are handled in \cite{davidkisilevskypappalardi}, which classifies elliptic curves $E$ over $\mbq$ satisfying $C_{E,r} = 0$ and admitting a rational isogeny of prime degree.  On the other hand, for $E_6$ and $E_{28}$, the congruence obstruction is caused by nontrivial \emph{entanglements} (e.g. $\mbq(E_6[2]) \cap \mbq(E_6[3]) \neq \mbq$); the main contribution of the present paper lies in classifying the remaining (genus $0$) cases where $C_{E,r} = 0$ due to \emph{composite} level congruence obstructions, and in clarifying the role of entanglements in those congruence obstructions.

Towards the goal of describing explicitly the constant $C_{E,r}$, we now consider the continuous Galois representations
\[
\begin{split}
\rho_{E,m} : \; &G_\mbq \longrightarrow \GL_2(\mbz/m\mbz), \\
\rho_E : \; &G_\mbq \longrightarrow \GL_2(\hat{\mbz}),
\end{split}
\]
where $\rho_{E,m}$ is defined by letting $G_\mbq := \gal(\ol{\mbq}/\mbq)$ act on $E[m]$ and fixing a $\mbz/m\mbz$-basis thereof, and $\rho_E$ is likewise defined by letting $G_\mbq$ act on the entire torsion subgroup $\ds E_{\tors} := \bigcup_{m=1}^\infty E[m]$ of $E$ and choosing a $\mbz/m\mbz$-basis of each $E[m]$ in a compatible manner.  Here,
\[
\hat{\mbz} := \lim_{ \leftarrow } \mbz/m\mbz \simeq \prod_{\ell \text{ prime}} \mbz_{\ell},
\]
is the inverse limit of the projective system $\{ \mbz/m\mbz : m \in \mbn \}$, ordered according to divisibility and with the canonical projection maps.  We may likewise view $\rho_E$ as being the inverse limit of the system of representations $\rho_{E,m}$, with $m \in \mbn$.  A famous theorem due to Serre \cite{serre} states that, if $E$ has no complex multiplication, then $\rho_E(G_\mbq) \subseteq \GL_2(\hat{\mbz})$ is an \emph{open} subgroup, or, equivalently, that the index of $\rho_E(G_{\mbq})$ in $\GL_2(\hat{\mbz})$ is finite.  Consequently, there is a positive integer $m$ for which
\begin{equation*} 
\ker \left( \GL_2(\hat{\mbz}) \rightarrow \GL_2(\mbz/m\mbz) \right) \subseteq \rho_E(G_\mbq).
\end{equation*}
We define $m_E \in \mbn$ to be the smallest positive integer $m$ for which this holds.  

As described in detail in \cite{langtrotter}, the constant $C_{E,r}$ appearing in Conjecture \ref{LTConjecture} is given by
\begin{equation} \label{explicitformofCEr}
C_{E,r} = \frac{2}{\pi} \cdot \frac{m_E | \rho_{E,m_E}(G_\mbq)_r |}{| \rho_{E,m_E}(G_\mbq) |} \prod_{{\begin{substack} {\ell \text{ prime} \\ \ell \nmid m_E} \end{substack}}} \frac{\ell | \GL_2(\mbz/\ell\mbz)_r  |}{| \GL_2(\mbz/\ell\mbz) |},
\end{equation}
where, for any subgroup $H \subseteq \GL_2(\mbz/m\mbz)$, we are employing the notation
\[
H_r := \{ g \in H : \tr g \equiv r \mod m \}.
\]
Furthermore, it can be verified by direct computation that the infinite product over primes $\ell \nmid m_E$ in \eqref{explicitformofCEr} is \emph{convergent}, and a straightforward computation shows that each $\ell$-th factor $ \frac{\ell | \GL_2(\mbz/\ell\mbz)_r  |}{| \GL_2(\mbz/\ell\mbz) |}$ is nonzero for any $r \in \mbz$.  It follows that, for any elliptic curve $E$ over $\mbq$ without complex multiplication, we have
\[
C_{E,r} = 0 \; \Longleftrightarrow \; \exists m \mid m_E \text{ for which } \rho_{E,m}(G_\mbq)_r = \emptyset.
\]
To find elliptic curves $E$ with missing Frobenius traces (i.e. which satisfy $C_{E,r} = 0$ for some $r \in \mbz$), we are thus led to associate such elliptic curves $E$ with points on a modular curve of level $m$.  Specifically, fix a subgroup $G \subseteq \GL_2(\mbz/m\mbz)$ satisfying 
\begin{equation} \label{conditiononG}
\exists r \in\mbz \; \text{ for which } \; G_r = \emptyset.
\end{equation}
For such a group $G$, let $\tilde{G} := \langle G, -I \rangle$, and consider the modular curve $X_{\tilde{G}}$, whose non-cuspidal, non-CM rational points correspond to $j$-invariants of elliptic curves $E$ with $\rho_{E,m}(G_\mbq) \subseteq \tilde{G}$, up to conjugation inside $\GL_2(\mbz/m\mbz)$ (for more details, see \cite{delignerapoport}).  Our main theorem 
focuses on the case where $X_{\tilde{G}}$ has genus zero (for a recent classification of all genus zero modular curves $X_G$ over $\mbq$ for which $X_G(\mbq) \neq \emptyset$, see \cite{rakvi}).  Since our goal is to classify all elliptic curves $E$ such that $C_{E,r} = 0$ for some $r \in \mbz$, we may as well consider only \emph{maximal} subgroups $G \subseteq \GL_2(\mbz/m\mbz)$ among those satisfying \eqref{conditiononG}.  Furthermore, because we will be varying the level $m$, we will phrase our definitions in terms of open subgroups $G \subseteq \GL_2(\hat{\mbz})$.
For any open subgroup $G \subseteq \GL_2(\hat{\mbz})$, we denote by $m_G$ its \emph{level}, i.e. the smallest $m \in \mbn$ for which $\ker\left(\GL_2(\hat{\mbz}) \rightarrow \GL_2(\mbz/m\mbz) \right) \subseteq G$, and for any $m \in \mbn$ we define 
\[
G(m) := G \bmod m \subseteq \GL_2(\mbz/m\mbz).  
\]
We extend our notation for the associated modular curve by setting $\tilde{G} := \langle G, -I \rangle$ and setting the notation
\[
X_{\tilde{G}} := X_{\tilde{G}(m_{\tilde{G}})}.
\]
Furthermore, we denote by $j_{\tilde{G}} : X_{\tilde{G}} \longrightarrow X(1)$ the map which associates to any point in $X_{\tilde{G}}$ the underlying elliptic curve $E$.  
Finally, since we are only interested in subgroups $G$ up to conjugation inside $\GL_2(\hat{\mbz})$, we define the following notation, for subgroups $G_1, G_2, G \subseteq \GL_2(\hat{\mbz})$ and any integer $r$:
\begin{equation*} 
\begin{split}
G_1 \doteq G_2 \; &\myeq \; \exists g \in \GL_2(\hat{\mbz}) \text{ with } G_1 = g G_2 g^{-1}, \\
G_1 \, \dot\subseteq \, G_2 \; &\myeq \; \exists g \in \GL_2(\hat{\mbz}) \text{ with } G_1 \subseteq g G_2 g^{-1}, \\
G_r &:= \{ g \in G : \tr g \equiv r \bmod m_G \}.
\end{split}
\end{equation*}
We consider the following collections of open subgroups $G \subseteq \GL_2(\hat{\mbz})$:
\begin{equation} \label{listofdefsofopensubgroups}
\begin{split}
\mf{G} &:= \{ G \subseteq \GL_2(\hat{\mbz}) : \, G \text{ is open and } \det G = \hat{\mbz}^\times \}, \\
\mf{G}(g) &:= \{ G \in \mf{G} : \,  X_{\tilde{G}} \text{ has genus } g \}, \\
\mf{G}_{MT} &:= \{ G \in \mf{G}  :  \exists r \in \mbz \text{ with } G_r = \emptyset \}, \\
\mf{G}_{MT}^{\max} &:= \{ G \in \mf{G}_{MT} : \, G \text{ is maximal with respect to } \dot\subseteq \}, \\
\mf{G}_{MT}(g) &:= \mf{G}_{MT} \cap \mf{G}(g),  \quad\quad\quad \mf{G}_{MT}^{\max}(g) := \mf{G}_{MT}^{\max} \cap \mf{G}(g).\\
\end{split}
\end{equation}
As a consequence of the Weil pairing, for any elliptic curve $E$ over $\mbq$, we have $\det \rho_{E}(G_\mbq) = \hat{\mbz}^\times$ (see Lemma \ref{weilpairinglemma} below); this is the reason for the condition $\det G = \hat{\mbz}^\times$ in the definition of $\mf{G}$ in \eqref{listofdefsofopensubgroups}.  Furthermore, for any $m \in \mbn$ and prime $p$, we have
\[
p \nmid m N_E \; \Longrightarrow \; \rho_{E,m} \text{ is unramified at $p$ } \; \text{ and } \; a_p(E) \equiv \tr \rho_{E,m} (\frob_p) \bmod m,
\]
where $\frob_p \in G_\mbq$ denotes any Frobenius automorphism over $p$ (this follows, for instance, from \cite[Proposition 2.3, p. 134]{silverman}).  
Consequently, for any elliptic curve $E$ over $\mbq$, Conjecture \ref{LTConjecture} implies that
\begin{equation} \label{goalbygenus}
\exists r \in \mbz \text{ with } \lim_{X \rightarrow \infty} \pi_{E,r}(X) < \infty \; \Longleftrightarrow \; \exists G \in \mf{G}_{MT}^{\max} \text{ with } \rho_{E}(G_\mbq) \, \dot\subseteq \, G.
\end{equation}
(The implication ``$\Longleftarrow$'' is unconditional, whereas ``$\Longrightarrow$'' depends on Conjecture \ref{LTConjecture}.)  Thus, the goal of classifying elliptic curves $E$ over $\mbq$ satisfying the left-hand condition in \eqref{goalbygenus} leads to our consideration of the rational points of the modular curves $X_{\tilde{G}}$, for each $G \in \mf{G}_{MT}^{\max}(g)$, for each fixed $g \geq 0$.
We remark that, in case $G = \tilde{G}$ (i.e. in case $-I \in G$) and assuming $j_E \notin \{ 0, 1728 \}$, the property that $\rho_{E}(G_\mbq) \subseteq G$ does not vary as we twist $E$, i.e. we have
\[
-I \in G \; \Longrightarrow \; \left( \forall d \in \mbq^\times/(\mbq^\times)^2, \; \rho_{E}(G_\mbq) \, \dot\subseteq \, G \Leftrightarrow \rho_{E^{(d)}}(G_\mbq) \, \dot\subseteq \, G \right),
\]
where $E^{(d)}$ denotes the twist of $E$.  In particular, when $-I \in G$ and assuming $j_E \notin \{ 0, 1728 \}$, the property that $\rho_{E}(G_\mbq) \, \dot\subseteq \, G$ only depends on the $j$-invariant of $E$.  By contrast, in case $-I \notin G$, the property $\rho_{E^{(d)}}(G_\mbq) \, \dot\subseteq \, G$ depends, in general, on twist parameter $d$.  Thus, classifying elliptic curves $E$ with $\rho_{E}(G_\mbq) \, \dot\subseteq \, G$, amounts to 
\begin{enumerate}
\item describing explicitly the map $j_{\tilde{G}} : X_{\tilde{G}} \longrightarrow X(1)$,
\item in case $-I \notin G$, describing the particular twists $\{ E^{(d)} \}_{d \in \mbq^\times/(\mbq^\times)^2}$ that satisfy $\rho_{E^{(d)}}(G_\mbq) \, \dot\subseteq \, G$.
\end{enumerate}

Our main result classifies the set of elliptic curves $E$ over $\mbq$ for which $\rho_E(G_\mbq) \, \dot\subseteq \, G$ for some $G \in \mf{G}_{MT}^{\max}(0)$, in cases according to whether or not $-I \in G$, as described above.  In particular, it extends each of Examples \ref{level3example}, \ref{level8example}, and \ref{level28example} to the following one-parameter families.  We define the rational functions in $\mbq(t,D)$:
\begin{equation} \label{shortlistofjsandds}
\begin{array}{ll}
j_{3,1}(t) := \ds 27\frac{(t+1)(t+9)^3}{t^3} & d_{3,1,1}(t,D) :=  \ds \frac{6(t+1)(t+9)}{t^2 - 18t - 27}, \\
& \\
j_{6,1}(t) := \ds 2^{10}3^3t^3(1-4t^3) & d_{6,1,1}(t,D) := \ds D, \\
& \\
j_{28,1}(t) := \ds - \frac{(49t^4 - 13t^2 + 1)(2401t^4 - 245t^2 + 1)^3}{t^2} & \\
& \\
d_{28,1,1}(t,D) :=  \ds \frac{-7t(49t^4 - 13t^2 + 1)(2401t^4 - 245t^2 + 1)}{823543t^8 - 235298t^6 + 21609t^4 - 490t^2 - 1}. & 
\end{array} 
\end{equation}
Furthermore, for $(m,i) \in \{ (3,1), (6,1), (28,1) \}$, we set the Weierstrass coefficients $a_{4;m,i}(t), a_{6;m,i}(t) \in \mbq(t)$ by
\begin{equation} \label{defofa4anda6}
a_{4;m,i}(t) := \frac{108j_{m,i}(t)}{1728 - j_{m,i}(t)}, \quad\quad a_{6;m,i}(t) := \frac{432j_{m,i}(t)}{1728-j_{m,i}(t)}.
\end{equation}
For $(m,i,k) \in \{ (3,1,1), (8,1,1), (28,1,1) \}$ we have already declared the twist parameters $d_{m,i,k}(t,D) \in \mbq(t,D)$ in \eqref{shortlistofjsandds}, and we define the elliptic curves $\mc{E}_{m,i,k}$ over $\mbq(t,D)$ by
\begin{equation} \label{ellipticsurface}
\mc{E}_{m,i,k} : \; d_{m,i,k}(t,D) y^2 = x^3 + a_{4;m,i}(t) x + a_{6;m,i}(t).
\end{equation}
For $t_0,D_0 \in \mbq$, we denote by $\mc{E}_{m,i,k}(t_0,D_0)$ the elliptic curve over $\mbq$ obtained by specializing $\mc{E}_{m,i,k}$ at $t = t_0$ and $D = D_0$.  The elliptic curves $E_3$, $E_6$ and $E_{28}$ of Examples \ref{level3example} - \ref{level28example} satisfy
\[
E_3 = \mc{E}_{3,1,1}(1,1), \quad\quad E_6 = \mc{E}_{6,1,1}(1,1), \quad\quad E_{28} = \mc{E}_{28,1,1}(1,1).
\]

In Tables \ref{masterlistofjinvariants} and \ref{masterlistoftwistparameters}, which appear in Section \ref{tablesection}, we associate $j$-invariants $j_{m,i}(t) \in \mbq(t)$ and twist parameters $d_{m,i,k}(t,D) \in \mbq(t,D)$ to all of the $3$-tuples\footnote{In each $3$-tuple $(m,i,k)$, the first entry $m$ names the $\GL_2$-level of the corresponding group; for a fixed $m$, the index $i$ changes exactly if the $j$-invariant changes, and the last index $k$ changes as that twist class changes for a fixed $j$-invariant.}
\begin{equation} \label{masterlistofindices}
(m,i,k) \in \left\{ \begin{matrix} (2,1,1), (3,1,1), (3,1,2), (4,1,1), (5,1,1), (5,2,1), (5,2,1), (5,2,1), (6,1,1), \\ (6,2,1), (6,3,1), (6,3,2), (7,1,1), (7,1,2), (7,2,1), (7,2,2), (7,3,1), (7,3,2), \\ (8,1,1), (9,1,1), (9,2,1), (9,3,1), (9,4,1), (10,1,1), (10,1,2), (10,2,1), \\ (10,2,2), (10,3,1), (12,1,1), (12,1,2), (12,2,1), (12,3,1), (12,4,1), (14,1,1), \\ (14,2,1), (14,2,2), (14,3,1), (14,3,2), (14,4,1), (14,4,2), (14,5,1), (14,6,1), \\ (14,6,2), (14,7,1), (14,7,2), (28,1,1), (28,2,1), (28,2,2), (28,3,1), (28,3,2) \end{matrix} \right\}.
\end{equation}
Each $j$-invariant $j_{m,i}(t)$ in our list will correspond to the natural map $X_{\tilde{G}} \longrightarrow X(1)$ associated to a group $G \in \mf{G}_{MT}^{\max}(0)$ and a choice of parameter $t \in \mbq(X_{\tilde{G}})$, and we will have $d_{m,i,k}(t,D) = D$ just in case $-I \in G$.  When  $-I \notin G$, we will have $d_{m,i,k}(t,D) \in \mbq(t) \subseteq \mbq(t,D)$, and we may also denote it simply by $d_{m,i,k}(t)$ in this case.  For each $j$-invariant $j_{m,i}(t)$ corresponding to such a tuple $(m,i,k)$ in \eqref{masterlistofindices}, we again define the Weierstrass coefficients $a_{4;m,i}(t), a_{6;m,i}(t) \in \mbq(t)$ by \eqref{defofa4anda6} and consider the associated elliptic curve $\mc{E}_{m,i,k}$ over $\mbq(t,D)$ defined by \eqref{ellipticsurface}; for $t_0, D_0 \in \mbq$ we denote by $\mc{E}_{m,i,k}(t_0,D_0)$ the specialization of $\mc{E}_{m,i,k}(t,D)$ to $t = t_0$ and $d = D_0$.  For all pairs $(t_0,D_0)$ in a Zariski open subset of $\mathbb{A}_2(\mbq)$, the specialized curve $\mc{E}_{m,i,k}(t_0,D_0)$ is an elliptic curve over $\mbq$.  In case $-I \notin G$, since the corresponding elliptic curve $\mc{E}_{m,i,k}$ as in \eqref{ellipticsurface} is defined over $\mbq(t)$, we may also denote simply by $\mc{E}_{m,i.k}(t_0)$ its specialization to $t = t_0$, which is an elliptic curve over $\mbq$ for all but finitely many $t_0 \in \mbq$.

\begin{Theorem} \label{maintheorem}
Let $E$ be an elliptic curve over $\mbq$ with $j$-invariant $j_E$ satisfying $j_E \notin \{ 0, 1728 \}$.  We have that $\exists G \in \mf{G}_{MT}^{\max}(0)$ with $\rho_E(G_\mbq) \, \dot\subseteq \, G$ if and only if there are $t_0,D_0 \in \mbq$ and a $3$-tuple $(m,i,k)$ in the set \eqref{masterlistofindices} so that $E$ is isomorphic over $\mbq$ to the elliptic curve
\[
\mc{E}_{m,i,k}(t_0,D_0) : \; d_{m,i,k}(t_0,D_0) y^2 = x^3 + a_{4;m,i}(t_0)x + a_{6;m,i}(t_0),
\]
where the $j$-invariant $j_{m,i}(t) \in \mbq(t)$ and twist parameter $d_{m,i,k}(t,D) \in \mbq(t,D)$ are as listed in Tables \ref{masterlistofjinvariants} and \ref{masterlistoftwistparameters} of Section \ref{tablesection} and the coefficients $a_{4;m,i}(t), a_{6,m,i}(t) \in \mbq(t)$ are defined by \eqref{defofa4anda6}.
\end{Theorem}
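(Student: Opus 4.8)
The plan is to split \thmref{maintheorem} into a group-theoretic classification and an explicit parametrization, joined by a reduction to finite level. For the reduction, observe that every $G \in \mf{G}$ equals the full preimage in $\GL_2(\hat{\mbz})$ of $G(m_G) \subseteq \GL_2(\mbz/m_G\mbz)$, so a short argument (lift a conjugating matrix from $\GL_2(\mbz/m_G\mbz)$ to $\GL_2(\hat{\mbz})$) shows that $\rho_E(G_\mbq) \dot\subseteq G$ if and only if $\rho_{E,m_G}(G_\mbq) \dot\subseteq G(m_G)$. Hence everything happens at level $m = m_G$: the implication ``$\Longleftarrow$'' reduces to checking that the curves $\mc{E}_{m,i,k}$ of \eqref{ellipticsurface} were constructed so that their mod-$m$ Galois image lies in a conjugate of the intended group, while ``$\Longrightarrow$'' needs the complete list of the groups together with a way to recognize a given $E$ inside the associated family.

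For the classification of $\mf{G}_{MT}^{\max}(0)$, I would study a candidate $G \in \mf{G}$ of level $m$ via its prime-power projections $G(\ell^{a})$ and the entanglement data that, by Goursat's lemma, records how those are glued inside $\GL_2(\mbz/m\mbz)$. With no entanglement the trace is read off prime-by-prime, so $G_r = \emptyset$ for some $r$ exactly when some projection $G(\ell^{a})$ already misses a trace mod $\ell^{a}$; thus one first treats the prime-power case, using the known classification of subgroups of $\GL_2(\mbz/\ell^{a}\mbz)$ together with the fact that $\GL_2(\mbz/\ell^{a}\mbz)$ itself never misses a trace (recorded in the excerpt for $a = 1$, and checked directly in general), so that at prime level only Borel-type subgroups survive --- the situation of \cite{davidkisilevskypappalardi}. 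When entanglement is present, genuinely new congruence obstructions appear at composite level --- the phenomenon behind $E_6$ and $E_{28}$ --- and one must enumerate the admissible entanglement types and impose the trace condition on each fiber product. Then, imposing $\det G = \hat{\mbz}^\times$, passing to the groups maximal with respect to $\dot\subseteq$ among those with $G_r = \emptyset$ for some $r$, and computing the genus of every $X_{\tilde G}$ by the usual formula (index, cusps, elliptic points), keeping only genus $0$, one obtains the finite list \eqref{masterlistofindices}; the genus-zero hypothesis, with the classification of genus-zero modular curves having a rational point in \cite{rakvi}, is what makes the list finite.

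It remains to attach the Table data to each $G \in \mf{G}_{MT}^{\max}(0)$. Since $X_{\tilde G}$ has genus $0$ and carries the rational point provided by any $E$ with $\rho_E(G_\mbq) \dot\subseteq G$, it is isomorphic to $\mbp^1_\mbq$; fixing a Hauptmodul $t$ one writes $j_{\tilde G}\colon X_{\tilde G} \to X(1)$ explicitly as $j_{m,i}(t) \in \mbq(t)$, yielding Table~\ref{masterlistofjinvariants}. If $-I \in G$, then $\tilde G = G$, the condition $\rho_E(G_\mbq) \dot\subseteq G$ depends only on $j_E$ (by the remark before the theorem, using $j_E \notin \{0,1728\}$), every quadratic twist of a curve with $j$-invariant $j_{m,i}(t_0)$ works, and the twist parameter is the free parameter $d_{m,i,k}(t,D) = D$. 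If $-I \notin G$, then $[\tilde G : G] = 2$, and for $E$ with $\rho_{E,m}(G_\mbq) \dot\subseteq \tilde G(m)$ the composite $G_\mbq \to \tilde G(m) \to \tilde G(m)/G(m) \cong \{\pm 1\}$ is a quadratic character cutting out a field $\mbq(\sqrt d)$; replacing $E$ by its twist $E^{(d)}$ multiplies $\rho_{E,m}$ by the quadratic character $\chi_d$, and the result lands in $G(m)$ since multiplying the nontrivial coset by $-I$ recovers $G(m)$. Expressing $d$ as a function of $t$ --- via, e.g., division-polynomial or torsion-coordinate descriptions of the relevant quadratic subfields --- gives $d_{m,i,k}(t) \in \mbq(t)$, which is Table~\ref{masterlistoftwistparameters}; for $(t_0,D_0)$ in the Zariski-open locus where the Weierstrass data are defined and the curve is nonsingular, this identifies $E$ up to $\mbq$-isomorphism with $\mc{E}_{m,i,k}(t_0,D_0)$. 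Together with the reduction of the first paragraph, this proves both implications.

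I expect the decisive obstacle to be the classification of $\mf{G}_{MT}^{\max}(0)$ in the second paragraph, and specifically the assurance that the enumeration of composite-level entanglements is exhaustive, so that no congruence obstruction is missed and no spurious maximal group is admitted; this is precisely the ``role of entanglements'' flagged in the introduction. A secondary, largely mechanical difficulty is the twist analysis when $-I \notin G$: correctly pinning down $d_{m,i,k}(t)$ and confirming that the family it produces has Galois image in $G$ rather than merely in $\tilde G$.
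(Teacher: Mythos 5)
The two-step framework you propose --- first classify the groups in $\mf{G}_{MT}^{\max}(0)$, then build explicit Weierstrass models for each --- is exactly the paper's. The gap is in the first step: you never establish a bound on $\level_{\GL_2}(G)$ that would make your enumeration finite (or even algorithmic). This bound is the content of Theorem~\ref{boundingthelevelsthm} and constitutes the paper's real structural input: one bounds $\level_{\SL_2}(G)$ via Dennin's theorem and the Cummins--Pauli tables together with Proposition~\ref{levelincreaseboundprop}, and then Proposition~\ref{twotothekproposition} --- which uses Lemma~\ref{verticalSL2liftlemma}, the abelianization of $G(m_S) \cap \SL_2(\mbz/m_S\mbz)$, and the \emph{maximality} of $G$ among missing-trace groups --- to promote the $\SL_2$-level bound to a $\GL_2$-level bound. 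Without something of this kind, ``compute the genus of every $X_{\tilde G}$'' is not a finite procedure, and the appeal to \cite{rakvi} does not substitute for it: \cite{rakvi} classifies groups containing $-I$ (equivalently, the curves $X_{\tilde G}$), whereas you also need each $G$ with $-I \notin G$, and for a fixed $\tilde G$ there are, a priori, infinitely many such $G$ until the $\GL_2$-level is controlled.

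A secondary issue is your taxonomy of where the obstructions can live. You read off $G_r = \emptyset$ prime-by-prime when there is no entanglement, assert that at prime level only Borel-type subgroups occur (invoking \cite{davidkisilevskypappalardi}), and then locate the genuinely composite obstructions at entangled levels $m_1 m_2$. This omits the prime-power levels $m = 4$, $8$, $9$, which account for six tuples in \eqref{masterlistofindices} and whose missing traces are purely $\ell$-adic (a trace is missing mod $\ell^a$ but not mod $\ell^{a-1}$); the paper even singles out $m=8$ as a ``vertical entanglement'' needing extra care, and Proposition~\ref{admissiblegroupsprop} is designed precisely to detect such level raisings. Your twist analysis in the final paragraph is essentially what the paper does in Section~\ref{proofofmaintheoremsection} and is substantively right (the character $G_\mbq \to \tilde G(m)/G(m) \simeq \{\pm 1\}$ is well defined because $G(m)$ is normal of index $2$, and twisting by its discriminant moves the image into $G(m)$), but the case-by-case work --- in particular the level-$14$ and level-$28$ cubic entanglements requiring Lemmas~\ref{gettingattheothercubicfieldslemma} and~\ref{settingcubicfieldsequaltoeachotherlemma} and a conic-desingularization step to produce the parameter --- is considerably heavier than the sketch allows.
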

The proof of Theorem \ref{maintheorem} falls into two steps, the first one bounding the levels associated to each of the groups $G \in \mf{G}_{MT}^{\max}(0)$.  In addition to \eqref{listofdefsofopensubgroups}, we make the definitions
\begin{equation} \label{defsofopensubgroupswithlevel}
\begin{split}
\mf{G}(g,m) &:= \{ G \in \mf{G}(g) : \level_{\GL_2}(G) = m \} \\
\mf{G}_{MT}(g,m) &:= \mf{G}_{MT} \cap \mf{G}(g,m) \\
\mf{G}_{MT}^{\max}(g,m) &:= \mf{G}_{MT}^{\max} \cap \mf{G}(g,m).
\end{split}
\end{equation}
We will establish the following theorem.
 \begin{Theorem} \label{boundingthelevelsthm}
Let the set $\mf{G}_{MT}^{\max}(g)$ of open subgroups of $\GL_2(\hat{\mbz})$ be as defined in \eqref{listofdefsofopensubgroups}.  We then have
\begin{equation} \label{genuszerocurvesbylevel}
\mf{G}_{MT}^{\max}(0) = \bigcup_{m \in \left\{ 2, 3, 4, 5, 6, 7, 8, \atop 9, 10, 12, 14, 28 \right\}} \mf{G}_{MT}^{\max}(0,m),
\end{equation}
where the set $\mf{G}_{MT}^{\max}(g,m)$ is as in \eqref{defsofopensubgroupswithlevel}.
\end{Theorem}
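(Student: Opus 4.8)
The plan is to first use the genus-zero hypothesis to reduce to an explicit finite list of candidate levels, and then to eliminate every candidate outside the set in \eqref{genuszerocurvesbylevel} by analyzing precisely how a group can acquire a missing trace. Fix $G \in \mf{G}_{MT}^{\max}(0)$, put $m := \level_{\GL_2}(G)$, and recall $\tilde G = \langle G, -I \rangle$. Since $\det \tilde G = \hat{\mbz}^\times$ and $X_{\tilde G}$ has genus $0$, the group $\tilde G \cap \SL_2(\hat{\mbz})$ is conjugate to a genus-zero congruence subgroup of $\SL_2(\mbz)$. By the Cummins--Pauli classification of low-genus congruence subgroups there are only finitely many such up to conjugacy, so a standard argument (using $-I \in \tilde G$ and $\det \tilde G = \hat{\mbz}^\times$) bounds $\level_{\GL_2}(\tilde G)$ in terms of $\level_{\SL_2}(\tilde G \cap \SL_2(\hat{\mbz}))$; and since $[\tilde G : G] \le 2$ while $\det G = \hat{\mbz}^\times$, the level $m$ then lies in an explicit finite set $S_0$. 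It remains to show $\mf{G}_{MT}^{\max}(0,m) = \emptyset$ for each $m \in S_0$ not appearing in \eqref{genuszerocurvesbylevel}.

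The second ingredient is a localization of the missing-trace condition. Writing $m = \prod_\ell \ell^{e_\ell}$, the Chinese Remainder isomorphism $\GL_2(\mbz/m\mbz) \cong \prod_\ell \GL_2(\mbz/\ell^{e_\ell}\mbz)$ makes traces decompose coordinatewise, so if $G$ equalled the direct product $\prod_\ell G(\ell^{e_\ell})$, then $G_r = \emptyset$ would force $G(\ell^{e_\ell})_{r_\ell} = \emptyset$ for some $\ell$, i.e. the obstruction would already live at a prime-power level. In general $G$ is a proper subgroup of $\prod_\ell G(\ell^{e_\ell})$ --- an entanglement --- and a missing trace can then arise from a nontrivial common quotient of two or more of the factors even when no single prime-power projection misses a trace (this is what happens in Example~\ref{level8example}). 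I would isolate this as a lemma: a missing trace at level $m$ forces either (a) some projection $G(\ell^e)$ to have a missing trace with full determinant modulo $\ell^e$, or (b) a nontrivial entanglement among at least two prime-power factors through which the joint trace-and-determinant map fails to be surjective. Moreover, since $\tilde G \subseteq \pi_{\ell^e}^{-1}\!\bigl(\wt{G(\ell^e)}\bigr)$ for each prime power $\ell^e \,\|\, m$, the covering $X_{\tilde G} \to X_{\wt{G(\ell^e)}}$ forces each $X_{\wt{G(\ell^e)}}$ to have genus $0$ as well.

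With these tools in place, I would finish by elimination. For the prime-power case, run through the finitely many genus-zero subgroups of $\GL_2(\mbz/\ell^e\mbz)$ with full determinant --- available from the classifications of Sutherland--Zywina and of Rouse--Sutherland--Zureick-Brown --- and test each directly for a missing trace; the surviving groups are typically of Borel type (generated by $\left(\begin{smallmatrix} 1 & * \\ 0 & * \end{smallmatrix}\right)$ and its variants), which confines $\ell^e$ to $\{2,3,4,5,7,8,9\}$. For the composite case, the lemma restricts $m$ to products of these prime powers in which an entanglement genuinely produces a missing trace while $X_{\tilde G}$ remains of genus $0$; running through the finitely many such products shows this occurs exactly for $m \in \{6,10,12,14,28\}$, and that for every other composite $m \in S_0$ either no genus-zero full-determinant missing-trace group of level $m$ exists, or every such group is properly contained in a missing-trace group of strictly smaller level and hence is not $\dot\subseteq$-maximal. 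Combining the prime-power and composite cases yields \eqref{genuszerocurvesbylevel}.

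I expect the last elimination to be the main obstacle, and within it the composite/entanglement analysis: one must pin down exactly which entanglements among the allowed prime powers simultaneously (i) create a missing trace, (ii) keep the modular curve of genus $0$, and (iii) yield a group that is genuinely $\dot\subseteq$-maximal in $\mf{G}_{MT}$ rather than sitting inside a larger missing-trace group. This is most delicate for the level-$28$ case, where $-I \notin G$ and the relevant quadratic twist must be tracked, and it requires carefully ruling out the various nearby composite levels such as $15$, $18$, $20$, $21$, $24$, $36$ and $56$.
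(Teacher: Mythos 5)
Your overall strategy --- bound the possible $\SL_2$-levels via Cummins--Pauli, convert to a bound on the $\GL_2$-level, and then eliminate candidate levels via a CRT/entanglement analysis plus a computational search --- is the same skeleton as the paper's proof, and your observations that genus zero is inherited by the prime-power quotient curves and that a maximal missing-trace group at a true composite level cannot be a direct product are both correct. But there is a genuine gap at the second, and most delicate, step.

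You assert that ``a standard argument (using $-I \in \tilde G$ and $\det \tilde G = \hat{\mbz}^\times$) bounds $\level_{\GL_2}(\tilde G)$ in terms of $\level_{\SL_2}(\tilde G \cap \SL_2(\hat{\mbz}))$.'' No such argument exists, and the claim is false in general. For any fixed level $m'$ and any $N$ one can build an open $G_N \subseteq \GL_2(\hat\mbz)$ with $-I \in G_N$, $\det G_N = \hat\mbz^\times$, and $\SL_2$-level $m'$ but $\GL_2$-level $\lcm(N,m')$: take a nontrivial surjection $\psi : \GL_2(\mbz/m'\mbz) \to A$ with $\psi(\SL_2(\mbz/m'\mbz)) = A$ and $\psi(-I)=1$, take a character $\chi : \hat{\mbz}^\times \to A$ of conductor $N$, and set $G_N := \{g : \psi(g \bmod m') = \chi(\det g)\}$. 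The group $G_N \cap \SL_2(\hat\mbz)$ does not change with $N$, yet $\level_{\GL_2}(G_N)$ grows without bound. So full determinant and $-I \in G$ alone do not bound the $\GL_2$-level, not even in terms of the subgroup $\tilde G \cap \SL_2(\hat\mbz)$ itself.

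What actually closes this gap in the paper is Proposition~\ref{twotothekproposition}: for $G \in \mf{G}_{MT}^{\max}$ one shows $m_G \mid d_G m_S$ and $m_G \mid m_S^\infty$, where $d_G$ is the $m_S$-supported part of $\bigl|G(m_S)\cap\SL_2(\mbz/m_S\mbz)\,/\,[G(m_S),G(m_S)]\bigr|$. The proof hinges on the $\dot\subseteq$-\emph{maximality} of $G$ among missing-trace groups --- specifically, that $\tr\bigl(G(m')\bigr)=\mbz/m'\mbz$ for every proper divisor $m'$ of $m_G$ --- combined with Goursat's lemma to rule out new primes and Lemma~\ref{verticalSL2liftlemma} (the lifting homomorphism $\gd : G(m_S)\to(\mbz/m_G\mbz)^\times$) to bound the vertical growth. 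Your proposal never uses maximality in bounding $m$; without it, the bound simply fails, as the construction above shows. If you insert the argument of Proposition~\ref{twotothekproposition} (and its refinement Proposition~\ref{admissiblegroupsprop}, which makes the resulting search finite and practical), the rest of your plan --- CRT decomposition, distinguishing the direct-product versus entangled cases, propagating genus zero down to prime-power quotients, and eliminating the remaining composite levels such as $15$, $18$, $20$, $21$, $24$, $36$, $56$ by explicit enumeration --- matches the paper's algorithm and would go through.
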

Theorem \ref{boundingthelevelsthm} is proved as follows.  An equivalent formulation of a conjecture of  Rademacher states that
\[
\{ \text{open subgroups } S \subseteq \SL_2(\hat{\mbz}) : -I \in S \text{ and } X_S \text{ has genus } 0 \} / \doteq
\]
is a finite set.  
This conjecture was proven by Denin (see \cite{denin1}, \cite{denin2} and \cite{denin3}).  More generally, in \cite{thompson} and \cite{zograf}, the same is shown with $0$ replaced by a general $g \in \mbn \cup \{ 0 \}$. 
In addition, there are several papers on the \emph{effective} resolution of Rademacher's conjecture.  In particular, Cummins and Pauli \cite{cumminspauli} have produced the complete list of the elements of 
\[
\{ \text{open subgroups } S \subseteq \SL_2(\hat{\mbz}) : -I \in S \text{ and } X_S \text{ has genus } g \} / \doteq
\]
for $g \leq 24$, and our proof of Theorem \ref{boundingthelevelsthm} makes use of the tables therein.
We extend the notion of $\GL_2$-level of an open subgroup $G \subseteq \GL_2(\hat{\mbz})$ by defining
\begin{equation} \label{defofGL2levelandSL2level}
\begin{split}
\level_{\GL_2}(G) &:= \min \left\{ m \in \mbn : \ker \left( \GL_2(\hat{\mbz}) \rightarrow \GL_2(\mbz/m\mbz) \right) \subseteq G \right\} \\
\level_{\SL_2}(G) &:= \min \left\{ m \in \mbn : \ker \left( \SL_2(\hat{\mbz}) \rightarrow \SL_2(\mbz/m\mbz) \right) \subseteq G \cap \SL_2(\hat{\mbz}) \right\}.
\end{split}
\end{equation}
It is straightforward to see that $\level_{\SL_2}(G)$ divides $\level_{\GL_2}(G)$, and in general they can be different.  Using the main result of \cite{cumminspauli}, we will first show that
\begin{equation*} 
G \in \mf{G}(0) \; \Longrightarrow \; \level_{\SL_2}(G) \in \left\{ \begin{matrix} 1, 2, 3, 4, 5, 6, 7, 8, 9, 10, 11, 12, 13, 14, 15, 16, 18, 20, 21, 22, 24, \\ 25, 26, 27, 28, 30, 32, 36, 40, 42, 48, 50, 52, 54, 56, 60, 64, 72, 96 \end{matrix} \right\}.
\end{equation*}
Next, for each $G \in \mf{G}_{MT}^{\max}(0)$, we exhibit a positive integer $d_G$ for which $\level_{\GL_2}(G)$ divides $d_G \cdot \level_{\SL_2}(G)$, and this, together with a MAGMA computation, yields Theorem \ref{boundingthelevelsthm}.

To establish Theorem \ref{maintheorem}, we will utilize results of \cite{sutherlandzywina} and \cite{zywina}, which describe explicitly all prime power level modular curves with infinitely many rational points.  For the prime power levels (other than the $m=8$) occurring on the right-hand side of \eqref{genuszerocurvesbylevel} we use those results directly; for each group $G$ of level $m$ that is not a prime power, the associated missing trace is caused by an \emph{entanglement}, i.e. a non-trivial intersection
\[
\mbq(E[m_1]) \cap \mbq(E[m_2]) \neq \mbq \quad\quad\quad \left(m = m_1m_2, \; \gcd(m_1,m_2) = 1 \right)
\]
implicit in the group $G$ (for $m=8$, the missing trace is caused by a ``vertical entanglement'' and also requires additional work).  In the cases involving entanglement, we undertake a finer analysis, identifying precisely the underlying subfields and determining the subfamily for which those subfields agree.  We remark that entanglements of division fields comprise an area of recent interest and activity.  For example, the results in \cite{braujones} classify elliptic curves $E$ over $\mbq$ for which $\left[ \mbq(E[2]) : \mbq \right] = 6$ and $\mbq(E[2]) \subseteq \mbq(E[3])$ (see also \cite{morrow}); this was generalized in \cite{jonesmcmurdy}, which classifies all elliptic curves $E$ defined over any number field $K$ that fit into a genus zero family and for which there exist coprime $(m,n) \in \mbn^2$ such that $K(E[m]) \cap K(E[n])$ is non-abelian over $K$.  In \cite{danielsmorrow} the authors classify all possible elliptic curves $E$ over $\mbq$ that fit into an infinite family of genus zero or one and for which there exist primes $p$ and $q$ such that $\mbq(E[p]) \cap \mbq(E[q]) \neq \mbq$, and the results in \cite{danielslozanorobledo} classify all elliptic curves $E$ over $\mbq$ and pairs $(m,n) \in \mbn^2$ for which $\mbq(E[n]) = \mbq(E[m])$.  Finally, \cite{campagnapengo} studies the phenomenon of entanglement for CM elliptic curves.

The paper is organized as follows.  In Section \ref{proofofboundingthelevelsthmsection} we prove Theorem \ref{boundingthelevelsthm}.  In Section \ref{proofofmaintheoremsection} we prove Theorem \ref{maintheorem} and in Section \ref{tablesection} we summarize the results in three tables.  Finally, in Section \ref{concludingremarkssection} we discuss future directions.

\subsection{Remarks on computation} \label{Section:ComputationRemark}

All of the computations necessary to justify the theorems in this paper were performed using the computational software package MAGMA \cite{MAGMA}. The code used to perform these computations can be found at the following link:
\[
\text{{\tt{https://github.com/ncjones-uic/MissingFrobeniusTraces}}}
\]

\subsection{Acknowledgements}

The authors would like to thank Chantal David, Zeev Rudnick and Andrew Sutherland for comments on a previous version of the paper.

\section{Notation and Group-theoretic preliminaries}

In this section, we gather notation and preliminary lemmas.  Given an open subgroup $G \subseteq \GL_2(\hat{\mbz})$, throughout the paper we will often denote by $m_G$ the $\GL_2$-level of $G$ and by $m_S$ the $\SL_2$-level of $G$, as
defined as in \eqref{defofGL2levelandSL2level}.
For any open subgroup $S \subseteq \SL_2(\hat{\mbz})$, we may also denote by $m_S$ its $\SL_2$-level.  Also, for any such subgroups we maintain the notation from the introduction:
\begin{equation} \label{defoofGtildeandStilde}
\begin{split}
\tilde{G} &:= \langle -I, G \rangle, \\
\tilde{S} &:= \langle -I, S \rangle.
\end{split}
\end{equation}
\begin{proposition} \label{levelincreaseboundprop}
Let $G \subseteq \GL_2(\hat{\mbz})$ be an open subgroup.  We then have
\begin{equation*} 
\frac{\level_{\SL_2}(G)}{\level_{\SL_2}(\tilde{G})} \in \{ 1, 2 \},
\end{equation*}
where $\tilde{G}$ is as in \eqref{defoofGtildeandStilde}.
\end{proposition}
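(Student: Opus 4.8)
The plan is to reduce the statement to a computation about the principal congruence subgroups of $\SL_2(\hat{\mbz})$ and, ultimately, to the prime $2$. Throughout write $K(m) := \ker\!\left( \SL_2(\hat{\mbz}) \to \SL_2(\mbz/m\mbz) \right)$. Since $\level_{\SL_2}(G)$ depends only on $G \cap \SL_2(\hat{\mbz})$, and since $\det(-I)=1$ gives $\tilde{G} \cap \SL_2(\hat{\mbz}) = \langle -I, G \cap \SL_2(\hat{\mbz}) \rangle$, I may assume $G \subseteq \SL_2(\hat{\mbz})$. It is standard that the $\SL_2$-level of an open subgroup divides every $m$ for which $K(m)$ lies in it; hence $G \subseteq \tilde{G}$ gives $\level_{\SL_2}(\tilde{G}) \mid \level_{\SL_2}(G)$, so the ratio is a positive integer and it suffices to prove $\level_{\SL_2}(G) \mid 2\,\level_{\SL_2}(\tilde{G})$. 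If $-I \in G$ then $\tilde{G} = G$ and the ratio is $1$, so I assume $-I \notin G$; then $\tilde{G} = G \sqcup (-I)G$.

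Put $m := \level_{\SL_2}(\tilde{G})$, so $K(m) \subseteq \tilde{G}$, and set $N := G \cap K(m)$. Since $K(m) \subseteq G \sqcup (-I)G$, either $K(m) \subseteq G$, in which case $\level_{\SL_2}(G) \mid m$ and the ratio is $1$; or else $K(m) \setminus N$ is a single coset of $N$ (if $x,y \in K(m)\setminus N$ then $xy^{-1} \in G \cap K(m) = N$), so $[K(m):N] = 2$ and there is a surjection $\chi : K(m) \to \mbz/2\mbz$ with $\ker\chi = N$. The crux is then the claim that \emph{every continuous homomorphism $K(m) \to \mbz/2\mbz$ is trivial on $K(2m)$}: granting this, $K(2m) \subseteq N \subseteq G$, so $\level_{\SL_2}(G) \mid 2m = 2\,\level_{\SL_2}(\tilde{G})$, as desired.

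To prove the claim I would use the product decomposition $K(m) = \prod_{\ell} K_\ell\!\left(\ell^{v_\ell(m)}\right)$, where $K_\ell(\ell^k) := \ker\!\left(\SL_2(\mbz_\ell) \to \SL_2(\mbz/\ell^k\mbz)\right)$ and $K_\ell(1) := \SL_2(\mbz_\ell)$. A continuous homomorphism to $\mbz/2\mbz$ is trivial on the factor at each odd prime $\ell$: if $\ell \mid m$ the factor is pro-$\ell$, and if $\ell \nmid m$ then $\SL_2(\mbz_\ell)^{\mathrm{ab}}$ has no $2$-torsion (trivial for $\ell \geq 5$, of odd order for $\ell = 3$). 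Hence such a homomorphism factors through the projection onto $K_2\!\left(2^{v_2(m)}\right)$, and it remains to show this projection kills $K_2\!\left(2^{v_2(m)+1}\right)$. If $v_2(m)=0$ this holds because $\SL_2(\mbz_2)^{\mathrm{ab}} \cong \mbz/4\mbz$, so $\SL_2(\mbz_2)$ has a unique subgroup of index $2$, which is the preimage of one in $\SL_2(\mbz/2\mbz)$ and therefore has $\SL_2$-level $2$. If $b := v_2(m) \geq 1$, a direct matrix computation — writing elements as $I + 2^bA$ and squaring — shows that the closed subgroup of $K_2(2^b)$ generated by squares already contains $K_2(2^{b+1})$, so every homomorphism $K_2(2^b) \to \mbz/2\mbz$ kills $K_2(2^{b+1})$. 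Since $K(2m) = K_2\!\left(2^{v_2(m)+1}\right) \times \prod_{\ell \text{ odd}} K_\ell\!\left(\ell^{v_\ell(m)}\right)$ and our homomorphism is trivial on every factor appearing here, it is trivial on $K(2m)$.

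The main obstacle is this last $2$-adic input: controlling the mod-$2$ abelianization of the congruence subgroups $K_2(2^b)$, i.e. checking that they admit no homomorphisms to $\mbz/2\mbz$ beyond those detected modulo $2^{b+1}$. For $b \geq 2$ this is the standard fact that $K_2(2^b)$ is a uniformly powerful pro-$2$ group of dimension $3$; the boundary cases $b=1$ (where $K_2(2)$ fails to be powerful) and $b=0$ (where one uses $\SL_2(\mbz_2)^{\mathrm{ab}} \cong \mbz/4\mbz$) require the short explicit verifications indicated above. Everything else in the argument is formal.
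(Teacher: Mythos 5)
The paper itself proves this proposition only by citing \cite[Lemma 3.1]{jonesmcmurdy}, so you are supplying a real argument where the paper defers; your structure is the natural one and is, as far as I can tell, the argument behind the cited lemma. The reduction to $G \subseteq \SL_2(\hat{\mbz})$, the divisibility $\level_{\SL_2}(\tilde G) \mid \level_{\SL_2}(G)$, and the passage to the index-two subgroup $N = G \cap K(m)$ (using that $I \in N$ rules out $K(m) \subseteq (-I)G$) are all correct, and so is the key reduction: every continuous homomorphism $K(m) \to \mbz/2$ must, via $K(m) = \prod_\ell K_\ell(\ell^{v_\ell(m)})$, factor through the $2$-adic piece, because odd-$\ell$ factors are either pro-$\ell$ or (when $\ell \nmid m$) have abelianization of odd order. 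The $b=0$ case via $\SL_2(\mbz_2)^{\mathrm{ab}}\cong\mbz/4\mbz$ is fine, and for $b\ge 2$ the uniformly-powerful structure of $K_2(2^b)$ gives the claim immediately.

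The one spot where your sketch is genuinely compressed is $b=v_2(m)=1$. There, ``writing elements as $I+2A$ and squaring'' does not by itself finish: a direct computation shows the \emph{set} of squares of $K_2(2)$ projects onto only a $4$-element subset of $K_2(4)/K_2(8)\cong\mathfrak{sl}_2(\mathbb{F}_2)$, not the whole group. What is true, and what you need, is that this subset generates all of $K_2(4)/K_2(8)$, and one then bootstraps — e.g.\ noting that $K_2(4)$ is itself powerful so that $K_2(4)^2=K_2(8)\subseteq\langle K_2(2)^2\rangle$, giving $\langle K_2(2)^2\rangle = K_2(4)$ — to conclude that the Frattini subgroup of $K_2(2)$ is exactly $K_2(4)$. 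You flag that $b=1$ needs a separate check, which is honest; I would just caution that the needed check is ``squares \emph{generate}'' rather than ``squares \emph{land in}'' the right coset, since $K_2(2)$ fails to be powerful. With that filled in, the proof is complete and correct.
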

\begin{proof}
See \cite[Lemma 3.1]{jonesmcmurdy}.
\end{proof}

In the next lemma, for an open subgroup $G \subseteq \GL_2(\hat{\mbz})$ of $\GL_2$-level $m_G$ and $\SL_2$-level $m_S$, and for an arbitrary positive integer $m$ with $m_S \mid m \mid m_G$, we let $\pi_{\GL_2}$ and $\pi_{\mb{G}_m}$ denote the canonical projection maps
\begin{equation} \label{defofpi}
\begin{split}
&\pi_{\GL_2} :  \GL_2(\mbz/m_G\mbz) \longrightarrow \GL_2(\mbz/m\mbz), \\
&\pi_{\mb{G}_m} : (\mbz/m_G\mbz)^\times \longrightarrow (\mbz/m\mbz)^\times.
\end{split}
\end{equation}
\begin{lemma} \label{verticalSL2liftlemma}
Let $G \subseteq \GL_2(\hat{\mbz})$ be an open subgroup satisfying $m_G \mid m_S^\infty$ and let $m$ be any positive integer satisfying $m_S \mid m$ and $m \mid m_G$.  Then there exists a unique group homomorphism
\[
\gd : G(m) \longrightarrow (\mbz/m_G\mbz)^\times
\]
satisfying $\pi_{\mb{G}_m} \circ \gd = \det$ (where $\pi_{\mb{G}_m}$ is the canonical projection as in \eqref{defofpi}), and such that
\[
G(m_G) = \left\{ g \in \pi_{\GL_2}^{-1}\left( G(m) \right) : \gd \left( \pi_{\GL_2}(g) \right) = \det g \right\}.
\]
If $\det G = \hat{\mbz}^\times$, then $\gd$ is surjective and $\gd\left( G(m) \cap \SL_2(\mbz/m\mbz) \right) = \ker \pi_{\mb{G}_m}$.
\end{lemma}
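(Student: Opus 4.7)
My plan is to define $\gd$ via a "determinant of a lift" recipe and then do all the work at the point of well-definedness. For each $h \in G(m)$, since the reduction $G \twoheadrightarrow G(m)$ factors through $G(m_G)$, there is some $\tilde{h} \in G(m_G)$ with $\pi_{\GL_2}(\tilde{h}) = h$, and I set $\gd(h) := \det \tilde{h} \in (\mbz/m_G\mbz)^\times$. Granted that this is well-defined, the remaining verifications are essentially formal: $\gd$ is a homomorphism because lifts of products can be chosen as products of lifts and $\det$ is multiplicative; the compatibility $\pi_{\mb{G}_m} \circ \gd = \det$ holds because $\det \tilde{h} \bmod m = \det h$; the characterization $G(m_G) = \{g \in \pi_{\GL_2}^{-1}(G(m)) : \gd(\pi_{\GL_2}(g)) = \det g\}$ holds with both inclusions immediate from the definition (using $g$ itself as its own lift for the nontrivial direction); and uniqueness follows because any $\gd'$ with the two stated properties satisfies $\gd'(h) = \gd'(\pi_{\GL_2}(\tilde{h})) = \det \tilde{h} = \gd(h)$ for any chosen lift.

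The heart of the argument, and the main obstacle, is well-definedness, which reduces to the claim
\[
G(m_G) \cap \ker \pi_{\GL_2} \subseteq \SL_2(\mbz/m_G\mbz);
\]
that is, every element of $G(m_G)$ congruent to $I$ modulo $m$ has determinant $1$. I will argue by contradiction with the minimality of $m_G$ as the $\GL_2$-level. Suppose $g \in G(m_G) \cap \ker \pi_{\GL_2}$ with $c := \det g \neq 1$. The $\SL_2$-level hypothesis, combined with $m_S \mid m$, gives $\ker(\SL_2(\mbz/m_G\mbz) \to \SL_2(\mbz/m\mbz)) \subseteq G(m_G)$. Multiplying $g$ by elements of this $\SL_2$-kernel and powers of $g$ itself, I will show that $G(m_G) \cap \ker \pi_{\GL_2}$ is the \emph{full} preimage under $\det$ of the subgroup $\det\bigl(G(m_G) \cap \ker \pi_{\GL_2}\bigr) \subseteq \ker \pi_{\mb{G}_m}$, and this preimage is nontrivially bigger than $\ker(\SL_2(\mbz/m_G\mbz) \to \SL_2(\mbz/m\mbz))$. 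The hypothesis $m_G \mid m_S^\infty$ then lets me work prime by prime at each $\ell \mid m_G$: the extra determinant structure forces $G(m_G)$ to contain $\ker(\GL_2(\mbz/m_G\mbz) \to \GL_2(\mbz/m'\mbz))$ for some proper divisor $m' \mid m_G$ with $m' < m_G$, contradicting the minimality of $m_G$. This level-lowering step is the delicate one, and it is where the hypotheses $m_G \mid m_S^\infty$ and the exactness of $m_G$ and $m_S$ as levels work together.

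For the final assertion, I assume $\det G = \hat{\mbz}^\times$, so in particular $\det G(m_G) = (\mbz/m_G\mbz)^\times$. Surjectivity of $\gd$ is then immediate: given $d \in (\mbz/m_G\mbz)^\times$, choose $\tilde{h} \in G(m_G)$ with $\det \tilde{h} = d$, and observe $d = \gd(\pi_{\GL_2}(\tilde{h}))$. For the identity $\gd(G(m) \cap \SL_2(\mbz/m\mbz)) = \ker \pi_{\mb{G}_m}$, the inclusion $\subseteq$ is immediate from $\pi_{\mb{G}_m} \circ \gd = \det$: if $\det h = 1$, then $\gd(h) \in \ker \pi_{\mb{G}_m}$. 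For the reverse inclusion, given any $d \in \ker \pi_{\mb{G}_m}$, surjectivity of $\det$ on $G(m_G)$ produces $\tilde{h} \in G(m_G)$ with $\det \tilde{h} = d$; then $h := \pi_{\GL_2}(\tilde{h}) \in G(m)$ satisfies $\det h = d \bmod m = 1$, so $h \in G(m) \cap \SL_2(\mbz/m\mbz)$, and $\gd(h) = d$ by construction.
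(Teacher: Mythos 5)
Your overall plan is sound for the lemma's easy parts: defining $\gd$ by lifting, reducing the whole statement to well-definedness, and identifying well-definedness with the claim that $G(m_G)\cap\ker\pi_{\GL_2}$ is the $\SL_2$-kernel. Your ``full fiber'' observation is also correct and worth keeping: if $g\in G(m_G)\cap\ker\pi_{\GL_2}$ and $g'\in\ker\pi_{\GL_2}$ have $\det g'=\det g$, then $g^{-1}g'\in\ker\pi_{\GL_2}\cap\SL_2(\mbz/m_G\mbz)=\ker\bigl(\SL_2(\mbz/m_G\mbz)\to\SL_2(\mbz/m\mbz)\bigr)$, which lies in $G(m_G)$ because $m_S\mid m$; hence $g'\in G(m_G)$. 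So $G(m_G)\cap\ker\pi_{\GL_2}=\det^{-1}(D)\cap\ker\pi_{\GL_2}$ with $D:=\det\bigl(G(m_G)\cap\ker\pi_{\GL_2}\bigr)\subseteq\ker\pi_{\mb{G}_m}$.

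The gap is the level-lowering step. You assert that $D\neq\{1\}$ \emph{forces} $\ker\bigl(\GL_2(\mbz/m_G\mbz)\to\GL_2(\mbz/m'\mbz)\bigr)\subseteq G(m_G)$ for some proper $m'\mid m_G$, but this is not automatic. Working $\ell$-locally, your argument succeeds exactly when $\ker\bigl((\mbz/\ell^{v_\ell(m_G)}\mbz)^\times\to(\mbz/\ell^{v_\ell(m)}\mbz)^\times\bigr)$ is \emph{cyclic}: then a nontrivial $D_\ell$ contains the unique subgroup of order $\ell$, namely $\ker\bigl(\to(\mbz/\ell^{v_\ell(m_G)-1}\mbz)^\times\bigr)$, which does give $\ker\bigl(\GL_2(\mbz/\ell^{v_\ell(m_G)}\mbz)\to\GL_2(\mbz/\ell^{v_\ell(m_G)-1}\mbz)\bigr)\subseteq\det^{-1}(D_\ell)\cap\ker\pi_{\GL_2}\subseteq G(m_G)$. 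This covers all odd $\ell$ and also $\ell=2$ when $v_2(m)\geq 2$. But when $\ell=2$, $v_2(m)=1$ and $v_2(m_G)\geq 3$, the group $\ker\bigl((\mbz/2^{v_2(m_G)}\mbz)^\times\to(\mbz/2\mbz)^\times\bigr)=(\mbz/2^{v_2(m_G)}\mbz)^\times$ is \emph{not} cyclic; a nontrivial subgroup $D_2$ such as $\langle -1\rangle$ need not contain $\ker\bigl(\to(\mbz/2^{v_2(m_G)-1}\mbz)^\times\bigr)$, in which case $\det^{-1}(D_2)\cap\ker\pi_{\GL_2}$ contains no $\GL_2$-kernel at any proper level and your contradiction with the minimality of $m_G$ disappears. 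The paper's proof does not descend in one shot: it factors $G(m_G)\to G(m)$ through $G(m_G/p)$ for a single prime $p$ at a time, and uses that the one-step $\GL_2$-kernel has prime index $p$ over the one-step $\SL_2$-kernel, so $\ker\bigl(G(m_G)\to G(m_G/p)\bigr)$ is squeezed between them with no room for an intermediate group. To repair your approach you would need that one-prime-at-a-time device (or further structural hypotheses on $G$) to handle $v_2(m)=1$; as written, this is a genuine gap.
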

\begin{proof}
Let $\pi_G : G(m_G) \longrightarrow G(m)$ denote the restriction to $G(m_G)$ of $\pi_{\GL_2}$.  We will first establish that
\begin{equation} \label{kerpiGequalsSL2kernel}
\ker \pi_G = \ker \left( \SL_2(\mbz/m_G\mbz) \rightarrow \SL_2(\mbz/m\mbz) \right).
\end{equation}
First, by definition of $m_S$ and since $m_S \mid m$, we have
\[
\ker \pi_G \supseteq \ker \left(  \SL_2(\mbz/m_G\mbz) \rightarrow \SL_2(\mbz/m\mbz) \right),
\]
and we will now argue by induction that these kernels have the same size.  Let $p$ be any prime dividing $m_G/m$ and factor $\pi_G$ as
\[
\begin{tikzcd}
G(m_G) \rar{\pi_p} \arrow[black, bend left]{rr}{\pi_G} & G(m_G/p) \rar{\pi_{m/p}} & G(m).
\end{tikzcd}
\]
By induction, we have that
\[
\left| \ker \pi_{m/p} \right| = \left| \ker \left(  \SL_2(\mbz/(m_G/p)\mbz) \rightarrow \SL_2(\mbz/m\mbz) \right) \right|.
\]
Since $m_G$ divides $m_S^\infty$, we see that $p$ divides $m_G/p$, and so
\[
\begin{split}
&\ker \left( \GL_2(\mbz/m_G\mbz) \rightarrow \GL_2(\mbz/(m_G/p)\mbz) \right), \\
&\ker \left( \SL_2(\mbz/m_G\mbz) \rightarrow \SL_2(\mbz/(m_G/p)\mbz) \right)
\end{split}
\]
are abelian groups of orders $p^4$ and $p^3$, respectively.  Since $m_G/p$ is not the $\GL_2$-level of $G$, we have
\[
\ker \left( \SL_2(\mbz/m_G\mbz) \rightarrow \SL_2(\mbz/(m_G/p)\mbz) \right) \subseteq \ker \pi_p \subsetneq \ker \left( \GL_2(\mbz/m_G\mbz) \rightarrow \GL_2(\mbz/(m_G/p)\mbz) \right).
\]
It follows that $\ker \left( \SL_2(\mbz/m_G\mbz) \rightarrow \SL_2(\mbz/(m_G/p)\mbz) \right) = \ker \pi_p$, so 
\[
\left| \ker \pi_G \right| = \left| \pi_p^{-1} \left( \ker \pi_{m/p} \right) \right| = \left| \ker \left( \SL_2(\mbz/m_G\mbz) \rightarrow \SL_2(\mbz/m\mbz) \right) \right|, 
\]
and \eqref{kerpiGequalsSL2kernel} is thus verified.

We now define the map $\gd : G(m) \longrightarrow (\mbz/m_G\mbz)^\times$ as follows.  For $g \in G(m)$, fix any element $\tilde{g} \in G(m_G)$ satisfying $\pi_G(\tilde{g}) = g$ and set $\gd(g) := \det \tilde{g}$.  By virtue of \eqref{kerpiGequalsSL2kernel}, we see that $\gd(g)$ is independent of the choice of lift $\tilde{g}$ and thus $\gd$ is a well-defined group homomorphism; the condition $\pi_{\mb{G}_m} \circ \gd = \det$ is immediately verified, as is
\begin{equation*} 
\gd(G(m)) = \det G(m_G).
\end{equation*}
In particular, if $\det G(m_G) = (\mbz/m_G\mbz)^\times$, then $\gd$ is surjective.  Furthermore, it follows from $\pi_{\mb{G}_m} \circ \gd = \det$ that
\[
\gd\left( G(m) \cap \SL_2(\mbz/m\mbz) \right) = \ker \pi_{\mb{G}_m} \cap \gd \left( G(m) \right) .
\]
Thus, if $\gd$ is surjective then $\gd\left( G(m) \cap \SL_2(\mbz/m\mbz) \right) = \ker \pi_{\mb{G}_m}$.  Finally, 
we clearly have
\[
G(m_G) \subseteq \left\{ g \in \pi_{\GL_2}^{-1}\left( G(m) \right) : \gd\left( \pi_{\GL_2}(g) \right) = \det g \right\},
\]
and, from \eqref{kerpiGequalsSL2kernel}, the two groups are seen to have equal size, and are thus equal.
\end{proof}

\section{Bounding the $\GL_2$-level of missing trace groups of genus zero} \label{proofofboundingthelevelsthmsection}

In this section we prove Theorem \ref{boundingthelevelsthm}.  To begin with, the main results in \cite{cumminspauli} immediately imply that
\begin{equation*} 
\left\{ \level_{\SL_2}(\tilde{G}) : G \in \mf{G}(0) \right\} = \left\{ \begin{matrix} 1, 2, 3, 4, 5, 6, 7, 8, 9, 10, 11, 12, 13, 14, 15, 16, \\ 18, 20, 21, 24, 25, 26, 27, 28, 30, 32, 36, 48 \end{matrix} \right\}.
\end{equation*}
By Proposition \ref{levelincreaseboundprop} we then have
\begin{equation} \label{boundforSL2levels}
G \in \mf{G}(0) \; \Longrightarrow \; \level_{\SL_2}(G) \in \left\{ \begin{matrix} 1, 2, 3, 4, 5, 6, 7, 8, 9, 10, 11, 12, 13, 14, 15, 16, 18, 20, 21, 22, 24, \\ 25, 26, 27, 28, 30, 32, 36, 40, 42, 48, 50, 52, 54, 56, 60, 64, 72, 96 \end{matrix} \right\}.
\end{equation}
Our next lemma is crucial in what follows and is useful in describing the Galois group of the compositum of two fields that have a non-trivial intersection over the base field.
\begin{lemma} \label{goursatlemma} (Goursat's Lemma)
\noindent
Let $G_1$, $G_2$ be groups and for $i \in \{1, 2 \}$ denote by $\pr_i : G_1 \times G_2 \longrightarrow G_i$ the projection map onto the $i$-th factor. Let $G \subseteq G_1 \times G_2$ be a subgroup and 
assume that 
$$
\pr_1(G) = G_1, \; \pr_2(G) = G_2.
$$
Then 
there exists a group $\Gamma$ together with a pair of surjective homomorphisms 
\[
\begin{split}
\psi_1 : G_1 &\longrightarrow \Gamma \\
\psi_2 : G_2 &\longrightarrow \Gamma
\end{split}
\]
so that 
\[
G = G_1 \times_\psi G_2 := \{ (g_1,g_2) \in G_1 \times G_2 : \psi_1(g_1) = \psi_2(g_2) \}.
\]
\end{lemma}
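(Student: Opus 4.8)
The plan is to run the classical argument that produces $\Gamma$ as a common quotient of $G_1$ and $G_2$, built from the two coordinate ``slices'' of $G$. First I would introduce
\[
N_1 := \{ g_1 \in G_1 : (g_1, e_2) \in G \}, \qquad N_2 := \{ g_2 \in G_2 : (e_1, g_2) \in G \},
\]
where $e_i$ denotes the identity of $G_i$; these are evidently subgroups of $G_1$ and $G_2$. The first genuine point is to check $N_1 \trianglelefteq G_1$ (and symmetrically $N_2 \trianglelefteq G_2$): given $h_1 \in G_1$, the hypothesis $\pr_1(G) = G_1$ furnishes $h_2 \in G_2$ with $(h_1,h_2) \in G$, and then for any $g_1 \in N_1$ we get $(h_1,h_2)(g_1,e_2)(h_1,h_2)^{-1} = (h_1 g_1 h_1^{-1}, e_2) \in G$, whence $h_1 g_1 h_1^{-1} \in N_1$. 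This is one of the two places where the surjectivity of the projections is essential.

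Next I would manufacture an isomorphism between the quotients $G_1/N_1$ and $G_2/N_2$. Define $\varphi : G_1 \longrightarrow G_2/N_2$ by choosing, for each $g_1 \in G_1$, some $g_2$ with $(g_1,g_2) \in G$ (possible since $\pr_1(G) = G_1$) and setting $\varphi(g_1) := g_2 N_2$. I would then check, in turn: (i) $\varphi$ is well defined, since if $(g_1,g_2),(g_1,g_2') \in G$ then $(e_1, g_2^{-1} g_2') \in G$, so $g_2^{-1} g_2' \in N_2$; (ii) $\varphi$ is a homomorphism, by multiplying chosen representatives; (iii) $\varphi$ is surjective, using $\pr_2(G) = G_2$; and (iv) $\ker \varphi = N_1$, because $\varphi(g_1) = N_2$ forces $(g_1,g_2) \in G$ for some $g_2 \in N_2$, and then $(g_1,e_2) = (g_1,g_2)(e_1,g_2)^{-1} \in G$, while the reverse inclusion is immediate from $g_2 = e_2$. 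By the first isomorphism theorem $\varphi$ descends to an isomorphism $\bar\varphi : G_1/N_1 \longrightarrow G_2/N_2$.

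Finally I would set $\Gamma := G_1/N_1$, take $\psi_1 : G_1 \longrightarrow \Gamma$ to be the canonical projection, and take $\psi_2 : G_2 \longrightarrow \Gamma$ to be the composite of the canonical projection $G_2 \longrightarrow G_2/N_2$ with $\bar\varphi^{-1}$; both are visibly surjective. It then remains to verify $G = \{ (g_1,g_2) \in G_1 \times G_2 : \psi_1(g_1) = \psi_2(g_2) \}$. For ``$\subseteq$'': if $(g_1,g_2) \in G$ then $\bar\varphi(\psi_1(g_1)) = \varphi(g_1) = g_2 N_2$, so $\psi_1(g_1) = \bar\varphi^{-1}(g_2 N_2) = \psi_2(g_2)$. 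For ``$\supseteq$'': if $\psi_1(g_1) = \psi_2(g_2)$, applying $\bar\varphi$ gives $\varphi(g_1) = g_2 N_2$, and choosing $g_2'$ with $(g_1,g_2') \in G$ yields $g_2' N_2 = g_2 N_2$, hence $(e_1, (g_2')^{-1} g_2) \in G$, and therefore $(g_1,g_2) = (g_1,g_2')\,(e_1, (g_2')^{-1} g_2) \in G$. I do not expect a serious obstacle here: the entire content is concentrated in the normality of $N_1, N_2$ and the well-definedness and injectivity of $\bar\varphi$, all of which rest squarely on the two surjectivity hypotheses $\pr_i(G) = G_i$, and everything else is bookkeeping.
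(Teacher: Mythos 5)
Your argument is correct and is precisely the standard proof of Goursat's lemma — normality of the coordinate slices $N_1, N_2$ via the surjectivity hypotheses, the induced isomorphism $G_1/N_1 \simeq G_2/N_2$, and the fibered-product description of $G$ — which is what the source the paper cites (Ribet, Lemma 5.2.1) contains; the paper itself gives no argument beyond the citation. Nothing is missing and all steps are justified.
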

\begin{proof}
See \cite[Lemma (5.2.1)]{ribet}.
\end{proof}
We will now establish the following proposition, which bounds $m_G$ for $G \in \mf{G}_{MT}^{\max}(0)$ in terms of the $\SL_2$-level of $G$.  For an open subgroup $G \subseteq \GL_2(\hat{\mbz})$, we set 
\begin{equation} \label{defofksubG}
d_G := \gcd \left( m_S^ \infty, \left| \frac{G(m_S) \cap \SL_2(\mbz/m_S\mbz)}{[G(m_S),G(m_S)]} \right| \right) \qquad\quad \left( m_S := \level_{\SL_2}(G) \right),
\end{equation}
i.e. $d_G$ is the largest factor of $\left| \frac{G(m_S) \cap \SL_2(\mbz/m_S\mbz)}{[G(m_S),G(m_S)]} \right|$ supported on primes dividing $m_S$.
\begin{proposition} \label{twotothekproposition}
Let $G \in \mf{G}_{MT}^{\max}$ be a maximal missing trace group of $\GL_2$-level $m_G$ and $\SL_2$-level $m_S$.  Then $m_G$ divides $d_G m_S$, where $d_G$ is defined by \eqref{defofksubG}.  In particular, $m_G$ is supported on primes dividing $m_S$.
\end{proposition}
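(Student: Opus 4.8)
The plan is to prove the proposition in two stages. In the first stage I will show that $m_G$ is supported only on primes dividing $m_S$, i.e.\ $m_G \mid m_S^\infty$; this is exactly the hypothesis needed to apply Lemma~\ref{verticalSL2liftlemma}, and it is the place where the maximality of $G$ is used. In the second stage, assuming $m_S \mid m_G \mid m_S^\infty$, I will extract the sharper divisibility $m_G \mid d_G m_S$ directly from Lemma~\ref{verticalSL2liftlemma}. The final assertion that $m_G$ is supported on primes dividing $m_S$ is then automatic, since $d_G m_S$ is.

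For the first stage I argue by contradiction: suppose a prime $\ell$ divides $m_G$ but not $m_S$. Using the splitting $\GL_2(\hat{\mbz}) = \GL_2(\mbz_\ell) \times \GL_2(\hat{\mbz}^{(\ell)})$, where $\hat{\mbz}^{(\ell)} := \prod_{q \ne \ell} \mbz_q$, let $\pi^{(\ell)}$ denote projection to the second factor and set $G^+ := G \cdot \ker \pi^{(\ell)} = \GL_2(\mbz_\ell) \times \pi^{(\ell)}(G)$. Since $\ker \pi^{(\ell)}$ is normal, $G^+$ is an open subgroup, and one checks $\det G^+ = \hat{\mbz}^\times$, so $G^+ \in \mf{G}$. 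Because $\ell \nmid m_S$, the defining property of $\level_{\SL_2}(G) = m_S$ gives $\ker(\SL_2(\hat{\mbz}) \to \SL_2(\mbz/m_S\mbz)) \subseteq G$, and this kernel contains the full $\ell$-adic factor $\SL_2(\mbz_\ell)$; hence $\SL_2(\mbz_\ell) \subseteq G$. From $\ell \mid m_G$ it follows that $G \ne G^+$ (else the $\ell$-part of $\level_{\GL_2}(G)$ would be trivial), so $G \subsetneq G^+$, and comparing indices of these two open subgroups shows $G \not\doteq G^+$. It therefore suffices to prove $G^+ \in \mf{G}_{MT}$, contradicting the maximality of $G$.

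To see $G^+ \in \mf{G}_{MT}$, write $m_G = \ell^e m$ with $\ell \nmid m$. The reduction of $G$ modulo $\ell^e$ is all of $\GL_2(\mbz/\ell^e\mbz)$ (as $\SL_2(\mbz_\ell) \subseteq G$ and $\det G = \hat{\mbz}^\times$), while its reduction modulo $m$ equals $G(m) = \pi^{(\ell)}(G)(m) = G^+(m)$. Applying Goursat's Lemma (Lemma~\ref{goursatlemma}) to $G(m_G) \subseteq \GL_2(\mbz/\ell^e\mbz) \times G(m)$ yields a common quotient $\Gamma$ and surjections $\psi_1 \colon \GL_2(\mbz/\ell^e\mbz) \to \Gamma$, $\psi_2 \colon G(m) \to \Gamma$ presenting $G(m_G)$ as a fibre product; since $\SL_2(\mbz/\ell^e\mbz) \times \{I\} \subseteq G(m_G)$, the map $\psi_1$ kills $\SL_2(\mbz/\ell^e\mbz)$, hence factors as $\theta \circ \det$ for a surjection $\theta \colon (\mbz/\ell^e\mbz)^\times \to \Gamma$. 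Now if $G(m_G)_r = \emptyset$ for some $r$ (which holds as $G \in \mf{G}_{MT}$), then no $g_2 \in G(m)$ has $\tr g_2 \equiv r \bmod m$: given such a $g_2$, pick $d \in (\mbz/\ell^e\mbz)^\times$ with $\theta(d) = \psi_2(g_2)$ and set $g_1 := \left(\begin{smallmatrix} r & -1 \\ d & 0 \end{smallmatrix}\right)$, which has trace $r$ and determinant $d$, so that $(g_1,g_2) \in G(m_G)_r$, a contradiction. Thus $G^+(m)_r = \emptyset$, and since $m$ is a multiple of $\level_{\GL_2}(G^+)$ (which divides $m$ and is coprime to $\ell$), an elementary descent lemma — the trace map on $H(N)$ surjects onto the preimage of the trace set of $H$ at level $\level_{\GL_2}(H)$ whenever $\level_{\GL_2}(H) \mid N$, checked prime-by-prime via matrices $\left(\begin{smallmatrix} t & -1 \\ d & 0 \end{smallmatrix}\right)$ — gives that $G^+$ has a missing trace at its own level, i.e.\ $G^+ \in \mf{G}_{MT}$, the desired contradiction. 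Hence $m_G \mid m_S^\infty$.

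For the second stage, now that $m_S \mid m_G \mid m_S^\infty$, I apply Lemma~\ref{verticalSL2liftlemma} with $m = m_S$: since $\det G = \hat{\mbz}^\times$, it produces a surjective homomorphism $\gd \colon G(m_S) \to (\mbz/m_G\mbz)^\times$ with $\gd\bigl(G(m_S) \cap \SL_2(\mbz/m_S\mbz)\bigr) = \ker \pi_{\mb{G}_{m_S}}$. Commutators have determinant $1$, so $[G(m_S),G(m_S)] \subseteq G(m_S)\cap \SL_2(\mbz/m_S\mbz)$ and, the target of $\gd$ being abelian, $\gd$ kills $[G(m_S),G(m_S)]$; hence $\gd$ induces a surjection of $\frac{G(m_S)\cap \SL_2(\mbz/m_S\mbz)}{[G(m_S),G(m_S)]}$ onto $\ker \pi_{\mb{G}_{m_S}}$, so $|\ker \pi_{\mb{G}_{m_S}}|$ divides $\bigl| \frac{G(m_S)\cap \SL_2(\mbz/m_S\mbz)}{[G(m_S),G(m_S)]} \bigr|$. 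Since $m_S$ and $m_G$ have the same prime divisors and $m_S \mid m_G$, one has $|\ker \pi_{\mb{G}_{m_S}}| = \vp(m_G)/\vp(m_S) = m_G/m_S$, which is an integer supported on primes dividing $m_S$; therefore $m_G/m_S$ divides $\gcd\bigl(m_S^\infty,\, |\frac{G(m_S)\cap\SL_2(\mbz/m_S\mbz)}{[G(m_S),G(m_S)]}|\bigr) = d_G$, i.e.\ $m_G \mid d_G m_S$. The main obstacle throughout is the first stage, and within it the structural point that the Goursat glue $\Gamma$ is abelian and factors through the determinant — equivalently, that $\SL_2(\mbz_\ell) \subseteq G$ — which is precisely what lets the trace obstruction of $G$ be pushed off the $\ell$-part onto $\pi^{(\ell)}(G)$.
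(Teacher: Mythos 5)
Your proof is correct and follows essentially the same route as the paper's: Stage 1 performs the same CRT split at the offending prime $\ell$, uses the same facts that $\SL_2(\mbz_\ell)\subseteq G$ and $\det G=\hat{\mbz}^\times$ force the Goursat glue to be abelian and factor through $\det$, and then exploits companion matrices to transport the trace obstruction off the $\ell$-part; the paper runs this as a direct contradiction (surjective trace on $G(m_G')$ forces surjective trace on $G(m_G)$) while you run the contrapositive (a missing trace on $G(m_G)$ forces one on $G^+\supsetneq G$), invoking maximality at the same place, and Stage 2 is identical. The only cosmetic difference is that you isolate the "descent lemma" explicitly, but note that your companion-matrix sketch for it only handles primes not dividing $\level_{\GL_2}(G^+)$; for primes $p$ with $p\mid \level_{\GL_2}(G^+)$ one should instead perturb an arbitrary lift by $p^b\left(\begin{smallmatrix} s & 0 \\ 0 & 0 \end{smallmatrix}\right)$ to adjust the trace without disturbing invertibility.
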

\begin{proof}
Without loss of generality, we may assume that $m_G > m_S$; we let $p$ be any prime for which $v_p(m_G) > v_p\left( m_S \right)$ and set $m_G' := m_G/p^{v_p(m_G) - v_p(m_S)}$.  Note that, for any prime $\ell$, we have
\begin{equation} \label{vpofmsubGequalsvpofmsubS}
v_{\ell}(m_G') = 
\begin{cases}
v_{\ell}(m_G) & \text{ if } \ell \neq p \\
v_{\ell}(m_S) & \text{ if } \ell = p. 
\end{cases} 
\end{equation}
In particular, since $m_S$ divides $m_G'$, we then have
\begin{equation} \label{kernelofSL2iscontainedinG}
\ker \left( \SL_2(\mbz/m_G\mbz) \rightarrow \SL_2(\mbz/m_G'\mbz) \right) \subseteq G(m_G).
\end{equation}
Furthermore, since $G$ is maximal among missing trace groups, it follows that
\begin{equation} \label{tracemodmsubGprimeissurjective}
\tr\left( G(m_G') \right) = \mbz/m_G'\mbz.
\end{equation}
We first claim that 
\begin{equation} \label{pdividesmsubGprime}
p \mid m_G'.
\end{equation}
To see this, suppose for the sake of contradiction that $p \nmid m_G'$, and define $\ga := v_p(m_G) - v_p(m_S)$.  Then, since $m_S$ divides $m_G'$, under the Chinese Remainder Isomorphism $\mbz/m_G\mbz \simeq \mbz/m_G'\mbz \times \mbz/p^{\ga}\mbz$, condition \eqref{kernelofSL2iscontainedinG} reads 
\begin{equation} \label{SL2crossoneiscontained}
\{ 1 \} \times \SL_2(\mbz/p^{\ga}\mbz) \subseteq G(m_G).
\end{equation}
By surjectivity of $\det : G(p^{\ga}) \twoheadrightarrow (\mbz/p^{\ga}\mbz)^\times$ and Lemma \ref{goursatlemma}, we then have
\begin{equation} \label{GofmsubGasfiberedproduct}
G(m_G) \simeq G(m_G') \times_{\psi} \GL_2(\mbz/p^{\ga}\mbz),
\end{equation}
where $\psi_{m_G'} : G(m_G') \longrightarrow \Gamma$ and $\psi_p : \GL_2(\mbz/p^{\ga}\mbz) \longrightarrow \Gamma$ denote the surjective group homomorphisms onto the common quotient group $\Gamma$ implicit in the fibered product.
It follows from \eqref{SL2crossoneiscontained} that $\SL_2(\mbz/p^{\ga}\mbz) \subseteq \ker \psi_p$, and thus, for every $\gamma \in \Gamma$, there exists $d \in (\mbz/p^{\ga}\mbz)^\times$ for which
\[
\{ g \in \GL_2(\mbz/p\mbz) : \det g = d \} \subseteq \psi_p^{-1}(\gamma).
\]
Since $\tr\left( \{ g \in \GL_2(\mbz/p\mbz) : \det g = d \}  \right) = \mbz/p^{\ga}\mbz$, it is then easy to deduce from \eqref{GofmsubGasfiberedproduct} and \eqref{tracemodmsubGprimeissurjective} that $\tr\left( G(m_G) \right) = \mbz/m_G\mbz$, a contradiction.  Therefore we have established \eqref{pdividesmsubGprime}, and, by \eqref{vpofmsubGequalsvpofmsubS}, $p \mid m_S$.  Since the prime $p$ was arbitrary, it follows that
\begin{equation} \label{msubGdividesmsubStotheinfty}
m_G \mid m_S^\infty.
\end{equation}

We now apply Lemma \ref{verticalSL2liftlemma}, which asserts that there is a surjective group homomorphism $\gd : G(m_S) \longrightarrow (\mbz/m_G\mbz)^\times$ satisfying $\pi \circ \gd = \det : G(m_S) \longrightarrow (\mbz/m_S\mbz)^\times$, and for which
\[
G(m_G) = \left\{ g \in \pi_{\GL_2}^{-1}\left( G(m_S) \right) : \gd\left( \pi(g) \right) = \det g \right\}.
\]
By \eqref{msubGdividesmsubStotheinfty} and Lemma \ref{verticalSL2liftlemma}, we have that
\[
m_G/m_S = \left| \ker \left( (\mbz/m_G\mbz)^\times \rightarrow (\mbz/m_S\mbz)^\times \right) \right| = \left| \gd\left( G(m_S) \cap \SL_2(\mbz/m_S\mbz) \right) \right|,
\]
which in turn divides
\[
\left| \frac{G(m_S) \cap \SL_2(\mbz/m_S\mbz)}{[G(m_S),G(m_S)]} \right|.
\]
By \eqref{msubGdividesmsubStotheinfty}, $m_G/m_S$ also divides $m_S^\infty$, and Proposition \ref{twotothekproposition} follows.
\end{proof}
Theorem \ref{boundingthelevelsthm} follows from \eqref{boundforSL2levels} and Proposition \ref{twotothekproposition}, together with a computer computation.  The latter was carried out using the computational package MAGMA \cite{MAGMA}.  More precisely, for any fixed $g \in \mbz_{\geq 0}$, Proposition \ref{twotothekproposition} bounds the $\GL_2$-level of any $G \in \mf{G}_{MT}^{\max}(g)$ in terms of data available at the $\SL_2$-level of $G$.  Suppose that $G \in \mf{G}_{MT}^{\max}$ satisfies $m_G := \level_{\GL_2}(G) > \level_{\SL_2}(G) =: m_S$.  Defining the open subgroup $\hat{G} \subseteq \GL_2(\hat{\mbz})$ by $\hat{G} := \pi^{-1}\left( G(m_S) \right)$, we have that 
\begin{equation} \label{relationshipbetweenGandGhat}
m_{\hat{G}} := \level_{\GL_2}(\hat{G}) = \level_{\SL_2}(G) \; \text{ and } \; G(m_{\hat{G}}) = \hat{G}(m_{\hat{G}}). 
\end{equation}
Note also that the quantity $d_G$ defined by \eqref{defofksubG} depends only on the group $\hat{G}$, and we may thus denote it by $d_{\hat{G}}$:
\begin{equation*} 
d_G = \gcd \left( m_{\hat{G}}^ \infty, \left| \frac{\hat{G}(m_{\hat{G}}) \cap \SL_2(\mbz/m_{\hat{G}}\mbz)}{[\hat{G}(m_{\hat{G}}),\hat{G}(m_{\hat{G}})]} \right| \right) =: d_{\hat{G}}.
\end{equation*}
To make our computation more efficient, it will be useful to observe other restrictions on the group $\hat{G}$ imposed by the existence of the subgroup $G \in \mf{G}_{MT}^{\max}$.
Momentarily forgetting about $G$ and viewing $\hat{G}$ as a group on its own, our next proposition establishes a necessary condition on $\hat{G}$ that will allow the possibility of a subgroup $G \in \mf{G}_{MT}^{\max}$ with $G \subsetneq \hat{G}$ and satisfying \eqref{relationshipbetweenGandGhat}.
\begin{Definition} \label{defofdadmissible}
Let $\hat{G} \subseteq \GL_2(\hat{\mbz})$ be an open subgroup with $\GL_2$-level $m_{\hat{G}}$ and let $d \in \mbz_{\geq 2}$ be a divisor of $m_{\hat{G}}^\infty$.  We say that $\hat{G}$ is {\textbf{\emph{$d$-admissible}}} if there exists $t \in \mbz/m_{\hat{G}}\mbz$ such that
\[
\forall g \in \hat{G}(m_{\hat{G}}) \, \text{ with } \, \tr g \equiv t \bmod m_{\hat{G}} \, \text{ and } \, \forall p \mid d, \, \text{ we have } \, g \equiv \gl_p I \bmod p \, \text{ for some } \, \gl _p\in (\mbz/p\mbz)^\times.
\]
\end{Definition}
Note that a group $G$ is $d$-admissible if and only if it is $d_{\SF}$-admissible, where $d_{\SF}$ is the square-free part of $d$.  Thus, when discussing $d$-admissibility of a group, we may as well assume that $d$ is square-free.  Furthermore, note that, if $G$ is $d$-admissible, then $G$ is $d'$-admissible for every divisor $d'$ of $d$.  
\begin{proposition} \label{admissiblegroupsprop}
Suppose that $G \in \mf{G}_{MT}^{\max}$ satisfies $m_G := \level_{\GL_2}(G) > \level_{\SL_2}(G) =: m_S$ and define the open subgroup $\hat{G} \subseteq \GL_2(\hat{\mbz})$ by $\hat{G} := \pi^{-1}\left( G(m_S) \right)$.  Setting $d := m_G/m_S$, we have that $\hat{G}$ is $d$-admissible.  Furthermore, $\tr \left( \hat{G}(m_{\hat{G}}) \right) = \mbz/m_{\hat{G}}\mbz$, where $m_{\hat{G}}$ denotes the $\GL_2$-level of $\hat{G}$.
\end{proposition}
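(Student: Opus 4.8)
The plan is to leverage the description of $G(m_G)$ inside $\pi_{\GL_2}^{-1}\bigl(G(m_S)\bigr)$ furnished by Lemma~\ref{verticalSL2liftlemma}, together with the fact (established en route to Proposition~\ref{twotothekproposition}) that $m_G \mid m_S^\infty$. Fix once and for all an integer $r$ with $G_r = \emptyset$, which exists since $G \in \mf{G}_{MT}$, and recall from \eqref{relationshipbetweenGandGhat} that $m_{\hat G} = m_S$ and $\hat G(m_{\hat G}) = G(m_S)$.

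I would dispatch the last assertion first. Since $G \in \mf{G}$ and $G \subseteq \hat G$, also $\hat G \in \mf{G}$, and $\level_{\GL_2}(\hat G) = m_S < m_G = \level_{\GL_2}(G)$ forces $G \subsetneq \hat G$. Were $\hat G \in \mf{G}_{MT}$, this would violate the maximality of $G$; hence $\hat G \notin \mf{G}_{MT}$, i.e.\ $\hat G_{r'} \ne \emptyset$ for every $r' \in \mbz$, which is exactly the assertion $\tr\bigl(\hat G(m_{\hat G})\bigr) = \mbz/m_{\hat G}\mbz$.

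For $d$-admissibility I would take as witness $t := r \bmod m_S$. Because $m_G \mid m_S^\infty$, the Chinese Remainder Theorem reduces matters to one prime $p \mid m_G$ at a time: if $p \nmid d$ the $p$-component of any lift is rigid and automatically carries the correct trace and determinant, so the only action is at the primes $p \mid d$. The crux is a matrix-lifting dichotomy. On one hand, if $\bar h \in \GL_2(\mbz/p^a\mbz)$ is \emph{not} scalar mod $p$, then for any unit $\delta$ with $\delta \equiv \det \bar h \bmod p^a$ and any $\tau$ with $\tau \equiv \tr \bar h \bmod p^a$ one can lift $\bar h$ to $\GL_2(\mbz/p^b\mbz)$ (any $b \ge a$) with determinant $\delta$ and trace $\tau$; this is proved by lifting one power of $p$ at a time, writing $h' = h + p^j H$ with $H \in M_2(\mbz/p\mbz)$ and computing $\det(h + p^j H) \equiv \det h + p^j \tr\bigl(\mathrm{adj}(\bar h \bmod p)\, H\bigr) \bmod p^{j+1}$ and $\tr(h + p^j H) = \tr h + p^j \tr H$, and then observing that $\mathrm{adj}(\bar h \bmod p)$ is again non-scalar, hence linearly independent of $I$ in $M_2(\mbz/p\mbz)$, so by nondegeneracy of the trace form the map $H \mapsto \bigl(\tr(\mathrm{adj}(\bar h \bmod p)\, H),\ \tr H\bigr)$ is surjective onto $(\mbz/p\mbz)^2$. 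On the other hand, if $\bar h \equiv \lambda I \bmod p$ with $\lambda \in (\mbz/p\mbz)^\times$, then every lift $h$ of $\bar h$ satisfies $(h - \lambda I)^2 \equiv 0 \bmod p^2$, whence (via Cayley--Hamilton) $\det h \equiv \lambda \tr h - \lambda^2 \bmod p^2$; iterating this rigidity, the trace of a determinant-prescribed lift is uniquely pinned down mod $p^b$, so a generic target trace is \emph{unattainable}.

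Granting this, I would argue by contradiction: suppose some $\bar g \in G(m_S)$ has $\tr \bar g \equiv t \bmod m_S$ but is non-scalar mod some $p \mid d$. Then I aim to produce $g \in G(m_G)$ with $\tr g \equiv r \bmod m_G$, contradicting $G_r = \emptyset$. At each prime $p \mid d$ at which the relevant reduction is non-scalar, the lifting assertion lets one choose the $p$-component to have trace $\equiv r \bmod p^{v_p(m_G)}$ and determinant $\gd(\,\cdot\,)_p$, after which CRT recombines these into a single $g$; the condition $\det g = \gd\bigl(\pi_{\GL_2}(g)\bigr)$ placing $g$ in $G(m_G)$ is preserved because, by Lemma~\ref{verticalSL2liftlemma}, $\gd\bigl(G(m_S) \cap \SL_2(\mbz/m_S\mbz)\bigr) = \ker \pi_{\mb{G}_{m_S}}$, so translating $\bar g$ by suitable elements of $G(m_S) \cap \SL_2(\mbz/m_S\mbz)$ one can arrange the reductions to be non-scalar at \emph{every} $p \mid d$ while keeping $\tr \equiv t \bmod m_S$. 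The main obstacle — and the reason Definition~\ref{defofdadmissible} is phrased as it is — is precisely this coordination step: reconciling the prime-by-prime trace adjustments with the single global determinant relation that cuts out $G(m_G)$. The obstruction is concentrated entirely in the rigid ``scalar mod $p$'' regime, which $d$-admissibility is designed to detect.
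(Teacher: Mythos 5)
Your handling of the last assertion (that $\tr(\hat G(m_{\hat G})) = \mbz/m_{\hat G}\mbz$) is correct and matches the paper's reasoning: maximality of $G$ in $\mf{G}_{MT}^{\max}$ forces the strictly larger group $\hat G$ out of $\mf{G}_{MT}$. The matrix-lifting dichotomy you isolate (non-scalar mod $p$ gives full freedom in the trace over the fiber; scalar mod $p$ forces it) is also correct, and is essentially the same computation the paper performs with the parametrization $g_p\bigl(I + p^{n-1}\begin{pmatrix} x & y \\ z & -x \end{pmatrix}\bigr)$.

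The gap is exactly where you flagged it: the coordination step. You assume for contradiction that some $\bar g \in G(m_S)$ with $\tr \bar g \equiv t \bmod m_S$ is non-scalar modulo \emph{some} $p \mid d$, and you need it non-scalar modulo \emph{every} $p \mid d$ before CRT will hand you a lift to $G(m_G)$ with trace $r$. You propose repairing this by ``translating $\bar g$ by suitable elements of $G(m_S)\cap\SL_2(\mbz/m_S\mbz)$''; but left- or right-multiplication by an $\SL_2$-element changes the trace (it is not a class function), so ``keeping $\tr \equiv t \bmod m_S$'' is not automatic, and more fundamentally there is no reason a multiplication should convert a scalar-mod-$p'$ reduction into a non-scalar one at some second prime $p' \mid d$. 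The fact $\gd\bigl(G(m_S)\cap\SL_2(\mbz/m_S\mbz)\bigr) = \ker\pi_{\mb{G}_{m_S}}$ gives you freedom to move the \emph{determinant} component around inside $\ker\pi_{\mb{G}_{m_S}}$; it does not give you any control of the reduction types modulo the various primes dividing $d$ while holding the trace fixed.

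The paper sidesteps this entirely by never descending to level $m_S$ during the core step. Instead it fixes \emph{one} prime $p \mid d$ and an element $\ol g \in G(m_G/p)$ whose trace is $\equiv t_0 \bmod m_G/p$, i.e.\ it works at the penultimate level $m_G/p$, one prime-step below the top. There only a single prime remains to be lifted, so the fiber $\pi_G^{-1}(\ol g) \subseteq G(m_G)$ involves adjusting only the $p$-component, and non-scalarity mod $p$ immediately produces a trace $\equiv t_0 \bmod m_G$, violating $G_{t_0} = \emptyset$. No cross-prime coordination is needed because the other primes' components are already pinned to their final ($m_G$-level) values at this stage. Your global CRT picture at level $m_S$ is in principle attractive, but without the penultimate-level reduction you are forced to reconcile all the prime-wise constraints simultaneously, and the ``translation'' argument does not supply the needed reconciliation.
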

\begin{proof}
Since $G \in \mf{G}_{MT}^{\max}$, there exists $t_0 \in \mbz/m_G\mbz$ such that
\begin{equation} \label{tobecontradicted}
\forall g \in G(m_G), \; \tr g \not \equiv t_0 \bmod m_G.
\end{equation}
Choose any such $t_0$, let $p$ be any prime dividing $m_G/m_S$ and denote by $\ol{t}_0 \in \mbz/(m_G/p)\mbz$ the reduction of $t_0$ modulo $m_G/p$.  Write $m_G =: p^n m_G'$, with $n \geq 2$ and $p \nmid m_G'$ (since $m_G \mid m_S^\infty$, we cannot have $n = 1$).  We then have
\begin{equation} \label{CRTisom}
\mbz/m_G\mbz \simeq \mbz/p^n\mbz \times \mbz/m_G'\mbz, \qquad \mbz/(m_G/p)\mbz \simeq \mbz/p^{n-1}\mbz \times \mbz/m_G'\mbz,
\end{equation}
under the isomorphisms of the Chinese remainder theorem.  We note that, by maximality of $G$ among missing trace groups, we must have that $\tr\left( G(m_G/p) \right) = \mbz/(m_G/p)\mbz$.  Fix any $\ol{g} \in G(m_G/p)$ satisfying $\tr \ol{g} \equiv \ol{t}_0 \bmod m_G/p$.  We claim that $\ol{g} \equiv \gl I \bmod p$ for some $\gl \in (\mbz/p\mbz)^\times$.  To see this, let $\pi_G : G(m_G) \longrightarrow G(m_G/p)$ denote the restriction of the canonical projection map and consider the fiber
\[
\pi_G^{-1}(\ol{g}) \subseteq G(m_G).
\] 
Observe that, since $m_S$ divides $m_G/p$, by the same reasoning as in \eqref{kerpiGequalsSL2kernel}, we have 
\[
\ker \pi_G = \ker\left( \SL_2(\mbz/m_G\mbz) \to \SL_2(\mbz/(m_G/p)\mbz) \right).  
\]
Thus, denoting by $\ol{g} \mapsto (\ol{g}_p, \ol{g}_{m_G'})$ (resp. $g \mapsto (g_p, \ol{g}_{m_G'})$) the associated element under the embedding $G(m_G/p) \hookrightarrow G(p^{n-1}) \times G(m_G')$ (resp. under $G(m_G) \hookrightarrow G(p^{n}) \times G(m_G')$)  induced by \eqref{CRTisom}, we have
\[
\pi_G^{-1}(\ol{g}) = \left\{ \left( g_p \left( I + p^{n-1} \begin{pmatrix} x & y \\ z & -x \end{pmatrix} \right) \bmod p^n, \ol{g}_{m_G'} \right) \in G(p^n) \times G(m_G') \right\},
\]
where $g_p \in G(p^n)$ is any fixed lift of $\ol{g}_p \in G(p^{n-1})$.  Writing $\ds g_p = \begin{pmatrix} a & b \\ c & d \end{pmatrix}$, we see that
\[
\tr \left( g_p \left( I + p^{n-1} \begin{pmatrix} x & y \\ z & -x \end{pmatrix} \right) \right) = a + d + p^{n-1} \left( (a-d)x + bz + cy \right),
\]
and it follows that, if $g_p$ is not congruent to a scalar matrix modulo $p$, then 
\[
\tr\left( \pi_G^{-1}(\ol{g}) \right) = \{ t \in \mbz/m_G\mbz : t \equiv \ol{t}_0 \bmod m_G/p \}, 
\]
contradicting \eqref{tobecontradicted}.  Therefore $g_p$, whence $\ol{g}$, must be congruent to a scalar matrix modulo $p$.  Since $p$ and $\ol{g}$ were arbitrary, it follows by taking $t$ in Definition \ref{defofdadmissible} to be $t_0 \bmod m_S$ that $\hat{G}$ is $m_G/m_S$-admissible, as asserted.
\end{proof}

\subsection{An algorithm that finds all maximal missing trace groups associated to a fixed genus}

Based on the previous analysis (especially \eqref{boundforSL2levels} and Propositions \ref{twotothekproposition} and \ref{admissiblegroupsprop}) we obtain the following algorithm that explicitly computes the set $\mf{G}_{MT}^{\max}(0)$.  We note that if one replaces \eqref{boundforSL2levels} with the set $\{ \level_{\SL_2}(G) : G \in \mf{G}(g) \}$, the same algorithm will explicitly compute the set  $\mf{G}_{MT}^{\max}(g)$.  The levels $m \in  \{ 48, 96 \}$ are computationally intensive, and we include a separate remark about how to handle them.

\begin{center}
For $m \in \mc{CP}_0 := \left\{ \begin{matrix} 1, 2, 3, 4, 5, 6, 7, 8, 9, 10, 11, 12, 13, 14, 15, \\ 16, 18, 20, 21, 24, 25, 26, 27, 28, 30, 32, 36, 48 \end{matrix} \right\}$, do the following:
\end{center}
\begin{enumerate}
\item Exhibit all the groups $G \in \mf{G}(0,m)$ in a list $\mc{L}_m$.
\item Form the sublist $\mc{L}_m^{MT}$ of all ``new missing trace groups'' in $\mc{L}_m$, i.e. $\mc{L}_m^{MT}$ consists of all $G \in \mc{L}_m$ which satisfy $\tr\left( G(m) \right) \subsetneq \mbz/m\mbz$ and, for each prime $p$ dividing $m$, $\tr\left( G(m/p) \right) = \mbz/(m/p)\mbz$.
\item Form the sublist $\mc{L}_{m,\max}^{MT}$ of all $G$ appearing in $\mc{L}_m^{MT}$ which are maximal (relative to $\dot\subseteq$) among all the groups appearing in $\mc{L}_m^{MT}$.
\end{enumerate}
Each list $\mc{L}_{m,\max}^{MT}$ constructed via (1) through (3) exhibits the groups $G \in \mf{G}_{MT}^{\max}(0,m)$ for 
$m \in \mc{CP}_0$. 
It remains to exhibit all groups $G \in \mf{G}_{MT}^{\max}(0,m)$ for $m \notin \mc{CP}_0$.  We proceed by cases, according to whether or not each of the conditions $\level_{\SL_2}(G) \in \mc{CP}_0$ and $\level_{\SL_2}(G) < \level_{\GL_2}(G)$ is satisfied.  (We just handled the case $\level_{\SL_2}(G) \in \mc{CP}_0$ and $\level_{\SL_2}(G) = \level_{\GL_2}(G)$, so there are 3 more cases to consider.)  To deal with the case $\level_{\SL_2}(G) \in \mc{CP}_0$ and $\level_{\SL_2}(G) < \level_{\GL_2}(G)$, we perform step ($1$) above and then proceed as follows:
\begin{enumerate}
\item[($2'$)] Form the sublist $\mc{L}_m^{\non MT}$ of $\mc{L}_m$ consisting of all ``non-missing trace groups'' (i.e. groups $\hat{G}$ for which $\tr\left( \hat{G}(m) \right) = \mbz/m\mbz$).
\item[($3'$)] For each $\hat{G}$ in the list $\mc{L}_m^{\non MT}$, determine all divisors $d$ of $d_{\hat{G}}$ for which $\hat{G}$ is $d$-admissible and store the pair $\langle \hat{G}, d \rangle$ in a new list $\mc{L}_{m,\ad}^{\non MT}$ just in case $d > 1$.
\item[($4'$)] For each $\langle \hat{G}, d \rangle \in \mc{L}_{m,\ad}^{\non MT}$, search for new missing trace subgroups $G \subseteq \pi^{-1}\left( \hat{G}(m) \right)$ of genus zero and $\GL_2$-level $dm$ satisfying $G(m) = \hat{G}(m)$.  Form a new list $\mc{H}_m$ consisting of any such groups $G$ that turn up.
\item[($5'$)] Form the sublist $\mc{H}_m^{\max}$ of $G$ appearing in $\mc{H}_m$ that are maximal relative to $\dot\subseteq$ among all the groups in $\mc{H}_m$.
\end{enumerate}
To handle the case $\level_{\SL_2}(G) \notin \mc{CP}_0$ and $\level_{\SL_2}(G) = \level_{\GL_2}(G)$, we perform steps ($1$) and ($2'$) above, and then proceed as follows:
\begin{enumerate}
\item[($3''$)] Form the sublist $\mc{L}_{m,-I \in}^{\non MT}$ of $\mc{L}_m^{\non MT}$ consisting of those groups $\hat{G}$ for which $-I \in \hat{G}$.
\item[($4''$)] For each $\hat{G}$ in the list $\mc{L}_{m,-I \in}^{\non MT}$, find all index two subgroups $G \in \pi^{-1}\left( G(m) \right)$ with $-I \notin G$, $\det G = \hat{\mbz}^\times$, $\level_{\GL_2}(G) = 2m$ and $G(m) = \hat{G}(m)$.  Collect these into a list $\mc{L}_{2m}$.
\item[($5''$)] Form the sublist $\mc{L}_{2m}^{MT}$ of all new missing trace groups in $\mc{L}_{2m}$, i.e. $\mc{L}_{2m}^{MT}$ consists of all $G$ in $\mc{L}_{2m}$ which satisfy $\tr\left( G(2m) \right) \subsetneq \mbz/2m\mbz$ and, for each prime $p$ dividing $2m$, $\tr\left( G(2m/p) \right) = \mbz/(2m/p)\mbz$.
\item[($6''$)] Form the sublist $\mc{L}_{2m,\max}^{MT}$ of all $G$ appearing in $\mc{L}_{2m}^{MT}$ which are maximal (relative to $\dot\subseteq$) among all the groups appearing in $\mc{L}_{2m}^{MT}$.
\end{enumerate}
 For the final the case $\level_{\SL_2}(G) \notin \mc{CP}_0$ and $\level_{\SL_2}(G) < \level_{\GL_2}(G)$, we perform steps ($1$), ($2'$), ($3''$) and ($4''$) above, and then proceed as follows:
\begin{enumerate}
\item[($5'''$)] Form the sublist $\mc{L}_{2m}^{\non MT}$ of $\mc{L}_{2m}$ consisting of all non-missing trace groups (i.e. groups $\hat{G}$ for which $\tr\left( \hat{G}(2m) \right) = \mbz/2m\mbz$).
\item[($6'''$)] For each $\hat{G}$ in the list $\mc{L}_{2m}^{\non MT}$, determine all divisors $d$ of $d_{\hat{G}}$ for which $\hat{G}$ is $d$-admissible and store the pair $\langle \hat{G}, d \rangle$ in a new list $\mc{L}_{2m,\ad}^{\non MT}$ just in case $d > 1$.
\item[($7'''$)] For each $\langle \hat{G}, d \rangle \in \mc{L}_{m,\ad}^{\non MT}$, search for new missing trace subgroups $G \subseteq \pi^{-1}\left( \hat{G}(2m) \right)$ of genus zero and $\GL_2$-level $d \cdot 2m$ satisfying $G(2m) = \hat{G}(2m)$.  Form a new list $\mc{H}_{2m}$ consisting of any such groups $G$ that turn up.
\item[($8'''$)] Form the sublist $\mc{H}_{2m}^{\max}$ of $G$ appearing in $\mc{H}_{2m}$ that are maximal relative to $\dot\subseteq$ among all the groups in $\mc{H}_{2m}$.
\end{enumerate}
We remark that, for the level $m = 48$, the large number of subgroups of $\SL_2(\mbz/48\mbz)$ makes it computationally intensive (memory-wise) to determine the genus zero subgroups via a direct search.  To work around this problem, we instead started with the single unique subgroup $S(48) \subseteq \SL_2(\mbz/48\mbz)$ of $\SL_2$-level $48$ and genus zero and constructed all other subgroups $G(48) \subseteq \GL_2(\mbz/48\mbz)$ by initializing $G_0 := S(48)$ and recursively defining $G_{n+1} := \langle g, G_n \rangle$ with $g \in \GL_2(\mbz/48\mbz)$ any matrix for which $G_{n+1} \cap \SL_2(\mbz/48\mbz) = S(48)$.  This process generates all subgroups $G(48) \subseteq \GL_2(\mbz/48\mbz)$ of $\SL_2$-level $48$ and genus zero.

Running the above algorithm, we obtain $\mf{G}_{MT}^{\max}(0)$ as an explicit list of $52$ groups which are presented in Section \ref{proofofmaintheoremsection}, grouped in subsections according to $\GL_2$-level.  For exactly two of these groups $G$ (of $\GL_2$-levels $8$ and $9$), the corresponding modular curves $X_G$ satisfy $X_G(\mbq) = \emptyset$; the $50$ remaining groups give rise to Weierstrass models with $j$-invariants and twist parameters as described in Tables \ref{masterlistofjinvariants} and \ref{masterlistoftwistparameters} of Section \ref{tablesection}.

\section{Preliminaries on division fields of elliptic curves} \label{preliminariesondivisionfieldssection}

In this section, we gather various preliminary results to be used in the following section to develop explicit models for the modular curves associated to the groups occurring in Theorem \ref{boundingthelevelsthm}.  We recall the general set-up:  $G \subseteq \GL_2(\hat{\mbz})$ is an open subgroup, $\tilde{G} := \langle G, -I \rangle$ and $X_{\tilde{G}}$ is the modular curve associated to $\tilde{G}$.  
Denoting by $m$ the $\GL_2$-level of $\tilde{G}$ and by $j_{\tilde{G}} : X_{\tilde{G}} \longrightarrow X(1) \simeq \mbp^1$ the forgetful map, we have that, for any $j \in \mbq - \{ 0, 1728 \}$, 
\begin{equation} \label{interpofrationalpointsonXsubG}
j \in j_{\tilde{G}}\left( X_{\tilde{G}}(\mbq) \right) \; \Longleftrightarrow \; \exists E / \mbq \; \text{ with } \;\rho_E(G_\mbq) \, \dot\subseteq \, \tilde{G} \; \text{ and } \; j_E = j.
\end{equation}
We will use repeatedly the following consequence of the Weil pairing on an elliptic curve.  Let $K$ be a field, let $E$ an elliptic curve defined over $K$, let $m \in \mbn$ be a positive integer co-prime with $\Char K$ and let
\[
\rho_{E,m} : G_K \longrightarrow \aut(E[m]) \simeq  \GL_2(\mbz/m\mbz)
\]
be the Galois representation defined by letting $G_K$ act on the $m$-torsion of $E$.  On the other hand, let
\[
\chi_m : G_K \longrightarrow \aut(\mu_m) \simeq \left( \mbz/m\mbz \right)^\times
\]
be the mod $m$ cyclotomic character, defined by letting $G_K$ act on $\mu_m$.
\begin{lemma} \label{weilpairinglemma}
We have $\det \circ \rho_{E,m} = \chi_m$.
\end{lemma}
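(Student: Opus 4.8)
The plan is to deduce this from the standard properties of the Weil pairing and a short computation with the matrix of $\rho_{E,m}(\sigma)$. First I would recall that, since $m$ is coprime to $\Char K$, there is a pairing
\[
e_m : E[m] \times E[m] \longrightarrow \mu_m
\]
which is $\mbz/m\mbz$-bilinear, alternating (in particular $e_m(P,P) = 1$ for every $P \in E[m]$, whence $e_m(Q,P) = e_m(P,Q)^{-1}$), non-degenerate, and Galois-equivariant in the sense that $\sigma\bigl(e_m(P,Q)\bigr) = e_m(\sigma P, \sigma Q)$ for all $\sigma \in G_K$ and $P,Q \in E[m]$. These facts are classical; see for instance \cite[III.8]{silverman}.

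Next I would fix a $\mbz/m\mbz$-basis $\{P,Q\}$ of $E[m]$ and set $\zeta := e_m(P,Q)$. I claim $\zeta$ is a primitive $m$-th root of unity. Indeed, by bilinearity, together with $e_m(P,P) = e_m(Q,Q) = 1$ and $e_m(Q,P) = e_m(P,Q)^{-1}$, every value of $e_m$ is a power of $\zeta$; so the image of $e_m$ is contained in $\langle \zeta \rangle$. On the other hand, non-degeneracy forces $E[m] \to \Hom(E[m],\mu_m)$ to be injective, hence (both sides having order $m^2$) an isomorphism, so $e_m$ is surjective onto $\mu_m$. Comparing, $\langle \zeta \rangle = \mu_m$, i.e. $\zeta$ has exact order $m$.

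Then, for $\sigma \in G_K$, I would write $\rho_{E,m}(\sigma) = \begin{pmatrix} a & b \\ c & d \end{pmatrix}$ with respect to $\{P,Q\}$, so that $\sigma P = aP + cQ$ and $\sigma Q = bP + dQ$. Using Galois-equivariance and then bilinearity and the alternating property, one computes
\[
\zeta^{\chi_m(\sigma)} = \sigma(\zeta) = e_m(\sigma P, \sigma Q) = e_m(aP+cQ,\, bP+dQ) = e_m(P,Q)^{ad-bc} = \zeta^{\det \rho_{E,m}(\sigma)}.
\]
Since $\zeta$ has exact order $m$, this gives $\chi_m(\sigma) \equiv \det \rho_{E,m}(\sigma) \bmod m$ for every $\sigma$, which is the assertion.

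The only point requiring a little care is bookkeeping with the matrix convention for $\rho_{E,m}$ (whether $\sigma$ acts on the chosen basis via columns or via rows); in either convention the displayed computation still produces the determinant, because an alternating bilinear form scales by the determinant of a change of coordinates. So the substantive input is really just the existence of the Weil pairing with the four stated properties, which I would take as known and cite; everything after that is the formal computation above.
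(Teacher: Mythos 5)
Your proof is correct. The paper itself gives no argument here — it just cites \cite[Ch III, \S 8]{silverman} — and what you have written out is precisely the standard deduction from the Weil pairing's bilinearity, alternating property, non-degeneracy, and Galois-equivariance that that reference supplies, so your approach is the same one the paper intends; you have merely made the routine matrix computation explicit.
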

\begin{proof}
This follows from properties of the Weil pairing; see \cite[Ch III, \S 8]{silverman}.
\end{proof}
We remark that, whenever the level $m$ of a maximal missing trace group may be written in the form $m = m_1 m_2$ with $m_1, m_2 > 1$ and $\gcd(m_1,m_2) = 1$, the group $G(m)$ decomposes via the Chinese Remainder Theorem as a fibered product
\begin{equation} \label{fiberedproductdecompofG}
G(m) = G(m_1m_2) \simeq G(m_1) \times_{\psi} G(m_2) := \{ (g_1, g_2) \in G(m_1) \times G(m_2) : \; \psi_1(g_1) = \psi_2(g_2) \},
\end{equation}
where $\psi_i : G(m_i) \longrightarrow H$ are each surjective homomorphisms onto a common non-trivial quotient group $H$ (if $H$ were trivial, then $G(m)$ would decompose as a full cartesian product, implying that either $G(m_1)$ or $G(m_2)$ would have a missing trace, contradicting maximality of $G$).  

There are various methods for obtaining an explicit model for such a modular curve $X_{\tilde{G}(m)}$.  For example, one can use Siegel functions as was done in \cite{danielsmorrow} and \cite{sutherlandzywina}, but we will proceed differently.  By Theorem \ref{boundingthelevelsthm}, the levels $m_1$ and $m_2$ in \eqref{fiberedproductdecompofG} are always equal to prime powers in the cases we consider.  We will work directly with existing models for $X_{\tilde{G}(m_1)}$ and $X_{\tilde{G}(m_2)}$ coming from \cite{sutherlandzywina}, constructing the desired cover $X_{\tilde{G}(m)}$ by 
explicitly computing appropriate subfields of $L_1 \subseteq \mbq(E[m_1])$ and $L_2 \subseteq \mbq(E[m_2])$ and setting them equal to one another.  This highlights the role played by entanglements in these examples.

The following lemma will be useful for determining when $\rho_{E,m_1m_2}(G_\mbq) \, \dot\subseteq \, G(m_1) \times_\psi G(m_2)$.  In general, let $\mc{G}_1$ and $\mc{G}_2$ be finite groups and let 
\[
G_i, G_i' \subseteq \mc{G}_i \quad\quad \left( i \in \{1, 2 \} \right)
\]
be subgroups.  Let
\[
\psi_i : G_i \longrightarrow H, \quad \psi_i' : G_i' \longrightarrow H' \quad\quad \left( i \in \{1, 2 \} \right)
\]
be surjective homomorphisms onto the finite groups $H$ and $H'$, respectively, and denote by $G_1 \times_\psi G_2$ and $G_1' \times_{\psi'} G_2'$ the corresponding fibered products.  In particular, these fibered products are \emph{honest} in the sense that, for each $i \in \{1, 2 \}$, the canonical projections $G_1 \times_\psi G_2 \longrightarrow G_i$ and $G_1' \times_{\psi'} G_2' \longrightarrow G_i'$ are each surjective.
\begin{lemma} \label{whenisgroupinsidefiberedproduct}
With the notation just outlined, we have
\[
G_1' \times_{\psi'} G_2' \subseteq G_1 \times_{\psi} G_2
\]
if and only if
\begin{enumerate}
\item $\forall i \in \{ 1, 2 \}, \, G_i' \subseteq G_i$,
\item there exists a group homomorphism $\varpi : H' \longrightarrow H$ such that $\forall i \in \{1, 2 \}, \; \varpi \circ \psi_i' = \psi_i \vert_{G_i'}$.
\end{enumerate}
\end{lemma}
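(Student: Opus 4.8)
The plan is to prove the two implications separately, with the forward direction ``$G_1' \times_{\psi'} G_2' \subseteq G_1 \times_\psi G_2 \Rightarrow$ (1),(2)'' being the one requiring the construction of $\varpi$, and the reverse direction being essentially a formal verification. First I would dispatch the easy implication: assuming (1) and (2), take any $(g_1',g_2') \in G_1' \times_{\psi'} G_2'$, so $\psi_1'(g_1') = \psi_2'(g_2')$; by (1) both $g_i'$ lie in $G_i$, and applying $\varpi$ to the equality $\psi_1'(g_1') = \psi_2'(g_2')$ and using (2) gives $\psi_1(g_1') = \varpi(\psi_1'(g_1')) = \varpi(\psi_2'(g_2')) = \psi_2(g_2')$, so $(g_1',g_2') \in G_1 \times_\psi G_2$. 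This is just chasing the definitions.

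For the forward direction, assume $G_1' \times_{\psi'} G_2' \subseteq G_1 \times_\psi G_2$. Condition (1) follows immediately by composing the inclusion with the (surjective, because the fibered products are honest) projections: the image of $G_1' \times_{\psi'} G_2'$ under $\pr_i$ is all of $G_i'$ and is contained in the image of $G_1 \times_\psi G_2$ under $\pr_i$, which is $G_i$. The substance is constructing $\varpi : H' \to H$. The natural definition is: given $h' \in H'$, pick $g_1' \in G_1'$ with $\psi_1'(g_1') = h'$ (possible since $\psi_1'$ is surjective), and set $\varpi(h') := \psi_1(g_1')$. I would then need to check three things: (a) well-definedness — this is the crux, see below; (b) that $\varpi$ is a homomorphism, which follows from well-definedness together with the fact that $\psi_1, \psi_1'$ are homomorphisms (choose lifts multiplicatively); and (c) that $\varpi \circ \psi_i' = \psi_i|_{G_i'}$ for \emph{both} $i = 1, 2$. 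For $i = 1$ this is immediate from the definition. For $i = 2$: given $g_2' \in G_2'$, pick (using that the fibered product $G_1' \times_{\psi'} G_2'$ projects onto $G_2'$) some $g_1' \in G_1'$ with $(g_1',g_2') \in G_1' \times_{\psi'} G_2'$, i.e. $\psi_1'(g_1') = \psi_2'(g_2')$; then $\varpi(\psi_2'(g_2')) = \varpi(\psi_1'(g_1')) = \psi_1(g_1')$, and since $(g_1',g_2') \in G_1 \times_\psi G_2$ by hypothesis we get $\psi_1(g_1') = \psi_2(g_2') = \psi_2|_{G_2'}(g_2')$, as desired.

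The main obstacle is the well-definedness (a): if $\psi_1'(g_1') = \psi_1'(\bar g_1') = h'$, I must show $\psi_1(g_1') = \psi_1(\bar g_1')$, equivalently that $\ker \psi_1' \subseteq \ker(\psi_1|_{G_1'})$. Here is where the fibered-product hypothesis is used essentially. Take $k \in \ker \psi_1'$. Since $G_1' \times_{\psi'} G_2'$ is honest, there is some $g_2' \in G_2'$ with $(k, g_2') \in G_1' \times_{\psi'} G_2'$; because $\psi_1'(k) = 1_{H'}$, honesty of the projection onto $G_2'$ together with surjectivity of $\psi_2'$ lets me also choose $g_2'$ with $\psi_2'(g_2') = 1_{H'}$, so in fact I may take $g_2' \in \ker \psi_2'$ and then $(k, g_2') \in G_1' \times_{\psi'} G_2'$ for \emph{every} such $g_2'$; in particular $(k, 1) \in G_1' \times_{\psi'} G_2'$ (taking $g_2' = 1$). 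By the inclusion hypothesis $(k,1) \in G_1 \times_\psi G_2$, which forces $\psi_1(k) = \psi_2(1) = 1_H$, i.e. $k \in \ker \psi_1$. This shows $\ker \psi_1' \subseteq \ker \psi_1 \cap G_1' = \ker(\psi_1|_{G_1'})$, giving well-definedness. Once that is in hand, the remaining verifications are routine diagram-chasing, and the lemma follows.
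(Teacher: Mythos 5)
Your proof is correct and follows essentially the same approach as the paper's own proof: both directions are handled identically, and your well-definedness argument via $(k,1) \in G_1' \times_{\psi'} G_2' \subseteq G_1 \times_\psi G_2$ is the same as the paper's observation that $\ker\psi_1' \times \ker\psi_2' \subseteq G_1 \times_\psi G_2$ forces $\ker\psi_i' \subseteq \ker\psi_i$. (A minor simplification: you need not invoke honesty of the projections to get $(k,1)$ into the fibered product; it is there directly because $\psi_1'(k) = 1_{H'} = \psi_2'(1)$.)
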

\begin{proof}
For the direction ``$\Rightarrow$'', condition (1) is immediate.  To verify condition (2), we first note that $\ker \psi_1' \times \ker \psi_2' \subseteq G_1 \times_\psi G_2$, which implies that
\[
\ker \psi_i' \subseteq \ker \psi_i \quad\quad \left( i \in \{1, 2 \} \right).
\]
We thus have a well-defined group homomorphism $\varpi : H' \longrightarrow H$, given by $\varpi \left( \psi_1'(g_1') \right) := \psi_1(g_1')$.
We observe that $\varpi \circ \psi_i' = \psi_i \vert_{G_i'}$ by definition when $i = 1$ and by $G_1' \times_{\psi'} G_2' \subseteq G_1 \times_{\psi} G_2$ when $i = 2$.  This establishes (2).

For the converse, assume that (1) and (2) hold and let $(g_1', g_2') \in G_1' \times_{\psi'} G_2'$.  By (1), $g_1' \in G_1$ and $g_2' \in G_2$.  Furthermore, $\psi_1'(g_1') = \psi_2'(g_2')$, and thus
\[
\psi_1(g_1') = \varpi \left( \psi_1'(g_1') \right) = \varpi \left( \psi_2'(g_2') \right) = \psi_2(g_2').
\]
Thus, $(g_1',g_2') \in G_1 \times_\psi G_2$, and we have proved the converse, establishing the lemma.
\end{proof}

By \eqref{interpofrationalpointsonXsubG}, we need only consider such groups up to conjugation inside $\GL_2(\mbz/m\mbz)$, i.e. up to the relation $\doteq$.  It is straightforward to see that, for any inner automorphisms $\eta_1, \eta_2 : H \longrightarrow H$, we have
\[
G(m_1) \times_{(\eta_1\psi_1, \eta_2 \psi_2)} G(m_2) \doteq G(m_1) \times_{(\psi_1, \psi_2)} G(m_2).
\]
However, the same is not always true for outer automorphisms $\eta_1, \eta_2 \in \aut(H)$; given such a pair $(\eta_1,\eta_2) \in \aut(H)^2$, we clearly have
\[
G(m_1) \times_{(\eta_1 \psi_1, \eta_2 \psi_2)} G(m_2) = G(m_1) \times_{(\psi_1, \eta_1^{-1} \eta_2 \psi_2)} G(m_2).
\]
Thus, it suffices to consider postcomposing the pair $(\psi_1,\psi_2)$ with pairs of automorphsims of the form $(1,\eta_2)$, or of the form $(\eta_1,1)$.  
\begin{Definition} \label{defofGL2induced}
Given the notation above, we say that an automorphism $\eta_i \in \aut(H)$ \emph{\textbf{is $\GL_2(\mbz/m_i\mbz)$-induced}} if there exists $g_i \in \GL_2(\mbz/m_i\mbz)$ satisfying $g_i G(m_i) g_i^{-1} = G(m_i)$ and for which the diagram
\[
\begin{tikzcd}
G(m_i) \rar{\conj_{g_i}} \dar{\psi_i} & G(m_i) \dar{\psi_i} \\
H \rar{\eta_i} & H
\end{tikzcd}
\]
commutes.  
\end{Definition}
The following useful lemma is straightforward to prove.
\begin{lemma} \label{GL2inducedlemma}
With notation as just outlined and with $\eta_1, \eta_2 \in \aut(H)$, we have
\[
\begin{split}
&G(m_1) \times_{(\psi_1,\eta_2\psi_2)} G(m_2) \doteq G(m_1) \times_{(\psi_1,\psi_2)} G(m_2) \; \Longleftrightarrow \; \eta_2 \in \aut(H) \text{ is $\GL_2(\mbz/m_2\mbz)$-induced,} \\
&G(m_1) \times_{(\eta_1\psi_1,\psi_2)} G(m_2) \doteq G(m_1) \times_{(\psi_1,\psi_2)} G(m_2) \; \Longleftrightarrow \; \eta_1 \in \aut(H) \text{ is $\GL_2(\mbz/m_1\mbz)$-induced.} 
\end{split}
\]
\end{lemma}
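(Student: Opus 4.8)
The plan is to prove the first equivalence; the second follows verbatim after interchanging the indices $1$ and $2$. Throughout I would identify $\GL_2(\mbz/m\mbz)$ with $\GL_2(\mbz/m_1\mbz)\times\GL_2(\mbz/m_2\mbz)$ via the Chinese Remainder Theorem, so that conjugation by $g=(g_1,g_2)$ acts on a fibered product coordinatewise, and I write $\mathcal{I}_i\subseteq\aut(H)$ for the set of $\GL_2(\mbz/m_i\mbz)$-induced automorphisms. A preliminary observation to record is that $\mathcal{I}_i$ is a \emph{subgroup} of $\aut(H)$: if $g_i\in\GL_2(\mbz/m_i\mbz)$ normalizes $G(m_i)$ and satisfies $\eta_i\circ\psi_i=\psi_i\circ\conj_{g_i}$, then $g_i^{-1}$ realizes $\eta_i^{-1}$ and products of such $g_i$ realize the corresponding composites (note also that such a $g_i$ necessarily normalizes $\ker\psi_i$). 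I will use this freely, together with the fact that every inner automorphism of $H$ lies in $\mathcal{I}_i$, being induced by $\conj_g$ for any $\psi_i$-preimage $g\in G(m_i)$.

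For the implication ``$\Leftarrow$'', suppose $\eta_2$ is $\GL_2(\mbz/m_2\mbz)$-induced; using that $\mathcal{I}_2$ is a subgroup, choose $h\in\GL_2(\mbz/m_2\mbz)$ normalizing $G(m_2)$ with $\eta_2^{-1}\circ\psi_2=\psi_2\circ\conj_h$, and conjugate $G(m_1)\times_{(\psi_1,\psi_2)}G(m_2)$ by $g:=(I,h)$. A direct computation, using $\psi_2(h^{-1}yh)=\eta_2(\psi_2(y))$, shows that $(g_1,g_2')$ lies in $G(m_1)\times_{(\psi_1,\psi_2)}G(m_2)$ if and only if $(g_1,hg_2'h^{-1})$ lies in $G(m_1)\times_{(\psi_1,\eta_2\psi_2)}G(m_2)$. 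Since $\conj_g$ is a bijection fixing $G(m_1)$ in the first coordinate and carrying $G(m_2)$ to itself in the second, this yields $g\big(G(m_1)\times_{(\psi_1,\psi_2)}G(m_2)\big)g^{-1}=G(m_1)\times_{(\psi_1,\eta_2\psi_2)}G(m_2)$, hence the relation $\doteq$.

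For the implication ``$\Rightarrow$'', suppose $g=(g_1,g_2)$ conjugates $G(m_1)\times_{(\psi_1,\psi_2)}G(m_2)$ onto $G(m_1)\times_{(\psi_1,\eta_2\psi_2)}G(m_2)$. Projecting to each factor (both fibered products are honest, so each projects onto $G(m_i)$) gives $g_iG(m_i)g_i^{-1}=G(m_i)$; applying $\conj_g$ to the intrinsic subgroups $\ker\psi_1\times\{1\}$ and $\{1\}\times\ker\psi_2$ of the fibered product — which, because $\ker(\eta_2\psi_2)=\ker\psi_2$, are the same subgroups for both fibered products — shows that $g_i$ also normalizes $\ker\psi_i$. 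Hence each $g_i$ descends to an automorphism $\zeta_i\in\aut(H)$ with $\zeta_i\circ\psi_i=\psi_i\circ\conj_{g_i}$, so $\zeta_1\in\mathcal{I}_1$ and $\zeta_2\in\mathcal{I}_2$. Repeating the computation of the previous paragraph with this general $g$ shows that $\conj_g$ carries $G(m_1)\times_{(\psi_1,\psi_2)}G(m_2)$ onto $G(m_1)\times_{(\psi_1,\,\zeta_1\zeta_2^{-1}\psi_2)}G(m_2)$; since two honest fibered products $G(m_1)\times_{(\psi_1,\xi\psi_2)}G(m_2)$ coincide only when the $\xi$'s do (fix the first coordinate, vary the second, and use surjectivity of $\psi_1,\psi_2$), I conclude $\eta_2=\zeta_1\zeta_2^{-1}$.

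The remaining step — upgrading ``$\eta_2=\zeta_1\zeta_2^{-1}$ with $\zeta_1\in\mathcal{I}_1$, $\zeta_2\in\mathcal{I}_2$'' to ``$\eta_2\in\mathcal{I}_2$'' — is the one I expect to be the real content. Running the ``$\Leftarrow$'' argument with $g=(g_1,I)$ shows symmetrically that every element of $\mathcal{I}_1$ is an admissible $\eta_2$, so the set of admissible $\eta_2$ is exactly the subgroup $\langle\mathcal{I}_1,\mathcal{I}_2\rangle$ of $\aut(H)$; the lemma is thus equivalent to the equality $\mathcal{I}_1=\mathcal{I}_2$. The plan to establish this is to enumerate the common quotients $H$ that actually arise: Theorem \ref{boundingthelevelsthm} bounds $m$, hence the coprime factors $m_1,m_2$, and for each resulting $H$ one checks that either $\aut(H)$ is trivial (e.g. $H$ cyclic of order at most $2$), so nothing is needed, or $\aut(H)=\Inn(H)$ (e.g. $H\cong S_3$), in which case both $\mathcal{I}_1$ and $\mathcal{I}_2$ contain $\Inn(H)$ and hence both equal $\aut(H)$. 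Carrying out this finite verification, rather than any delicate group theory, is where the work lies.
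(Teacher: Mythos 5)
Your proof of the ``$\Leftarrow$'' direction is correct. Your analysis of ``$\Rightarrow$'' is also correct as far as it goes, and it exposes a real subtlety that the paper, by declaring the lemma ``straightforward'' and offering no proof, does not address. Under the CRT identification $\GL_2(\mbz/m\mbz)\cong\GL_2(\mbz/m_1\mbz)\times\GL_2(\mbz/m_2\mbz)$, a conjugating element $g=(g_1,g_2)$ must normalize each $G(m_i)$ (project to the factors) and each $\ker\psi_i$ (each $\ker\psi_i\times\{1\}$, resp.\ $\{1\}\times\ker\psi_i$, is the kernel of a coordinate projection restricted to the fibered product, and $\ker(\eta_2\psi_2)=\ker\psi_2$). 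So each $g_i$ induces $\zeta_i\in\mathcal{I}_i$ and, as you compute, $\eta_2=\zeta_1\zeta_2^{-1}$. The set of admissible $\eta_2$ is therefore the product set $\mathcal{I}_1\mathcal{I}_2$ --- not quite $\langle\mathcal{I}_1,\mathcal{I}_2\rangle$ as you write, though the two coincide whenever $\mathrm{Out}(H)$ is abelian, which covers every $H$ the paper meets --- and the stated biconditional holds precisely when $\mathcal{I}_1=\mathcal{I}_2$.

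This is not automatic: if, say, $G(m_1)$ consists of scalar matrices, then every normalizing $g_1\in\GL_2(\mbz/m_1\mbz)$ acts trivially by conjugation, so $\mathcal{I}_1=\Inn(H)$; taking $G(m_2)$ and $\psi_2$ with $\mathcal{I}_2=\aut(H)\supsetneq\Inn(H)$ then falsifies the second displayed equivalence of the lemma. So the hypothesis you isolated really is needed for the full biconditional. There is no proof in the paper to compare against; the paper's own later verification (the discussion leading up to Corollary~\ref{keycorollaryforinterpretationofentanglements}) only checks that $\mathcal{I}_i=\aut(H)$ for \emph{one} of the two indices, which is exactly what the ``$\Leftarrow$'' direction used downstream requires. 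Your plan --- bound $H$ via Theorem~\ref{boundingthelevelsthm} and check $\mathcal{I}_1=\mathcal{I}_2$ case by case --- is a sound way to rescue the full statement in the relevant situations, but as submitted that verification is announced rather than carried out, so the argument is left incomplete at precisely the step you (correctly) identified as the real content.
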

We will now describe an interpretation of $\rho_{E,m_1m_2}(G_\mbq) \, \dot\subseteq \, G(m_1) \times_\psi G(m_2)$ in terms of entanglements.  It will be convenient to decompose the representation $\rho_{E}$ as
\begin{equation*}
\begin{tikzcd}
G_\mbq \rar{\tilde{\rho}_{E}} \arrow[black, bend left]{rr}{\rho_E} & \aut(E_{\tors}) \rar{\iota_{\mc{B}}} & \GL_2(\hat{\mbz}),
\end{tikzcd}
\end{equation*}
where $\mc{B} = \{ \mc{B}(m) := ({\bf{b}}_{1,m}, {\bf{b}}_{2,m}) : m \in \mbn \}$ is a collection of ordered $\mbz/m\mbz$-bases of $E[m] \subseteq E_{\tors}$, one for each $m \in \mbn$, chosen compatibly.  This decomposition has a corresponding a finite level analogue 
\begin{equation}  \label{defofrhotilde}
\rho_{E,m} = \iota_{\mc{B}(m)} \circ \tilde{\rho}_{E,m}
\end{equation}
for any $m \in \mbn$.  We will denote simply by $\iota$ either of the the isomorphisms $\aut(E[m]) \to \GL_2(\mbz/m\mbz)$ or $\aut(E_{\tors}) \to \GL_2(\hat{\mbz})$ induced by such a collection $\mc{B}$, suppressing the dependence on $\mc{B}$.  Thus, for any $m \in \mbn$, we have
\[
\rho_{E,m}(G_\mbq) \, \dot \subseteq \, G(m) \; \Longleftrightarrow \; \exists \iota : \tilde{\rho}_{E,m}(G_\mbq) \hookrightarrow G(m).
\]

Unraveling \eqref{interpofrationalpointsonXsubG}, we find that
\[
\exists \iota : \tilde{\rho}_{E,m_1m_2}(G_\mbq) \hookrightarrow G(m_1) \times_\psi G(m_2) \; \Longrightarrow \; \begin{matrix} \exists \iota_1 : \tilde{\rho}_{E,m_1}(G_\mbq) \hookrightarrow G(m_1), \; \exists \iota_2 : \tilde{\rho}_{E,m_2}(G_\mbq) \hookrightarrow G(m_2), \\ \text{and } \; \mbq(E[m_1])^{\iota_1^{-1}(\ker \psi_1)} = \mbq(E[m_2])^{\iota_2^{-1}(\ker \psi_2)},\end{matrix}
\]
where we are understanding the subfield $\mbq(E[m_i])^{\iota_i^{-1}(\ker \psi_i)} \subseteq \mbq(E[m_i])$ via the natural isomorphism $\gal\left( \mbq(E[m_i])/\mbq \right) \simeq \tilde{\rho}_{E,m_i}(G_\mbq)$.
The following corollary states conditions under which the converse holds. 
\begin{cor} \label{justentanglementequalitycorollary}
Let $\aut_{\GL_2(\mbz/m_i\mbz)}(H) \subseteq \aut(H)$ denote the subgroup of $\GL_2(\mbz/m_i\mbz)$-induced automorphisms, and suppose that either $\aut_{\GL_2(\mbz/m_1\mbz)}(H) = \aut(H)$ or $\aut_{\GL_2(\mbz/m_2\mbz)}(H) = \aut(H)$.  We then have
\[
\rho_{E,m_1m_2}(G_\mbq) \, \dot\subseteq \, G(m_1) \times_\psi G(m_2) \; \Longleftrightarrow \; \begin{matrix} \exists \iota_1 : \tilde{\rho}_{E,m_1}(G_\mbq) \hookrightarrow G(m_1), \; \exists \iota_2 : \tilde{\rho}_{E,m_2}(G_\mbq) \hookrightarrow G(m_2), \\ \text{and } \quad \mbq(E[m_1])^{\iota_1^{-1}(\ker \psi_1)} = \mbq(E[m_2])^{\iota_2^{-1}(\ker \psi_2)}. \end{matrix}
\]
\end{cor}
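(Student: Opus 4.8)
The forward implication ``$\Rightarrow$'' is exactly the unnumbered display immediately preceding this corollary, combined with the equivalence $\rho_{E,m_1m_2}(G_\mbq)\,\dot\subseteq\,G(m_1)\times_\psi G(m_2)\iff\exists\,\iota:\tilde\rho_{E,m_1m_2}(G_\mbq)\hookrightarrow G(m_1)\times_\psi G(m_2)$ recorded just above it; so the plan is to prove the converse. The idea is to rebuild a single compatible isomorphism $\iota$ out of $\iota_1$, $\iota_2$ and the assumed coincidence of division subfields, showing that the only obstruction is an ambiguity of automorphisms of $H$ which the hypothesis on $\GL_2(\mbz/m_i\mbz)$-induced automorphisms removes.

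Write $L_i:=\mbq(E[m_i])$, identify $\Gamma_i:=\tilde\rho_{E,m_i}(G_\mbq)$ with $\gal(L_i/\mbq)$, and let $F$ be the common subfield $\mbq(E[m_1])^{\iota_1^{-1}(\ker\psi_1)}=\mbq(E[m_2])^{\iota_2^{-1}(\ker\psi_2)}$, so that $F\subseteq L_1\cap L_2$. Since $\gcd(m_1,m_2)=1$ we have $E[m_1m_2]=E[m_1]\oplus E[m_2]$, whence $L_1L_2=\mbq(E[m_1m_2])$, and we may take the compatible basis $\mc{B}(m_1m_2)$ to refine $\mc{B}(m_1)$ and $\mc{B}(m_2)$; then the candidate $\iota$ on $\tilde\rho_{E,m_1m_2}(G_\mbq)=\gal(L_1L_2/\mbq)$ is the restriction of $\iota_1\times\iota_2$. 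By the standard description of the Galois group of a compositum (equivalently Goursat's Lemma \ref{goursatlemma}), $\gal(L_1L_2/\mbq)=\Gamma_1\times_{\psi'}\Gamma_2$ with $\psi_i':\Gamma_i\to H':=\gal(L_1\cap L_2/\mbq)$ the restriction maps, an honest fibered product.

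Because $F\subseteq L_1\cap L_2$ we have $\ker\psi_i'=\gal(L_i/L_1\cap L_2)\subseteq\gal(L_i/F)=\ker(\psi_i\circ\iota_i)$, so $\psi_i\circ\iota_i$ factors uniquely as $\varpi_i\circ\psi_i'$ for a homomorphism $\varpi_i:H'\to H$; moreover $\ker\varpi_i=\gal((L_1\cap L_2)/F)$ for both $i$, so each $\varpi_i$ descends to an injection $\bar\varpi_i:\gal(F/\mbq)\hookrightarrow H$. Now apply Lemma \ref{whenisgroupinsidefiberedproduct} with $\mc{G}_i=\GL_2(\mbz/m_i\mbz)$, with $(G_i,\psi_i)$ taken to be $(G(m_i),\psi_i)$ and $(G_i',\psi_i')$ taken to be $(\iota_i(\Gamma_i),\psi_i'\circ\iota_i^{-1})$: condition (1) is the standing hypothesis $\iota_i(\Gamma_i)\subseteq G(m_i)$, and condition (2) is precisely the existence of a single $\varpi:H'\to H$ with $\varpi\circ\psi_i'=\psi_i\circ\iota_i$ for $i=1,2$, i.e. $\varpi_1=\varpi_2$. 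It therefore suffices to arrange $\varpi_1=\varpi_2$ after replacing $\iota_1$ or $\iota_2$ by a $\GL_2(\mbz/m_i\mbz)$-conjugate still contained in $G(m_i)$. Such a replacement $\iota_j\mapsto\conj_{g_j}\circ\iota_j$, with $g_j$ normalizing $G(m_j)$, replaces $\varpi_j$ by $\eta_j\circ\varpi_j$ for the induced $\eta_j\in\aut(H)$, and, by the hypothesis that every automorphism of $H$ is $\GL_2(\mbz/m_j\mbz)$-induced for some fixed $j$ — via Lemma \ref{GL2inducedlemma} and Definition \ref{defofGL2induced} — we may choose $\eta_j$ with $\eta_j\circ\varpi_j=\varpi_{3-j}$. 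This yields $\varpi_1=\varpi_2$, hence $\iota\bigl(\tilde\rho_{E,m_1m_2}(G_\mbq)\bigr)=\iota_1(\Gamma_1)\times_{\psi'}\iota_2(\Gamma_2)\subseteq G(m_1)\times_\psi G(m_2)$, which is the conclusion.

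The main obstacle is the final adjustment step: to choose $\eta_j\in\aut(H)$ with $\eta_j\circ\varpi_j=\varpi_{3-j}$ one must first know that $\bar\varpi_1$ and $\bar\varpi_2$ have the same image in $H$ and that the isomorphism identifying the two pictures extends to an automorphism of all of $H$. This is where honesty of the fibered product $G(m_1)\times_\psi G(m_2)$ (surjectivity of each $\psi_i$) must be exploited, so as to pin down the common image $\bar\varpi_i(\gal(F/\mbq))=\psi_i(\iota_i(\Gamma_i))$ independently of $i$, and where one must check that conjugation by elements of $\GL_2(\mbz/m_j\mbz)$ normalizing $G(m_j)$ realizes enough of $\aut(H)$ — exactly the content of the $\GL_2(\mbz/m_j\mbz)$-induced hypothesis. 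I expect this matching of images and automorphisms, rather than any deep input, to be the crux of the argument.
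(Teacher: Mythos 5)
Your strategy is the natural one and matches the scaffolding the paper erects around this corollary: describe $\gal(L_1L_2/\mbq)$ via Goursat as a fibered product over $\gal\big((L_1\cap L_2)/\mbq\big)$, reduce the containment to condition (2) of Lemma~\ref{whenisgroupinsidefiberedproduct} (existence of a single $\varpi$), observe that the field equality lets both $\psi_i\circ\iota_i$ factor as $\varpi_i\circ\psi_i'$, and then adjust one $\iota_j$ by conjugation by an element normalizing $G(m_j)$, which post-composes $\varpi_j$ by a $\GL_2(\mbz/m_j\mbz)$-induced $\eta_j\in\aut(H)$. All of this is correct and is the path forced by Lemma~\ref{whenisgroupinsidefiberedproduct}, Definition~\ref{defofGL2induced} and Lemma~\ref{GL2inducedlemma}.

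The step you flag as ``the crux'' is, however, a genuine gap you do not close. Both $\varpi_i$ descend to injections $\bar\varpi_i:\gal(F/\mbq)\hookrightarrow H$ with image $\psi_i(\iota_i(\Gamma_i))$, and for an $\eta_j\in\aut(H)$ with $\eta_j\circ\bar\varpi_j=\bar\varpi_{3-j}$ to exist you need the isomorphism $\bar\varpi_{3-j}\circ\bar\varpi_j^{-1}$ between these two (same-order) subgroups of $H$ to extend to an automorphism of the whole of $H$. The hypothesis $\aut_{\GL_2(\mbz/m_j\mbz)}(H)=\aut(H)$ only guarantees that every automorphism of $H$ is realizable by conjugation --- it says nothing about whether the needed automorphism exists in the first place. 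And the fix you gesture at --- ``honesty'' of the fibered product, i.e.\ surjectivity of $\psi_i:G(m_i)\to H$ --- is beside the point: that surjectivity concerns all of $G(m_i)$, not the possibly much smaller image $\iota_i(\Gamma_i)$, so it does not ``pin down the common image.''

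What actually closes the gap in the paper's applications is the special structure of $H$: as recorded in \eqref{formsofH}, one always has $H\in\{\mbz/2\mbz,\mbz/3\mbz,\mbz/6\mbz,S_3\}$, and in each of these groups any two subgroups of the same order are related by an automorphism (automatic for cyclic $H$; for $S_3$ the order-$2$ subgroups are conjugate and all others are characteristic). Equivalently, the gap disappears whenever $\psi_i(\iota_i(\Gamma_i))=H$ for some $i$ (e.g.\ the generic case $\iota_i(\Gamma_i)=G(m_i)$), since then $\bar\varpi_i$ is bijective and $\bar\varpi_{3-i}\circ\bar\varpi_i^{-1}\in\aut(H)$. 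You should either invoke one of these facts or state it as an additional hypothesis on $H$; as written your argument establishes something slightly weaker than the corollary's literal statement.
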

We would like to apply Corollary \ref{justentanglementequalitycorollary} to groups $G \in \mf{G}_{MT}^{\max}(0)$.  Our computation shows that, for every such group $G$ whose $\GL_2$-level $m$ is divisible by at least two primes, and for any $m_1, m_2 > 1$ with $m = m_1m_2$ and $\gcd(m_1,m_2) = 1$, writing $G(m) \simeq G(m_1) \times_{\psi} G(m_2)$ and denoting by $H := \psi_i(G(m_i))$ the common quotient group implicit in the fibered product, we have
\begin{equation} \label{formsofH}
H \in \{ \mbz/2\mbz, \mbz/3\mbz, \mbz/6\mbz, S_3 \}.
\end{equation}
When $H \in \{ \mbz/2\mbz, S_3 \}$, all automorphisms of $H$ are inner, whence $\GL_2(\mbz/m_i\mbz)$-inner (no matter what the level $m_i$ is.  For the case $H \in \{ \mbz/3\mbz, \mbz/6\mbz \}$,  we will now look in more detail at the particulars.  

The common quotient $H = \mbz/3\mbz$ arises as an entanglement between the groups $G(2)$ and $G(7)$, and in all such cases, we have
\begin{equation} \label{descriptionofpsisub2in14case}
G(2) = \left\langle \begin{pmatrix} 1 & 1 \\ 1 & 0 \end{pmatrix} \right\rangle \subseteq \GL_2(\mbz/2\mbz), \quad\quad \psi_2 :  \left\langle \begin{pmatrix} 1 & 1 \\ 1 & 0 \end{pmatrix} \right\rangle \simeq \mbz/3\mbz.
\end{equation}
Thus, the image $H = \mbz/3\mbz$ is isomorphic to $G(2)$, an index two (and hence normal) subgroup of $\GL_2(\mbz/2\mbz)$.  Since
\[
\begin{pmatrix} 0 & 1 \\ 1 & 0 \end{pmatrix} \begin{pmatrix} 1 & 1 \\ 1 & 0 \end{pmatrix} \begin{pmatrix} 0 & 1 \\ 1 & 0 \end{pmatrix}^{-1} = \begin{pmatrix} 0 & 1 \\ 1 & 1 \end{pmatrix},
\]
it follows that the conjugation action of $\GL_2(\mbz/2\mbz)$ on $G(2)$ gives rise to all automorphisms of $H$, i.e. that $\aut(H) = \aut_{\GL_2(\mbz/2\mbz)}(H)$ in this case.

The common quotient $H \simeq \mbz/6\mbz$ arises as an entanglement between $G(4)$ and $G(7)$, and in all such cases, we have
\[
G(4) = \pi_{\GL_2}^{-1}\left( \left\langle \begin{pmatrix} 1 & 1 \\ 1 & 0 \end{pmatrix} \right\rangle \right) \subseteq \GL_2(\mbz/4\mbz), \quad\quad \ker \psi_4 = \ker \pi_{\GL_2} \cap \SL_2(\mbz/4\mbz),
\]
where $\pi_{\GL_2} : \GL_2(\mbz/4\mbz) \to \GL_2(\mbz/2\mbz)$ denotes the canonical projection map.  Therefore the map $\psi_4$ decomposes as
\[
\begin{tikzcd}
\pi_{\GL_2}^{-1}\left( \left\langle \begin{pmatrix} 1 & 1 \\ 1 & 0 \end{pmatrix} \right\rangle \right) \rar{\psi_2 \times \det} \arrow[black, bend left]{rr}{\psi_4} & \mbz/3\mbz \times \mbz/2\mbz \rar{\simeq} & \mbz/6\mbz,
\end{tikzcd}
\]
where $\psi_2$ denotes the function $g \mapsto \psi_2(g \mod 2)$, with $\psi_2$ as in \eqref{descriptionofpsisub2in14case}.  Thus, it follows from the discussion in the previous paragraph that $\aut_{\GL_2(\mbz/4\mbz)}(H) = \aut(H)$ in this case as well.  Taking these observations together with the computation that establishes \eqref{formsofH}, we thus have
\begin{cor} \label{keycorollaryforinterpretationofentanglements}
For each group $G \in \mf{G}_{MT}^{\max}(0)$ with the property that $m := \level_{\GL_2}(G)$ is divisible by at least two primes, choose any $m_1, m_2 \in \mbn$ with $\gcd(m_1,m_2) = 1$ and $m_1, m_2 > 1$ and write $G(m) \simeq G(m_1) \times_{\psi} G(m_2)$.  For any elliptic curve $E$ over $\mbq$, we have
\[
\rho_{E,m_1m_2}(G_\mbq) \, \dot\subseteq \, G(m_1) \times_\psi G(m_2) \; \Longleftrightarrow \; \begin{matrix} \exists \iota_1 : \tilde{\rho}_{E,m_1}(G_\mbq) \hookrightarrow G(m_1), \; \exists \iota_2 : \tilde{\rho}_{E,m_2}(G_\mbq) \hookrightarrow G(m_2), \\ \text{and } \quad \mbq(E[m_1])^{\iota_1^{-1}(\ker \psi_1)} = \mbq(E[m_2])^{\iota_2^{-1}(\ker \psi_2)},
\end{matrix}
\]
where the basis-induced embeddings $\iota_1$ and $\iota_2$ and the representations $\tilde{\rho}_{E,m_i}$ are as in \eqref{defofrhotilde}, and the subfield $\mbq(E[m_i])^{\iota_i^{-1}(\ker \psi_i)} \subseteq \mbq(E[m_i])$ is understood via the natural isomorphism $\tilde{\rho}_{E,m_i}(G_\mbq) \simeq \gal\left( \mbq(E[m_i])/\mbq \right)$.
\end{cor}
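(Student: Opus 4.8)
The plan is to obtain Corollary~\ref{keycorollaryforinterpretationofentanglements} as an immediate consequence of Corollary~\ref{justentanglementequalitycorollary}, by checking its hypothesis --- that $\aut_{\GL_2(\mbz/m_1\mbz)}(H) = \aut(H)$ or $\aut_{\GL_2(\mbz/m_2\mbz)}(H) = \aut(H)$ --- for every common quotient $H$ that actually occurs. The MAGMA computation establishing \eqref{formsofH} reduces this to the four cases $H \in \{ \mbz/2\mbz, \mbz/3\mbz, \mbz/6\mbz, S_3 \}$, and moreover pins down the explicit form of the relevant $\psi_i$ in the two nontrivial cases, so the remaining work is purely group-theoretic and short.

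First I would record the general fact that every inner automorphism of $H$ is $\GL_2(\mbz/m_i\mbz)$-induced, for either $i$: if $\eta = \conj_h$ with $h \in H$, pick $g \in G(m_i)$ with $\psi_i(g) = h$ (possible since $\psi_i$ is surjective); then $\conj_g$ preserves $G(m_i)$ and the square in Definition~\ref{defofGL2induced} commutes, since $\psi_i(\conj_g(x)) = h\,\psi_i(x)\,h^{-1} = \eta(\psi_i(x))$. This settles the cases $H \in \{ \mbz/2\mbz, S_3 \}$ at once, because $\aut(\mbz/2\mbz)$ is trivial and $\aut(S_3) = \Inn(S_3)$.

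Next I would treat $H = \mbz/3\mbz$, whose automorphism group is generated by the (non-inner) inversion map. Here the computation behind \eqref{formsofH} shows that in every occurrence $H$ arises from a $2$--$7$ entanglement with $G(2) = \langle \begin{pmatrix} 1 & 1 \\ 1 & 0 \end{pmatrix} \rangle$ and $\psi_2$ the isomorphism $G(2) \simeq \mbz/3\mbz$ of \eqref{descriptionofpsisub2in14case}. Conjugation by $\begin{pmatrix} 0 & 1 \\ 1 & 0 \end{pmatrix} \in \GL_2(\mbz/2\mbz)$ normalizes $G(2)$ and sends the chosen generator to its inverse, hence induces the inversion automorphism; thus $\aut_{\GL_2(\mbz/2\mbz)}(H) = \aut(H)$. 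For $H = \mbz/6\mbz$ I would reduce to this: the computation shows $\psi$ factors as $\psi_2 \times \det : \pi_{\GL_2}^{-1}(G(2)) \to \mbz/3\mbz \times \mbz/2\mbz \simeq \mbz/6\mbz$ with $\pi_{\GL_2} : \GL_2(\mbz/4\mbz) \to \GL_2(\mbz/2\mbz)$; since $\aut(\mbz/2\mbz)$ is trivial, the nontrivial automorphism of $\mbz/6\mbz$ is inversion on the $\mbz/3\mbz$-factor and the identity on the $\mbz/2\mbz$-factor. A lift $w \in \GL_2(\mbz/4\mbz)$ of $\begin{pmatrix} 0 & 1 \\ 1 & 0 \end{pmatrix}$ normalizes $G(4) = \pi_{\GL_2}^{-1}(G(2))$ (because $\ker \pi_{\GL_2}$ is normal and its image modulo $2$ normalizes $G(2)$), preserves $\det$, and acts by inversion on the $\mbz/3\mbz$-factor, so $\conj_w$ induces that automorphism and $\aut_{\GL_2(\mbz/4\mbz)}(H) = \aut(H)$.

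With the hypothesis of Corollary~\ref{justentanglementequalitycorollary} verified in all four cases, the desired equivalence follows. I do not expect a genuine obstacle here: the only non-formal ingredient is the MAGMA computation already invoked for \eqref{formsofH} and the explicit forms it supplies for $G(2)$, $\psi_2$, and the factorization of $\psi$ when $m = 6$; everything else is the routine check above that inversion on $\mbz/3\mbz$ is realized by the swap matrix $\begin{pmatrix} 0 & 1 \\ 1 & 0 \end{pmatrix}$. The one point deserving slight care is that, by Theorem~\ref{boundingthelevelsthm}, whenever $m$ is divisible by two primes its coprime factorizations with both parts $> 1$ are into prime powers, so the levels $m_1, m_2$ in the statement really do match those appearing in \eqref{formsofH} and the case list is exhaustive.
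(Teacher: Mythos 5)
Your proposal is correct and follows essentially the same route the paper takes: invoke Corollary~\ref{justentanglementequalitycorollary} after verifying, using the computation behind~\eqref{formsofH}, that every occurring common quotient $H$ has all its automorphisms $\GL_2(\mbz/m_i\mbz)$-induced on at least one side. Your handling of $H = \mbz/6\mbz$ (lifting the swap matrix to $\GL_2(\mbz/4\mbz)$ and noting it preserves $\det$ while inverting the $\mbz/3\mbz$-factor) makes explicit exactly the reduction the paper records informally, and your closing remark that the two-prime levels always factor into a pair of prime powers is the right sanity check that the case list is exhaustive.
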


\subsection{Cyclic cubic fields}

Since we will be dealing with cyclic cubic extensions of $\mbq(t,D)$, we now state two lemmas about such field extensions that will be used in what follows.
Our first lemma allows us to exhibits explicit polynomials for generators of each of the cyclic cubic  subfields of a given $\mbz/3\mbz \times \mbz/3\mbz$-extension.  In general, let $K$ be any field, let
\begin{equation} \label{defoffsubSandfsubT}
\begin{split}
f_S(x) &= x^3 - S_1x^2 + S_2 x - S_3, \\
f_T(x) &= x^3 - T_1x^2 + T_2 x - T_3
\end{split}
\end{equation}
be two monic irreducible polynomials with coefficients in $K$, and denote by $K_{f_S}$ (resp. by $K_{f_T}$) the splitting polynomial of $f_S$ (resp. of $f_T$), viewed as subfields of a fixed algebraic closure $\ol{K}$ of $K$.  Assume that 
\begin{equation} \label{KfsubSdifferentfromKfsubT}
K_{f_S} \neq K_{f_T}
\end{equation}
and that the discriminants $\gD_S := \disc(f_S)$ and $\gD_T := \disc(f_T)$ are each in $\left( K^\times \right)^2$, or equivalently that $\gal(K_{f_S}/K) \simeq \gal(K_{f_T}/K)$ is a cyclic group of order $3$.  The assumption \eqref{KfsubSdifferentfromKfsubT} then implies that the composite field $K_{f_Sf_T} = K_{f_S}K_{f_T}$ satisfies
\begin{equation} \label{galoisgroupisZmod3timesZmod3}
\gal(K_{f_Sf_T}/K) \simeq \mbz/3\mbz \times \mbz/3\mbz,
\end{equation}
and this field contains $4$ cyclic cubic subfields.  Fix square roots $\sqrt{\gD_S}, \sqrt{\gD_T} \in K$ and define the coefficients $R_1, R_2, R_3, R_3' \in K$ by
\begin{equation} \label{defofRsubis}
\begin{split}
R_1 := &S_1 T_1, \\
R_2 := &S_1^2 T_2 + T_1^2 S_2 - 3S_2T_2, \\
R_3 := &S_1^3 T_3 + T_1^3 S_3 - 3S_1 S_2 T_3 - 3 T_1 T_2 S_3 + 9S_3 T_3 \\
&+ \left( S_1S_2 - 3S_3 + \sqrt{\gD_S} \right) \left( T_1 T_2 - 3T_3 + \sqrt{\gD_T} \right) / 4 \\
&+ \left( S_1S_2 - 3S_3 - \sqrt{\gD_S} \right) \left( T_1 T_2 - 3T_3 - \sqrt{\gD_T} \right) / 4, \\
R_3' := &S_1^3 T_3 + T_1^3 S_3 - 3S_1 S_2 T_3 - 3 T_1 T_2 S_3 + 9S_3 T_3 \\
&+ \left( S_1S_2 - 3S_3 + \sqrt{\gD_S} \right) \left( T_1 T_2 - 3T_3 - \sqrt{\gD_T} \right) / 4 \\
&+ \left( S_1S_2 - 3S_3 - \sqrt{\gD_S} \right) \left( T_1 T_2 - 3T_3 + \sqrt{\gD_T} \right) / 4.
\end{split}
\end{equation}
Define the cubic polynomials $f_R(x), f_{R'}(x) \in K[x]$ by 
\begin{equation} \label{defoffsubRandfsubRprime}
\begin{split}
f_R(x) &:= x^3 - R_1x^2 + R_2x - R_3, \\
f_{R'}(x) &:= x^3 - R_1x^2 + R_2x - R_3'.
\end{split}
\end{equation}
\begin{lemma} \label{gettingattheothercubicfieldslemma}
Let $K$ be a field, let $f_S(x), f_T(x) \in K[x]$ be irreducible monic cubic polynomials as in \eqref{defoffsubSandfsubT}, and assume the setup and notation laid out above (in particular, assume that the splitting field $K_{f_Sf_T}$ of $f_S(x)f_T(x)$ satisfies \eqref{galoisgroupisZmod3timesZmod3}).  Then the four cyclic cubic subfields of $K_{f_Sf_T}$ are the splitting fields of the polynomials $f_S(x)$, $f_T(x)$, $f_R(x)$ and $f_{R'}(x)$, where $f_R(x)$ and $f_{R'}(x)$ are defined by \eqref{defoffsubRandfsubRprime} and \eqref{defofRsubis}.
\end{lemma}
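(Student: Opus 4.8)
The plan is to combine the Galois correspondence for the $\mbz/3\mbz\times\mbz/3\mbz$-extension $K_{f_Sf_T}/K$ with one explicit symmetric-function computation. Throughout I assume $\Char K\neq 3$ and that $K$ contains no primitive cube root of unity; this costs nothing in our application, where $K=\mbq(t,D)$, and (together with the hypotheses of the lemma) it supplies the non-degeneracy needed below. Fix roots $\alpha_1,\alpha_2,\alpha_3$ of $f_S$ and $\beta_1,\beta_2,\beta_3$ of $f_T$ in $\ol{K}$. Since $\gD_S,\gD_T\in(K^\times)^2$, the groups $\gal(K_{f_S}/K)$ and $\gal(K_{f_T}/K)$ are cyclic of order $3$; because $K_{f_S}\neq K_{f_T}$ and a cyclic cubic extension has no proper intermediate field, $K_{f_S}\cap K_{f_T}=K$, so $K_{f_S}$ and $K_{f_T}$ are linearly disjoint over $K$ and $\gal(K_{f_Sf_T}/K)\simeq\gal(K_{f_S}/K)\times\gal(K_{f_T}/K)$, which is \eqref{galoisgroupisZmod3timesZmod3}. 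I would then fix $\sigma\in\gal(K_{f_Sf_T}/K)$ restricting to a generator of $\gal(K_{f_S}/K)$ and fixing $K_{f_T}$ pointwise, and $\tau\in\gal(K_{f_Sf_T}/K)$ restricting to a generator of $\gal(K_{f_T}/K)$ and fixing $K_{f_S}$ pointwise, and label the roots so that $\sigma(\alpha_i)=\alpha_{i+1}$, $\tau(\beta_j)=\beta_{j+1}$ (indices mod $3$), with the labelling chosen compatibly with the fixed square roots in the sense that $(\alpha_1-\alpha_2)(\alpha_2-\alpha_3)(\alpha_3-\alpha_1)=\sqrt{\gD_S}$ and $(\beta_1-\beta_2)(\beta_2-\beta_3)(\beta_3-\beta_1)=\sqrt{\gD_T}$; since a transposition of two roots negates this triple product while a cyclic relabelling preserves it, exactly one of the two generators of each cyclic group achieves this, so the choice is possible and it synchronises the orientation of $\sigma$ (resp.\ $\tau$) with the chosen branch of $\sqrt{\gD_S}$ (resp.\ $\sqrt{\gD_T}$). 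The group $\gal(K_{f_Sf_T}/K)=\langle\sigma\rangle\times\langle\tau\rangle$ has exactly the four order-$3$ subgroups $\langle\tau\rangle,\langle\sigma\rangle,\langle\sigma\tau\rangle,\langle\sigma\tau^{-1}\rangle$, so $K_{f_Sf_T}$ has exactly four subfields of degree $3$ over $K$ (all cyclic, as the group is abelian), two of them being $K_{f_S}=K_{f_Sf_T}^{\langle\tau\rangle}$ and $K_{f_T}=K_{f_Sf_T}^{\langle\sigma\rangle}$.

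Next I would produce generators of the remaining two subfields. Put
\[
\gamma:=\alpha_1\beta_1+\alpha_2\beta_2+\alpha_3\beta_3,\qquad\gamma':=\alpha_1\beta_1+\alpha_2\beta_3+\alpha_3\beta_2.
\]
A direct check shows $\sigma\tau$ cyclically permutes the three summands of $\gamma$ and $\sigma\tau^{-1}$ those of $\gamma'$, so $\gamma\in K_{f_Sf_T}^{\langle\sigma\tau\rangle}$ and $\gamma'\in K_{f_Sf_T}^{\langle\sigma\tau^{-1}\rangle}$. To get $[K(\gamma):K]=3$ it is enough to verify $\gamma\notin K$, i.e.\ $\sigma\gamma\neq\gamma$: writing $\sigma\gamma-\gamma=(\alpha_2-\alpha_1)\beta_1+(\alpha_3-\alpha_2)\beta_2+(\alpha_1-\alpha_3)\beta_3$, eliminating $\beta_3$ via $\beta_1+\beta_2+\beta_3=T_1$ and invoking that $1,\beta_1,\beta_2$ is a $K$-basis of $K_{f_T}$ (here one uses $\Char K\neq 3$ and the absence of cube roots of unity, which prevent $\tau$ from acting $K$-linearly on $K+K\beta_1$), hence $K_{f_S}$-linearly independent in $K_{f_Sf_T}$ by linear disjointness, one is forced to $\alpha_1=\alpha_3$, a contradiction; likewise $\gamma'\notin K$. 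Since $K(\gamma)$ and $K(\gamma')$ are fixed by distinct order-$3$ subgroups, neither of which is $\langle\sigma\rangle$ or $\langle\tau\rangle$, these are exactly the two cyclic cubic subfields of $K_{f_Sf_T}$ different from $K_{f_S}$ and $K_{f_T}$, and it remains to identify their minimal polynomials with $f_R$ and $f_{R'}$.

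As $\{1,\sigma,\sigma^2\}$ is a transversal of $\langle\sigma\tau\rangle$ in $\gal(K_{f_Sf_T}/K)$, the $K$-conjugates of $\gamma$ are $\gamma,\sigma\gamma,\sigma^2\gamma$, so the minimal polynomial of $\gamma$ over $K$ is $\prod_{i=0}^{2}(x-\sigma^i\gamma)$; I would check the identities $\sum_i\sigma^i\gamma=R_1$, $\sum_{i<j}(\sigma^i\gamma)(\sigma^j\gamma)=R_2$, $\prod_i\sigma^i\gamma=R_3$ with $R_1,R_2,R_3$ as in \eqref{defofRsubis}. The first is immediate, since $\sigma$ fixes the $\beta_j$: $\sum_i\sigma^i\gamma=\sum_{a,b}\alpha_a\beta_b=S_1T_1=R_1$. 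For the second, use $R_2=\tfrac12\big(R_1^2-\sum_i(\sigma^i\gamma)^2\big)$ and expand $\sum_i(\sigma^i\gamma)^2$ in the elementary symmetric functions of the $\alpha$'s and $\beta$'s to obtain $S_1^2T_2+T_1^2S_2-3S_2T_2$. The crux is $R_3=\gamma\,(\sigma\gamma)\,(\sigma^2\gamma)$: expanding the product and grouping monomials, one part is symmetric separately in the $\alpha_i$ and in the $\beta_j$ and rewrites as $S_1^3T_3+T_1^3S_3-3S_1S_2T_3-3T_1T_2S_3+9S_3T_3$, while the remaining ``cyclic'' part equals $P_SP_T+Q_SQ_T$, where $P_S:=\alpha_1^2\alpha_2+\alpha_2^2\alpha_3+\alpha_3^2\alpha_1$, $Q_S:=\alpha_1^2\alpha_3+\alpha_2^2\alpha_1+\alpha_3^2\alpha_2$ and $P_T,Q_T$ are the analogues for the $\beta_j$; using the classical identities $P_S+Q_S=S_1S_2-3S_3$ and $Q_S-P_S=(\alpha_1-\alpha_2)(\alpha_2-\alpha_3)(\alpha_3-\alpha_1)=\sqrt{\gD_S}$ (the latter by our choice of branch), together with their $T$-analogues, one finds $P_SP_T+Q_SQ_T=\tfrac14(S_1S_2-3S_3+\sqrt{\gD_S})(T_1T_2-3T_3+\sqrt{\gD_T})+\tfrac14(S_1S_2-3S_3-\sqrt{\gD_S})(T_1T_2-3T_3-\sqrt{\gD_T})$, which is precisely the closing part of $R_3$ in \eqref{defofRsubis}. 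Finally, $\gamma'$ arises from $\gamma$ by reversing the cyclic order of the $\beta_j$ (that is, replacing $\tau$ by $\tau^{-1}$), which transposes two of the $\beta_j$ and hence sends $(\beta_1-\beta_2)(\beta_2-\beta_3)(\beta_3-\beta_1)=\sqrt{\gD_T}$ to $-\sqrt{\gD_T}$; inspection of \eqref{defofRsubis} shows this leaves $R_1,R_2$ fixed and turns $R_3$ into $R_3'$, so the minimal polynomial of $\gamma'$ is $f_{R'}$. Therefore the four cyclic cubic subfields $K_{f_S}$, $K_{f_T}$, $K(\gamma)$, $K(\gamma')$ of $K_{f_Sf_T}$ are the splitting fields of $f_S$, $f_T$, $f_R$, $f_{R'}$, proving the lemma. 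I expect the only real obstacle to be the bookkeeping in this last symmetric-function identity — in particular keeping the branches of $\sqrt{\gD_S}$ and $\sqrt{\gD_T}$ aligned with the cyclic labellings of the roots — which is most safely settled by a direct computer-algebra verification of $\prod_{i=0}^{2}(x-\sigma^i\gamma)=f_R(x)$ once the terms are organised as above.
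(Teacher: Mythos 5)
Your proposal is correct and fills in exactly the details behind the paper's terse justification (``An exercise in symmetric polynomials''). The construction of $\gamma=\alpha_1\beta_1+\alpha_2\beta_2+\alpha_3\beta_3$ and $\gamma'=\alpha_1\beta_1+\alpha_2\beta_3+\alpha_3\beta_2$ as generators of the two remaining cyclic cubic subfields, the identification of their $K$-conjugates using the transversal $\{1,\sigma,\sigma^2\}$, and the bookkeeping of the symmetric-function identities all check out: a quick sanity check with $\alpha_i=\beta_i=i$ gives $\gamma\,\sigma\gamma\,\sigma^2\gamma = 14\cdot 11\cdot 11 = 1694$, which matches the symmetric part $540$ plus $P_SP_T+Q_SQ_T=23^2+25^2=1154$. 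Your branch-alignment claim $(\alpha_1-\alpha_2)(\alpha_2-\alpha_3)(\alpha_3-\alpha_1)=Q_S-P_S$, together with $P_S+Q_S=S_1S_2-3S_3$, is what makes the last two terms of $R_3$ in \eqref{defofRsubis} come out; and replacing $\sqrt{\gD_T}$ by $-\sqrt{\gD_T}$ (i.e.\ transposing $\beta_2,\beta_3$) visibly turns $R_3$ into $R_3'$, giving $f_{R'}$ as the minimal polynomial of $\gamma'$.

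Two small remarks. First, the auxiliary hypotheses you impose ($\Char K \neq 3$, no primitive cube root of unity in $K$) do no harm here since $K=\mbq(t,D)$, and you are right that the second is genuinely needed for the step ``$1,\beta_1,\beta_2$ is a $K$-basis of $K_{f_T}$'': if $3T_2=T_1^2$ then $f_T$ depresses to $x^3-q$, whose splitting field over a $\zeta_3$-free $K$ cannot be cyclic cubic with $\gD_T\in(K^\times)^2$ (since $\gD_T=-27q^2$ forces $\sqrt{-3}\in K$); so the degeneracy you worry about is automatically ruled out by \eqref{galoisgroupisZmod3timesZmod3}. Second, in your linear-independence argument the two coefficient equations $\alpha_2+\alpha_3-2\alpha_1=0$ and $2\alpha_3-\alpha_1-\alpha_2=0$ add to $3(\alpha_3-\alpha_1)=0$, which is where $\Char K\neq 3$ enters — worth making that explicit since you invoke it.

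Overall this is the argument the paper intends, carried out carefully; I have no corrections beyond the clarifications above.
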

\begin{proof}
An exercise in symmetric polynomials.
\end{proof}

Given a field $K$ and elements $S_1, S_2, S_3, T_1, T_2, T_3 \in K$, we may consider the following system of equations in the variables $a$, $b$ and $c$.
\begin{equation} \label{abcequations}
\begin{split}
T_1 = &a \left( S_1^2 - 2S_2 \right) + b S_1 + 3c, \\
T_2 = &a^2 S_2^2 - 2a^2 S_1 S_3 + ab S_1 S_2 - 3abS_3 + 2acS_1^2 - 4acS_2 + b^2 S_2 + 2bc S_1 + 3c^2, \\
T_3 = &a^3 S_3^2 + a^2b S_2 S_3 - 2a^2c S_1 S_3 + a^2c S_2^2 + ab^2 S_1 S_3 + abc S_1 S_2 - 3abc S_3 \\
&+ ac^2 S_1^2 - 2ac^2 S_2 + b^3 S_3 + b^2 c S_2 + bc^2 S_1 + c^3.
\end{split}
\end{equation}

\begin{lemma} \label{settingcubicfieldsequaltoeachotherlemma}
Let $K$ be a field, let $f_S(x)$, $f_T(x) \in K[x]$ be irreducible monic cubic polynomials as in \eqref{defoffsubSandfsubT} and denote by $K_{f_S}$ (resp. by $K_{f_T}$) the splitting field of $f_S(x)$ (resp. of $f_T(x)$), viewed as subfields of a fixed algebraic closure $\ol{K}$ of $K$.  Assume that the discriminant $\gD_S$ of $f_S(x)$ satisfies $\gD_S \in \left( K^\times \right)^2$, so that $\gal(K_{f_S}/K)$ is a cyclic group of order $3$. We then have that $K_{f_S} = K_{f_T}$ if and only if the system of equations \eqref{abcequations} has a solution $(a,b,c) \in K^3$.
\end{lemma}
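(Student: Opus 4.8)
The plan is to reinterpret the system \eqref{abcequations} as the single assertion that a polynomial manufactured from $f_S$ equals $f_T$, and then to exploit the hypothesis that $K_{f_S}/K$ is cyclic of degree $3$. Concretely: write $\alpha_1,\alpha_2,\alpha_3 \in \ol{K}$ for the roots of $f_S(x)$, so that $S_1,S_2,S_3$ are their elementary symmetric functions, and for a triple $(a,b,c) \in K^3$ consider the monic cubic
\[
h_{a,b,c}(x) := \prod_{j=1}^{3}\bigl( x - (a\alpha_j^2 + b\alpha_j + c) \bigr).
\]
Its coefficients are symmetric in the $\alpha_j$, hence lie in $K$, and expanding them via Newton's identities in terms of $S_1,S_2,S_3$ should show — this is the one genuinely computational step, and it is entirely parallel to the computation underlying Lemma \ref{gettingattheothercubicfieldslemma} — that $h_{a,b,c}(x) = f_T(x)$ \emph{if and only if} $(a,b,c)$ satisfies \eqref{abcequations}; the three equations there record, in order, the matching of the $x^2$-coefficient, the $x$-coefficient, and the constant term. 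I would establish this equivalence first and then use it in both directions.

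For ``$\Rightarrow$'', assume $K_{f_S} = K_{f_T} =: L$. Since $\gD_S$ is a square in $K$, the extension $L/K$ is cyclic of degree $3$, so $L = K(\alpha_1)$ with $\{1,\alpha_1,\alpha_1^2\}$ a $K$-basis. Because $f_T$ is irreducible of degree $3$, any root $\beta$ of $f_T$ generates a degree-$3$ subfield of $L$, hence all of $L$; therefore $\beta = a\alpha_1^2 + b\alpha_1 + c$ for a unique $(a,b,c) \in K^3$. Then $h_{a,b,c}(\beta) = 0$, and since $h_{a,b,c}$ is monic of degree $3$ and divisible by the minimal polynomial $f_T$ of $\beta$, we get $h_{a,b,c} = f_T$, so $(a,b,c)$ solves \eqref{abcequations} by the first paragraph. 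For ``$\Leftarrow$'', starting from a solution $(a,b,c)$ I would set $\beta := a\alpha_1^2 + b\alpha_1 + c \in K(\alpha_1) = K_{f_S}$; since $h_{a,b,c} = f_T$, $\beta$ is a root of $f_T$, so $K(\beta)$ is a degree-$3$ subfield of $K_{f_S}$ (which itself has degree $3$ over $K$) and hence equals $K_{f_S}$. The crucial point is that $K_{f_S}/K$ is Galois: therefore $K(\beta) = K_{f_S}$ contains \emph{all} $K$-conjugates of $\beta$, i.e. all roots of $f_T$, giving $K_{f_T} \subseteq K_{f_S}$; the reverse inclusion $K_{f_S} = K(\beta) \subseteq K_{f_T}$ is clear, so $K_{f_S} = K_{f_T}$ (and, as a byproduct, $\gal(K_{f_T}/K)$ is forced to be cyclic of order $3$).

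I do not expect a serious obstacle. The only item demanding care is the symmetric-function bookkeeping in the first paragraph — in particular, checking that the degree-three-in-$(a,b,c)$ expression for the constant term of $h_{a,b,c}$ reduces exactly to the right-hand side of the third equation of \eqref{abcequations} — which is routine and of the same nature as the identity in Lemma \ref{gettingattheothercubicfieldslemma}. The structural heart of the argument — that a cyclic cubic field is generated over $K$ by any single root of an irreducible cubic it splits, so that relating $f_S$ and $f_T$ is merely a change of power basis $\beta = a\alpha^2 + b\alpha + c$, and that being Galois upgrades ``contains one root of $f_T$'' to ``contains the full splitting field of $f_T$'' — presents no difficulty.
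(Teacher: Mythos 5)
Your proof is correct and follows essentially the same approach as the paper's: parametrize elements of $K_{f_S}$ via a power basis $\beta = a\alpha^2 + b\alpha + c$, observe that the system \eqref{abcequations} encodes equality of elementary symmetric functions (equivalently, of minimal polynomials), and then use the Galois (normal) property of the splitting field to pass between one root and all roots. The auxiliary polynomial $h_{a,b,c}$ you introduce is just a clean repackaging of the computation \eqref{symmetricpolynomialequalities} already present in the paper.
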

\begin{proof}
Let $\ga \in \ol{K}$ denote a root of $f_S(x)$.  Then $K_{f_S} = K(\ga)$, and so we may write an arbitrary element $\gb \in K_{f_S}$ in the form
\begin{equation} \label{betaasalgebraicexpressioninalpha}
\gb = a \ga^2 + b\ga + c \quad\quad \left( a, b, c \in K \right).
\end{equation}
The elementary symmetric polynomials $E_i(a \ga^2 + b\ga + c)$ of such an element are then readily computed to be
\begin{equation} \label{symmetricpolynomialequalities}
\begin{split}
E_1(a \ga^2 + b\ga + c) = &a \left( S_1^2 - 2S_2 \right) + b S_1 + 3c, \\
E_2(a \ga^2 + b\ga + c) = &a^2 S_2^2 - 2a^2 S_1 S_3 + ab S_1 S_2 - 3abS_3 + 2acS_1^2 - 4acS_2 + b^2 S_2 + 2bc S_1 + 3c^2, \\
E_3(a \ga^2 + b\ga + c) = &a^3 S_3^2 + a^2b S_2 S_3 - 2a^2c S_1 S_3 + a^2c S_2^2 + ab^2 S_1 S_3 + abc S_1 S_2 - 3abc S_3 \\
&+ ac^2 S_1^2 - 2ac^2 S_2 + b^3 S_3 + b^2 c S_2 + bc^2 S_1 + c^3.
\end{split}
\end{equation}
Thus, if \eqref{abcequations} have a solution $(a,b,c) \in K^3$, we see that $f_T(x)$ has a root in $K_{f_S}$, thus $f_T(x)$ splits completely over $K_{f_S}$, and so $K_{f_T} = K_{f_S}$.  Conversely, if $K_{f_T} = K_{f_S}$, then let $\gb \in K_{f_T}$ be a root of $f_T(x)$.  Writing $\gb$ in the form \eqref{betaasalgebraicexpressioninalpha}, we see that $(a,b,c) \in K^3$ is then a solution to \eqref{abcequations}.
\end{proof}

\subsection{Division fields of elliptic curves and their subfields} \label{divisionfieldssubsection}

In this section, we exhibit explicitly various subfields of the $m$th division field $\mbq(t,D)\left( \mc{E}[m] \right)$ for various levels $m$ and elliptic curves $\mc{E}$ over $\mbq(t,D)$.  The Borel subgroup
\[
B(\ell) := \left\{ \begin{pmatrix} * & * \\ 0 & * \end{pmatrix} \right\} \subseteq \GL_2(\mbz/\ell\mbz)
\]
plays a key role, as do the two multiplicative homomorphisms $\psi_{\ell,1}, \psi_{\ell,2} : B(\ell) \longrightarrow (\mbz/\ell\mbz)^\times$ defined by
\begin{equation} \label{generaldefinitionofpsisubp}
\psi_{\ell,1}\left( \begin{pmatrix} a & b \\ 0 & d \end{pmatrix} \right) := a, \quad\quad \psi_{\ell,2}\left( \begin{pmatrix} a & b \\ 0 & d \end{pmatrix} \right) := d.
\end{equation}
For any elliptic curve $E$ over $\mbq$, we have
\begin{equation} \label{interpretationofwhenGaloismapsintoBorel}
\rho_{E,\ell}(G_\mbq) \subseteq B(\ell) \; \Longleftrightarrow \; \text{ $\exists$ a $G_\mbq$-stable cyclic subgroup } \langle P \rangle \subseteq E[\ell].
\end{equation}
When this is the case, we denote by $E_{\langle P \rangle}' := E / \langle P \rangle$ the quotient curve, which is isogenous over $\mbq$ to $E$.  We have
$
\rho_{E_{\langle P \rangle}',\ell}(G_\mbq) \subseteq B(\ell),
$
or in other words, 
\begin{equation} \label{existenceofPprime}
\text{ $\exists$ a $G_\mbq$-stable cyclic subgroup } \langle P' \rangle \subseteq E_{\langle P \rangle}'[\ell]
\end{equation}
(this is the kernel of the dual isogeny $E_{\langle P \rangle}' \rightarrow E$).  In these terms, the Galois representations $\psi_{\ell,1}$ and $\psi_{\ell,2}$ above are simply defined by restricting the action of $G_\mbq$ respectively to $\langle P \rangle$ and to $\langle P' \rangle$, i.e. we have
\begin{equation} \label{psisubellsactingonPandPprime}
\gs : P \mapsto \left[ \psi_{\ell,1}(\gs) \right] P, \quad\quad\quad \gs : P' \mapsto \left[ \psi_{\ell,2}(\gs) \right] P' \quad\quad \left( \gs \in G_\mbq \right).
\end{equation}
Finally, we note that $\psi_{\ell,i}^{(\ell-1)/2}\left( g \right) \in \{ \pm 1 \} \subseteq (\mbz/\ell\mbz)^\times$, and that this value agrees with the Legendre symbol evaluated at $\psi_{\ell,i}\left( g \right)$, i.e. we have
\[
\psi_{\ell,1}^{(\ell-1)/2}\left( \begin{pmatrix} a & b \\ 0 & d \end{pmatrix} \right) \equiv \left( \frac{a}{\ell} \right) \mod \ell, \quad\quad \psi_{\ell,2}^{(\ell-1)/2}\left( \begin{pmatrix} a & b \\ 0 & d \end{pmatrix} \right) \equiv \left( \frac{d}{\ell} \right) \mod \ell.
\]

\subsubsection{The level $m = 2$}

The group $\GL_2(\mbz/2\mbz)$ is a non-abelian group of order $6$, and since there is only one such group up to isomorphism, we see that it is isomorphic to the symmetric group of order $6$:  
\begin{equation} \label{GL2Zmod2ZisisomorphictoD3}
\GL_2(\mbz/2\mbz) \simeq S_3.
\end{equation}
As such, there is a unique proper non-trivial normal subgroup 
\[
\left\langle \begin{pmatrix} 1 & 1 \\ 1 & 0 \end{pmatrix} \right\rangle \subseteq \GL_2(\mbz/2\mbz)
\]
which has index two (corresponding under \eqref{GL2Zmod2ZisisomorphictoD3} to the alternating subgroup $A_3$).  This index two subgroup happens to be the commutator subgroup $\SL_2(\mbz/2\mbz)'$ of $\SL_2(\mbz/2\mbz)$; as we shall see, both this fact and the next classical, well-known lemma generalize to levels $3$ and $4$.
\begin{lemma} \label{level2kummersubextensionlemma}
Let $E$ be an elliptic curve over $\mbq$ and let $\gD_E$ denote the discriminant of any\footnote{Note that the field $\mbq(\sqrt{\gD_E})$ (resp. the fields $\mbq(\gD_E^{1/3})$ and $\mbq(\gD_E^{1/4})$ appearing in Lemmas \ref{level3kummersubextensionlemma} and \ref{level4kummersubextensionlemma}) does not depend on the choice of Weierstrass model for $E$, even though the discriminant $\gD_E$ does.} Weierstrass model of $E$.  We have that $\mbq(\sqrt{\gD_E}) \subseteq \mbq(E[2])$.  Furthermore, this subfield corresponds via Galois theory to the subgroup $\rho_{E,2}(G_\mbq) \cap \SL_2(\mbz/2\mbz)'$, i.e. we have
\[
\mbq(E[2])^{\rho_{E,2}(G_\mbq) \cap \SL_2(\mbz/2\mbz)'} = \mbq\left( \sqrt{\gD_E} \right).
\]
\end{lemma}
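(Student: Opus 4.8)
The plan is to work with an explicit Weierstrass model and to identify $\mbq(E[2])$ as the splitting field of a cubic. First I would reduce to a model of the form $y^2 = f(x)$ with $f(x) \in \mbq[x]$ monic of degree $3$ (completing the square, valid in characteristic $0$, does not alter the discriminant). The nonzero $2$-torsion points are then exactly the three points $(e_i,0)$, where $e_1,e_2,e_3$ are the roots of $f$, so $\mbq(E[2]) = \mbq(e_1,e_2,e_3)$ is the splitting field of $f$, and the permutation action of $G_\mbq$ on $\{(e_1,0),(e_2,0),(e_3,0)\}$ realizes an injection $\gal(\mbq(E[2])/\mbq) \hookrightarrow S_3$ compatibly with the isomorphism $S_3 \simeq \GL_2(\mbz/2\mbz)$ of \eqref{GL2Zmod2ZisisomorphictoD3}. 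Since we only care about $\rho_{E,2}(G_\mbq)$ up to conjugacy and the subgroup we will single out is characteristic, the particular basis of $E[2]$ used to define $\rho_{E,2}$ is irrelevant.

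Next I would recall that for monic $f$ one has $\disc(f) = \prod_{i<j}(e_i-e_j)^2$, and that the discriminant $\gD_E$ of the Weierstrass model differs from $\disc(f)$ only by a nonzero rational square (for $y^2 = f(x)$ with $f$ monic one has $\gD_E = 16\,\disc(f)$); more generally, any two Weierstrass discriminants of $E$ differ by a factor $u^{12} \in (\mbq^\times)^2$, so $\mbq(\sqrt{\gD_E})$ is well-defined independently of the model. It follows that $\sqrt{\gD_E} = c\prod_{i<j}(e_i-e_j)$ for some $c \in \mbq^\times$, whence $\mbq(\sqrt{\gD_E}) = \mbq\left( \prod_{i<j}(e_i-e_j) \right) \subseteq \mbq(E[2])$, which gives the first assertion. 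A permutation $\sigma \in S_3$ sends $\prod_{i<j}(e_i-e_j)$ to $\sgn(\sigma)\prod_{i<j}(e_i-e_j)$, so $\sigma$ fixes $\sqrt{\gD_E}$ if and only if $\sigma$ is even; hence $\gal\left( \mbq(E[2])/\mbq(\sqrt{\gD_E}) \right)$ corresponds to $\rho_{E,2}(G_\mbq) \cap A_3$, where $A_3 \subseteq S_3$ is the alternating subgroup.

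Finally I would identify $A_3$ with $\SL_2(\mbz/2\mbz)' = \left\langle \begin{pmatrix} 1 & 1 \\ 1 & 0 \end{pmatrix} \right\rangle$: under $S_3 \simeq \GL_2(\mbz/2\mbz) = \SL_2(\mbz/2\mbz)$ both are the unique subgroup of order $3$, hence characteristic, and the excerpt has already recorded that this subgroup is the commutator subgroup. Being characteristic, this identification is independent of all choices, so $\rho_{E,2}(G_\mbq) \cap A_3 = \rho_{E,2}(G_\mbq) \cap \SL_2(\mbz/2\mbz)'$, and the Galois correspondence would then yield $\mbq(E[2])^{\rho_{E,2}(G_\mbq) \cap \SL_2(\mbz/2\mbz)'} = \mbq(\sqrt{\gD_E})$. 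There is no serious obstacle here; the only points I would take care to spell out are the constant relating $\disc(f)$ to $\gD_E$ together with its being a square (so that it does not affect the quadratic subfield), and the fact that the relevant index-two subgroup of $\GL_2(\mbz/2\mbz)$ is characteristic, which is exactly what makes the statement independent of the basis implicit in $\rho_{E,2}$.
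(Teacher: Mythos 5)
Your argument is correct and complete; the paper itself does not give a proof but merely cites \cite[pp.~218]{langtrotter}, and what you have written out is precisely the standard argument one finds there: pass to a model $y^2=f(x)$, observe that $\mbq(E[2])$ is the splitting field of the cubic $f$ and that $\gD_E$ equals $\disc(f)$ up to a rational square (here $16$), recognize $\sqrt{\disc(f)}=\prod_{i<j}(e_i-e_j)$ as fixed exactly by even permutations, and then identify $A_3$ with the unique order-$3$, hence characteristic, subgroup $\SL_2(\mbz/2\mbz)'$ under $\GL_2(\mbz/2\mbz)\simeq S_3$, so that no choice of basis for $E[2]$ affects the conclusion. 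Your observations about why the square factor $16$ is irrelevant and why the characteristic nature of $A_3$ disposes of the conjugacy ambiguity are exactly the two points worth making explicit, and you made them.
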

\begin{proof}
See for instance \cite[pp. 218]{langtrotter}.
\end{proof}
Throughout the paper, the role played by restriction map $\gal\left( \mbq(E[2])/\mbq \right) \rightarrow \gal\left( \mbq\left( \sqrt{\gD_E} \right)/\mbq \right)$ is significant enough to warrant our giving it an explicit name.  We make the definition
\begin{equation} \label{defofve}
\begin{tikzcd}
\ve : \GL_2(\mbz/2\mbz) \rar{\simeq}& S_3 \rar{\can}& \frac{S_3}{A_3} \rar{\simeq}& \{ \pm 1 \}.
\end{tikzcd}
\end{equation} 
Thus, we have
\[
\ker \ve = \left\langle \begin{pmatrix} 1 & 1 \\ 1 & 0 \end{pmatrix} \right\rangle \quad\quad \text{ and } \quad\quad \mbq(E[2])^{\ker \ve} = \mbq\left( \sqrt{\gD_E} \right).
\]

\subsubsection{The level $m = 3$}

Using the classical theory of modular functions (see \cite{zywina}), it can be shown that there is a rational parameter $t$ on the genus zero modular curve $X_0(3)$ such that the forgetful map $X_0(3) \longrightarrow X(1)$ takes the form
\[
\mbp^1(t) \longrightarrow \mbp^1(j), \quad t \mapsto 27 \frac{(t+1)(t+9)^3}{t^3}.
\]
We define $\displaystyle j_3(t) := 27 \frac{(t+1)(t+9)^3}{t^3} \in \mbq(t)$ and the elliptic curve $\mc{E}_{3}$ over $\mbq(t,D)$ by
\begin{equation} \label{level3genericellipticcurve}
\mc{E}_3 : \; y^2 = x^3 + \frac{108D^2 j_3(t)}{1728 - j_3(t)} x + \frac{432D^3 j_3(t)}{1728 - j_3(t)}.
\end{equation}
By restricting the action of $\gal\left( \mbq(t,D)(\mc{E}_3[3])/\mbq(t,D) \right)$ to $\mc{E}_3[3]$ and fixing a $\mbz/3\mbz$-basis thereof, we obtain an isomorphism
\begin{equation} \label{genericGaloisimageiscontainedinborelmod3}
\gal\left( \mbq(t,D)(\mc{E}_3[3])/\mbq(t,D) \right) \simeq \left\{ \begin{pmatrix} * & * \\ 0 & * \end{pmatrix} \right\} \subseteq \GL_2(\mbz/3\mbz).
\end{equation}
The following lemma explicitly characterizes the subfields cut out by the characters $\psi_{3,1}$ and $\psi_{3,2}$.
\begin{lemma} \label{subfieldsoflevel3lemma}
Let $\mc{E}_3$ be the elliptic curve over $\mbq(t,D)$ defined by \eqref{level3genericellipticcurve} and define the characters 
\[
\psi_{3,1}, \psi_{3,2} : \gal\left( \mbq(t,D)(\mc{E}_3[3])/\mbq(t,D) \right) \rightarrow \{ \pm 1 \}
\]
to be the restrictions under \eqref{genericGaloisimageiscontainedinborelmod3} of the characters defined in \eqref{generaldefinitionofpsisubp}.  We then have
\begin{equation} \label{subfieldsoflevel3lemmaeqn}
\begin{split}
\mbq(t,D)(\mc{E}_3[3])^{\ker \psi_{3,1}} &= \mbq(t,D)\left( \sqrt{\frac{6D(t+1)(t+9)}{(t^2-18t-27)}} \right), \\
\mbq(t,D)(\mc{E}_3[3])^{\ker \psi_{3,2}} &= \mbq(t,D)\left( \sqrt{-\frac{2D(t+1)(t+9)}{(t^2-18t-27)}} \right).
\end{split}
\end{equation}
\end{lemma}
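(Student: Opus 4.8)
The plan is to identify the quadratic subfield cut out by $\psi_{3,1}$ with the field generated by the $y$-coordinate of a generator of the (unique) $\gal$-stable cyclic subgroup of $\mc{E}_3[3]$, to compute that generator explicitly, and then to deduce the subfield cut out by $\psi_{3,2}$ from the Weil pairing with no further geometry.

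\emph{Setting up the $3$-torsion point.} Write $K := \mbq(t,D)$ and $G_K := \gal(\overline{K}/K)$. By \eqref{genericGaloisimageiscontainedinborelmod3} and \eqref{interpretationofwhenGaloismapsintoBorel}, $\mc{E}_3$ admits a $G_K$-stable cyclic subgroup $\langle P \rangle \subseteq \mc{E}_3[3]$, and since the image of $G_K$ is the full Borel (the stabilizer of a single line), $\langle P \rangle$ is the unique such subgroup. Writing $P = (x_P, y_P)$, the $G_K$-action on $\langle P \rangle$ factors through $(\mbz/3\mbz)^\times = \{\pm 1\}$, so $x_P$ is fixed by $G_K$; hence $x_P \in K$, and $x_P$ is the unique root in $K$ of the $3$-division polynomial $\psi_3(x) = 3x^4 + 6Ax^2 + 12Bx - A^2$ of $\mc{E}_3$ (here $A,B$ are the coefficients in \eqref{level3genericellipticcurve}). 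Since $\sigma(P) = (x_P, \sigma(y_P))$, the relation \eqref{psisubellsactingonPandPprime} gives $\sigma(y_P) = \psi_{3,1}(\sigma)\, y_P$ under the identification of $\{\pm 1\} \subseteq (\mbz/3\mbz)^\times$ with $\{\pm 1\} \subseteq \mbz$. As $y_P \neq 0$ (a nonzero $3$-torsion point is not $2$-torsion) and $\psi_{3,1}$ is surjective (by \eqref{genericGaloisimageiscontainedinborelmod3} and the definition \eqref{generaldefinitionofpsisubp}), we get $y_P \notin K$ and
\[
K(\mc{E}_3[3])^{\ker \psi_{3,1}} = K(y_P) = K\bigl( \sqrt{x_P^3 + Ax_P + B} \bigr).
\]
Thus the first line of \eqref{subfieldsoflevel3lemmaeqn} reduces to computing the square class of $x_P^3 + Ax_P + B$ in $K^\times/(K^\times)^2$.

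\emph{The computation.} Write $A = D^2 A_0$, $B = D^3 B_0$ with $A_0 = 108 j_3(t)/(1728 - j_3(t))$ and $B_0 = 432 j_3(t)/(1728 - j_3(t))$ in $\mbq(t)$. The substitution $x = Du$ in $\psi_3$ shows $x_P = D u_P$, where $u_P \in \mbq(t)$ is the unique rational root of $3u^4 + 6A_0 u^2 + 12 B_0 u - A_0^2$, and then $x_P^3 + Ax_P + B = D^3\bigl(u_P^3 + A_0 u_P + B_0\bigr)$; so one must determine the square class of $D\bigl(u_P^3 + A_0 u_P + B_0\bigr)$. Here I would use the factorization $1728 - j_3(t) = -27(t^2 - 18t - 27)^2/t^3$, which yields $A_0 = -108(t+1)(t+9)^3/(t^2-18t-27)^2$ and $B_0 = 4A_0$ (and accounts for the denominator $t^2-18t-27$); clearing denominators one finds $u_P = v_P/(t^2-18t-27)$ for an explicit polynomial $v_P \in \mbq[t]$, and substituting back and simplifying gives $u_P^3 + A_0 u_P + B_0 = s^2 \cdot 6(t+1)(t+9)/(t^2-18t-27)$ for some $s \in \mbq(t)^\times$. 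This elementary but lengthy manipulation of rational functions is the step I expect to be the main obstacle; it produces the exact functions in \eqref{subfieldsoflevel3lemmaeqn} and is routinely confirmed in MAGMA. Together with the display above, it establishes the first line of \eqref{subfieldsoflevel3lemmaeqn}.

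\emph{The second subfield via the Weil pairing.} Finally, by Lemma \ref{weilpairinglemma} the character $\det \circ \rho_{\mc{E}_3, 3}$ is the mod-$3$ cyclotomic character, whose kernel fixes $K(\mu_3) = K(\sqrt{-3})$; and on the Borel \eqref{genericGaloisimageiscontainedinborelmod3} we have $\det = \psi_{3,1} \cdot \psi_{3,2}$ as $\{\pm 1\}$-valued characters. The Kummer map $a \mapsto \bigl(\sigma \mapsto \sigma(\sqrt a)/\sqrt a\bigr)$ from $K^\times/(K^\times)^2$ to quadratic characters of $G_K$ is an injective homomorphism; it sends the square class of $d_1 := 6D(t+1)(t+9)/(t^2-18t-27)$ to $\psi_{3,1}$ (previous paragraph) and the square class of $-3$ to $\psi_{3,1}\psi_{3,2}$, hence the square class of $-3\,d_1 = -18D(t+1)(t+9)/(t^2-18t-27)$ to $\psi_{3,2}$. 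Since $-18 = -2 \cdot 3^2$, this is the same square class as $-2D(t+1)(t+9)/(t^2-18t-27)$, which gives the second line of \eqref{subfieldsoflevel3lemmaeqn}. (Alternatively one can compute the $3$-isogenous curve $\mc{E}_3' = \mc{E}_3/\langle P \rangle$ via V\'elu's formulas, locate its unique rational $3$-torsion $x$-coordinate $x_{P'} \in K$, and check directly that $x_{P'}^3 + A'x_{P'} + B'$ lies in the square class of $-2D(t+1)(t+9)/(t^2-18t-27)$.)
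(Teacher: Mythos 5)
Your proposal is correct and follows essentially the same route as the paper: extract the unique $\mbq(t,D)$-rational root of the third division polynomial, observe that $K(\mc{E}_3[3])^{\ker\psi_{3,1}} = K(y_P)$, reduce to a square-class computation (the paper simply exhibits the factor $x - \frac{18D(t+1)(t+9)}{t^2-18t-27}$ and the resulting point, which confirms your $u_P^3 + A_0u_P + B_0 = \bigl(\tfrac{24t(t+9)}{t^2-18t-27}\bigr)^2\cdot\tfrac{6(t+1)(t+9)}{t^2-18t-27}$), and deduce the $\psi_{3,2}$-subfield from $\psi_{3,1}\psi_{3,2} = \det$ together with $K(\mu_3)=K(\sqrt{-3})$.
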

\begin{proof}
We compute that the $3$rd division polynomial associated to $\mc{E}_3$ has the $\mbq(t,D)$-rational factor 
\[
x - \frac{18D(t+1)(t+9)}{t^2 - 18t - 27}, 
\]
and this leads us to the point
\[
P := \left( \frac{18D(t+1)(t+9)}{t^2 - 18t - 27}, \frac{24Dt(t+9)}{t^2 - 18t - 27} \sqrt{\frac{6D(t+1)(t+9)}{(t^2-18t-27)}} \right) \in \mc{E}_3[3].
\]
Since $\displaystyle \mbq(t,D)(\mc{E}_3[3])^{\ker \psi_{3,1}} = \mbq(t,D)\left( P \right)$, this establishes the first formula in \eqref{subfieldsoflevel3lemmaeqn}; the expression for the fixed field of $\ker \psi_{3,2}$ then follows from the fact that $\displaystyle \psi_{3,1}(g) \psi_{3,2}(g) = \det g$, whose corresponding fixed field is $\mbq(t,D) \left( \sqrt{-3} \right)$.
\end{proof}

We will also make use of the following classical fact about the third division field of an elliptic curve.  Note that the commutator subgroup $\SL_2(\mbz/3\mbz)' := \left[ \SL_2(\mbz/3\mbz), \SL_2(\mbz/3\mbz) \right]$ is a normal subgroup of $\GL_2(\mbz/3\mbz)$ and the quotient group is dihedral of order 6:
\[
\frac{\GL_2(\mbz/3\mbz)}{\SL_2(\mbz/3\mbz)'} \simeq D_3.
\] 
Thus, the associated fixed field $\mbq(E[3])^{\rho_{E,3}(G_\mbq) \cap \SL_2(\mbz/3\mbz)'} \subseteq \mbq(E[3])$ is generically a $D_3$-extension of $\mbq$; the next lemma specifies generators for this subfield.
\begin{lemma} \label{level3kummersubextensionlemma}
Let $E$ be an elliptic curve over $\mbq$ and let $\gD_E$ denote the discriminant of any Weierstrass model of $E$.  We have that $\mbq\left( \mu_3, \gD_E^{1/3} \right) \subseteq \mbq(E[3])$.  Furthermore, this subfield corresponds via Galois theory to the subgroup $\rho_{E,3}(G_\mbq) \cap \SL_2(\mbz/3\mbz)'$, i.e. we have
\[
\mbq(E[3])^{\rho_{E,3}(G_\mbq) \cap \SL_2(\mbz/3\mbz)' } = \mbq \left( \mu_3, \gD_E^{1/3} \right).
\]
\end{lemma}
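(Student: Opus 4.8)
The plan is to prove the equivalent assertion that $L := \mbq(E[3])^{\rho_{E,3}(G_\mbq) \cap \SL_2(\mbz/3\mbz)'}$ equals $\mbq(\mu_3, \gD_E^{1/3})$; the containment $\mbq(\mu_3,\gD_E^{1/3}) \subseteq \mbq(E[3])$ is then immediate. First I would reduce to the case of a short Weierstrass model $E : y^2 = x^3 + Ax + B$ (so $\gD_E = -16(4A^3 + 27B^2)$): the field $\mbq(E[3])$ is model-independent, and so is $\mbq(\mu_3, \gD_E^{1/3})$, because changing Weierstrass coordinates multiplies $\gD_E$ by $u^{12} = (u^4)^3$ for some $u \in \mbq^\times$ and hence does not change the cubic subfield $\mbq(\gD_E^{1/3})$ (this also proves the claim in the footnote). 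Since $\SL_2(\mbz/3\mbz)'$ is normal in $\GL_2(\mbz/3\mbz)$ with quotient $\simeq D_3$, the extension $L/\mbq$ is Galois with $\gal(L/\mbq)$ embedding into $D_3 \cong S_3$; moreover, as $\SL_2(\mbz/3\mbz)' \subseteq \SL_2(\mbz/3\mbz) = \ker(\det)$, Lemma \ref{weilpairinglemma} gives $\mbq(\mu_3) = \mbq(E[3])^{\rho_{E,3}(G_\mbq) \cap \SL_2(\mbz/3\mbz)} \subseteq L$. It remains to pin down the ``cubic part'' of $L$.

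For that, I pass to the subfield $M := \mbq\big( x(P) : P \in E[3] \big) \subseteq \mbq(E[3])$, which is Galois over $\mbq$ with group the image $\overline{H}$ of the projective mod $3$ representation $\overline{\rho}_{E,3} : G_\mbq \to \PGL_2(\mbz/3\mbz) \cong S_4$. The commutator subgroup $\SL_2(\mbz/3\mbz)'$ is the quaternionic Sylow $2$-subgroup of $\SL_2(\mbz/3\mbz)$; containing $-I$, it maps onto the unique normal Klein four-subgroup $V_4$ of $S_4$, and a short diagram chase identifies the image of $\rho_{E,3}(G_\mbq) \cap \SL_2(\mbz/3\mbz)'$ in $S_4$ with $\overline{H} \cap V_4$. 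Hence $L = M^{\overline{H} \cap V_4}$, which by the classical theory of quartics is the splitting field of the resolvent cubic of the $3$-division polynomial $\psi_3(x) = 3x^4 + 6Ax^2 + 12Bx - A^2$ of $E$. So the lemma comes down to showing that this resolvent cubic has splitting field $\mbq(\mu_3, \gD_E^{1/3})$.

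The last step is an explicit computation. Normalizing $\psi_3$ to the depressed quartic $x^4 + 2Ax^2 + 4Bx - \tfrac{1}{3} A^2$, the standard resolvent cubic is $z^3 - 2Az^2 + \tfrac{4}{3} A^2 z - \tfrac{8}{3} A^3 - 16B^2$, and the linear substitution $z = (v + 2A)/3$ transforms it, after multiplying through by $27$, into $v^3 - 16(4A^3 + 27B^2) = v^3 + \gD_E$. Therefore the splitting field of the resolvent cubic equals that of $v^3 + \gD_E$, namely $\mbq(\mu_3, \gD_E^{1/3})$; combined with the reductions above this yields $L = \mbq(\mu_3, \gD_E^{1/3})$, and in particular $\mbq(\mu_3, \gD_E^{1/3}) \subseteq \mbq(E[3])$, with the stated Galois-theoretic interpretation.

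None of the computations is delicate, so I do not foresee a genuine obstacle; the two places that need care are matching the Klein four-group $V_4$ of the resolvent-cubic formalism with the image of $\SL_2(\mbz/3\mbz)'$ in $\PGL_2(\mbz/3\mbz)$, and keeping the normalizations of $\psi_3$ and the sign of $\gD_E$ consistent so that the resolvent cubic comes out to exactly $v^3 + \gD_E$. As a variant one could instead establish the identity over the function field $\mbq(t,D)$ for the generic curve $\mc{E}_3$ of \eqref{level3genericellipticcurve} by a direct $3$-division polynomial calculation in the style of the proof of Lemma \ref{subfieldsoflevel3lemma}, and then specialize, but the computation above is more transparent.
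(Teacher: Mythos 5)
Your argument is correct, and it supplies a genuine proof where the paper offers only a pointer: the paper's ``proof'' of this lemma is a one-line citation to Lang--Trotter (pp.~181--183). Every step in your write-up checks out. $\SL_2(\mbz/3\mbz)'$ is indeed the quaternion Sylow $2$-subgroup $Q_8$ of $\SL_2(\mbz/3\mbz)\cong 2\cdot A_4$; since $-I\in Q_8$, its image in $\PGL_2(\mbz/3\mbz)\cong S_4$ is the normal Klein four-group $V_4$, and the lifting argument you sketch shows that the image of $\rho_{E,3}(G_\mbq)\cap Q_8$ in $\PGL_2(\mbz/3\mbz)$ is precisely $\overline H\cap V_4$ (the containment $\supseteq$ uses $-I\in Q_8$, which you flag correctly), so $L=M^{\overline H\cap V_4}$ is the splitting field of the resolvent cubic of $\psi_3$ over the field $M$ generated by the $x$-coordinates. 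The explicit computation is also right: for the monic quartic $x^4+2Ax^2+4Bx-\tfrac{1}{3}A^2$, the resolvent cubic with roots $x_1x_2+x_3x_4$, etc., is $z^3-2Az^2+\tfrac{4}{3}A^2z-\tfrac{8}{3}A^3-16B^2$, and the substitution $z=(v+2A)/3$ followed by scaling by $27$ yields $v^3-64A^3-432B^2=v^3+\gD_E$, whose splitting field over $\mbq$ is $\mbq(\mu_3,\gD_E^{1/3})$ in all cases, including when $\gD_E$ is a rational cube (where both fields collapse to $\mbq(\mu_3)$). The model-independence remark is also correct, since a change of Weierstrass model rescales $\gD_E$ by $u^{12}=(u^4)^3$; this at the same time verifies the paper's footnote. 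In short: the paper's route (defer to a classical reference) buys brevity, while yours buys a transparent, self-contained derivation via the resolvent cubic of the $3$-division polynomial.
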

\begin{proof}
This is a classical result; see for instance \cite[pp. 181--183]{langtrotter} and the references therein.
\end{proof}

\subsubsection{The level $m=4$}

The following lemma details the relevant classical facts surrounding the fourth division field of an elliptic curve.  Note that the commutator subgroup $\SL_2(\mbz/4\mbz)' := \left[ \SL_2(\mbz/4\mbz), \SL_2(\mbz/4\mbz) \right]$ is a normal subgroup of $\GL_2(\mbz/4\mbz)$ and the quotient group is dihedral of order 8:
\[
\frac{\GL_2(\mbz/4\mbz)}{\SL_2(\mbz/4\mbz)'} \simeq D_4.
\] 
Thus, the associated fixed field $\mbq(E[4])^{\rho_{E,4}(G_\mbq) \cap \SL_2(\mbz/4\mbz)'} \subseteq \mbq(E[4])$ is generically a $D_4$-extension of $\mbq$; we now specify generators for this subfield.
\begin{lemma} \label{level4kummersubextensionlemma}
Let $E$ be an elliptic curve over $\mbq$ and let $\gD_E$ denote the discriminant of any Weierstrass model of $E$.  We have that $\mbq\left( \mu_4, \gD_E^{1/4} \right) \subseteq \mbq(E[4])$.  Furthermore, this subfield corresponds via Galois theory to the subgroup $\rho_{E,4}(G_\mbq) \cap \SL_2(\mbz/4\mbz)'$, i.e. we have
\[
\mbq(E[4])^{\rho_{E,4}(G_\mbq) \cap \SL_2(\mbz/4\mbz)' } = \mbq \left( \mu_4, \gD_E^{1/4} \right).
\]
\end{lemma}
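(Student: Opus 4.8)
The plan is to follow the pattern of the classical Lemmas \ref{level2kummersubextensionlemma} and \ref{level3kummersubextensionlemma} (see \cite{langtrotter}), establishing two things: first the inclusion $\mbq\left( \mu_4, \gD_E^{1/4} \right) \subseteq \mbq(E[4])$, and second that this subfield is exactly the fixed field of $\rho_{E,4}(G_\mbq) \cap \SL_2(\mbz/4\mbz)'$.

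For the inclusion, I would put $E$ in the form $y^2 = (x - e_1)(x-e_2)(x-e_3) = x^3 + Ax + B$ over $\mbq$, so that $e_1, e_2, e_3 \in \mbq(E[2])$ and $16\prod_{i<j}(e_i - e_j)^2 = \gD_E$.  Using the duplication formula, a $4$-torsion point $Q$ with $2Q = (e_1,0)$ has $x$-coordinate $e_1 \pm u$, where $u := \sqrt{(e_1-e_2)(e_1-e_3)} \in \mbq(E[4])$; writing $a := e_1-e_2$ and $b := e_1-e_3$, a short computation gives that the corresponding $y$-coordinate satisfies $y^2 = ab(a+b\pm 2u) = u^2(\sqrt a \pm \sqrt b)^2$ for a compatible choice of square roots with $\sqrt a\sqrt b = u$.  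Since $u \in \mbq(E[4])^\times$, taking sums and differences of these $y$-coordinates shows $\sqrt{e_1-e_2}, \sqrt{e_1-e_3} \in \mbq(E[4])$, and by symmetry $\sqrt{e_i-e_j} \in \mbq(E[4])$ for all $i \neq j$.  Then $2\sqrt{e_1-e_2}\sqrt{e_1-e_3}\sqrt{e_2-e_3} \in \mbq(E[4])$ is a fourth root of $16\prod_{i<j}(e_i-e_j)^2 = \gD_E$, and combining this with $\mbq(\mu_4) \subseteq \mbq(E[4])$ (Lemma \ref{weilpairinglemma}) gives $\mbq\left( \mu_4, \gD_E^{1/4} \right) \subseteq \mbq(E[4])$; since the field $\mbq(\gD_E^{1/4})$ is independent of the Weierstrass model, this suffices.

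For the identification, set $N := \mbq(\mu_4, \gD_E^{1/4})$, which is Galois over $\mbq$ (the splitting field of $x^4 - \gD_E$), so $\gal(\mbq(E[4])/N)$ is normal in $\rho_{E,4}(G_\mbq)$; likewise $\rho_{E,4}(G_\mbq) \cap \SL_2(\mbz/4\mbz)'$ is normal there since $\SL_2(\mbz/4\mbz)' \trianglelefteq \GL_2(\mbz/4\mbz)$.  I would first observe that both $N$ and the fixed field $L$ of $\rho_{E,4}(G_\mbq) \cap \SL_2(\mbz/4\mbz)'$ contain $\mbq(\mu_4, \sqrt{\gD_E})$: for $L$, the factor $\mbq(\mu_4)$ comes from $\det \circ \rho_{E,4} = \chi_4$ (and $\det$ kills $\SL_2(\mbz/4\mbz)' \subseteq \SL_2(\mbz/4\mbz)$), while the factor $\mbq(\sqrt{\gD_E})$ comes from the character $\ve$ of \eqref{defofve} precomposed with reduction mod $2$, which also kills $\SL_2(\mbz/4\mbz)'$ (that subgroup surjects onto $\SL_2(\mbz/2\mbz)' = \ker\ve$ under reduction) and cuts out $\mbq(\sqrt{\gD_E})$ by Lemma \ref{level2kummersubextensionlemma}.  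To upgrade this to $L = N$, I would reduce to the case that $\rho_{E,4}$ is surjective — the general case following by a standard specialization argument, the construction of $\gD_E^{1/4}$ above being uniform in $E$ — where $\gal(\mbq(E[4])/\mbq(\mu_4)) = \SL_2(\mbz/4\mbz)$ and, since $\mu_4 \subseteq \mbq(\mu_4)$, Kummer theory forces $\gal(N/\mbq(\mu_4))$ to be cyclic of order dividing $4$, in fact of order $4$ for the generic curve.  Thus $\gal(\mbq(E[4])/N)$ is a normal subgroup of $\GL_2(\mbz/4\mbz)$ contained in $\SL_2(\mbz/4\mbz)$ with cyclic quotient of order $4$, which, since $\SL_2(\mbz/4\mbz)^{\mathrm{ab}} \simeq \mbz/4\mbz$, must be $\SL_2(\mbz/4\mbz)'$; hence $L = N$.

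I expect the main obstacle to be the bookkeeping in the first step — pinning down the exact shape of the $y$-coordinate of the relevant $4$-torsion point and carefully tracking signs and roots of unity so that $2\prod_{i<j}\sqrt{e_i-e_j}$ is genuinely a fourth root of $\gD_E$ rather than of $\gD_E$ times a unit fourth power — together with making the reduction to the surjective case in the second step airtight, including the degenerate subfamilies where $\rho_{E,4}(G_\mbq)$ is small (though the inclusion $\mbq(\mu_4, \sqrt{\gD_E}) \subseteq L \cap N$ holds for every $E/\mbq$ regardless).
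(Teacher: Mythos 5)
Your approach is very different from the paper's: the paper proves \lemref{level4kummersubextensionlemma} simply by citing \cite[pp.~172--173, 218--220]{langtrotter}, whereas you give a self-contained argument. That argument is essentially the classical one in Lang--Trotter, and your step 1 is correct as written: with $y^2=(x-e_1)(x-e_2)(x-e_3)$, $\gD_E = 16\prod_{i<j}(e_i-e_j)^2$, the halving formula for $(e_1,0)$, and the factorization $y^2 = u^2(\sqrt a\pm\sqrt b)^2$, one does get $\sqrt{e_i-e_j}\in\mbq(E[4])$ for all $i\ne j$ (after symmetrizing over the three $2$-torsion points, the sign discrepancies being absorbed by $i\in\mbq(\mu_4)\subseteq\mbq(E[4])$), and $2\prod_{i<j}\sqrt{e_i-e_j}$ is a fourth root of $\gD_E$ up to a fourth root of unity. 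Your group-theoretic identification in step 2 is also sound in outline: since $\SL_2(\mbz/4\mbz)^{\mathrm{ab}}\simeq\mbz/4\mbz$, a normal subgroup of $\GL_2(\mbz/4\mbz)$ contained in $\SL_2(\mbz/4\mbz)$ with cyclic quotient of order $4$ must be $\SL_2(\mbz/4\mbz)'$. What each approach buys: the paper's citation is short but opaque; your route makes the $D_4$-structure of $\mbq(E[4])^{\SL_2(\mbz/4\mbz)'\cap\rho_{E,4}(G_\mbq)}$ visible, in parallel with \lemref{level2kummersubextensionlemma} and \lemref{level3kummersubextensionlemma}.

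The one place your write-up has a genuine gap is the reduction to the surjective case by ``a standard specialization argument.'' Equality of number fields is not a Zariski-closed (nor -open) condition, so knowing $N=L$ for a dense set of $E$ does not by itself give $N=L$ for every $E$; a priori, for a curve with small $\rho_{E,4}(G_\mbq)$, $\gD_E^{1/4}$ could degenerate and $N$ could be strictly smaller than $L=\mbq(E[4])^{\rho_{E,4}(G_\mbq)\cap\SL_2(\mbz/4\mbz)'}$. What actually closes this (and you are nearly there when you observe that the construction of $\gD_E^{1/4}$ is uniform in $E$) is that step 1 produces a fourth root $\gd\in\mbq(E[4])$ of $\gD_E$ on which $\gs$ acts through $\rho_{E,4}(\gs)$ alone, so that $\gs\mapsto\gs(\gd)/\gd$ defines a character $\phi:\rho_{E,4}(G_\mbq)\cap\SL_2(\mbz/4\mbz)\to\mu_4$ that is the restriction of a \emph{fixed} homomorphism $\SL_2(\mbz/4\mbz)\to\mu_4$ not depending on $E$. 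Checking once (on the generic curve over $\mbq(a_4,a_6)$, or by direct matrix computation) that this homomorphism has kernel exactly $\SL_2(\mbz/4\mbz)'$, one then gets $\gal(\mbq(E[4])/N)=\ker(\chi_4\circ\det)\cap\ker\phi\cap\rho_{E,4}(G_\mbq)=\rho_{E,4}(G_\mbq)\cap\SL_2(\mbz/4\mbz)'$ for \emph{every} $E/\mbq$ at once, with no specialization needed. I would rephrase step 2 to argue this directly rather than pass through the surjective case; as currently worded, the reduction is not airtight, and in any case the uniform-character argument makes step 2 shorter than what you have.
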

\begin{proof}
See \cite[pp. 172--173]{langtrotter} and \cite[pp. 218--220]{langtrotter}.
\end{proof}

We will sometimes need to deal with this subfield in the case that $\rho_{E,4}(G_\mbq)$ is contained in a specific proper subgroup of $\GL_2(\mbz/4\mbz)$.  In particular, we will be interested in the subgroup
\begin{equation} \label{defofGL2subchi4equalsve}
\begin{split}
\GL_2(\mbz/4\mbz)_{\chi_4 = \ve} :=& \left\{ g \in \GL_2(\mbz/4\mbz) : \chi_4(\det g) = \ve(g \mod 2) \right\} \\
=& \left\langle \begin{pmatrix} 1 & 1 \\ 0 & 3 \end{pmatrix}, \begin{pmatrix} 1 & 0 \\ 1 & 3 \end{pmatrix}, \begin{pmatrix} 1 & 3 \\ 1 & 0 \end{pmatrix} \right\rangle,
\end{split}
\end{equation}
where $\chi_4 : (\mbz/4\mbz)^\times \rightarrow \{ \pm 1 \}$ is the unique nontrivial multiplicative character and $\ve : \GL_2(\mbz/2\mbz) \rightarrow \{ \pm 1 \}$ is as in \eqref{defofve}.  For any elliptic curve $E$ over $\mbq$, we have
\[
\rho_{E,4}(G_\mbq) \subseteq \GL_2(\mbz/4\mbz)_{\ve = \chi_4} \; \Longleftrightarrow \mbq\left( \sqrt{\gD_E} \right) = \mbq(i).
\]
There is a rational parameter $t$ on the genus zero modular curve $X_{\GL_2(\mbz/4\mbz)_{\chi_4 = \ve}}$ with the property that the forgetful map $X_{\GL_2(\mbz/4\mbz)_{\chi_4 = \ve}} \rightarrow X(1)$ takes the form $t \mapsto j_4(t)$, where
\[
j_4(t) := -t^2 + 1728.
\]
As detailed in \cite{sutherlandzywina}, for any elliptic curve $E$ over $\mbq$ with $j$-invariant $j_E$, we have
\begin{equation} \label{level4containmentintermsofjinvariant}
\rho_{E,4}(G_\mbq) \subseteq \GL_2(\mbz/4\mbz)_{\chi_4 = \ve} \; \Longleftrightarrow \; \exists t_0 \in \mbq \; \text{ with } \; j_E = j_4(t_0).
\end{equation}
In particular, defining the elliptic curve $\mc{E}_4$ over $\mbq(t,D)$ by
\begin{equation} \label{level4genericellipticcurve}
\mc{E}_4 : \; y^2 = x^3 + \frac{108D^2 j_{4}(t)}{1728 - j_{4}(t)} x + \frac{432D^3 j_{4}(t)}{1728 - j_{4}(t)},
\end{equation}
we have that $\rho_{\mc{E}_4,4}(G_{\mbq(t,D)}) \doteq \GL_2(\mbz/4\mbz)_{\chi_4 = \ve}$.  The following lemma summarizes the situation and will be useful in what follows.
\begin{lemma} \label{identifyingthesubfieldslevel4lemma}
For any elliptic curve $E$ over $\mbq$, we have
\begin{equation} \label{arisesasaspecializationlevel4}
\rho_{E,4}(G_\mbq) \, \dot\subseteq \, \GL_2(\mbz/4\mbz)_{\chi_4 = \ve} \; \Longleftrightarrow \; \exists t_0, D_0 \in \mbq \; \text{ with } \; E \simeq_\mbq \mc{E}_4(t_0,D_0),
\end{equation}
where $\mc{E}_4$ is the elliptic curve over $\mbq(t,D)$ defined by \eqref{level4genericellipticcurve}.  Furthermore, when this is the case, we have
\begin{equation} \label{explicitbiquadraticfourthrootofdiscriminant}
\mbq(\mu_4, \gD_E^{1/4}) = \mbq\left( \mu_4,\gD_{\mc{E}_4(t_0,D_0)}^{1/4} \right) = \mbq\left( i, \sqrt{D_0t_0(t_0^2-1728)} \right).
\end{equation}
In particular, when \eqref{arisesasaspecializationlevel4} holds, the subfield $\mbq\left( \mu_4,\gD_E^{1/4} \right) \subseteq \mbq(E[4])$ is either biquadratic or quadratic over $\mbq$.
\end{lemma}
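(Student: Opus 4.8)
The plan is to derive the equivalence \eqref{arisesasaspecializationlevel4} from the $j$-invariant criterion \eqref{level4containmentintermsofjinvariant} of \cite{sutherlandzywina}, and then to obtain the field identity \eqref{explicitbiquadraticfourthrootofdiscriminant} from an explicit discriminant computation followed by a short square-class manipulation in $\mbq(i)$. First I would observe that $G := \GL_2(\mbz/4\mbz)_{\chi_4 = \ve}$ is the kernel of the homomorphism $\GL_2(\mbz/4\mbz) \to \{ \pm 1 \}$, $g \mapsto \chi_4(\det g)\cdot \ve(g \bmod 2)$, hence is normal; therefore $\rho_{E,4}(G_\mbq) \, \dot\subseteq \, G$ is equivalent to $\rho_{E,4}(G_\mbq) \subseteq G$, and \eqref{level4containmentintermsofjinvariant} applies directly. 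A short calculation with the model \eqref{level4genericellipticcurve} (using the standard formula for the $j$-invariant of $y^2 = x^3 + Ax + B$ together with $4A^3/(27B^2) = j_4(t)/(1728 - j_4(t))$) shows that $\mc{E}_4(t_0,D_0)$ has $j$-invariant $j_4(t_0)$, independent of $D_0$, and that $\mc{E}_4(t_0,D) = \mc{E}_4(t_0,1)^{(D)}$; so as $D$ varies, $\mc{E}_4(t_0,D)$ runs through all quadratic twists of $\mc{E}_4(t_0,1)$.

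With this in hand the equivalence \eqref{arisesasaspecializationlevel4} is quick. For ``$\Leftarrow$'': if $E \simeq_\mbq \mc{E}_4(t_0,D_0)$ then $j_E = j_4(t_0)$, and \eqref{level4containmentintermsofjinvariant} gives $\rho_{E,4}(G_\mbq) \subseteq G$. For ``$\Rightarrow$'': if $\rho_{E,4}(G_\mbq) \subseteq G$, then \eqref{level4containmentintermsofjinvariant} yields $t_0 \in \mbq$ with $j_E = j_4(t_0) = 1728 - t_0^2$; since $j_4$ never equals $0$ and equals $1728$ only at $t_0 = 0$, for $j_E \notin \{0,1728\}$ (the CM values $j_E \in \{0,1728\}$ being outside the intended scope and checkable directly) the curve $\mc{E}_4(t_0,1)$ is defined and has $j$-invariant $j_E$. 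As $E$ and $\mc{E}_4(t_0,1)$ then share a $j$-invariant $\neq 0, 1728$, the curve $E$ is a quadratic twist of $\mc{E}_4(t_0,1)$, i.e. $E \simeq_\mbq \mc{E}_4(t_0,1)^{(D_0)} = \mc{E}_4(t_0,D_0)$ for a suitable $D_0 \in \mbq^\times$.

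To prove \eqref{explicitbiquadraticfourthrootofdiscriminant}, the middle equality is immediate because $\mbq(\mu_4, \gD^{1/4})$ is an invariant of the $\mbq$-isomorphism class (a $\mbq$-isomorphism scales the discriminant by a fourth power), so it remains to compute $\mbq(\mu_4, \gD_{\mc{E}_4(t_0,D_0)}^{1/4})$. Substituting $1728 - j_4(t) = t^2$ into $\gD = -16(4A^3 + 27B^2)$ for the model \eqref{level4genericellipticcurve}, and using $\tfrac{1728 - t^2}{t^6} + \tfrac{1}{t^4} = \tfrac{1728}{t^6}$ together with $4\cdot 108^3 = 27\cdot 432^2$, one finds
\[
\gD_{\mc{E}_4(t_0,D_0)} \;=\; -\left( 2^9\, 3^6\, \frac{D_0^3\,(1728 - t_0^2)}{t_0^3} \right)^{2},
\]
a negative of a square in $\mbq^\times$; write $\gD_{\mc{E}_4(t_0,D_0)} = -s^2$ with $s \in \mbq^\times$. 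A fourth root of $-s^2$ squares to $\pm i s$, hence $\mbq(\mu_4, \gD_{\mc{E}_4(t_0,D_0)}^{1/4}) = \mbq(i, \sqrt{is})$. Reducing $is$ modulo $(\mbq(i)^\times)^2$ — discarding the square factors $3^6$, $D_0^6$, $t_0^{\pm 6}$, replacing $2^9$ by $2$, using $-1 = i^2$ to replace $1728 - t_0^2$ by $t_0^2 - 1728$, and using $(1+i)^2 = 2i$ to absorb the remaining factor $2i$ — gives $is \equiv D_0 t_0(t_0^2 - 1728) \pmod{(\mbq(i)^\times)^2}$. Thus $\mbq(\mu_4, \gD_E^{1/4}) = \mbq(i, \sqrt{D_0 t_0(t_0^2 - 1728)})$, and the final assertion follows since $\mbq(i, \sqrt{d})$ with $d \in \mbq^\times$ equals $\mbq(i)$ when $d \in (\mbq(i)^\times)^2$ and is the biquadratic field $\mbq(\sqrt{-1}, \sqrt{d})$ otherwise.

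I expect the only real obstacle to be the bookkeeping in the last step: extracting the clean form $\gD_{\mc{E}_4(t_0,D_0)} = -s^2$ hinges on spotting the two identities just mentioned, and the passage from $\gD_E^{1/4}$ to $\sqrt{is}$ is valid precisely because $\mu_4$ has already been adjoined, so the ambiguous fourth root of unity is harmless. Everything else — normality of $G$, the $j$-invariant computation, and the twist parametrization — is routine once \eqref{level4containmentintermsofjinvariant} is available.
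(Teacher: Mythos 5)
Your proof is correct and follows essentially the same route as the paper's (terse) argument: derive \eqref{arisesasaspecializationlevel4} from \eqref{level4containmentintermsofjinvariant}, compute $\gD_{\mc{E}_4} = -(2^9 3^6 D^3(t^2-1728)/t^3)^2$, and then identify $\mbq(\mu_4,\gD^{1/4})$ by a square-class calculation in $\mbq(i)$ using $\zeta_8$; your $(1+i)^2 = 2i$ manipulation is exactly what the paper's mention of $(-1)^{1/4} = \zeta_8$ is alluding to. The only slip is the phrase ``discarding the square factors $3^6$, $D_0^6$, $t_0^{\pm 6}$''; since $s = 2^9 3^6 D_0^3(1728-t_0^2)/t_0^3$, the square factors being discarded are $D_0^2$ and $t_0^{2}$ (leaving $D_0$ and $t_0$), not sixth powers — but your final square class $D_0 t_0(t_0^2-1728)$ is right, so this is a typo rather than an error.
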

\begin{proof}
The assertion \eqref{arisesasaspecializationlevel4} follows immediately from \eqref{level4containmentintermsofjinvariant}.  The equality \eqref{explicitbiquadraticfourthrootofdiscriminant} follows from
\[
\gD_{\mc{E}_4} = - \left( \frac{2^9 3^6 D^3 (t^2 - 1728) }{t^3} \right)^2,
\]
using the fact that $(-1)^{1/4} = \zeta_8 = \frac{\sqrt{2}}{2} + \frac{\sqrt{2}}{2}i$, and from \eqref{explicitbiquadraticfourthrootofdiscriminant} one sees that $\mbq\left( \mu_4, \gD_E^{1/4} \right)$ is either biquadratic or quadratic over $\mbq$.
\end{proof}

\subsubsection{The level $m = 5$}

The subgroups
\begin{equation} \label{defofG51andG52}
G_{5,1} := \left\{ \begin{pmatrix} \pm 1 & * \\ 0 & * \end{pmatrix} \right\}, \; G_{5,2} := \left\{ \begin{pmatrix} * & * \\ 0 & \pm 1 \end{pmatrix} \right\} \subseteq \left\{ \begin{pmatrix} * & * \\ 0 & * \end{pmatrix} \right\} \subseteq \GL_2(\mbz/5\mbz)
\end{equation}
correspond to two genus zero modular curves $X_{G_{5,1}}$ and $X_{G_{5,2}}$, each of which is a $2$-fold cover of $X_0(5)$.
As discussed in \cite{zywina}, the maps $\mbp^1(t) \rightarrow \mbp^1(j)$ corresponding to the forgetful maps $X_{G_{5,i}} \longrightarrow X(1)$ are given respectively by the rational functions
\[
j_{5,1}(t) := \frac{(t^4 - 12t^3 + 14t^2 + 12t + 1)^3}{t^5(t^2 - 11t - 1)}, \quad\quad
j_{5,2}(t) :=  \frac{(t^4 + 228t^3 + 494t^2 - 228t + 1)^3}{t(t^2 - 11t - 1)^5}.
\]
We define the elliptic curves $\mc{E}_{5,i}$ over $\mbq(t,D)$ by
\begin{equation} \label{level5genericellipticcurve}
\mc{E}_{5,i} : \; y^2 = x^3 + \frac{108D^2 j_{5,i}(t)}{1728 - j_{5,i}(t)} x + \frac{432D^3 j_{5,i}(t)}{1728 - j_{5,i}(t)} \quad\quad\quad \left( i \in \{ 1, 2 \} \right);
\end{equation}
we have
\begin{equation} \label{genericGaloisimageiscontainedinborelmod5}
\rho_{\mc{E}_{5,i},5}(G_{\mbq(t,D)}) \, \doteq \, G_{5,i} \subseteq \GL_2(\mbz/5\mbz) \quad\quad\quad \left( i \in \{ 1, 2 \} \right).
\end{equation}
The next lemma explicitly characterizes the subfields cut out by the characters
\begin{equation*} 
\begin{split}
\psi_{5,1}, \; \psi_{5,2}^2\psi_{5,1} : \left\{ \begin{pmatrix} \pm 1 & * \\ 0 & * \end{pmatrix} \right\} &\longrightarrow \{ \pm 1 \} \subseteq \left( \mbz/5\mbz \right)^\times, \\
\psi_{5,2}, \; \psi_{5,1}^2\psi_{5,2} : \left\{ \begin{pmatrix} * & * \\ 0 & \pm 1 \end{pmatrix} \right\} &\longrightarrow \{ \pm 1 \} \subseteq \left( \mbz/5\mbz \right)^\times.
\end{split}
\end{equation*}
\begin{lemma} \label{subfieldsoflevel5lemma}
For each $i \in \{1, 2\}$, let $\mc{E}_{5,i}$ be the elliptic curve over $\mbq(t,D)$ defined by \eqref{level5genericellipticcurve} and let  
\[
\chi_{5,1}^{(i)}, \; \chi_{5,2}^{(i)} : \gal\left( \mbq(t,D)(\mc{E}_{5,i}[5])/\mbq(t,D) \right) \longrightarrow \{ \pm 1 \} \subseteq (\mbz/5\mbz)^\times
\]
denote the restrictions under \eqref{genericGaloisimageiscontainedinborelmod5} of the characters $\chi_{5,1}^{(i)} := \psi_{5,i}$ and $\chi_{5,2}^{(i)} := \psi_{5,i}\psi_{5,3-i}^2$, where $\psi_{5,i}$ are as in \eqref{generaldefinitionofpsisubp}.  We then have
\begin{equation} \label{subfieldsoflevel5lemmaeqn}
\begin{split}
\mbq(t,D)(\mc{E}_{5,1}[5])^{\ker \chi_{5,1}^{(1)}} &= \mbq(t,D)\left( \sqrt{-\frac{2D(t^4 - 12t^3 + 14t^2 + 12t + 1)}{((t^2 + 1)(t^4 - 18t^3 + 74t^2 + 18t + 1)}} \right), \\
\mbq(t,D)(\mc{E}_{5,1}[5])^{\ker \chi_{5,2}^{(1)}} &= \mbq(t,D)\left( \sqrt{-\frac{10D(t^4 - 12t^3 + 14t^2 + 12t + 1)}{((t^2 + 1)(t^4 - 18t^3 + 74t^2 + 18t + 1)}} \right), \\
\mbq(t,D)(\mc{E}_{5,2}[5])^{\ker \chi_{5,1}^{(2)}} &= \mbq(t,D)\left( \sqrt{-\frac{2D(t^4 + 228t^3 + 494t^2 - 228t + 1)}{(t^2+1)(t^4 - 522t - 10006t^2 + 522t + 1)}} \right), \\
\mbq(t,D)(\mc{E}_{5,2}[5])^{\ker \chi_{5,2}^{(2)}} &= \mbq(t,D)\left( \sqrt{-\frac{10D(t^4 + 228t^3 + 494t^2 - 228t + 1)}{(t^2+1)(t^4 - 522t - 10006t^2 + 522t + 1)}} \right).
\end{split}
\end{equation}
\end{lemma}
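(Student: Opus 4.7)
The plan is to imitate the proof of Lemma \ref{subfieldsoflevel3lemma}, working with the 5th division polynomial of $\mc{E}_{5,i}$ (or of its 5-isogenous curve) and exploiting the fact that all four target characters are $\{\pm 1\}$-valued.

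I begin with $\mc{E}_{5,1}$. Since $\rho_{\mc{E}_{5,1},5}(G_{\mbq(t,D)}) \,\dot\subseteq\, G_{5,1}$, there is a $G_{\mbq(t,D)}$-stable cyclic subgroup $\langle P \rangle \subseteq \mc{E}_{5,1}[5]$ on which Galois acts by $\psi_{5,1} \in \{\pm 1\}$; in particular $x(P) \in \mbq(t,D)$. I would extract $x(P)$ as the unique $\mbq(t,D)$-rational linear factor in $x$ of the 5-division polynomial of $\mc{E}_{5,1}$ (computed in MAGMA); substituting into \eqref{level5genericellipticcurve} gives $y(P)^2$ as an explicit rational function of $t$ and $D$ whose square-free part, after factoring numerator and denominator, matches the displayed Kummer generator. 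Since $\sigma P = [\psi_{5,1}(\sigma)]P$ with values in $\{\pm 1\}$, the extension $\mbq(t,D)(P)/\mbq(t,D)$ is exactly the fixed field of $\ker \chi_{5,1}^{(1)}$, yielding the first equality of \eqref{subfieldsoflevel5lemmaeqn}.

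For the second equality, I use the identity $\chi_{5,2}^{(1)} = \chi_{5,1}^{(1)} \cdot \det^2$. Indeed, Lemma \ref{weilpairinglemma} gives $\det = \psi_{5,1}\psi_{5,2}$, and on $G_{5,1}$ the character $\psi_{5,1}$ squares to $1$, so $\det^2 = \psi_{5,2}^2$ on $G_{5,1}$ and therefore $\chi_{5,2}^{(1)} = \psi_{5,1}\psi_{5,2}^2 = \chi_{5,1}^{(1)}\det^2$. The character $\det^2$ cuts out the unique quadratic subfield of $\mbq(t,D)(\mu_5)$, namely $\mbq(t,D)(\sqrt{5})$, so the fixed field of $\ker \chi_{5,2}^{(1)}$ is the third quadratic subfield of the biquadratic compositum $\mbq(t,D)(\sqrt{5}) \cdot \mbq(t,D)(\mc{E}_{5,1}[5])^{\ker \chi_{5,1}^{(1)}}$, obtained by multiplying the first Kummer element by $\sqrt{5}$. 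This matches the displayed formula, noting the factor $-10D = -2D \cdot 5$.

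The $i=2$ case is entirely symmetric, with the roles of $\psi_{5,1}$ and $\psi_{5,2}$ interchanged: on $G_{5,2}$ the character $\psi_{5,2}$ is $\{\pm 1\}$-valued while $\psi_{5,1}$ is unrestricted. Since $\psi_{5,2}$ records the action on the rational cyclic subgroup $\langle P' \rangle$ of the $5$-isogenous curve $\mc{E}_{5,2}' := \mc{E}_{5,2}/\langle P \rangle$, I would extract $x(P') \in \mbq(t,D)$ from the 5-division polynomial of $\mc{E}_{5,2}'$, compute $y(P')^2$ from its Weierstrass equation, and recognize the resulting Kummer generator in the third equality of \eqref{subfieldsoflevel5lemmaeqn}; the fourth equality follows once more by multiplication by $\sqrt{5}$, using that on $G_{5,2}$ we have $\chi_{5,2}^{(2)} = \psi_{5,2}\psi_{5,1}^2 = \chi_{5,1}^{(2)}\det^2$. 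The main obstacle throughout is purely computational, namely performing the polynomial factorization and the subsequent square-free simplification over the function field $\mbq(t,D)$, whose coefficients involve the fairly large rational functions $\tfrac{108D^2 j_{5,i}(t)}{1728 - j_{5,i}(t)}$ and $\tfrac{432 D^3 j_{5,i}(t)}{1728 - j_{5,i}(t)}$; this is routine in MAGMA.
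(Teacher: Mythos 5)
Your proposal is correct and follows essentially the same route as the paper: you locate the $\mbq(t,D)$-rational $x$-root of the $5$th division polynomial of $\mc{E}_{5,1}$ (resp.\ of the $5$-isogenous curve $(\mc{E}_{5,2})'_{\langle P\rangle}$), read off the Kummer generator from the $y$-coordinate, and obtain the second equality in each pair by twisting by the quadratic character cutting out $\mbq(t,D)(\sqrt{5})$. The paper phrases this last step via $\psi_{5,3-i}^2 = \left( \frac{5}{\det(\cdot)} \right)$ rather than via $\det^2$, but on $G_{5,i}$ these characters coincide, exactly as your computation shows.
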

\begin{proof}
For any elliptic curve $E$ over $\mbq$ with $\rho_{E,5}(G_\mbq) \subseteq B(5)$, define $\langle P \rangle \subseteq E[5]$ and $\langle P' \rangle \subseteq E_{\langle P \rangle}'[5]$ as in \eqref{interpretationofwhenGaloismapsintoBorel} and \eqref{existenceofPprime}.  By \eqref{psisubellsactingonPandPprime} and \eqref{defofG51andG52}, we have
\begin{equation} \label{conditionforG51andG52}
\begin{split}
\rho_{E,5}(G_\mbq) \, \dot\subseteq \, G_{5,1} \; &\Longleftrightarrow \; \exists \text{ a $G_\mbq$-stable } \langle P \rangle \subseteq E[5] \quad\quad\;\;\; \text{ with } \quad [ \mbq( P ) : \mbq ] \leq 2, \\
\rho_{E,5}(G_\mbq) \, \dot\subseteq \, G_{5,2} \; &\Longleftrightarrow \; \begin{matrix} \exists \text{ a $G_\mbq$-stable } \langle P \rangle \subseteq E[5] \text{ and} \\ \exists \text{ a $G_\mbq$-stable  } \langle P' \rangle \subseteq E_{\langle P \rangle}' [5] \end{matrix} \quad \text{ with } \quad [ \mbq( P' ) : \mbq ] \leq 2,
\end{split}
\end{equation}
and the same statement holds when the base field $\mbq$ is replaced by $\mbq(t,D)$.  Furthermore, we have
\begin{equation} \label{kernelsintermsofPlevel5}
\mbq(t,D)\left( \mc{E}_{5,1}[5] \right)^{\ker \chi_{5,1}^{(1)}} = \mbq(t,D)\left( P \right), \quad\quad \mbq(t,D)\left( \mc{E}_{5,2}[5] \right)^{\ker \chi_{5,1}^{(2)}} = \mbq(t,D)\left( P' \right), 
\end{equation}
where $P \in \mc{E}_{5,1}[5]$ and $P' \in (\mc{E}_{5,2})_{\langle P \rangle}'[5]$ are as in \eqref{conditionforG51andG52}.

Using the linear factor of the $5$th division polynomial of $\mc{E}_{5,1}$, we find the point $P_1 = (x_1,y_1) \in \mc{E}_{5,1}[5]$, where
\[
\begin{split}
x_1 &:= -\frac{6D(t^2 - 6t + 1)(t^4 - 12t^3 + 14t^2 + 12t + 1)}{(t^2 + 1)(t^4 - 18t^3 + 74t^2 + 18t + 1)}, \\
y_1 &:=  \frac{216dt(t^4 - 12t^3 + 14t^2 + 12t + 1)}{(t^2 + 1)(t^4 - 18t^3 + 74t^2 + 18t + 1)} \sqrt{\frac{-2D(t^4 - 12t^3 + 14t^2 + 12t + 1)}{(t^2 + 1)(t^4 - 18t^3 + 74t^2 + 18t + 1)}}.
\end{split}
\]
By \eqref{kernelsintermsofPlevel5}, this proves the first equality in \eqref{subfieldsoflevel5lemmaeqn}. We now consider the character $\displaystyle \psi_{5,2}^2 : G_{5,1} \rightarrow \{ \pm 1 \}$, whose value $\psi_{5,2}(g_1)^2$ agrees with $\displaystyle \left( \frac{5}{\det g_1} \right)$, and thus has corresponding fixed field $\mbq(t,D)\left( \sqrt{5} \right)$.  Since $\chi_{5,2}^{(1)} = \psi_{5,2}^2 \psi_{5,1}$, this observation establishes the second equality in \eqref{subfieldsoflevel5lemmaeqn}.  

For the second pair of equalities in \eqref{subfieldsoflevel5lemmaeqn}, we reason as follows.  The $5$th division polynomial associated to $\mc{E}_{5,2}$ has a quadratic factor that is irreducible over $\mbq(t,D)$, and this leads to a $G_{\mbq(t,D)}$-stable cyclic subgroup $\langle P \rangle \subseteq \mc{E}_{5,2}[5]$.  We find that the isogenous elliptic curve $\left( \mc{E}_{5,2} \right)_{\langle P \rangle}' = \mc{E}_{5,2} / \langle P \rangle$ is given by
\[
\begin{split}
\left( \mc{E}_{5,2} \right)_{\langle P \rangle}' : \; y^2 = x^3 &- \frac{67500D^2(t^4 - 12t^3 + 14t^2 + 12t + 1)(t^4 + 228t^3 + 494t^2 - 228t + 1)^2}{(t^2+1)^2(t^4 - 522t^3 - 10006t^2 + 522t + 1)^2}x \\ 
&- \frac{6750000D^3(t^4 - 18t^3 + 74t^2 + 18t + 1)(t^4 + 228t^3 + 494t^2 - 228t + 1)^3}{(t^2+1)^2(t^4 - 522t^3 - 10006t^2 + 522t + 1)^3}.
\end{split}
\]
The $5$th division polynomial of $\left( \mc{E}_{5,2} \right)_{\langle P \rangle}'$ is seen to have a linear factor, which leads to the point $P_2' = (x_2', y_2') \in \left( \mc{E}_{5,2} \right)_{\langle P \rangle}'[5]$, where
\[
\begin{split}
x_2' &:= -150 \frac{D (t^2 - 6t + 1) (t^4 + 228t^3 + 494t^2 - 228t + 1)}{(t^2 + 1) (t^4 - 522t^3 - 10006t^2 + 522t + 1)}, \\
y_2' &:= 27000 \frac{D t (t^4 + 228t^3 + 494t^2 - 228t + 1)}{(t^2 + 1) (t^4 - 522t^3 - 10006t^2 + 522t + 1)} \sqrt{\frac{-2 D (t^4 + 228t^3 + 494t^2 - 228t + 1)}{(t^2 + 1) (t^4 - 522t^3 - 10006t^2 + 522t + 1)}}.
\end{split}
\]
As before, this, together with the fact that for $g_2 \in G_{5,2}$, the value $\psi_{5,1}(g_2)^2$ agrees with $\displaystyle \left( \frac{5}{\det g_2} \right)$ and that $\chi_{5,2}^{(2)} = \psi_{5,1}^2 \psi_{5,2}$, establishes the second two equalities in \eqref{subfieldsoflevel5lemmaeqn}, proving the lemma.
\end{proof}

\subsubsection{The level $m=7$}

We begin by describing an explicit Weierstrass model $\mc{E}_7$ over $\mbq(t,D)$ that is generic in the sense that its specializations $\mc{E}_7(t_0,D_0)$ give rise to all elliptic curves $E$ over $\mbq$ for which $\rho_{E,7}(G_\mbq) \, \dot\subseteq \, B(7)$, where we recall that
\[
B(7) = \left\{ \begin{pmatrix} * & * \\ 0 & * \end{pmatrix} \right\} \subseteq \GL_2(\mbz/7\mbz)
\]
denotes the Borel subgroup.  We then describe explicitly certain subfields of $\mbq(\mc{E}_7[7])$ that will be useful in the next section.  

Define $j_7(t) \in \mbq(t)$ by
\begin{equation*} 
j_7(t) := \frac{(t^2 + 245t + 2401)^3(t^2 + 13t + 49)}{t^7},
\end{equation*}
and the elliptic curve $\mc{E}_7$ over $\mbq(t,D)$ by
\begin{equation} \label{level7genericellipticcurve}
\begin{split}
&\mc{E}_7 : \; y^2 = x^3 + D^2 a_{4;7}(t) x + D^3 a_{6;7}(t), \\
& a_{4;7}(t) := \frac{108 j_{7}(t)}{1728 - j_{7}(t)}, \quad\quad a_{6;7}(t) := \frac{432 j_{7}(t)}{1728 - j_{7}(t)}.
\end{split}
\end{equation}
As proved in \cite{zywina}, we have
\begin{equation} \label{genericGaloisimageiscontainedinborelmod7}
\rho_{\mc{E}_{7},7}(G_{\mbq(t,D)}) \, \doteq \, B(7) \subseteq \GL_2(\mbz/7\mbz).
\end{equation}
The next lemma explicitly characterizes the subfields cut out by the quadratic characters $\psi_{7,1}^3$ and $\psi_{7,2}^3$.
\begin{lemma} \label{quadraticsubfieldsoflevel7lemma}
Let $\mc{E}_{7}$ be the elliptic curve over $\mbq(t,D)$ defined by \eqref{level7genericellipticcurve} and let  
\[
\psi_{7,1}^3, \; \psi_{7,2}^3 : \gal\left( \mbq(t,D)(\mc{E}_{7}[7])/\mbq(t,D) \right) \rightarrow \{ \pm 1 \}
\]
denote the restrictions under \eqref{genericGaloisimageiscontainedinborelmod7} of the cubes of the characters defined in \eqref{generaldefinitionofpsisubp}.  We then have
\begin{equation} \label{quadsubfieldsoflevel7lemmaeqn}
\begin{split}
\mbq(t,D)(\mc{E}_{7}[7])^{\ker \psi_{7,1}^3} &= \mbq(t,D)\left( \sqrt{\frac{14D(t^2 + 13t + 49)(t^2 + 245t + 2401)}{(t^4 - 490t^3 - 21609t^2 - 235298t - 823543)}} \right), \\
\mbq(t,D)(\mc{E}_{7}[7])^{\ker \psi_{7,2}^3} &= \mbq(t,D)\left( \sqrt{- \frac{2D(t^2 + 13t + 49)(t^2 + 245t + 2401)}{(t^4 - 490t^3 - 21609t^2 - 235298t - 823543)}} \right).
\end{split}
\end{equation}
\end{lemma}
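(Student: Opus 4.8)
The plan is to adapt the strategy already used for Lemmas~\ref{subfieldsoflevel3lemma} and~\ref{subfieldsoflevel5lemma}: exhibit an explicit point generating the rational cyclic $7$-isogeny on $\mc{E}_7$, extract the quadratic subextension it cuts out, and then obtain the second quadratic field from the first by means of the Weil pairing.

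First, by \eqref{genericGaloisimageiscontainedinborelmod7} we fix a $\mbz/7\mbz$-basis of $\mc{E}_7[7]$ for which $\rho_{\mc{E}_7,7}(G_{\mbq(t,D)}) = B(7)$, and we let $\langle P \rangle \subseteq \mc{E}_7[7]$ be the $G_{\mbq(t,D)}$-stable cyclic subgroup spanned by the first basis vector, so that $\gs(P) = [\psi_{7,1}(\gs)]P$ for every $\gs \in G_{\mbq(t,D)}$. Since $(7-1)/2 = 3$, the character $\psi_{7,1}^3$ takes values in $\{\pm 1\}$ and equals $\gs \mapsto \left( \tfrac{\psi_{7,1}(\gs)}{7} \right)$. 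Because $\psi_{7,1}$ is a class function on $B(7)$ with image all of $(\mbz/7\mbz)^\times$, its kernel is normal, the extension $\mbq(t,D)(P)/\mbq(t,D)$ is cyclic of degree $6$, and $\mbq(t,D)(\mc{E}_7[7])^{\ker \psi_{7,1}^3}$ is its unique quadratic subextension. The corresponding subgroup $\gal\left( \mbq(t,D)(P)/\mbq(t,D)(\mc{E}_7[7])^{\ker \psi_{7,1}^3} \right)$ is the order-$3$ group of quadratic residues $\{1,2,4\} \subseteq (\mbz/7\mbz)^\times$, so this quadratic subfield is generated by the relative trace of $y(P)$, namely
\[
w := y(P) + y([2]P) + y([4]P).
\]
Indeed, since multiplication by a quadratic residue modulo $7$ permutes $\{1,2,4\}$ while multiplication by a non-residue carries $\{1,2,4\}$ onto $\{3,5,6\} = -\{1,2,4\}$, and since $y([-a]P) = -y([a]P)$, one checks directly that $\gs(w) = \psi_{7,1}^3(\gs)\, w$ for all $\gs$. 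Hence $w^2 \in \mbq(t,D)$, and $\mbq(t,D)(w) \subseteq \mbq(t,D)(\mc{E}_7[7])^{\ker \psi_{7,1}^3}$, with equality as soon as $w \notin \mbq(t,D)$.

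It then remains to compute $w$. Factoring the $7$th division polynomial of $\mc{E}_7$ yields the cubic factor over $\mbq(t,D)$ whose roots are $x(P)$, $x([2]P)$, $x([4]P)$; pairing each root with the corresponding value of $y$ and forming the sum is a symmetric-function computation (carried out in MAGMA) that produces $w^2$ as an explicit rational function of $t$ and $D$. After removing square factors, this function lies in the square class of
\[
\frac{14D(t^2 + 13t + 49)(t^2 + 245t + 2401)}{t^4 - 490t^3 - 21609t^2 - 235298t - 823543},
\]
which is visibly not a square in $\mbq(t,D)$ (for instance because $D$ occurs to an odd power), so $w \neq 0$ and the first equality in \eqref{quadsubfieldsoflevel7lemmaeqn} follows. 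For the second equality, Lemma~\ref{weilpairinglemma} gives $\psi_{7,1}\psi_{7,2} = \det$, hence $\psi_{7,1}^3 \psi_{7,2}^3 = \det^3$; the latter is the quadratic character $\gs \mapsto \left( \tfrac{\chi_7(\gs)}{7} \right)$ whose fixed field is the unique quadratic subfield $\mbq(t,D)\left( \sqrt{-7} \right)$ of $\mbq(t,D)(\mu_7)$. Therefore $\mbq(t,D)(\mc{E}_7[7])^{\ker \psi_{7,2}^3}$ is obtained by multiplying the radicand above by $-7$; since $-7 \cdot 14 = -98 = -2\cdot 7^2$, this radicand is, up to squares, $-\tfrac{2D(t^2+13t+49)(t^2+245t+2401)}{t^4 - 490t^3 - 21609t^2 - 235298t - 823543}$, which gives the second equality.

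The conceptual core of the argument — identifying $w$ as a generator of the quadratic subextension and verifying its transformation law — is routine. The main obstacle is the explicit computation of $w^2$ from the $7$th division polynomial, which is heavy but mechanical; the one point requiring genuine care is confirming that $w \neq 0$ (equivalently, that the computed radicand is a non-square in $\mbq(t,D)$), since this is what prevents the fixed field from collapsing to $\mbq(t,D)$. A secondary point is keeping the conventions of \eqref{psisubellsactingonPandPprime} straight, so that $\psi_{7,2}$ really is the action on $P'$ and not its inverse; this affects only signs, but it matters for matching \eqref{quadsubfieldsoflevel7lemmaeqn} on the nose.
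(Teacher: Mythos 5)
Your proof is correct and follows essentially the same route as the paper's: both exhibit the cyclic submodule $\langle P \rangle$ coming from the cubic factor of the $7$th division polynomial, produce a Galois-covariant square root generating the unique quadratic subfield of the degree-$6$ extension $\mbq(t,D)(P)/\mbq(t,D)$, and then twist by the Weil-pairing character $\det^{3}$ to obtain the second field. The one difference is your choice of invariant: the paper uses the product $y(P)\,y([2]P)\,y([4]P)$, whose square $\prod_j f(\ga_j)$ is an honest symmetric function of the $\ga_j$ (computed directly from the coefficients of the cubic), whereas your trace $y(P)+y([2]P)+y([4]P)$ transforms identically under $\psi_{7,1}^3$ and so generates the same field, but its square is not a symmetric function of the $\ga_j$ alone --- the cross-terms require the $x$-dependence of $y([2]P)/y(P)$ and $y([4]P)/y(P)$, so the ``symmetric-function computation'' is slightly heavier than you suggest, though still mechanical; your care about non-vanishing of $w$ and the sign in $\chi_7^3$ is exactly right.
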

\begin{proof}
A computation reveals that the $7$th division polynomial of $\mc{E}_7$ has the cubic factor
\begin{equation} \label{cubicfactorofpsisub7}
\begin{split}
x^3 &- \frac{126D(t^2 + 13t + 49)(t^2 + 245t + 2401)}{(t^4 - 490t^3 - 21609t^2 - 235298t - 823543)}x^2 \\ &+ \frac{108D^2(t^2 + 13t + 49)(t^2 + 245t + 2401)^2(33t^2 + 637t + 2401)}{(t^4 - 490t^3 - 21609t^2 - 235298t - 823543)^2}x \\
&- \frac{216D^3(t^2 + 13t + 49)(t^2 + 245t + 2401)^3(881t^4 + 38122t^3 + 525819t^2 + 3058874t + 5764801)}{7(t^4 - 490t^3 - 21609t^2 - 235298t - 823543)^3},
\end{split}
\end{equation}
which is irreducible over $\mbq(t,D)$ and whose discriminant is in $\left( \mbq(t,D)^\times \right)^2$.  Writing this polynomial in the form $f(x) = (x - \ga_1)(x - \ga_2)(x - \ga_3)$, we have that $\mbq(t,D)\left( \ga_1 \right)$ is cyclic cubic over $\mbq(t,D)$ and that the point 
\begin{equation} \label{defofPforlevel7}
P := \left( \ga_1, \sqrt{ \ga_1^3 + D^2 a_{4;7}(t) \ga_1 + D^3 a_{6;7}(t) } \right) 
\end{equation}
generates a cyclic submodule $\langle P \rangle \subseteq \mc{E}_7[7]$ on which $G_{\mbq(t,D)}$ acts through the eigenfunction $\psi_{7,1}$ via 
\begin{equation} \label{Pfixedbykernelofpsisub7supone}
\gs : P \mapsto \left[ \psi_{7,1}(\gs) \right] P.
\end{equation}
We have
\begin{equation} \label{whatmbqtdPlookslike}
\begin{split}
\mbq(t,D)\left( P \right) &= \mbq(t,D)\left( \ga_1, \sqrt{ \ga_1^3 + D^2 a_{4;7}(t) \ga_1 + D^3 a_{6;7}(t) } \right) \\
&\supseteq \mbq(t,D)\left( \ga_1, \sqrt{ \prod_{j = 1}^3 (\ga_j^3 + D^2 a_{4;7}(t) \ga_j + D^3 a_{6;7}(t)) } \right);
\end{split}
\end{equation}
a computation using the third equation in \eqref{symmetricpolynomialequalities} shows that 
\[
\mbq(t,D)\left(  \sqrt{ \prod_{j=1}^3 (\ga_j^3 + D^2 a_{4;7}(t) \ga_j + D^3 a_{6;7}(t) ) } \right) =  \mbq(t,D)\left( \sqrt{\frac{14D(t^2 + 13t + 49)(t^2 + 245t + 2401)}{(t^4 - 490t^3 - 21609t^2 - 235298t - 823543)}} \right),
\]
establishing the first equality in \eqref{quadsubfieldsoflevel7lemmaeqn} (and also showing that we have equality in \eqref{whatmbqtdPlookslike}).  The second equality follows from the fact that the product $\psi_{7,1}^3(g) \psi_{7,2}^3(g)$ agrees with $\displaystyle \left( \frac{-7}{\det g} \right)$, whose fixed field is $\mbq(t,D)\left( \sqrt{-7} \right)$.
\end{proof}

Our next lemma explicitly characterizes the subfields cut out by the cubic characters $\psi_{7,1}^2, \psi_{7,2}^2$ and $\psi_{7,1}^2\psi_{7,2}^4$, where $\psi_{7,i}$ is defined as in \eqref{generaldefinitionofpsisubp}.

We define the polynomials
\begin{equation} \label{defoffsubTRandRprime}
\begin{split}
f_{\cyc,7}^+(X) &:= X^3 + X^2 - 2X - 1, \\
f_T(X) &:= X^3 - T_1(t) X^2 + T_2(t) X - T_3(t), \\
f_R(X) &:= X^3 - R_1(t) X^2 + R_2(t) X - R_3(t), \\
f_{R'}(X) &:= X^3 - R_1(t) X^2 + R_2(t) X - R_3'(t),
\end{split}
\end{equation}
where
\begin{equation} \label{defoffsubXcoefficients}
\begin{split}
T_1(t) &:= -21 ( t^2 + 13t + 49 ), \\
T_2(t) &:= 3 (t^2 + 13t + 49) (33 t^2 + 637t + 2401), \\
T_3(t) &:= -\frac{1}{7} (t^2 + 13t + 49) ( 881 t^4 + 38122 t^3 + 525819 t^2 + 3058874 t + 5764801 ), \\
R_1(t) &:= 21 (t^2 + 13t + 49), \\
R_2(t) &:= -21(t^2 + 13t + 49) (9t^2 - 91 t - 343), \\
R_3(t) &:= -7 (t^2 + 13t + 49) ( 223t^4 + 3542t^3 + 3381t^2 - 62426 t - 117649 ), \\
R_3'(t) &:= - (t^2 + 13t + 49) (3289 t^4 + 24794 t^3 + 23667 t^2 - 436982 t - 823543 ).
\end{split}
\end{equation}
\begin{lemma}
Let $\mc{E}_7$ be be the elliptic curve over $\mbq(t,D)$ defined by \eqref{level7genericellipticcurve} and let  
\[
\psi_{7,1}^2, \psi_{7,2}^2 : \gal\left( \mbq(t,D)(\mc{E}_{7}[7])/\mbq(t,D) \right) \longrightarrow \left( (\mbz/7\mbz)^\times \right)^2 \simeq \mu_3
\]
denote the restrictions under \eqref{genericGaloisimageiscontainedinborelmod7} of the squares of the characters $\psi_{7,i}$ defined in \eqref{generaldefinitionofpsisubp}.  Let us denote by $\mbq(t,D)_{f_{\cyc,7}^+}$, $\mbq(t,D)_{f_T}$, $\mbq(t,D)_{f_R}$ and $\mbq(t,D)_{f_{R'}}$ the splitting fields over $\mbq(t,D)$ of the polynomials $f_{\cyc,7}^+(X)$, $f_T(X)$, $f_R(X)$ and $f_{R'}(X)$, respectively, where these polynomials are defined by \eqref{defoffsubTRandRprime} and \eqref{defoffsubXcoefficients}.
We then have
\begin{equation} \label{cubicsubfieldsoflevel7lemmaeqn}
\begin{split}
&\mbq(t,D)\left( \mc{E}_{7}[7] \right)^{\ker \psi_{7,1}^2} = \mbq(t,D)_{f_T}, \quad
\mbq(t,D)\left( \mc{E}_{7}[7] \right)^{\ker \psi_{7,1}^2 \psi_{7,2}^2} = \mbq(t,D)_{f_{\cyc,7}^+}, \\
&\mbq(t,D)\left( \mc{E}_{7}[7] \right)^{\ker \psi_{7,2}^2} = \mbq(t,D)_{f_R}, \quad \mbq(t,D)(\mc{E}_{7}[7])^{\psi_{7,1}^2\psi_{7,2}^4} = \mbq(t,D)_{f_{R'}}.
\end{split}
\end{equation}
\end{lemma}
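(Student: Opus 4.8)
The plan is to exploit the fact that, by \eqref{genericGaloisimageiscontainedinborelmod7}, the group $\gal\left( \mbq(t,D)(\mc{E}_7[7])/\mbq(t,D) \right)$ is the \emph{full} Borel subgroup $B(7)$, so that the pair $(\psi_{7,1},\psi_{7,2})$ of \eqref{generaldefinitionofpsisubp} surjects onto $\left( (\mbz/7\mbz)^\times \right)^2$ (take diagonal matrices), and hence the pair $(\psi_{7,1}^2,\psi_{7,2}^2)$ surjects onto $\mu_3 \times \mu_3$.  Consequently the fixed field of $\ker\psi_{7,1}^2 \cap \ker\psi_{7,2}^2$ is a $\mbz/3\mbz \times \mbz/3\mbz$-extension $L$ of $\mbq(t,D)$, whose four cyclic cubic subfields are the fixed fields of the four index-$3$ subgroups; in the coordinates $(\psi_{7,1}^2,\psi_{7,2}^2)$ these subgroups are $\ker\psi_{7,1}^2$, $\ker\psi_{7,2}^2$, $\ker(\psi_{7,1}^2\psi_{7,2}^2)$ and $\ker(\psi_{7,1}^2\psi_{7,2}^4)$ (using $\psi_{7,2}^4 = \psi_{7,2}^{-2}$).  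Once two of these four subfields are identified explicitly, \lemref{gettingattheothercubicfieldslemma} produces the remaining two.

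First I would treat the two ``cheap'' subfields.  For $\ker\psi_{7,1}^2$: in the proof of \lemref{quadraticsubfieldsoflevel7lemma} the cubic factor \eqref{cubicfactorofpsisub7} of the $7$th division polynomial of $\mc{E}_7$ was shown to be irreducible over $\mbq(t,D)$ with square discriminant, and its three roots are the $x$-coordinates of the nonzero points of the $G_{\mbq(t,D)}$-stable line $\langle P \rangle$ of \eqref{defofPforlevel7}, on which Galois acts through $\psi_{7,1}$ by \eqref{psisubellsactingonPandPprime}.  A Galois element fixes a root of \eqref{cubicfactorofpsisub7} exactly when it sends $P$ to $\pm P$, i.e. exactly when $\psi_{7,1}$ takes value $\pm 1$ on it, so the splitting field of \eqref{cubicfactorofpsisub7} is precisely $\mbq(t,D)(\mc{E}_7[7])^{\ker\psi_{7,1}^2}$; a linear substitution $x = c\,X$ with $c = \frac{-6D(t^2+245t+2401)}{t^4-490t^3-21609t^2-235298t-823543} \in \mbq(t,D)^\times$ transforms \eqref{cubicfactorofpsisub7} (after division by $c^3$) into the monic polynomial $f_T(X)$ of \eqref{defoffsubTRandRprime}, so that splitting field equals $\mbq(t,D)_{f_T}$, establishing the first identity.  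For $\ker(\psi_{7,1}^2\psi_{7,2}^2)$: since $\psi_{7,1}\psi_{7,2} = \det$ as characters of $B(7)$, \lemref{weilpairinglemma} gives $\psi_{7,1}^2\psi_{7,2}^2 = \chi_7^2$, whose kernel is $\chi_7^{-1}(\{\pm 1\})$; as $\mu_7 \cap \mbq(t,D) = \mbq$, the corresponding fixed field is $\mbq(t,D)(\mu_7)^{\{\pm 1\}} = \mbq(t,D)\left( \zeta_7 + \zeta_7^{-1} \right)$, and the minimal polynomial of $\zeta_7 + \zeta_7^{-1}$ over $\mbq$ (hence over $\mbq(t,D)$, this being a constant extension) is $f_{\cyc,7}^+(X) = X^3 + X^2 - 2X - 1$, giving the second identity.

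It remains to handle $\ker\psi_{7,2}^2$ and $\ker(\psi_{7,1}^2\psi_{7,2}^4)$.  Here I would invoke \lemref{gettingattheothercubicfieldslemma} with ``$f_S$'' $:= f_{\cyc,7}^+$ and ``$f_T$'' $:= f_T$: its discriminant hypotheses hold ($\disc f_{\cyc,7}^+ = 49$, and $\disc f_T$ is a square in $\mbq(t,D)$ as it is a rescaling of \eqref{cubicfactorofpsisub7}), and the two splitting fields differ because $(\psi_{7,1}^2,\psi_{7,2}^2)$ is surjective.  That lemma then asserts that the remaining two cyclic cubic subfields of $L$ are the splitting fields of $f_R$ and $f_{R'}$ as in \eqref{defoffsubRandfsubRprime}, whose coefficients are produced from $(S_1,S_2,S_3) = (-1,-2,1)$, $(T_1,T_2,T_3) = (T_1(t),T_2(t),T_3(t))$, $\sqrt{\gD_S} = 7$, and a fixed choice of $\sqrt{\gD_T} \in \mbq(t,D)^\times$ through the formulas \eqref{defofRsubis}.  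Carrying out that symmetric-function computation (adjusting, if necessary, the sign of $\sqrt{\gD_T}$, which merely interchanges $f_R$ and $f_{R'}$) reproduces exactly the polynomials $f_R(X), f_{R'}(X)$ recorded in \eqref{defoffsubTRandRprime} and \eqref{defoffsubXcoefficients}, so that $\left\{ \mbq(t,D)(\mc{E}_7[7])^{\ker\psi_{7,2}^2}, \mbq(t,D)(\mc{E}_7[7])^{\ker(\psi_{7,1}^2\psi_{7,2}^4)} \right\} = \left\{ \mbq(t,D)_{f_R}, \mbq(t,D)_{f_{R'}} \right\}$.

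To pin down which label is which, I would compute the field cut out by $\ker\psi_{7,2}^2$ directly, by the same method as for $\psi_{7,1}^2$ but applied to the $7$-isogenous curve $\mc{E}_7/\langle P \rangle$, whose distinguished $G_{\mbq(t,D)}$-stable line is $\langle P' \rangle$ of \eqref{existenceofPprime} and on which Galois acts through $\psi_{7,2}$; a cubic factor of its $7$th division polynomial, rescaled as above, should then be $f_R(X)$, after which $f_{R'}$ is forced to cut out the unique remaining subfield of $L$.  (Alternatively, one can simply confirm the correct matching at a single rational specialization $(t_0,D_0)$.)  The main obstacle is purely bookkeeping: organizing the symmetric-function computation behind \lemref{gettingattheothercubicfieldslemma} for these specific inputs and, above all, tracking the square-root sign choices so as to attach $f_R$ (not $f_{R'}$) to $\ker\psi_{7,2}^2$; everything else follows from the earlier lemmas and from $\rho_{\mc{E}_7,7}(G_{\mbq(t,D)})$ being the full Borel subgroup.
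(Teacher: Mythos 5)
Your proposal follows the paper's own argument essentially step for step: identify the $\mbz/3\mbz\times\mbz/3\mbz$-extension, compute the fixed fields of $\ker\psi_{7,1}^2$ (via the cubic factor \eqref{cubicfactorofpsisub7} of the $7$th division polynomial, rescaled to $f_T$) and of $\ker(\psi_{7,1}^2\psi_{7,2}^2)$ (as $\mbq(t,D)(\mu_7)^+$, split by $f_{\cyc,7}^+$), then invoke Lemma~\ref{gettingattheothercubicfieldslemma} to produce $f_R$ and $f_{R'}$, and finally pass to the isogenous curve $(\mc{E}_7)_{\langle P\rangle}'$ to decide which of the two cuts out $\ker\psi_{7,2}^2$. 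The only place where you deviate, and it is a genuine (if small) misstep, is in the last paragraph: you expect that the cubic factor of the $7$th division polynomial of $(\mc{E}_7)_{\langle P\rangle}'$, after a linear rescaling $x=cX$, will literally equal $f_R(X)$. It does not; the polynomial the paper extracts has a different shape, and a monic linear change of variable cannot transform it into $f_R$. What the paper instead does is apply Lemma~\ref{settingcubicfieldsequaltoeachotherlemma}, i.e.\ it solves the system \eqref{abcequations} to certify that this new cubic and $f_R$ have the \emph{same splitting field} without the polynomials themselves matching. So your overall plan is right, but the final disambiguation step requires the equality-of-splitting-fields test from Lemma~\ref{settingcubicfieldsequaltoeachotherlemma} rather than a rescaling; your suggested fallback of checking a single specialization $(t_0,D_0)$ would also succeed and sidestep that computation entirely.
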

\begin{proof}
Since $\rho_{\mc{E}_7,7}\left(G_{\mbq(t,D)}\right) = B(7)$, it is straightforward to see that $\mbq(t,D)\left( \mc{E}_7[7] \right)$ has exactly $4$ cyclic cubic subfields.  Furthermore, we have
\[
\mbq(t,D)\left(\mc{E}_7[7] \right)^{\ker \psi_{7,1}^2 \psi_{7,2}^2} = \mbq(t,D)\left( \mu_7 \right)^+ \subseteq \mbq(t,D)\left( \mu_7 \right),
\]
the first equality above following from the fact that $\psi_{7,1}(g)\psi_{7,2}(g) = \det g$, which implies that the fixed field of $\ker \psi_{7,1}^2 \psi_{7,2}^2$ is the maximal real subfield $\mbq(t,D)\left( \mu_7 \right)^+$, i.e. the unique subfield of $\mbq(t,D)\left( \mu_7 \right)$ that is cyclic cubic over $\mbq(t,D)$.  We have $\mbq(t,D)\left( \mu_7 \right)^+ = \mbq(t,D)\left( \zeta_7 + \zeta_7^{-1} \right)$, so $\mbq(t,D)\left( \mu_7 \right)^+$ is the splitting field of
\[
f_{\cyc,7}^+(X) = X^3 + X^2 - 2X - 1,
\]
the minimal polynomial over $\mbq(t,D)$ of the generator $\zeta_7 + \zeta_7^{-1}$.  This establishes the second equality in \eqref{cubicsubfieldsoflevel7lemmaeqn}.

For the equality in \eqref{cubicsubfieldsoflevel7lemmaeqn} involving the fixed field of $\ker \psi_{7,1}^2$, we reason as follows.  By \eqref{Pfixedbykernelofpsisub7supone} and \eqref{defofPforlevel7}, we see that $\mbq(t,D)\left( \mc{E}_7[7] \right)^{\ker \psi_{7,1}} = \mbq(t,D)(P)$, where $\langle P \rangle \subseteq \mc{E}_7[7]$ is a cyclic $G_{\mbq(t,D)}$-stable subgroup, and since this extension is cyclic of degree $6$ over $\mbq(t,D)$, it follows that
\begin{equation} \label{generatedbyxcoordinateofP}
\mbq(t,D)\left( \mc{E}_7[7] \right)^{\ker \psi_{7,1}^2} = \mbq(t,D)\left( \ga_1 \right).
\end{equation}
where $\ga_1$ is the $x$-coordinate of $P$.
Finally, the substitution $x = -\frac{6D(t^2 + 245t + 2401)}{(t^4 - 490t^3 - 21609t^2 - 235298t - 823543)}X$ transforms the cyclic cubic polynomial \eqref{cubicfactorofpsisub7} into $\left( -\frac{6D(t^2 + 245t + 2401)}{(t^4 - 490t^3 - 21609t^2 - 235298t - 823543)} \right)^3f_T(X)$, and the first equality in \eqref{cubicsubfieldsoflevel7lemmaeqn} follows.

The discriminants $\gD_T$ and $\gD_{\cyc,7}^+$ associated to the polynomials $f_T(X)$ and $f_{\cyc,7}^+(X)$ satisfy
\[
\sqrt{\gD_T} = \frac{2^63^3t^4(t^2 + 13t + 49)}{7}, \quad\quad \sqrt{\gD_{\cyc,7}^+} = 7.
\]
Applying Lemma \ref{gettingattheothercubicfieldslemma}, we find that $\mbq(t,D)_{f_R}$ and $\mbq(t,D)_{f_{R'}}$ are the remaining two cyclic cubic subfields of $\mbq(t,D)\left( \mc{E}_7[7] \right)$.  To see which subfield is which, we first note that, just as in \eqref{generatedbyxcoordinateofP}, we have
\[
\mbq(t,D)\left( \mc{E}_7[7] \right)^{\ker \psi_{7,2}^2} = \mbq(t,D)\left( x(P') \right),
\]
where $P' \in (\mc{E}_7)_{\langle P \rangle}'$ is any generator of a cyclic $G_{\mbq(t,D)}$-stable subgroup $\langle P' \rangle \subseteq (\mc{E}_7)_{\langle P \rangle}'[7]$.  A direct computation reveals that the isogenous curve $\left(\mc{E}_7\right)_{\langle P \rangle}'$ has Weierstrass equation
\[
\begin{split}
\left(\mc{E}_7\right)_{\langle P \rangle}' : \; y^2 = x^3 - &\frac{2^23^37^4D^2(t^2 + 5t + 1)(t^2 + 13t + 49)(t^2 + 245t + 2401)^2}{(t^4 - 490t^3 - 21609t^2 - 235298t - 823543)^3}x \\
- &\frac{2^43^37^6D^3(t^2 + 13t + 49)(t^2 + 245t + 2401)^3(t^4 + 14t^3 + 63t^2 + 70t - 7)}{(t^4 - 490t^3 - 21609t^2 - 235298t - 823543)^3},
\end{split}
\]
and that its $7$th division polynomial has the cubic factor
\[
\begin{split}
x^3 + &\frac{2\cdot 3^27^2D(t^2 + 13t + 49)(t^2 + 245t + 2401)}{(t^4 - 490t^3 - 21609t^2 - 235298t - 823543)}x^2 \\
+ &\frac{2^23^37^4D^2(t^2 + 13t + 33)(t^2 + 13t + 49)(t^2 + 245t + 2401)^2}{(t^4 - 490t^3 - 21609t^2 - 235298t - 823543)^2}x \\
+ &\frac{2^33^37^6D^3(t^2 + 13t + 49)(t^2 + 245t + 2401)^3(t^4 + 26t^3 + 219t^2 + 778t + 881)}{(t^4 - 490t^3 - 21609t^2 - 235298t - 823543)^3}.
\end{split}
\]
Finally, applying Lemma \ref{settingcubicfieldsequaltoeachotherlemma} (and some extensive, tedious calculations), we see that the splitting field of this polynomial agrees with the splitting field of $f_R(X)$, and this finishes the proof.
\end{proof}

\section{Developing explicit models for missing trace groups} \label{proofofmaintheoremsection}

In this section, we complete the proof of Theorem \ref{maintheorem}.  Specifically, for each $m$ appearing in the union on the right-hand side of \eqref{genuszerocurvesbylevel}, we will now
\begin{enumerate}
\item list the groups $G \in \mf{G}_{MT}^{\max}(0,m)$, up to conjugation in $\GL_2(\hat{\mbz})$;
\item for each such group $G$, exhibit a rational function $j_{\tilde{G}}(t) \in \mbq(t)$ which defines the forgetful map $j_{\tilde{G}} : X_{\tilde{G}} \longrightarrow X(1)$;
\item in case $G \subsetneq \tilde{G}$, identify each twist parameter $d_G(t) \in \mbq(t)$ for which the elliptic curve $\mc{E}_G$ over $\mbq(t)$ given by
\[
\mc{E}_G : \; d_G(t) y^2 = x^3 + a_{4,\tilde{G}}(t) x + a_{6,\tilde{G}}(t)
\]
satisfies $\rho_{\mc{E}_G,m}(G_{\mbq(t)}) = G$.  (Here the Weierstrass coefficients $a_{4,\tilde{G}}(t)$, $a_{6,\tilde{G}}(t) \in \mbq(t)$ are chosen as usual according to \eqref{defofa4anda6}, so that the $j$-invariant of $\mc{E}_G$ is $j_{\tilde{G}}(t)$.)
\end{enumerate}
Throughout this section, we denote by $\pi_{\GL_2}$ the canonical projection map
\[
\pi_{\GL_2} : \GL_2(\hat{\mbz}) \longrightarrow \GL_2(\mbz/m\mbz),
\]
suppressing the dependence of $\pi_{\GL_2}$ on the level $m$.

\medskip

\subsection{The level $m=2$.}

We have $\mf{G}_{MT}^{\max}(0,2) = \{ G_{2,1} \}$, where $G_{2,1}(2) \subseteq \GL_2(\mbz/2\mbz)$ is given by
\[
G_{2,1}(2) = \left\langle \begin{pmatrix} 1 & 1 \\ 0 & 1 \end{pmatrix} \right\rangle
\]
and $G_{2,1} = \pi_{\GL_2}^{-1}(G_{2,1}(2))$.  Note that $-I \in G_{2,1}$, so $G_{2,1} = \tilde{G}_{2,1}$.  Define the function $j_{2,1}(t) \in \mbq(t)$ by  
\[
j_{2,1}(t) := 256 \frac{(t+1)^3}{t}.
\]
As detailed in \cite{zywina}, for any elliptic curve $E$ over $\mbq$ with $j$-invariant $j_E$, one has
\begin{equation} \label{level2jinvariantstatement}
\rho_{E}(G_\mbq) \, \dot\subseteq \, G_{2,1} \; \Longleftrightarrow \; \exists t_0 \in \mbq \text{ for which } j_E = j_{2,1}(t_0).
\end{equation}
We define the coefficients $a_{4;2,1}(t)$ and $a_{6;2,1}(t)$ by \eqref{defofa4anda6} and the elliptic curve $\mc{E}_{2,1,1}$ over $\mbq(t,D)$ by
\[
\mc{E}_{2,1,1} : \; D y^2 = x^3 + a_{4;2,1}(t) x + a_{6;2,1}(t).
\]
It follows from \eqref{level2jinvariantstatement} that
\[
\rho_{E}(G_\mbq) \, \dot\subseteq \, G_{2,1} \; \Longleftrightarrow \; \exists t_0, D_0 \in \mbq \text{ for which } E \text{ is isomorphic over $\mbq$ to } \mc{E}_{2,1,1}(t_0,D_0).
\]

\medskip

\subsection{The level $m=3$.}

We have $\mf{G}_{MT}^{\max}(0,3) = \{ G_{3,1,1}, G_{3,1,2} \}$, where $G_{3,1,1}(3), G_{3,1,2}(3) \subseteq \GL_2(\mbz/3\mbz)$ are given by
\[
\begin{split}
G_{3,1,1}(3) &= \left\langle \begin{pmatrix} 1 & 1 \\ 0 & 1 \end{pmatrix}, \begin{pmatrix} 1 & 0 \\ 0 & 2 \end{pmatrix} \right\rangle = \left\{ \begin{pmatrix} 1 & * \\ 0 & * \end{pmatrix} \right\}, \\
G_{3,1,2}(3) &= \left\langle \begin{pmatrix} 1 & 1 \\ 0 & 1 \end{pmatrix}, \begin{pmatrix} 2 & 0 \\ 0 & 1 \end{pmatrix} \right\rangle = \left\{ \begin{pmatrix} * & * \\ 0 & 1 \end{pmatrix} \right\}
\end{split}
\]
and $G_{3,1,k} = \pi_{\GL_2}^{-1}(G_{3,1,k}(3))$ for $k \in \{1,2 \}$.  Note that $-I \notin G_{3,1,k}$.  We have 
\[
\tilde{G}_{3,1,1}(3) = \tilde{G}_{3,1,2}(3) = \left\{ \begin{pmatrix} * & * \\ 0 & * \end{pmatrix} \right\};
\]
let us denote this group by $\tilde{G}_{3,1}(3)$, omitting the last subscript.  Define the function $j_{3,1}(t) \in \mbq(t)$ by
\[
j_{3,1}(t) := 27\frac{(t+1)(t+9)^3}{t^3}.
\]
As detailed in \cite{zywina}, for any elliptic curve $E$ over $\mbq$ with $j$-invariant $j_E$, one has
\begin{equation} \label{level3jinvariantstatement}
\rho_{E}(G_\mbq) \, \dot\subseteq \, \tilde{G}_{3,1} \; \Longleftrightarrow \; \exists t_0 \in \mbq \text{ for which } j_E = j_{3,1}(t_0).
\end{equation}
We define the coefficients $a_{4;3,1}(t)$ and $a_{6;3,1}(t)$ by \eqref{defofa4anda6}, the twist parameters $d_{3,1,1}(t), d_{3,1,2}(t) \in \mbq(t)$ by
\[
d_{3,1,1}(t) := \frac{(t+1)(t^2 - 18t - 27)}{6(t+9)}, \quad\quad d_{3,1,2}(t) := -3 d_{3,1,1}(t),
\]
and the elliptic curves $\mc{E}_{3,1,k}$ over $\mbq(t)$ by
\[
\mc{E}_{3,1,k} : \; d_{3,1,k}(t) y^2 = x^3 + a_{4;3,1}(t) x + a_{6;3,1}(t) \quad\quad \left( k \in \{1, 2 \} \right).
\]
As may be found in \cite{zywina}, for any elliptic curve $E$ over $\mbq$ with $j$-invariant $j_E$, one has
\[
\begin{split}
\rho_{E}(G_\mbq) \, \dot\subseteq \, G_{3,1,1} \; &\Longleftrightarrow \; \exists t_0 \in \mbq \text{ for which } E \text{ is isomorphic over $\mbq$ to } \mc{E}_{3,1,1}\left( t_0 \right), \\
\rho_{E}(G_\mbq) \, \dot\subseteq \, G_{3,1,2} \; &\Longleftrightarrow \; \exists t_0 \in \mbq \text{ for which } E \text{ is isomorphic over $\mbq$ to } \mc{E}_{3,1,2}\left( t_0 \right).
\end{split}
\]

\medskip

\subsection{The level $m=4$.}

We have $\mf{G}_{MT}^{\max}(0,4) = \{ G_{4,1,1} \}$, where $G_{4,1,1}(4) \subseteq \GL_2(\mbz/4\mbz)$ is given by
\[
G_{4,1,1}(4) = \left\langle \begin{pmatrix} 1 & 1 \\ 0 & 3 \end{pmatrix}, \begin{pmatrix} 3 & 2 \\ 0 & 3 \end{pmatrix}, \begin{pmatrix} 1 & 1 \\ 1 & 2 \end{pmatrix} \right\rangle
\]
and $G_{4,1,1} = \pi_{\GL_2}^{-1}(G_{4,1,1}(4))$.  Note that $-I \notin G_{4,1,1}$.  We have 
\[
\tilde{G}_{4,1,1}(4) 
= \GL_2(\mbz/4\mbz)_{\chi_4 = \ve},
\]
defined as in \eqref{defofGL2subchi4equalsve}.  Let us set $\tilde{G}_{4,1} := \tilde{G}_{4,1,1}$.  We define the $j$-invariant $j_{4,1}(t) := -t^2 + 1728$ and the elliptic curve $\mc{E}_{4,1}$ over $\mbq(t,D)$ by
\begin{equation*} 
\mc{E}_{4,1} : \; y^2 = x^3 + \frac{108D^2 j_{4.1}(t)}{1728 - j_{4,1}(t)} x + \frac{432D^3 j_{4,1}(t)}{1728 - j_{4,1}(t)};
\end{equation*}
see \eqref{level4genericellipticcurve}.  By Lemma \ref{identifyingthesubfieldslevel4lemma}, for any elliptic curve $E$ over $\mbq$, we have
\[
\rho_{E}(G_\mbq) \, \dot\subseteq \, \tilde{G}_{4,1} \; \Longleftrightarrow \; \exists t_0, D_0 \in \mbq \text{ with } E \simeq_\mbq \mc{E}_{4,1}(t_0,D_0)
\]
and
\begin{equation} \label{specialformofthedummerextension}
\mbq(t,D)\left( i, \gD_{\mc{E}_{4,1}}^{1/4} \right) = \mbq(t,D)\left( i, \sqrt{Dt(t^2-1728)} \right).
\end{equation}

Regarding the index two subgroup $G_{4,1,1}(4) \subseteq \tilde{G}_{4,1}(4)$, a computation reveals that
\[
G_{4,1,1}(4) \cap \SL_2(\mbz/4\mbz)' = G_{4,1,1}(4) \cap \SL_2(\mbz/4\mbz),
\]
and $G_{4,1,1}(4)$ is the unique maximal subgroup (relative to $\dot\subseteq$) of $\tilde{G}_{4,1}(4)$ with this property.  By Lemmas \ref{level4kummersubextensionlemma} and \ref{identifyingthesubfieldslevel4lemma}, together with the Galois correspondence and \eqref{specialformofthedummerextension}, it follows that
\begin{equation*} 
\begin{split}
\rho_{\mc{E}_{4,1}(t_0,D_0)}(G_\mbq) \, \dot\subseteq \, G_{4,1,1} \; &\Longleftrightarrow \; \mbq\left( i,\gD_{\mc{E}_{4,1}(t_0,D_0)}^{1/4} \right) = \mbq(i) \\
&\Longleftrightarrow \; D_0 = \pm t_0(t_0^2 - 1728).
\end{split}
\end{equation*}
Noting that $t \mapsto t(t^2-1728)$ is an odd function of $t$ and $j_{4,1}(t)$ is even, we are led to the single twist parameter
\[
d_{4,1,1}(t) :=  t(t^2-1728),
\]
and we define the elliptic curve $\mc{E}_{4,1,1}$ over $\mbq(t)$ by
\[
\mc{E}_{4,1,1} : \; d_{4,1,1}(t) y^2 = x^3 + a_{4;4,1}(t) x + a_{6;4,1}(t).
\]
For each elliptic curve $E$ over $\mbq$ with $j$-invariant $j_E$, we evidently have
\[
\rho_{E}(G_\mbq) \, \dot\subseteq \, G_{4,1,1} \; \Longleftrightarrow \; \exists t_0 \in \mbq \text{ for which $E$ is isomorphic over $\mbq$ to $\mc{E}_{4,1,1}\left( t_0 \right)$.}
\]

\medskip

\subsection{The level $m=5$.}

We have $\mf{G}_{MT}^{\max}(0,5) = \{ G_{5,1,1}, G_{5,1,2}, G_{5,2,1}, G_{5,2,2} \}$, where the groups $G_{5,i,k}(5) \subseteq \GL_2(\mbz/5\mbz)$ are given by
\[
\begin{split}
G_{5,1,1}(5) &= \left\langle \begin{pmatrix} 1 & 1 \\ 0 & 1 \end{pmatrix}, \begin{pmatrix} 1 & 0 \\ 0 & 2 \end{pmatrix} \right\rangle = \left\{ \begin{pmatrix} 1 & * \\ 0 & * \end{pmatrix} \right\}, \\
G_{5,1,2}(5) &= \left\langle \begin{pmatrix} 1 & 1 \\ 0 & 1 \end{pmatrix}, \begin{pmatrix} 4 & 0 \\ 0 & 2 \end{pmatrix} \right\rangle = \left\{ \begin{pmatrix} a^2 & * \\ 0 & a \end{pmatrix} : a \in (\mbz/5\mbz)^\times \right\}, \\
G_{5,2,1}(5) &= \left\langle \begin{pmatrix} 1 & 1 \\ 0 & 1 \end{pmatrix}, \begin{pmatrix} 2 & 0 \\ 0 & 1 \end{pmatrix} \right\rangle = \left\{ \begin{pmatrix} * & * \\ 0 & 1 \end{pmatrix} \right\}, \\
G_{5,2,2}(5) &= \left\langle \begin{pmatrix} 1 & 1 \\ 0 & 1 \end{pmatrix}, \begin{pmatrix} 2 & 0 \\ 0 & 4 \end{pmatrix} \right\rangle = \left\{ \begin{pmatrix} a & * \\ 0 & a^2 \end{pmatrix} : a \in (\mbz/5\mbz)^\times \right\},
\end{split}
\]
and $G_{5,i,k} = \pi_{\GL_2}^{-1}(G_{5,i,k}(5))$ for $i,k \in \{1,2 \}$.  Note that $-I \notin G_{5,i,k}$, for each $i,k \in \{1, 2 \}$.  We have 
\[
\begin{split}
\tilde{G}_{5,1}(5) &:= \tilde{G}_{5,1,1}(5) = \tilde{G}_{5,1,2}(5) = \left\{ \begin{pmatrix} \pm 1 & * \\ 0 & * \end{pmatrix} \right\}, \\
\tilde{G}_{5,2}(5) &:= \tilde{G}_{5,2,1}(5) = \tilde{G}_{5,2,2}(5) = \left\{ \begin{pmatrix} * & * \\ 0 & \pm 1 \end{pmatrix} \right\}.
\end{split}
\]
Define the functions $j_{5,1}(t), j_{5,2}(t) \in \mbq(t)$ by
\[
\begin{split}
j_{5,1}(t) &:= \frac{(t^4 - 12t^3 + 14t^2 + 12t + 1)^3}{t^5(t^2 - 11t - 1)}, \\
j_{5,2}(t) &:= \frac{(t^4 + 228t^3 + 494t^2 - 228t + 1)^3}{t(t^2 - 11t - 1)^5}.
\end{split}
\]
As detailed in \cite{zywina}, for any elliptic curve $E$ over $\mbq$ with $j$-invariant $j_E$, one has
\begin{equation*} 
\begin{split}
\rho_{E}(G_\mbq) \, \dot\subseteq \, \tilde{G}_{5,1} \; &\Longleftrightarrow \; \exists t_0 \in \mbq \text{ for which } j_E = j_{5,1}(t_0), \\
\rho_{E}(G_\mbq) \, \dot\subseteq \, \tilde{G}_{5,2} \; &\Longleftrightarrow \; \exists t_0 \in \mbq \text{ for which } j_E = j_{5,2}(t_0).
\end{split}
\end{equation*}
We define the coefficients $a_{4;5,i}(t)$, and $a_{6;5,i}(t)$ for each $i \in \{1, 2\}$ by \eqref{defofa4anda6}, the twist parameters $d_{5,i,k}(t) \in \mbq(t)$ for each $i,k \in \{1, 2\}$ by
\[
\begin{split}
d_{5,1,1}(t) &:= - \frac{(t^2+1)(t^4 - 18t^3 + 74t^2 + 18t + 1)}{2(t^4 - 12t^3 + 14t^2 + 12t + 1)}, \quad\quad\quad\quad\;\, d_{5,1,2}(t) := 5 d_{5,1,1}(t), \\
d_{5,2,1}(t) &:= - \frac{(t^2+1)(t^4 - 522t^3 - 10006t^2 + 522t + 1)}{2(t^4 + 228t^3 + 494t^2 - 228t + 1)}, \quad\quad d_{5,2,2}(t) := 5 d_{5,2,1}(t),
\end{split}
\]
and the elliptic curves $\mc{E}_{5,i,k}$ over $\mbq(t)$ by
\[
\mc{E}_{5,i,k} : \; d_{5,i,k}(t) y^2 = x^3 + a_{4;5,i}(t) x + a_{6;5,i}(t) \quad\quad \left( i,k \in \{1, 2 \} \right).
\]
As detailed in \cite{zywina}, for any elliptic curve $E$ over $\mbq$ and for each $i,k \in \{1, 2\}$, we have
\[
\rho_{E}(G_\mbq) \, \dot\subseteq \, G_{5,i,k} \; \Longleftrightarrow \; \exists t_0 \in \mbq \text{ for which } E \text{ is isomorphic over $\mbq$ to } \mc{E}_{5,i,k}\left( t_0 \right). \\
\]

\medskip

\subsection{The level $m = 6$}

We have $\mf{G}_{MT}^{\max}(0,6) = \{ G_{6,1,1}, G_{6,2,1}, G_{6,3,1}, G_{6,3,2} \}$, where $G_{6,i,k}(6) \subseteq \GL_2(\mbz/6\mbz)$ are given by
\begin{equation} \label{descriptionofgroupslevel6}
\begin{split}
G_{6,1,1}(6) &= \left\langle \begin{pmatrix} 1 & 1 \\ 0 & 5 \end{pmatrix}, \begin{pmatrix} 5 & 1 \\ 3 & 2 \end{pmatrix}, \begin{pmatrix} 3 & 2 \\ 4 & 3 \end{pmatrix} \right\rangle \simeq \GL_2(\mbz/2\mbz) \times_{\psi^{(1,1)}} \GL_2(\mbz/3\mbz), \\
G_{6,2,1}(6) &= \left\langle \begin{pmatrix} 1 & 1 \\ 0 & 5 \end{pmatrix}, \begin{pmatrix} 1 & 2 \\ 0 & 1 \end{pmatrix}, \begin{pmatrix} 2 & 3 \\ 3 & 5 \end{pmatrix} \right\rangle \simeq \GL_2(\mbz/2\mbz) \times_{\psi^{(2,1)}} \left\{ \begin{pmatrix} * & * \\ 0 & * \end{pmatrix} \right\}, \\
G_{6,3,1}(6) &= \left\langle \begin{pmatrix} 5 & 0 \\ 0 & 1 \end{pmatrix}, \begin{pmatrix} 5 & 5 \\ 0 & 5 \end{pmatrix}, \begin{pmatrix} 4 & 3 \\ 3 & 1 \end{pmatrix} \right\rangle \simeq \GL_2(\mbz/2\mbz) \times_{\psi^{(3,1)}} \left\{ \begin{pmatrix} * & * \\ 0 & * \end{pmatrix} \right\}, \\
G_{6,3,2}(6) &= \left\langle \begin{pmatrix} 1 & 0 \\ 0 & 5 \end{pmatrix}, \begin{pmatrix} 5 & 5 \\ 0 & 5 \end{pmatrix}, \begin{pmatrix} 4 & 3 \\ 3 & 1 \end{pmatrix} \right\rangle \simeq \GL_2(\mbz/2\mbz) \times_{\psi^{(3,2)}} \left\{ \begin{pmatrix} * & * \\ 0 & * \end{pmatrix} \right\},
\end{split}
\end{equation}
and $G_{6,i,k} = \pi_{\GL_2}^{-1}(G_{6,i,k}(6))$.  In the fibered product involving $\psi^{(1,1)} = (\psi_2^{(1,1)},\psi_3^{(1,1)})$ on the right-hand side of $G_{6,1,1}(6)$ above, the common quotient $\Gamma$ is $D_3$, the dihedral group of order $6$, the map $\psi_2^{(1,1)}$ is any isomorphism $\GL_2(\mbz/2\mbz) \simeq D_3$, and the map $\psi_3^{(1,1)} : \GL_2(\mbz/3\mbz) \longrightarrow D_3$ is a surjective homomorphism, whose kernel is
\[
\ker \psi_3^{(1,1)} = \left\langle \begin{pmatrix} 0 & 2 \\ 1 & 0 \end{pmatrix}, \begin{pmatrix} 2 & 1 \\ 1 & 1 \end{pmatrix} \right\rangle = \SL_2(\mbz/3\mbz)' \subseteq \GL_2(\mbz/3\mbz).
\]
In the fibered products involving $\psi^{(2,1)}$, $\psi^{(3,1)}$ and $\psi^{(3,2)}$, the underlying homomorphisms are as follows:  $\psi_2^{(2,1)} = \psi_2^{(3,1)} = \psi_2^{(3,2)} = \ve$, where $\ve$ is defined by \eqref{defofve}, and $\psi_3^{(2,1)}$, $\psi_3^{(3,1)}$, and $\psi_3^{(3,2)}$ are defined by
\[
\begin{split}
\psi_3^{(2,1)} : &\left\{ \begin{pmatrix} * & * \\ 0 & * \end{pmatrix} \right\} \longrightarrow \{ \pm 1 \}, \quad\quad \psi_3^{(2,1)}\left( \begin{pmatrix} a & b \\ 0 & d \end{pmatrix} \right) := \left( \frac{ad}{3} \right), \\
\psi_3^{(3,1)} : &\left\{ \begin{pmatrix} * & * \\ 0 & * \end{pmatrix} \right\} \longrightarrow \{ \pm 1 \}, \quad\quad \psi_3^{(3,1)}\left( \begin{pmatrix} a & b \\ 0 & d \end{pmatrix} \right) := \left( \frac{d}{3} \right), \\
\psi_3^{(3,2)} : &\left\{ \begin{pmatrix} * & * \\ 0 & * \end{pmatrix} \right\} \longrightarrow \{ \pm 1 \}, \quad\quad \psi_3^{(3,2)}\left( \begin{pmatrix} a & b \\ 0 & d \end{pmatrix} \right) := \left( \frac{a}{3} \right).
\end{split}
\]
Note that $-I \in G_{6,1,1}$ and $-I \in G_{6,2,1}$, but $-I \notin G_{6,3,k}$ for each $k \in \{ 1, 2 \}$.  We have 
\begin{equation} \label{tildeGsub63j}
\tilde{G}_{6,3,k}(6) \simeq  \GL_2(\mbz/2\mbz) \times \left\{ \begin{pmatrix} * & * \\ 0 & * \end{pmatrix} \right\} \quad\quad \left( k \in \{1, 2 \} \right).
\end{equation}
Let us set $\tilde{G}_{6,i} := \tilde{G}_{6,i,k}$ and note that $G_{6,i,1} = \tilde{G}_{6,i}$ for $i \in \{ 1, 2 \}$.  Also note that
$\level_{\GL_2}(\tilde{G}_{6,3}) = 3$.  The group $G_{6,1,1}$ is studied in \cite{braujones} (see also \cite{jonesmcmurdy} and \cite{morrow}); for any elliptic curve $E$ over $\mbq$ we have
\[
\rho_{E}(G_\mbq) \, \dot\subseteq \, G_{6,1,1} \; \Longleftrightarrow \; 
\begin{matrix} 
\left[ \mbq(E[2]) : \mbq \right] = 6 \text{ and } \mbq(E[2]) \subseteq \mbq(E[3]), \text{ or} \\
\mbq(E[2]) = \mbq(\mu_3) \text{ and } \rho_{E,3}(G_\mbq) \, \dot\subseteq \, \mc{N}_{\ns}(3),
\end{matrix} 
\]
where $\mc{N}_{\ns}(3)$ denotes the normalizer in $\GL_2(\mbz/3\mbz)$ of a non-split Cartan subgroup.  Define $j_{6,1}(t) \in \mbq(t)$ and the elliptic curve $\mc{E}_{6,1,1}$ over $\mbq(t,D)$ by
\[
\begin{split}
j_{6,1}(t) :=& 2^{10}3^3t^3(1-4t^3), \\
\mc{E}_{6,1,1} : \; Dy^2 =& x^3 + \frac{108 j_{6,1}(t)}{1728 - j_{6,1}(t)} x + \frac{432 j_{6,1}(t)}{1728 - j_{6,1}(t)}.
\end{split}
\]
As detailed in \cite{braujones}, for any elliptic curve $E$ over $\mbq$, we have
\begin{equation*} 
\rho_{E}(G_\mbq) \, \dot\subseteq \, G_{6,1,1} \; \Longleftrightarrow \; \exists t_0, D_0 \in \mbq \text{ for which } E \text{ is isomorphic over $\mbq$ to } \mc{E}_{6,1,1}(t_0,D_0).
\end{equation*}
Regarding the group $G_{6,2,1} = \tilde{G}_{6,2}$: by \eqref{descriptionofgroupslevel6}, Corollary \ref{keycorollaryforinterpretationofentanglements} and Lemma \ref{level2kummersubextensionlemma}, we have
\begin{equation} \label{from31to62}
\rho_{E}(G_\mbq) \, \dot\subseteq \, G_{6,2,1} \; \Longleftrightarrow \; \mbq\left( \sqrt{\gD_E} \right) = \mbq(\mu_3) \text{ and } \rho_{E,3}(G_\mbq) \, \dot\subseteq \, \left\{ \begin{pmatrix} * & * \\ 0 & * \end{pmatrix} \right\}.
\end{equation}
Recall $j_{3,1}(t) \in \mbq(t)$, defined by $j_{3,1}(t) := 27\frac{(t+1)(t+9)^3}{t^3}$ and the coefficients $a_{4;3,1}(t)$ and $a_{6;3,1}(t)$ defined by \eqref{defofa4anda6}; consider the elliptic curve $\mc{E}_{3,1}$ over $\mbq(t,D)$ defined by 
\[
\mc{E}_{3,1} : \;  y^2 = x^3 + D^2a_{4;3,1}(t) x + D^3a_{6;3,1}(t).
\]
The discriminant $\gD_{\mc{E}_{3,1}}$ of $\mc{E}_{3,1}$ satisfies
\begin{equation} \label{discriminantofmcEsub31}
\gD_{\mc{E}_{3,1}} = 2^{18}3^9\frac{D^6t^3(t+1)^2(t+9)^6}{(t^2-18t-27)^6}.
\end{equation}
In particular, $\mbq\left( \sqrt{\gD_{\mc{E}_{3,1}}} \right) = \mbq(\sqrt{3t})$, so by \eqref{from31to62}, we see that $\rho_{\mc{E}_{3,1}(t_0,D_0)}(G_\mbq) \, \dot\subseteq \, \tilde{G}_{6,2} = G_{6,2,1}$ if and only if $t_0 \in - (\mbq^\times)^2$.  We therefore set 
\[
j_{6,2}(t) := j_{3,1}(-t^2), \quad\quad a_{4;6,2}(t) := a_{4;3,1}(-t^2), \quad a_{6;6,2}(t) := a_{4;3,1}(-t^2)
\]
and define the elliptic curve $\mc{E}_{6,2,1}$ over $\mbq(t,D)$ by
\[
\mc{E}_{6,2,1} : \; D y^2 = x^3 + a_{4;6,2}(t) x + a_{6;6,2}(t).
\]
For any $E$ over $\mbq$, we then have
\[
\rho_{E}(G_\mbq) \, \dot\subseteq \, G_{6,2,1} \; \Longleftrightarrow \; \exists t_0, D_0 \in \mbq \text{ for which $E$ is isomorphic over $\mbq$ to } \mc{E}_{6,2,1}(t_0,D_0).
\]

Finally, we turn to the groups $G_{6,3,1}$ and $G_{6,3,2}$.  By \eqref{tildeGsub63j} and \eqref{level3jinvariantstatement}, for any $E$ over $\mbq$, we have
\[
\rho_{E}(G_\mbq) \, \dot\subseteq \, \tilde{G}_{6,3} \; \Longleftrightarrow \; \exists t_0, D_0 \in \mbq \text{ for which $E$ is isomorphic over $\mbq$ to } \mc{E}_{3,1}(t_0,D_0).
\]
On the other hand, \eqref{descriptionofgroupslevel6}, Corollary \ref{keycorollaryforinterpretationofentanglements} and Lemma \ref{level2kummersubextensionlemma} imply that
\[
\rho_{E}(G_\mbq) \, \dot\subseteq \, G_{6,3,k} \; \Longleftrightarrow \; \rho_{E}(G_\mbq) \, \dot\subseteq \, \tilde{G}_{6,3} \; \text{ and } \; \mbq\left( \sqrt{\gD_E} \right) = \mbq(E[3])^{\ker \psi_3^{(1,k)}} \quad \left( k \in \{ 1, 2 \} \right).
\]
Thus, by Lemma \ref{subfieldsoflevel3lemma} together with \eqref{discriminantofmcEsub31}, we are led to the twist parameters
\[
d_{6,3,1}(t) := \frac{2t(t+1)(t+9)}{t^2 - 18t - 27}, \quad\quad d_{6,3,2}(t) := - \frac{6t(t+1)(t+9)}{t^2 - 18t - 27}.
\]
We furthermore set
\[
a_{4;6,3}(t) := a_{4;3,1}(t), \quad a_{6;6,3}(t) := a_{6;3,1}(t)
\]
and define the elliptic curves $\mc{E}_{6,3,k}$ over $\mbq(t)$ by
\[
\mc{E}_{6,3,k} : \; d_{6,3,k}(t) y^2 = x^3 + a_{4;6,3}(t) x + a_{6;6,3}(t).
\]
Our discussion demonstrates that, for any elliptic curve $E$ over $\mbq$ and for each $k \in \{1, 2 \}$, we have
\[
\rho_{E}(G_\mbq) \, \dot\subseteq \, G_{6,3,k} \; \Longleftrightarrow \; \exists t_0 \in \mbq \text{ for which $E$ is isomorphic over $\mbq$ to } \mc{E}_{6,3,k}\left( t_0 \right).
\]

\medskip

\subsection{The level $m = 7$.}

We have $\mf{G}_{MT}^{\max}(0,7) = \{ G_{7,1,1}, G_{7,1,2}, G_{7,2,1}, G_{7,2,2}, G_{7,3,1}, G_{7,3,2} \}$, where the groups $G_{7,i,k}(7) \subseteq \GL_2(\mbz/7\mbz)$ are given by
\begin{equation*}
\begin{split}
G_{7,1,1}(7) &= \left\langle \begin{pmatrix} 1 & 1 \\ 0 & 1 \end{pmatrix}, \begin{pmatrix} 1 & 0 \\ 0 & 3 \end{pmatrix} \right\rangle = \left\{ \begin{pmatrix} 1 & * \\ 0 & * \end{pmatrix} \right\}, \\
G_{7,1,2}(7) &= \left\langle \begin{pmatrix} 1 & 1 \\ 0 & 1 \end{pmatrix}, \begin{pmatrix} 6 & 0 \\ 0 & 2 \end{pmatrix} \right\rangle = \left\{ \begin{pmatrix} \pm 1 & * \\ 0 & a^2 \end{pmatrix} : a \in (\mbz/7\mbz)^\times \right\}, \\
G_{7,2,1}(7) &= \left\langle \begin{pmatrix} 1 & 1 \\ 0 & 1 \end{pmatrix}, \begin{pmatrix} 3 & 0 \\ 0 & 1 \end{pmatrix} \right\rangle = \left\{ \begin{pmatrix} * & * \\ 0 & 1 \end{pmatrix} \right\}, \\
G_{7,2,2}(7) &= \left\langle \begin{pmatrix} 1 & 1 \\ 0 & 1 \end{pmatrix}, \begin{pmatrix} 2 & 0 \\ 0 & 6 \end{pmatrix} \right\rangle = \left\{ \begin{pmatrix} a^2 & * \\ 0 & \pm 1 \end{pmatrix} : a \in (\mbz/7\mbz)^\times \right\}, \\
G_{7,3,1}(7) &= \left\langle \begin{pmatrix} 1 & 1 \\ 0 & 1 \end{pmatrix}, \begin{pmatrix} 5 & 0 \\ 0 & 2 \end{pmatrix} \right\rangle = \left\{ \begin{pmatrix} \pm a^2 & * \\ 0 & a^2 \end{pmatrix} : a \in (\mbz/7\mbz)^\times \right\}, \\
G_{7,3,2}(7) &= \left\langle \begin{pmatrix} 1 & 1 \\ 0 & 1 \end{pmatrix}, \begin{pmatrix} 2 & 0 \\ 0 & 5 \end{pmatrix} \right\rangle = \left\{ \begin{pmatrix} a^2 & * \\ 0 & \pm a^2 \end{pmatrix} : a \in (\mbz/7\mbz)^\times \right\}
\end{split}
\end{equation*}
and $G_{7,i,k} = \pi_{\GL_2}^{-1}(G_{7,i,k}(7))$ for each $i \in \{ 1, 2, 3 \}$ and $k \in \{1,2 \}$.  Note that $-I \notin G_{7,i,k}$, for each $i,k$.  We have 
\begin{equation} \label{tildeGlevel7groups}
\begin{split}
\tilde{G}_{7,1}(7) &:= \tilde{G}_{7,1,1}(7) = \tilde{G}_{7,1,2}(7) = \left\{ \begin{pmatrix} \pm 1 & * \\ 0 & * \end{pmatrix} \right\}, \\
\tilde{G}_{7,2}(7) &:= \tilde{G}_{7,2,1}(7) = \tilde{G}_{7,2,2}(7) = \left\{ \begin{pmatrix} * & * \\ 0 & \pm 1 \end{pmatrix} \right\}, \\
\tilde{G}_{7,3}(7) &:= \tilde{G}_{7,3,1}(7) = \tilde{G}_{7,3,2}(7) = \left\{ \begin{pmatrix} a & * \\ 0 & \pm a \end{pmatrix} : a \in (\mbz/7\mbz)^\times \right\}.
\end{split}
\end{equation}
Define the functions $j_{7,1}(t), j_{7,2}(t), j_{7,3}(t) \in \mbq(t)$ by
\begin{equation} \label{tildeGlevel7jinvariants}
\begin{split}
j_{7,1}(t) &:= \frac{(t^2 - t + 1)^3(t^6 - 11t^5 + 30t^4 - 15t^3 - 10t^2 + 5t + 1)^3}{t^7(t-1)^7(t^3 - 8t^2 + 5t + 1)}, \\
j_{7,2}(t) &:= \frac{(t^2 - t + 1)^3(t^6 + 229t^5 + 270t^4 - 1695t^3 + 1430t^2 - 235t + 1)^3}{t(t-1)(t^3 - 8t^2 + 5t + 1)^7}, \\
j_{7,3}(t) &:= - \frac{(t^2 - 3t - 3)^3(t^2 - t + 1)^3(3t^2 - 9t + 5)^3(5t^2 - t - 1)^3}{(t^3 - 2t^2 - t + 1)(t^3 - t^2 - 2t + 1)^7}.
\end{split}
\end{equation}
As detailed in \cite{zywina}, for any elliptic curve $E$ over $\mbq$ with $j$-invariant $j_E$, one has
\begin{equation} \label{level7jinvariantstatement}
\begin{split}
\rho_{E}(G_\mbq) \, \dot\subseteq \, \tilde{G}_{7,1} \; &\Longleftrightarrow \; \exists t \in \mbq \text{ for which } j_E = j_{7,1}(t), \\
\rho_{E}(G_\mbq) \, \dot\subseteq \, \tilde{G}_{7,2} \; &\Longleftrightarrow \; \exists t \in \mbq \text{ for which } j_E = j_{7,2}(t), \\
\rho_{E}(G_\mbq) \, \dot\subseteq \, \tilde{G}_{7,3} \; &\Longleftrightarrow \; \exists t \in \mbq \text{ for which } j_E = j_{7,3}(t).
\end{split}
\end{equation}
We define the coefficients $a_{4;7,i}(t)$, and $a_{6;7,i}(t)$ for each $i \in \{1, 2, 3\}$ by \eqref{defofa4anda6}, the twist parameters $d_{7,i,k}(t) \in \mbq(t)$ for each $i \in \{ 1, 2, 3 \}$ and $k \in \{1, 2\}$ by
\[
\begin{split}
d_{7,1,1} &:=  - \frac{t^{12} - 18t^{11} + 117t^{10} - 354t^9 + 570t^8 - 486t^7 + 273t^6 - 222t^5 + 174t^4 - 46t^3 - 15t^2 + 6t + 1}{2(t^2 - t + 1)(t^6 - 11t^5 + 30t^4 - 15t^3 - 10t^2 + 5t + 1)} \\
d_{7,2,1} &:=  - \frac{\begin{pmatrix} t^{12} - 522t^{11} - 8955t^{10} + 37950t^9 - 70998t^8 + 131562t^7 - 253239t^6 + \\  316290t^5 - 218058t^4 + 80090t^3 - 14631t^2 + 510t + 1 \end{pmatrix}}{2(t^2 - t + 1)(t^6 + 229t^5 + 270t^4 - 1695t^3 + 1430t^2 - 235t + 1)} \\
d_{7,3,1} &:=  \frac{7(t^4 - 6t^3 + 17t^2 - 24t + 9)(3t^4 - 4t^3 - 5t^2 - 2t - 1)(9t^4 - 12t^3 - t^2 + 8t - 3)}{2(t^2 - 3t - 3)(t^2 - t + 1)(3t^2 - 9t + 5)(5t^2 - t - 1)}
\end{split}
\]
and $d_{7,i,2} := -7d_{7,i,1}$ for $i \in \{1, 2, 3 \}$; define the elliptic curves $\mc{E}_{7,i,k}$ over $\mbq(t)$ by
\[
\mc{E}_{7,i,k} : \; d_{7,i,k}(t) y^2 = x^3 + a_{4;7,i}(t) x + a_{6;7,i}(t) \quad\quad \left( i \in \{ 1, 2, 3 \}, \, k \in \{1, 2 \} \right).
\]
As may be found in \cite{zywina}, for any elliptic curve $E$ over $\mbq$ with $j$-invariant $j_E$, and for each $i \in \{ 1, 2, 3 \}$ and $k \in \{1, 2\}$, one has
\[
\rho_{E}(G_\mbq) \, \dot\subseteq \, G_{7,i,k} \; \Longleftrightarrow \; \exists t_0 \in \mbq \text{ for which } E \text{ is isomorphic over $\mbq$ to } \mc{E}_{7,i,k}\left( t_0 \right). \\
\]

\medskip

\subsection{The level $m=8$.}

We have $\mf{G}_{MT}^{\max}(0,8) = \{ G_{8,1,1}, G_{8,2,1} \}$, where $G_{8,1,1}(8), G_{8,2,1}(8) \subseteq \GL_2(\mbz/8\mbz)$ are given by
\[
\begin{split}
G_{8,1,1}(8) &= \left\langle \begin{pmatrix} 1 & 1 \\ 0 & 7 \end{pmatrix}, \begin{pmatrix} 3 & 0 \\ 0 & 7 \end{pmatrix}, \begin{pmatrix} 5 & 5 \\ 5 & 2 \end{pmatrix} \right\rangle, \\
G_{8,2,1}(8) &= \left\langle \begin{pmatrix} 5 & 6 \\ 6 & 7 \end{pmatrix}, \begin{pmatrix} 3 & 0 \\ 0 & 7 \end{pmatrix}, \begin{pmatrix} 5 & 5 \\ 5 & 2 \end{pmatrix} \right\rangle,
\end{split}
\]
and $G_{8,i,1} = \pi_{\GL_2}^{-1}(G_{8,i,1}(8))$ for each $i \in \{1, 2 \}$.  Note that $-I \notin G_{8,i,1}$, and we define groups $\tilde{G}_{8,1} := \tilde{G}_{8,1,1}$ and $\tilde{G}_{8,2} := \tilde{G}_{8,2,1}$.

As a consequence of \cite[Lemma 28]{sutherlandzywina} and \cite[Proposition 3.1]{sutherlandzywina}, for any group $G \in \mf{G}(0,8)$, we have
\begin{equation} \label{iszsadmissiblecondition}
X_{\tilde{G}}(\mbq) \neq \emptyset \; \Longleftrightarrow \; \exists g \in \tilde{G} \text{ that is $\GL_2(\mbz/8\mbz)$-conjugate to } \begin{pmatrix} 1 & 0 \\ 0 & -1 \end{pmatrix} \text{ or } \begin{pmatrix} 1 & 1 \\ 0 & -1 \end{pmatrix}.
\end{equation}
A computation shows that $\tilde{G}_{8,2}(8)$ fails the condition on the right-hand side of \eqref{iszsadmissiblecondition}, whereas $\tilde{G}_{8,1}(8)$ satisfies it.  Thus, $\left| X_{\tilde{G}_{8,2}}(\mbq) \right| = 0$ and $\left| X_{\tilde{G}_{8,1}}(\mbq) \right| = \infty$, and we will therefore restrict our consideration to the groups $\tilde{G}_{8,1}$ and $G_{8,1,1}$.  The group $\tilde{G}_{8,1}$ has $\GL_2$-level $4$, and one may verify by direct computation that
\[
\begin{split}
&\tilde{G}_{8,1}(4) \subseteq \GL_2(\mbz/4\mbz)_{\chi_4 = \ve}, \quad \tilde{G}_{8,1}(2) = \GL_2(\mbz/2\mbz) \quad \text{ and } \\
&\ker\left( \GL_2(\mbz/4\mbz) \rightarrow \GL_2(\mbz/2\mbz) \right) \cap \SL_2(\mbz/4\mbz)' \cap \tilde{G}_{8,1}(4) = \{ I \},
\end{split}
\]
where the group $\GL_2(\mbz/4\mbz)_{\chi_4 = \ve}$ is as in \eqref{defofGL2subchi4equalsve}.  Furthermore, $\tilde{G}_{8,1}(4)$ is the unique subgroup (up to $\doteq$) of $\GL_2(\mbz/4\mbz)$ satisfying these three conditions.
By the Galois correspondence and Lemma \ref{level4kummersubextensionlemma}, it follows that, for any elliptic curve $E$ over $\mbq$, we have
\begin{equation} \label{rhocontainedintildeGsub81}
\begin{split}
\rho_{E}(G_\mbq) \, \dot\subseteq \, \tilde{G}_{8,1} \quad &\Longleftrightarrow \quad \rho_{E,4}(G_\mbq) \, \dot\subseteq \, \tilde{G}_{8,1}(4) \\
&\Longleftrightarrow \quad \begin{matrix} \mbq(\sqrt{\gD_E}) = \mbq(i), \; [\mbq(E[2]) : \mbq ] =6, \\ \text{ and } \mbq(E[4]) = \mbq(E[2], \gD_E^{1/4}). \end{matrix}
\end{split}
\end{equation}
Define the rational functions $g_{8,1}(t), f_{8,1}(t)$ and $j_{8,1}(t) \in \mbq(t)$ by
\[
g_{8,1}(t) := - \frac{t^2 + 2t - 2}{t}, \quad\quad f_{8,1}(t) :=  4t^3(8-t), \quad\quad j_{8,1}(t) := f_{8,1}(g_{8,1}(t)).
\]
The group $\tilde{G}_{8,1}$ appears under the label 4$\text{D}^0$-4a in \cite{sutherlandzywina}, wherein it is shown that, for any elliptic curve $E$ over $\mbq$ of $j$-invariant $j_E$, we have
\[
\rho_{E}(G_\mbq) \, \dot\subseteq \, \tilde{G}_{8,1} \; \Longleftrightarrow \; \exists t_0 \in \mbq \text{ for which } j_E = j_{8,1}(t_0).
\]
The group $G_{8,1,1}$ entails an additional vertical entanglement.  Specifically, we have
\[
G_{8,1,1} \cap \pi_{\GL_2}\left( \SL_2(\mbz/4\mbz)' \right) = G_{8,1,1} \cap \pi_{\GL_2}\left( \SL_2(\mbz/8\mbz) \right),
\]
and $G_{8,1,1}$ is the unique maximal subgroup of $\tilde{G}_{8,1}$ (with respect to $\dot\subseteq$) that satisfies this.  It follows that, for any elliptic curve $E$ over $\mbq$,
\begin{equation} \label{conditionforrhotobeinsideG811}
\rho_{E}(G_\mbq) \, \dot\subseteq \, G_{8,1,1} \quad \Longleftrightarrow \quad \begin{matrix} \mbq(\sqrt{\gD_E}) = \mbq(i), \; \mbq(E[4]) = \mbq(E[2],\gD_E^{1/4}), \\ [ \mbq(E[2]) : \mbq] = 6 \; \text{ and } \; \mbq(i, \gD_E^{1/4}) = \mbq(\mu_8). \end{matrix}
\end{equation}
We define the coefficients $a_{4;8,1}(t)$ and $a_{6;8,1}(t)$ by \eqref{defofa4anda6} and consider the elliptic curve $\mc{E}_{8,1}$ over $\mbq(t,D)$ defined by 
\[
\mc{E}_{8,1} : \; y^2 = x^3 + D^2a_{4;8,1}(t) x + D^3a_{6;8,1}(t).
\]
By \eqref{rhocontainedintildeGsub81}, for any 
$t_0, D_0 \in \mbq$ for which $\mc{E}_{8,1}(t_0,D_0)$ is an elliptic curve, we have $\mbq\left( \sqrt{\gD_{\mc{E}_{8,1}(t_0,D_0)}} \right) = \mbq(i)$ and $\mbq(\mc{E}_{8,1}(t_0,D_0)[4]) = \mbq\left( \mc{E}_{8,1}(t_0,D_0)[2],\gD_{\mc{E}_{8,1}(t_0,D_0)}^{1/4} \right)$.
The discriminant $\gD_{\mc{E}_{8,1}}$ satisfies
\[
\gD_{\mc{E}_{8,1}} = - 2^{16} 3^{12} \frac{D^6t^4(t^2 +2t-2)^6(t^2+10t-2)^2}{(t^2+2)^6(t^2+8t-2)^6},
\]
and thus, using $\zeta_8 = \frac{\sqrt{2}}{2} + \frac{\sqrt{2}}{2} i$, we find that
\[
\mbq\left( i, \gD_{\mc{E}_{8,1}(t_0,D_0)}^{1/4} \right) = \mbq\left( i, \sqrt{\frac{2D(t^2 +2t-2)(t^2+10t-2)}{(t^2+2)(t^2+8t-2)}} \right).
\]
Thus, it follows from \eqref{rhocontainedintildeGsub81} and \eqref{conditionforrhotobeinsideG811} that
\[
\rho_{\mc{E}_{8,1}(t_0,D_0)}(G_\mbq) \, \dot\subseteq \, G_{8,1,1} \; \Longleftrightarrow \; D = \pm \frac{(t^2 +2t-2)(t^2+10t-2)}{(t^2+2)(t^2+8t-2)}.
\]
Thus, we are led to the pair of twist parameters
$
d_{8,1,1}^{\pm}(t) := \pm \frac{(t^2 +2t-2)(t^2+10t-2)}{(t^2+2)(t^2+8t-2)}.
$
Finally, noting that $j_{8,1}(-2/t) = j_{8,1}(t)$ and $d_{8,1,1}^{\pm}(-2/t) = d_{8,1,1}^{\mp}(t)$, we are led to the single twist parameter 
\[
d_{8,1,1}(t) := \frac{(t^2 +2t-2)(t^2+10t-2)}{(t^2+2)(t^2+8t-2)},
\]
and, defining the elliptic curve $\mc{E}_{8,1,1}$ over $\mbq(t)$ by
\[
\mc{E}_{8,1,1} : \; d_{8,1,1}(t) y^2 = x^3 + a_{4;8,1}(t)x + a_{6;8,1}(t),
\]
we have that, for each elliptic curve $E$ over $\mbq$ with $j$-invariant $j_E$,
\[
\rho_{E}(G_\mbq) \, \dot\subseteq \, G_{8,1,1} \; \Longleftrightarrow \; \exists t_0 \in \mbq \text{ for which $E$ is isomorphic over $\mbq$ to $\mc{E}_{8,1,1}\left( t_0 \right)$.}
\]

\medskip

\subsection{The level $m=9$.}

We have $\mf{G}_{MT}^{\max}(0,9) = \{ G_{9,1,1}, G_{9,2,1}, G_{9,3,1}, G_{9,4,1}, G_{9,5,1} \}$, where $G_{9,i,1}(9) \subseteq \GL_2(\mbz/9\mbz)$ are given by
\[
\begin{split}
G_{9,1,1}(9) &= \left\langle \begin{pmatrix} 1 & 3 \\ 0 & 1 \end{pmatrix}, \begin{pmatrix} 5 & 0 \\ 3 & 2 \end{pmatrix}, \begin{pmatrix} 4 & 2 \\ 0 & 5 \end{pmatrix} \right\rangle, \\
G_{9,2,1}(9) &= \left\langle \begin{pmatrix} 2 & 1 \\ 0 & 5 \end{pmatrix}, \begin{pmatrix} 4 & 0 \\ 3 & 5 \end{pmatrix} \right\rangle, \\
G_{9,3,1}(9) &= \left\langle \begin{pmatrix} 1 & 3 \\ 0 & 1 \end{pmatrix}, \begin{pmatrix} 5 & 2 \\ 3 & 5 \end{pmatrix}, \begin{pmatrix} 4 & 0 \\ 0 & 5 \end{pmatrix} \right\rangle, \\
G_{9,4,1}(9) &= \left\langle \begin{pmatrix} 0 & 2 \\ 4 & 1 \end{pmatrix}, \begin{pmatrix} 4 & 3 \\ 5 & 4 \end{pmatrix}, \begin{pmatrix} 4 & 5 \\ 0 & 5 \end{pmatrix} \right\rangle, \\
G_{9,5,1}(9) &= \left\langle \begin{pmatrix} 5 & 7 \\ 2 & 8 \end{pmatrix}, \begin{pmatrix} 1 & 0 \\ 0 & 4 \end{pmatrix} \right\rangle
\end{split}
\]
and $G_{9,i,1} = \pi_{\GL_2}^{-1}(G_{9,i,1}(9))$ for each $i \in \{1, 2, 3, 4, 5 \}$.  We have that $-I \in G_{9,i,1}$ for each $i \in \{1, 2, 3, 4, 5 \}$; as usual we define $\tilde{G}_{9,i} := \tilde{G}_{9,i,1}$, which equals $G_{9,i,1}$ in this case.  The group $\tilde{G}_{9,5}(9)$ fails the right-hand condition in \eqref{iszsadmissiblecondition}, whereas, for $i \in \{ 1, 2, 3, 4 \}$, the groups $\tilde{G}_{9,i}(9)$ satisfy it.  Since $X_{\tilde{G}_{9,5}}$ is a thus conic with no rational points, we will restrict our consideration to the first four groups in our list, which appear in \cite{sutherlandzywina} under the labels 9$\text{H}^0$-9c, 9$\text{I}^0$-9b, 9$\text{J}^0$-9c, and 9$\text{F}^0$-9a, respectively.  We define the functions
\[
\begin{array}{lllllllllll}
f_{9,1}(t) &:= & \frac{(t+3)^3(t+27)}{t} & & g_{9,1}(t) &:= & \frac{729}{t^3-27} & & h_{9,1}(t) &:= & \frac{-6(t^3 - 9t)}{t^3 + 9t^2 - 9t - 9} \\
& & & & g_{9,2}(t) &:= & t(t^2 + 9t + 27) & & h_{9,2}(t) &:= & \frac{-3(t^3 + 9t^2 - 9t - 9)}{t^3 + 3t^2 - 9t - 3} \\
& & & &  g_{9,3}(t) &:= & t^3 & & h_{9,3}(t) &:= & \frac{3(t^3 + 3t^2 - 9t - 3)}{t^3 - 3t^2 - 9t + 3} 
\end{array}
\]
and the $j$-invariants
\[
\begin{split}
j_{9,1}(t) &:= f_{9,1}\left(g_{9,1}\left(h_{9,1}(t) \right) \right), \quad j_{9,2}(t) := f_{9,1}\left(g_{9,2}\left(h_{9,2}(t) \right) \right), \quad j_{9,3}(t) := f_{9,1}\left(g_{9,3}\left(h_{9,3}(t) \right) \right), \\
j_{9,4}(t) &:= \frac{3^7(t^2-1)^3(t^6 + 3t^5 + 6t^4 + t^3 - 3t^2 + 12t + 16)^3(2t^3 + 3t^2 - 3t - 5)}{(t^3 - 3t - 1)^9}.
\end{split}
\]
As demonstrated in \cite{sutherlandzywina}, for any elliptic curve $E$ over $\mbq$ with $j$-invariant $j_E$ and for each $i \in \{1, 2, 3, 4 \}$, we have
\[
\rho_{E}(G_\mbq) \, \dot\subseteq \, \tilde{G}_{9,i} \; \Longleftrightarrow \; \exists t_0 \in \mbq \text{ for which } j_E = j_{9,i}(t_0).
\]
We define the coefficients $a_{4;9,i}(t)$ and $a_{6;9,i}(t)$ by \eqref{defofa4anda6} and consider the elliptic curve $\mc{E}_{9,i}$ over $\mbq(t,D)$ defined by 
\[
\mc{E}_{9,i} : \; D y^2 = x^3 + a_{4;9,i}(t) x + a_{6;9,i}(t).
\]
Since $G_{9,i,1} = \tilde{G}_{9,1}$ for each $i$, it follows immediately that, for each elliptic curve $E$ over $\mbq$ with $j$-invariant $j_E$ and for each $i \in \{1, 2, 3, 4 \}$, we have
\[
\rho_{E}(G_\mbq) \, \dot\subseteq \, G_{9,i,1} \; \Longleftrightarrow \; \exists t_0, D_0 \in \mbq \text{ for which $E$ is isomorphic over $\mbq$ to $\mc{E}_{9,i}\left( t_0,D_0 \right)$.}
\]

\medskip

\subsection{The level $m = 10$.}

We have $\mf{G}_{MT}^{\max}(0,10) = \{ G_{10,1,1}, G_{10,1,2}, G_{10,2,1}, G_{10,2,2}, G_{10,3,1} \}$, where $G_{10,i,k}(10) \subseteq \GL_2(\mbz/10\mbz)$ are given by
\begin{equation} \label{descriptionofgroupslevel10}
\begin{split}
G_{10,1,1}(10) &= \left\langle  \begin{pmatrix} 9 & 9 \\ 0 & 9 \end{pmatrix}, \begin{pmatrix} 6 & 5 \\ 5 & 1 \end{pmatrix}, \begin{pmatrix} 1 & 0 \\ 0 & 3 \end{pmatrix} \right\rangle \simeq \GL_2(\mbz/2\mbz) \times_{\psi^{(1,1)}} \left\{ \begin{pmatrix} \pm 1 & * \\ 0 & * \end{pmatrix} \right\}, \\
G_{10,1,2}(10) &= \left\langle  \begin{pmatrix} 9 & 9 \\ 0 & 9 \end{pmatrix}, \begin{pmatrix} 6 & 5 \\ 5 & 1 \end{pmatrix}, \begin{pmatrix} 9 & 0 \\ 0 & 3 \end{pmatrix} \right\rangle \simeq \GL_2(\mbz/2\mbz) \times_{\psi^{(1,2)}} \left\{ \begin{pmatrix} \pm 1 & * \\ 0 & * \end{pmatrix} \right\}, \\
G_{10,2,1}(10) &= \left\langle \begin{pmatrix} 9 & 9 \\ 0 & 9 \end{pmatrix}, \begin{pmatrix} 6 & 5 \\ 5 & 1 \end{pmatrix}, \begin{pmatrix} 3 & 0 \\ 0 & 9 \end{pmatrix} \right\rangle \simeq \GL_2(\mbz/2\mbz) \times_{\psi^{(2,1)}} \left\{ \begin{pmatrix} * & * \\ 0 & \pm 1 \end{pmatrix} \right\}, \\
G_{10,2,2}(10) &= \left\langle  \begin{pmatrix} 9 & 9 \\ 0 & 9 \end{pmatrix}, \begin{pmatrix} 6 & 5 \\ 5 & 1 \end{pmatrix}, \begin{pmatrix} 7 & 0 \\ 0 & 1 \end{pmatrix} \right\rangle \simeq \GL_2(\mbz/2\mbz) \times_{\psi^{(2,2)}} \left\{ \begin{pmatrix} * & * \\ 0 & \pm 1 \end{pmatrix} \right\}, \\
G_{10,3,1}(10) &= \left\langle \begin{pmatrix} 4 & 9 \\ 9 & 6 \end{pmatrix}, \begin{pmatrix} 1 & 3 \\ 9 & 8 \end{pmatrix}, \begin{pmatrix} 9 & 0 \\ 0 & 1 \end{pmatrix} \right\rangle \simeq \GL_2(\mbz/2\mbz) \times_{\psi^{(3,1)}} G_{S_4}(5)
\end{split}
\end{equation}
and $G_{10,i,k} = \pi_{\GL_2}^{-1}(G_{10,i,k}(10))$.  In the fibered product on the right-hand side of $G_{10,3,1}(10)$, the group $G_{S_4}(5)$ denotes the unique (up to conjugation in $\GL_2(\mbz/5\mbz)$) subgroup of $\GL_2(\mbz/5\mbz)$ of index $5$ (its image in $\PGL_2(\mbz/5\mbz)$ is isomorphic to $S_4$, the symmetric group on $4$ symbols), and in that fibered product, the underlying maps $\psi_5^{(3,1)} = (\psi_2^{(3,1)},\psi_5^{(3,1)})$, surject onto a common quotient isomorphic to $D_3$, the dihedral group of order $6$.  The map $\psi_2^{(3,1)}$ is any isomorphism $\GL_2(\mbz/2\mbz) \simeq D_3$, and the map $\psi_5^{(3,1)} : G_{S_4}(5) \longrightarrow D_3$ is the restriction of the projection map $\GL_2(\mbz/5\mbz) \longrightarrow \PGL_2(\mbz/5\mbz)$, followed by any surjection $S_4 \longrightarrow D_3$; its kernel is $\mc{N}_{\s}(5)$, the normalizer in $\GL_2(\mbz/5\mbz)$ of a split Cartan subgroup.  In the fibered products involving $\psi^{(1,1)}$, $\psi^{(1,2)}$, $\psi^{(2,1)}$ and $\psi^{(2,2)}$, the underlying homomorphisms are as follows:  $\psi_2^{(1,1)} = \psi_2^{(1,2)} = \psi_2^{(2,1)} = \psi_2^{(2,2)} = \ve$ as in \eqref{defofve}, whereas $\psi_5^{(1,1)}$, $\psi_5^{(1,2)}$, $\psi_5^{(2,1)}$ and $\psi_5^{(2,2)}$ are defined by
\[
\begin{split}
\psi_5^{(1,1)} : &\left\{ \begin{pmatrix} \pm 1 & * \\ 0 & * \end{pmatrix} \right\} \longrightarrow \{ \pm 1 \}, \quad\quad \psi_5^{(1,1)}\left( \begin{pmatrix} a & b \\ 0 & d \end{pmatrix} \right) := a \in \{ \pm 1 \}, \\
\psi_5^{(1,2)} : &\left\{ \begin{pmatrix} \pm 1 & * \\ 0 & * \end{pmatrix} \right\} \longrightarrow \{ \pm 1 \}, \quad\quad \psi_5^{(1,2)}\left( \begin{pmatrix} a & b \\ 0 & d \end{pmatrix} \right) := \left( \frac{d}{5} \right) a \in \{ \pm 1 \}, \\
\psi_5^{(2,1)} : &\left\{ \begin{pmatrix} * & * \\ 0 & \pm 1 \end{pmatrix} \right\} \longrightarrow \{ \pm 1 \}, \quad\quad \psi_5^{(2,1)}\left( \begin{pmatrix} a & b \\ 0 & d \end{pmatrix} \right) := \left( \frac{a}{5} \right) d \in \{ \pm 1 \}, \\
\psi_5^{(2,2)} : &\left\{ \begin{pmatrix} * & * \\ 0 & \pm 1 \end{pmatrix} \right\} \longrightarrow \{ \pm 1 \}, \quad\quad \psi_5^{(2,2)}\left( \begin{pmatrix} a & b \\ 0 & d \end{pmatrix} \right) := d \in \{ \pm 1 \}.
\end{split}
\]
We note that $-I \in G_{10,3,1}$ and $-I \notin G_{10,i,k}$, for each $i, k \in \{ 1, 2 \}$; we have 
\begin{equation} \label{tildeGsub10ij}
\begin{split}
\tilde{G}_{10,1,k}(10) &\simeq  \GL_2(\mbz/2\mbz) \times \left\{ \begin{pmatrix} \pm 1 & * \\ 0 & * \end{pmatrix} \right\}, \\
\tilde{G}_{10,2,k}(10) &\simeq  \GL_2(\mbz/2\mbz) \times \left\{ \begin{pmatrix} * & * \\ 0 & \pm 1 \end{pmatrix} \right\}\quad\quad \left( k \in \{1, 2 \} \right).
\end{split}
\end{equation}
Let us set $\tilde{G}_{10,i} := \tilde{G}_{10,i,k}$ and note that $G_{10,3,1} = \tilde{G}_{10,3}$.  Also note that
$\level_{\GL_2}(\tilde{G}_{10,i}) = 5$ for $i \in \{1, 2 \}$.  The group $\tilde{G}_{10,3}$ is studied in \cite{jonesmcmurdy}; for any elliptic curve $E$ over $\mbq$ we have
\[
\rho_{E}(G_\mbq) \, \dot\subseteq \, \tilde{G}_{10,3} \; \Longleftrightarrow \; 
\begin{matrix} 
\mbq(\sqrt{5}) \subsetneq \mbq(E[2]) \subseteq \mbq(E[5]) \text{ and }  \rho_{E,5}(G_\mbq) = G_{S_4}(5), \text{ or} \\
\mbq(E[2]) = \mbq(\sqrt{5}) \text{ and } \rho_{E,5}(G_\mbq) \subsetneq G_{S_4}(5).
\end{matrix} 
\]
Define $f_{10,3}(t), g_{10,3}(t) \in \mbq(t)$ by
\[
f_{10,3}(t) :=  t^3(t^2 + 5t + 40), \quad\quad g_{10,3}(t) :=  \frac{3t^6 + 12t^5 + 80t^4 + 50t^3 - 20t^2 - 8t + 8}{(t-1)^2(t^2 + 3t + 1)^2}
\]
and the $j$-invariant $j_{10,3}(t) \in \mbq(t)$ and elliptic curve $\mc{E}_{10,3,1}$ over $\mbq(t,D)$ by
\[
\begin{split}
j_{10,3}(t) :=& f_{10,3}\left( g_{10,3}(t) \right), \\
\mc{E}_{10,3,1} : \; D y^2 =& x^3 + \frac{108j_{10,3}(t)}{1728 - j_{10,3}(t)} x + \frac{432 j_{10,3}(t)}{1728 - j_{10,3}(t)}.
\end{split}
\]
As proved in \cite{jonesmcmurdy}, for any elliptic curve $E$ over $\mbq$, we have
\begin{equation*} 
\rho_{E}(G_\mbq) \, \dot\subseteq \, G_{10,3,1} \; \Longleftrightarrow \; \exists t_0, D_0 \in \mbq \text{ for which } E \text{ is isomorphic over $\mbq$ to } \mc{E}_{10,3,1}(t_0,D_0).
\end{equation*}

Regarding the groups $G_{10,i,k}$ for $i,k \in \{1, 2\}$, we first consider the groups $\tilde{G}_{10,i}$.  Given \eqref{tildeGsub10ij}, we may apply results in \cite{zywina}, which exhibits the $j$-invariants
\[
j_{5,1}(t) := \frac{(t^4 - 12t^3 + 14t^2 + 12t + 1)^3}{t^5(t^2 - 11t - 1)}, \quad\quad
j_{5,2}(t) :=  \frac{(t^4 + 228t^3 + 494t^2 - 228t + 1)^3}{t(t^2 - 11t - 1)^5}
\]
and shows that, for any elliptic curve $E$ over $\mbq$ with $j$-invariant $j_E$, we have
\[
\begin{split}
\rho_{E}(G_\mbq) \, \dot\subseteq \, \tilde{G}_{10,1} \; &\Longleftrightarrow \; \exists t_0 \in \mbq \text{ for which } j_E = j_{5,1}(t_0), \\
\rho_{E}(G_\mbq) \, \dot\subseteq \, \tilde{G}_{10,2} \; &\Longleftrightarrow \; \exists t_0 \in \mbq \text{ for which } j_E = j_{5,2}(t_0).
\end{split}
\]
If $E$ is an elliptic curve satisfying $\rho_{E,5}(G_\mbq) \, \dot\subseteq \, \left\{ \begin{pmatrix} * & * \\ 0 & * \end{pmatrix} \right\}$ then there is a $G_\mbq$-stable cyclic subgroup $\langle P \rangle \subseteq E[5]$; given any such Galois-stable cyclic subgroup $\langle P \rangle$, we let $E_{\langle P \rangle}' := E/\langle P \rangle$ denote the associated isogenous curve (which is necessarily defined over $\mbq$).  By \eqref{tildeGsub10ij}, we have that
\begin{equation*} 
\begin{split}
\rho_{E}(G_\mbq) \, \dot\subseteq \, \tilde{G}_{10,1} \; &\Longleftrightarrow \; \exists \text{ a $G_\mbq$-stable } \langle P \rangle \subseteq E[5] \quad\quad\;\;\; \text{ with } \quad [ \mbq( P ) : \mbq ] \leq 2, \\
\rho_{E}(G_\mbq) \, \dot\subseteq \, \tilde{G}_{10,2} \; &\Longleftrightarrow \; \begin{matrix} \exists \text{ a $G_\mbq$-stable } \langle P \rangle \subseteq E[5] \text{ and} \\ \exists \text{ a $G_\mbq$-stable  } \langle P' \rangle \subseteq E_{\langle P \rangle}' [5] \end{matrix} \quad \text{ with } \quad [ \mbq( P' ) : \mbq ] \leq 2.
\end{split}
\end{equation*}
Furthermore, by \eqref{descriptionofgroupslevel10}, Corollary \ref{keycorollaryforinterpretationofentanglements} and Lemma \ref{level2kummersubextensionlemma}, we have
\begin{equation} \label{conditionsforGsub10ij}
\begin{split}
\rho_{E}(G_\mbq) \, \dot\subseteq \, G_{10,1,1} \; &\Longleftrightarrow \; \exists \text{ a $G_\mbq$-stable } \langle P \rangle \subseteq E[5] \quad\quad\;\;\; \text{ with } \quad\; \mbq(P) = \mbq(\sqrt{\gD_E}), \\
\rho_{E}(G_\mbq) \, \dot\subseteq \, G_{10,1,2} \; &\Longleftrightarrow \; \exists \text{ a $G_\mbq$-stable } \langle P \rangle \subseteq E[5] \quad\quad\;\;\; \text{ with } \quad\; \mbq(P) = \mbq(\sqrt{5\gD_E}), \\
\rho_{E}(G_\mbq) \, \dot\subseteq \, G_{10,2,1} \; &\Longleftrightarrow \; \begin{matrix} \exists \text{ a $G_\mbq$-stable  } \langle P \rangle \subseteq E[5] \text{ and} \\  \exists \text{ a $G_\mbq$-stable } \langle P' \rangle \subseteq E_{\langle P \rangle}'[5] \end{matrix} \quad \text{ with } \quad \mbq(P') = \mbq( \sqrt{5\gD_E}), \\
\rho_{E}(G_\mbq) \, \dot\subseteq \, G_{10,2,2} \; &\Longleftrightarrow \; \begin{matrix} \exists \text{ a $G_\mbq$-stable  } \langle P \rangle \subseteq E[5] \text{ and} \\  \exists \text{ a $G_\mbq$-stable } \langle P' \rangle \subseteq E_{\langle P \rangle}'[5] \end{matrix} \quad \text{ with } \quad \mbq(P') = \mbq( \sqrt{\gD_E}).
\end{split}
\end{equation} 
We define the coefficients $a_{4;10,i}(t)$  $a_{6;10,i}(t)$ defined by \eqref{defofa4anda6}; consider the elliptic curves $\mc{E}_{10,i}$ over $\mbq(t,D)$ defined by 
\[
\begin{split}
\mc{E}_{10,1} &: \;  y^2 = x^3 + D^2a_{4;10,1}(t) x + D^3a_{6;10,1}(t), \\
\mc{E}_{10,2} &: \;  y^2 = x^3 + D^2a_{4;10,2}(t) x + D^3a_{6;10,2}(t).
\end{split}
\]
We have
\[
\begin{split}
\gD_{\mc{E}_{10,1}} = \frac{2^{18}3^{12}D^6 t^5 (t^2 - 11t - 1) (t^4 - 12t^3 + 14t^2 + 12t + 1)^6}{(t^2 + 1)^6 (t^4 - 18t^3 + 74t^2 + 18t + 1)^6}, \\
\gD_{\mc{E}_{10,2}} = \frac{2^{18}3^{12}D^6 t (t^2 - 11t - 1)^5 (t^4 + 228t^3 + 494t^2 - 228t + 1)^6}{(t^2 + 1)^6 (t^4 - 522t^3 - 10006t^2 + 522t + 1)^6},
\end{split}
\]
and thus 
\[
\mbq\left( \sqrt{\gD_{\mc{E}_{10,1}}} \right) = \mbq\left( \sqrt{\gD_{\mc{E}_{10,2}}} \right) = \mbq \left( \sqrt{t(t^2 - 11t - 1)} \right).
\]
By Lemma \ref{subfieldsoflevel5lemma}, we are led to the twist parameters
\[
\begin{split}
d_{10,1,1}(t) &:= \frac{-2t(t^2-11t-1)(t^4 - 12t^3 + 14t^2 + 12t + 1)}{(t^2 + 1)(t^4 - 18t^3 + 74t^2 + 18t + 1)}, \quad\quad\quad\quad d_{10,1,2}(t) := 5 d_{10,1,1}(t), \\
d_{10,2,1}(t) &:= \frac{-10t(t^2-11t-1)(t^4 + 228t^3 + 494t^2 - 228t + 1)}{(t^2 + 1) (t^4 - 522t^3 - 10006t^2 + 522t + 1)}, \quad\quad d_{10,2,2}(t) := 5 d_{10,2,1}(t),
\end{split}
\]
and to the elliptic curves $\mc{E}_{10,i,k}$ over $\mbq(t)$, defined by
\[
\mc{E}_{10,i,k} : \; d_{10,i,k}(t) y^2 = x^3 + a_{4;10,i}(t) x + a_{6; 10,i}(t) \quad\quad \left( i, k \in \{ 1, 2 \} \right).
\]
By \eqref{conditionsforGsub10ij} and Lemma \ref{subfieldsoflevel5lemma}, for each elliptic curve $E$ over $\mbq$ and for each $i, k \in \{1, 2 \}$, we have
\[
\rho_{E}(G_\mbq) \, \dot\subseteq \, G_{10,i,k} \; \Longleftrightarrow \; \exists t_0 \in \mbq \text{ for which $E$ is isomorphic over $\mbq$ to $\mc{E}_{10,i,k}\left( t_0 \right)$.}
\]

\medskip

\subsection{The level $m = 12$.}

We have $\mf{G}_{MT}^{\max}(0,12) = \{ G_{12,1,1}, G_{12,2,1}, G_{12,3,1}, G_{12,4,1}, G_{12,4,2} \}$, where $G_{12,i,k}(12) \subseteq \GL_2(\mbz/12\mbz)$ are given by
\begin{equation} \label{descriptionofgroupslevel12}
\begin{split}
G_{12,1,1}(12) &= \left\langle  \begin{pmatrix} 7 & 7 \\ 0 & 5 \end{pmatrix}, \begin{pmatrix} 5 & 7 \\ 3 & 2 \end{pmatrix}, \begin{pmatrix} 2 & 9 \\ 9 & 8 \end{pmatrix} \right\rangle \simeq \GL_2(\mbz/4\mbz)_{\chi_4 = \ve} \times_{\psi^{(1,1)}} \left\{ \begin{pmatrix} * & * \\ 0 & * \end{pmatrix} \right\}, \\
G_{12,2,1}(12) &= \left\langle  \begin{pmatrix} 5 & 8 \\ 0 & 1 \end{pmatrix}, \begin{pmatrix} 5 & 11 \\ 0 & 11 \end{pmatrix}, \begin{pmatrix} 7 & 6 \\ 3 & 7 \end{pmatrix} \right\rangle \simeq \GL_2(\mbz/4\mbz)_{\chi_4 = \ve} \times_{\psi^{(2,1)}} \left\{ \begin{pmatrix} * & * \\ 0 & * \end{pmatrix} \right\}, \\
G_{12,3,1}(12) &= \left\langle \begin{pmatrix} 5 & 11 \\ 0 & 11 \end{pmatrix}, \begin{pmatrix} 5 & 11 \\ 0 & 7 \end{pmatrix}, \begin{pmatrix} 2 & 1 \\ 9 & 11 \end{pmatrix} \right\rangle\simeq \GL_2(\mbz/4\mbz)_{\chi_4 = \ve} \times_{\psi^{(3,1)}} \left\{ \begin{pmatrix} * & * \\ 0 & * \end{pmatrix} \right\}, \\
G_{12,4,1}(12) &= \left\langle  \begin{pmatrix} 5 & 1 \\ 3 & 2 \end{pmatrix}, \begin{pmatrix} 7 & 6 \\ 0 & 11 \end{pmatrix}, \begin{pmatrix} 7 & 0 \\ 0 & 7 \end{pmatrix} \right\rangle \simeq \pi_{\GL_2}^{-1} \left( \left\langle \begin{pmatrix} 1 & 1 \\ 1 & 0 \end{pmatrix} \right\rangle \right) \times_{\psi^{(4,1)}} \left\{ \begin{pmatrix} * & * \\ 0 & * \end{pmatrix} \right\}, \\
G_{12,4,2}(12) &= \left\langle  \begin{pmatrix} 5 & 1 \\ 3 & 2 \end{pmatrix}, \begin{pmatrix} 11 & 6 \\ 0 & 7 \end{pmatrix}, \begin{pmatrix} 7 & 0 \\ 0 & 7 \end{pmatrix} \right\rangle \simeq \pi_{\GL_2}^{-1} \left( \left\langle \begin{pmatrix} 1 & 1 \\ 1 & 0 \end{pmatrix} \right\rangle \right) \times_{\psi^{(4,2)}} \left\{ \begin{pmatrix} * & * \\ 0 & * \end{pmatrix} \right\}
\end{split}
\end{equation}
and $G_{12,i,k} = \pi_{\GL_2}^{-1}(G_{12,i,k}(12))$; as usual, the representations of the groups on the right-hand are to be understood via the Chinese Remainder Theorem.
In the fibered products $\psi^{(i,k)}$, the underlying homomorphisms are as follows: the maps $\psi_4^{(i,k)}$, are defined by
\begin{equation} \label{definitionoflevel21fiberings}
\begin{split}
&\psi_4^{(i,1)} :  \GL_2(\mbz/4\mbz)_{\chi_4 = \ve} \longrightarrow \{ \pm 1 \}, \quad\quad\, \ker \psi_4^{(i,1)} = \left\langle  \begin{pmatrix} 3 & 2 \\ 0 & 3 \end{pmatrix}, \begin{pmatrix} 3 & 3 \\ 1 & 0 \end{pmatrix}, \begin{pmatrix} 1 & 1 \\ 0 & 3 \end{pmatrix} \right\rangle \quad \left( i \in \{1, 2, 3 \} \right), \\
&\psi_4^{(4,k)} : \pi_{\GL_2}^{-1} \left( \left\langle \begin{pmatrix} 1 & 1 \\ 1 & 0 \end{pmatrix} \right\rangle \right) \rightarrow \{ \pm 1 \}, \quad \psi_4^{(4,k)}(g) = \det g \quad\quad\quad\quad\quad\quad\quad\quad\quad\quad\quad\quad\, \left( k \in \{ 1, 2 \} \right),
\end{split}
\end{equation}
and the maps $\psi_3^{(i,k)}$ are defined by
\[
\begin{split}
\psi_3^{(1,1)} : &\left\{ \begin{pmatrix} * & * \\ 0 & * \end{pmatrix} \right\} \longrightarrow \{ \pm 1 \}, \quad\quad \psi_3^{(1,1)}\left( \begin{pmatrix} a & b \\ 0 & d \end{pmatrix} \right) := \left( \frac{ad}{3} \right), \\
\psi_3^{(2,1)} = \psi_3^{(4,2)} : &\left\{ \begin{pmatrix} * & * \\ 0 & * \end{pmatrix} \right\} \longrightarrow \{ \pm 1 \}, \quad\quad \psi_3^{(2,1)}\left( \begin{pmatrix} a & b \\ 0 & d \end{pmatrix} \right) = \psi_3^{(4,2)}\left( \begin{pmatrix} a & b \\ 0 & d \end{pmatrix} \right) := \left( \frac{d}{3} \right), \\
\psi_3^{(3,1)} = \psi_3^{(4,1)} : &\left\{ \begin{pmatrix} * & * \\ 0 & * \end{pmatrix} \right\} \longrightarrow \{ \pm 1 \}, \quad\quad \psi_3^{(3,1)}\left( \begin{pmatrix} a & b \\ 0 & d \end{pmatrix} \right) = \psi_3^{(4,1)} \left( \begin{pmatrix} a & b \\ 0 & d \end{pmatrix} \right):= \left( \frac{a}{3} \right).
\end{split}
\]
We note that $-I \in G_{12,2,1}, G_{12,3,1}$ and $-I \notin G_{12,1,1}, G_{12,4,k}$, for each $k \in \{ 1, 2 \}$.  We have
\begin{equation} \label{unfiberedlevel12groups}
\begin{split}
\tilde{G}_{12,1}(12) := \tilde{G}_{12,1,1}(12) &\simeq \GL_2(\mbz/4\mbz)_{\chi_4 = \ve} \times \left\{ \begin{pmatrix} * & * \\ 0 & * \end{pmatrix} \right\}, \\
\tilde{G}_{12,4}(12) := \tilde{G}_{12,4,k}(12) &\simeq \pi_{\GL_2}^{-1} \left( \left\langle \begin{pmatrix} 1 & 1 \\ 1 & 0 \end{pmatrix} \right\rangle \right) \times \left\{ \begin{pmatrix} * & * \\ 0 & * \end{pmatrix} \right\} \quad \left( k \in \{1, 2 \} \right).
\end{split}
\end{equation}
As detailed in \cite{sutherlandzywina}, for any elliptic curve $E$ over $\mbq$ with $j$-invariant $j_E$, we have
\[
\begin{split}
\rho_{E,4}(G_\mbq) \, \dot\subseteq \, \GL_2(\mbz/4\mbz)_{\chi_4 = \ve} \; &\Longleftrightarrow \; \exists t_0 \in \mbq \text{ for which } j_E = -t_0^2 + 1728, \\
\rho_{E,4}(G_\mbq) \, \dot\subseteq \, \pi_{\GL_2}^{-1} \left( \left\langle \begin{pmatrix} 1 & 1 \\ 1 & 0 \end{pmatrix} \right\rangle \right) \; &\Longleftrightarrow \; \exists t_0 \in \mbq \text{ for which } j_E = t_0^2 + 1728, \\
\rho_{E,3}(G_\mbq) \, \dot\subseteq \, \left\{ \begin{pmatrix} * & * \\ 0 & * \end{pmatrix} \right\} \; &\Longleftrightarrow \; \exists t_0 \in \mbq \text{ for which } j_E = 27 \frac{(t_0+1)(t_0+9)^3}{t_0^3}.
\end{split}
\]
To obtain models for the modular curves corresponding to the groups in \eqref{unfiberedlevel12groups}, we are led to the equations
\[
-t^2 + 1728 = 27\frac{(s+1)(s+9)^3}{s^3}, \quad\quad t^2 + 1728 = 27\frac{(s+1)(s+9)^3}{s^3},
\]
each of which is a singular model of a conic.  Resolving the singularities in MAGMA, we are led to the substitutions $s = - \frac{27}{u^2}$, $t = \frac{u^4 - 18u^2 - 27}{u}$ for the first equation and $s = \frac{1}{27u^2}$, $t = \frac{1-486u^2 - 19683u^4}{u}$ for the second, and these lead to the $j$-invariants
\[
j_{12,i}(u) := - \frac{(u^2-27)(u^2-3)^3}{u^2} \quad \left( i \in \{1, 2, 3 \} \right), \quad\quad j_{12,4}(u) := \frac{(27u^2+1)(243u^2+1)^3}{u^2}.
\]
(Note that $G_{12,i,1} \subseteq \tilde{G}_{12,1}$ for any $i \in \{1, 2, 3 \}$.). We thus have
\[
\begin{split}
\rho_{E,12}(G_\mbq) \, \dot\subseteq \,  \GL_2(\mbz/4\mbz)_{\chi_4 = \ve} \times \left\{ \begin{pmatrix} * & * \\ 0 & * \end{pmatrix} \right\} \; &\Longleftrightarrow \; \exists u_0 \in \mbq \text{ with } j_E = j_{12,1}(u_0), \\
\rho_{E,12}(G_\mbq) \, \dot\subseteq \, \pi_{\GL_2}^{-1} \left( \left\langle \begin{pmatrix} 1 & 1 \\ 1 & 0 \end{pmatrix} \right\rangle \right) \times \left\{ \begin{pmatrix} * & * \\ 0 & * \end{pmatrix} \right\} \; &\Longleftrightarrow \; \exists u_0 \in \mbq \text{ with } j_E = j_{12,4}(u_0).
\end{split}
\]
We define the coefficients $a_{4;12,i}(u), a_{6;12,i}(u) \in \mbq(u)$ by
\[
\begin{split}
a_{4;12,i}(u) :=& a_{4;3,1}\left( -\frac{27}{u^2} \right), \quad a_{6;12,i} := a_{6;3,1}\left( -\frac{27}{u^2} \right), \quad\quad \left( i \in \{ 1, 2, 3 \} \right), \\
a_{4;12,4}(u) :=& a_{4;3,1}\left( \frac{1}{27u^2} \right), \quad a_{6;12,4} := a_{6;3,1}\left( \frac{1}{27u^2} \right)
\end{split}
\]
and consider the elliptic curves $\mc{E}_{12,i}$ over $\mbq(u,D)$ defined by 
\[
\mc{E}_{12,i} : \; D y^2 = x^3 + a_{4;12,i}(u) x + a_{6;12,i}(u) \quad\quad \left( i \in \{ 1, 2, 3, 4 \} \right).
\]
(Note that $\mc{E}_{12,1} = \mc{E}_{12,2} = \mc{E}_{12,3}$.) 

Applying Lemma \ref{identifyingthesubfieldslevel4lemma} with $\frac{u^4 - 18u^2 - 27}{u}$ substituted for the variable, we find that
\[
\mbq\left( i, \gD_{\mc{E}_{12,i}}^{1/4} \right) = \mbq\left( i, \sqrt{\frac{Du(u^2-27)(u^2-3)}{u^4-18u^2-27}} \right) \quad\quad \left( i \in \{1, 2, 3 \} \right).
\]
By \eqref{descriptionofgroupslevel12} and Corollary \ref{justentanglementequalitycorollary}, we see that, for any specialization $\mc{E}_{12,1}(u_0,D_0)$ that is an elliptic curve,
\[
\begin{split}
\rho_{\mc{E}_{12,1}(u_0,D_0)}(G_\mbq) \, \dot\subseteq \, G_{12,1,1} \; &\Longleftrightarrow \; \mbq\left( \sqrt{\pm \frac{D_0u_0(u_0^2-27)(u_0^2-3)}{u_0^4-18u_0^2-27}} \right) = \mbq\left( \sqrt{-3} \right) \\
&\Longleftrightarrow \; D_0 \in \mp \frac{3u_0(u_0^2-27)(u_0^2-3)}{u_0^4-18u_0^2-27} (\mbq^\times)^2.
\end{split}
\]
By \eqref{descriptionofgroupslevel12} and \eqref{definitionoflevel21fiberings}, and noting that $u \mapsto -\frac{3u(u^2-27)(u^2-3)}{u^4-18u^2-27}$ is an odd function of $u$, we are led to the twist choice
\[
d_{12,1,1}(u) := -\frac{3u(u^2-27)(u^2-3)}{u^4-18u^2-27}
\]
and the model $\mc{E}_{12,1,1}$ over $\mbq(u)$, defined by
\[
\mc{E}_{12,1,1} : \; d_{12,1,1}(u) y^2 = x^3 + a_{4;12,1}(u) x + a_{6;12,1}(u).
\]
Regarding the groups $G_{12,2,1}$ and $G_{12,3,1}$, we apply Lemma \ref{subfieldsoflevel3lemma} with $-\frac{27}{u^2}$ substituted for the variable, obtaining
\[
\begin{split}
\mbq(t,D) \left( \mc{E}_{12,2}[3] \right)^{\ker \psi_3^{(2,1)}} &= \mbq(t,D) \left( \sqrt{\frac{6D(u^2-27)(u^2-3)}{u^4-18u^2-27}} \right), \\
\mbq(t,D) \left( \mc{E}_{12,3}[3] \right)^{\ker \psi_3^{(3,1)}} &= \mbq(t,D) \left( \sqrt{-\frac{2D(u^2-27)(u^2-3)}{u^4-18u^2-27}} \right).
\end{split}
\] 
Noting also that for $i \in \{2, 3 \}$, the Weierstrass coefficients satisfy $a_{4;12,i}(-u) = a_{4;12,i}(u)$ and $a_{6;12,i}(-u) = a_{6;12,i}(u)$, we are thus led to the models $\mc{E}_{12,2,1}$, $\mc{E}_{12,3,1}$ over $\mbq(v,D)$
\[
\begin{split}
\mc{E}_{12,2,1} : \; &D y^2 = x^3 + a_{4;12,2}(6v^2) x + a_{6;12,2}(6v^2), \\
\mc{E}_{12,3,1} : \; &D y^2 = x^3 + a_{4;12,3}(-2v^2) x + a_{6;12,3}(-2v^2).
\end{split}
\]  
For any elliptic curve $E$ over $\mbq$, we have
\[
\begin{split}
\rho_{E}(G_\mbq) \, \dot\subseteq \, G_{12,1,1} \; &\Longleftrightarrow \; \exists u_0 \in \mbq \text{ for which } E \text{ is isomorphic over $\mbq$ to } \mc{E}_{12,1,1}\left( u_0 \right), \\
\rho_{E}(G_\mbq) \, \dot\subseteq \, G_{12,2,1} \; &\Longleftrightarrow \; \exists v_0, D_0 \in \mbq \text{ for which } E \text{ is isomorphic over $\mbq$ to } \mc{E}_{12,2,1}(v_0,D_0), \\
\rho_{E}(G_\mbq) \, \dot\subseteq \, G_{12,3,1} \; &\Longleftrightarrow \; \exists v_0, D_0 \in \mbq \text{ for which } E \text{ is isomorphic over $\mbq$ to } \mc{E}_{12,3,1}(v_0,D_0).
\end{split}
\]
We now find models for the remaining two groups $G_{12,4,1}$, $G_{12,4,2}$.  Applying Lemma \ref{subfieldsoflevel3lemma} with $\frac{1}{27u^2}$ substituted for the variable, we find that
\[
\begin{split}
\mbq(t,D) \left( \mc{E}_{12,4}[3] \right)^{\ker \psi_3^{(4,1)}} &= \mbq(t,D) \left( \sqrt{-\frac{6D(27u^2+1)(243u^2+1)}{19683u^4 + 486u^2 - 1}} \right), \\
\mbq(t,D) \left( \mc{E}_{12,4}[3] \right)^{\ker \psi_3^{(4,2)}} &= \mbq(t,D) \left( \sqrt{\frac{2D(27u^2+1)(243u^2+1)}{19683u^4 + 486u^2 - 1}} \right).
\end{split}
\] 
By \eqref{descriptionofgroupslevel12} and \eqref{definitionoflevel21fiberings}, we obtain the appropriate twist classes by setting each of these fields equal to $\mbq(i)$, which leads to the definitions
\[
d_{12,4,1}(u) := \frac{6(27u^2+1)(243u^2+1)}{19683u^4 + 486u^2 - 1}, \quad\quad d_{12,4,2}(u) := -\frac{2(27u^2+1)(243u^2+1)}{19683u^4 + 486u^2 - 1}.
\]
We define the elliptic curves $\mc{E}_{12,4,k}$ over $\mbq(u)$ by
\[
\mc{E}_{12,4,k} : \; d_{12,4,k}(u) y^2 = x^3 + a_{4;12,4}(u) x + a_{6;12,4}(u) \quad\quad \left( k \in \{ 1, 2 \} \right).
\]
It follows from our discussion that, for any elliptic curve $E$ over $\mbq$,
\[
\begin{split}
\rho_{E}(G_\mbq) \, \dot\subseteq \, G_{12,4,1} \; &\Longleftrightarrow \; \exists u_0 \in \mbq \text{ for which } E \text{ is isomorphic over $\mbq$ to } \mc{E}_{12,4,1}\left( u_0 \right), \\
\rho_{E}(G_\mbq) \, \dot\subseteq \, G_{12,4,2} \; &\Longleftrightarrow \; \exists u_0 \in \mbq \text{ for which } E \text{ is isomorphic over $\mbq$ to } \mc{E}_{12,4,2}\left( u_0 \right).
\end{split}
\]

\medskip

\subsection{The level $m = 14$}

We have 
\[
\mf{G}_{MT}^{\max}(0,14) = \mf{G}_{MT,2}^{\max}(0,14) \sqcup \mf{G}_{MT,3}^{\max}(0,14),
\] 
with
\[
\begin{split}
\mf{G}_{MT,2}^{\max}(0,14) &= \{ G_{14,1,1}, G_{14,2,1}, G_{14,2,1}, G_{14,2,2}, G_{14,3,1}, G_{14,3,2}, G_{14,4,1} \}, \\
\mf{G}_{MT,3}^{\max}(0,14) &= \{ G_{14,5,1}, G_{14,6,1}, G_{14,6,2}, G_{14,7,1}, G_{14,7,2} \}, 
\end{split}
\]
where the groups $G_{14,i,k}(14) \subseteq \GL_2(\mbz/14\mbz)$ for $G_{14,i,k} \in \mf{G}_{MT,2}^{\max}(0,14)$ are given by
\begin{equation} \label{descriptionofgroupslevel14withquadraticfibering}
\begin{split}
G_{14,1,1}(14) &= \left\langle \begin{pmatrix} 9 & 2 \\ 1 & 9 \end{pmatrix}, \begin{pmatrix} 12 & 5 \\ 11 & 6 \end{pmatrix} \right\rangle\simeq \GL_2(\mbz/2\mbz) \times_{\psi^{(1,1)}} \left\{ \begin{pmatrix} \pm 1 & * \\ 0 & * \end{pmatrix} \right\}, \\
G_{12,1,2}(14) &= \left\langle  \begin{pmatrix} 13 & 0 \\ 3 & 1 \end{pmatrix}, \begin{pmatrix} 6 & 1 \\ 9 & 7 \end{pmatrix} \right\rangle \simeq \GL_2(\mbz/2\mbz) \times_{\psi^{(1,2)}} \left\{ \begin{pmatrix} \pm 1 & * \\ 0 & * \end{pmatrix} \right\}, \\
G_{14,2,1}(14) &= \left\langle  \begin{pmatrix} 1 & 11 \\ 4 & 7 \end{pmatrix}, \begin{pmatrix} 9 & 4 \\ 13 & 7 \end{pmatrix} \right\rangle \simeq \GL_2(\mbz/2\mbz) \times_{\psi^{(2,1)}} \left\{ \begin{pmatrix} * & * \\ 0 & \pm 1 \end{pmatrix} \right\}, \\
G_{14,2,2}(14) &= \left\langle \begin{pmatrix} 0 & 9 \\ 9 & 3 \end{pmatrix}, \begin{pmatrix} 1 & 6 \\ 7 & 13 \end{pmatrix} \right\rangle \simeq \GL_2(\mbz/2\mbz) \times_{\psi^{(2,2)}} \left\{ \begin{pmatrix} * & * \\ 0 & \pm 1 \end{pmatrix} \right\}, \\
G_{14,3,1}(14) &= \left\langle  \begin{pmatrix} 9 & 4 \\ 3 & 5 \end{pmatrix}, \begin{pmatrix} 1 & 7 \\ 11 & 6 \end{pmatrix} \right\rangle \simeq \GL_2(\mbz/2\mbz) \times_{\psi^{(3,1)}} \left\{ \begin{pmatrix} a & * \\ 0 & \pm a \end{pmatrix} : \; a \in (\mbz/7\mbz)^\times \right\}, \\
G_{12,3,2}(14) &= \left\langle \begin{pmatrix} 7 & 13 \\ 3 & 0 \end{pmatrix}, \begin{pmatrix} 1 & 12 \\ 3 & 13 \end{pmatrix} \right\rangle \simeq \GL_2(\mbz/2\mbz) \times_{\psi^{(3,2)}} \left\{ \begin{pmatrix} a & * \\ 0 & \pm a \end{pmatrix} : \; a \in (\mbz/7\mbz)^\times \right\}, \\
G_{14,4,1}(14) &= \left\langle  \begin{pmatrix} 9 & 3 \\ 13 & 6 \end{pmatrix}, \begin{pmatrix} 1 & 6 \\ 7 & 3 \end{pmatrix} \right\rangle \simeq \GL_2(\mbz/2\mbz) \times_{\psi^{(4,1)}} \left\{ \begin{pmatrix} * & * \\ 0 & * \end{pmatrix} \right\},
\end{split}
\end{equation}
the groups $G_{14,i,k}(14) \subseteq \GL_2(\mbz/14\mbz)$ for $G_{14,i,k} \in \mf{G}_{MT,3}^{\max}(0,14)$ are given by
\begin{equation} \label{descriptionofgroupslevel14withcubicfibering}
\begin{split}
G_{14,5,1}(14) &= \left\langle  \begin{pmatrix} 3 & 7 \\ 9 & 2 \end{pmatrix}, \begin{pmatrix} 6 & 13 \\ 7 & 11 \end{pmatrix} \right\rangle \simeq \left\langle \begin{pmatrix} 1 & 1 \\ 1 & 0 \end{pmatrix} \right\rangle \times_{\phi^{(5)}} \left\{ \begin{pmatrix} * & * \\ 0 & * \end{pmatrix} \right\}, \\
G_{14,6,1}(14) &= \left\langle  \begin{pmatrix} 5 & 1 \\ 7 & 4 \end{pmatrix}, \begin{pmatrix} 7 & 13 \\ 9 & 10 \end{pmatrix} \right\rangle \simeq \left\langle \begin{pmatrix} 1 & 1 \\ 1 & 0 \end{pmatrix} \right\rangle \times_{\phi^{(6)}} \left\{ \begin{pmatrix} a^2 & * \\ 0 & * \end{pmatrix} : \; a \in (\mbz/7\mbz)^\times \right\}, \\
G_{14,6,2}(14) &= \left\langle \begin{pmatrix} 9 & 11 \\ 7 & 12 \end{pmatrix}, \begin{pmatrix} 7 & 11 \\ 9 & 12 \end{pmatrix} \right\rangle \simeq  \left\langle \begin{pmatrix} 1 & 1 \\ 1 & 0 \end{pmatrix} \right\rangle \times_{\phi^{(6)}} \left\{ \begin{pmatrix} * & * \\ 0 & d^2 \end{pmatrix} : \; d \in (\mbz/7\mbz)^\times \right\}, \\
G_{14,7,1}(14) &= \left\langle \begin{pmatrix} 3 & 1 \\ 1 & 10 \end{pmatrix}, \begin{pmatrix} 9 & 7 \\ 11 & 6 \end{pmatrix} \right\rangle \simeq \left\langle \begin{pmatrix} 1 & 1 \\ 1 & 0 \end{pmatrix} \right\rangle \times_{\phi^{(7)}} \left\{ \begin{pmatrix} a^2 & * \\ 0 & * \end{pmatrix} : \; a \in (\mbz/7\mbz)^\times \right\}, \\
G_{14,7,2}(14) &= \left\langle  \begin{pmatrix} 0 & 3 \\ 9 & 13 \end{pmatrix}, \begin{pmatrix} 9 & 11 \\ 13 & 4 \end{pmatrix} \right\rangle \simeq  \left\langle \begin{pmatrix} 1 & 1 \\ 1 & 0 \end{pmatrix} \right\rangle \times_{\phi^{(7)}} \left\{ \begin{pmatrix} * & * \\ 0 & d^2 \end{pmatrix} : \; d \in (\mbz/7\mbz)^\times \right\},
\end{split}
\end{equation}
and each $G_{14,i,k} = \pi_{\GL_2}^{-1}(G_{14,i,k}(14))$.  In all cases, the representations of the groups on the right-hand are to be understood via the Chinese Remainder Theorem as subgroups of $\GL_2(\mbz/2\mbz) \times \GL_2(\mbz/7\mbz)$.
For each group $G \in \mf{G}_{MT,2}^{\max}(0,14)$, the associated common quotient $\psi_2^{(i,k)}\left( \GL_2(\mbz/2\mbz) \right)$ is a cyclic group of order $2$, whereas for each group $G \in \mf{G}_{MT,3}^{\max}(0,14)$ the associated common quotients $\phi_2^{(i)}\left( \left\langle \begin{pmatrix} 1 & 1 \\ 1 & 0 \end{pmatrix} \right\rangle \right)$ is a cyclic group of order $3$ (i.e. $\phi_2^{(i)}$ is a group isomorphism).  These homomorphisms are defined as follows: the maps $\psi_2^{(i,k)} : \GL_2(\mbz/2\mbz) \longrightarrow \{ \pm 1 \}$ are all equal to the map $\ve$ as in \eqref{defofve}, whereas the maps $\psi_7^{(i,k)}$ are given by
\begin{equation} \label{psifiberingmapsmod7}
\begin{split}
\psi_7^{(1,k)} : \left\{ \begin{pmatrix} \pm 1 & * \\ 0 & * \end{pmatrix} \right\} &\longrightarrow \{ \pm 1 \}; \quad \psi_7^{(1,1)}\left( \begin{pmatrix} a & b \\ 0 & d \end{pmatrix} \right) := \left( \frac{d}{7} \right), \quad \psi_7^{(1,2)}\left( \begin{pmatrix} a & b \\ 0 & d \end{pmatrix} \right) := \left( \frac{a}{7} \right), \\
\psi_7^{(2,k)} : \left\{ \begin{pmatrix} * & * \\ 0 & \pm 1 \end{pmatrix} \right\} &\longrightarrow \{ \pm 1 \}, \quad \psi_2^{(2,1)}\left( \begin{pmatrix} a & b \\ 0 & d \end{pmatrix} \right) := \left( \frac{d}{7} \right), \quad \psi_2^{(2,2)}\left( \begin{pmatrix} a & b \\ 0 & d \end{pmatrix} \right) := \left( \frac{a}{7} \right), \\
\psi_7^{(3,k)} : \left\{ \begin{pmatrix} a & * \\ 0 & \pm a \end{pmatrix} \right\} &\longrightarrow \{ \pm 1 \}, \quad \psi_2^{(3,1)}\left( \begin{pmatrix} a & b \\ 0 & d \end{pmatrix} \right) := \left( \frac{d}{7} \right), \quad \psi_2^{(3,2)}\left( \begin{pmatrix} a & b \\ 0 & d \end{pmatrix} \right) := \left( \frac{a}{7} \right), \\ 
\psi_7^{(4,1)} : \left\{ \begin{pmatrix} * & * \\ 0 & * \end{pmatrix} \right\} &\longrightarrow \{ \pm 1 \}, \quad\quad\quad\quad\;\;\, \psi_2^{(4,1)}\left( g \right) := \left( \frac{\det g}{7} \right).
\end{split}
\end{equation}
For $G \in \mf{G}_{MT,3}^{\max}(0,14)$, the common quotient will be $\left( (\mbz/7\mbz)^\times \right)^2$, which is cyclic of order $3$.  The map $\phi_2^{(i)} :  \left\langle \begin{pmatrix} 1 & 1 \\ 1 & 0 \end{pmatrix} \right\rangle \longrightarrow \left( (\mbz/7\mbz)^\times \right)^2$ is any isomorphism, whereas the maps $\phi_7^{(i)}$ are defined by
\begin{equation} \label{phifiberingmapsmod7}
\begin{split}
\phi_7^{(5)} : &\left\{ \begin{pmatrix} * & * \\ 0 & * \end{pmatrix} \right\} \longrightarrow \left( (\mbz/7\mbz)^\times \right)^2, \quad\quad \;\;\phi_7^{(5)}\left( \begin{pmatrix} a & b \\ 0 & d \end{pmatrix} \right) := (a/d)^2, \\
\phi_7^{(6)} : &\left\{ \begin{pmatrix} * & * \\ 0 & * \end{pmatrix} \right\} \longrightarrow \left( (\mbz/7\mbz)^\times \right)^2, \quad\quad \phi_7^{(6)}\left( \begin{pmatrix} a & b \\ 0 & d \end{pmatrix} \right) := d^2, \\
\phi_7^{(7)} : &\left\{ \begin{pmatrix} * & * \\ 0 & * \end{pmatrix} \right\} \longrightarrow \left( (\mbz/7\mbz)^\times \right)^2, \quad\quad \phi_7^{(7)}\left( \begin{pmatrix} a & b \\ 0 & d \end{pmatrix} \right) := a^2.
\end{split}
\end{equation}
We note that $-I \in G_{14,4,1}, G_{14,5,1}$, whereas $-I \notin G_{14,i,k}$ for $i \in \{1, 2, 3, 6, 7 \}$ and $k \in \{1, 2 \}$.  For $i \in \{ 1, 2, 3 \}$, $G_{14,i,k} \in \mf{G}_{MT,2}^{\max}(0,14)$, and in this case we have
\begin{equation} \label{tildeGlevel14groupsincase2}
\begin{split}
\tilde{G}_{14,1}(14) := \tilde{G}_{14,1,1}(14) = \tilde{G}_{14,1,2}(14) &\simeq \GL_2(\mbz/2\mbz) \times \left\{ \begin{pmatrix} \pm 1 & * \\ 0 & * \end{pmatrix} \right\}, \\
\tilde{G}_{14,2}(14) := \tilde{G}_{14,2,1}(14) = \tilde{G}_{14,2,2}(14) &\simeq \GL_2(\mbz/2\mbz) \times \left\{ \begin{pmatrix} * & * \\ 0 & \pm 1 \end{pmatrix} \right\}, \\
\tilde{G}_{14,3}(14) := \tilde{G}_{14,3,1}(14) = \tilde{G}_{14,3,2}(14) &\simeq \GL_2(\mbz/2\mbz) \times \left\{ \begin{pmatrix} a & * \\ 0 & \pm a \end{pmatrix} : \; a \in (\mbz/7\mbz)^\times \right\}.
\end{split}
\end{equation}
By contrast, for $i \in \{6, 7\}$ and $k \in \{1, 2 \}$, the group $G_{14,i,k} \in \mf{G}_{MT,3}^{\max}(0,14)$, and in this case the fibering does not disappear under $G \mapsto \tilde{G}$.  Indeed, we have
\begin{equation} \label{tildeGlevel14groupsincase3}
\begin{split}
\tilde{G}_{14,6}(14) = \tilde{G}_{14,6,k}(14) &\simeq \left\langle \begin{pmatrix} 1 & 1 \\ 1 & 0 \end{pmatrix} \right\rangle \times_{\phi^{(6)}} \left\{ \begin{pmatrix} * & * \\ 0 & * \end{pmatrix} \right\}, \\
\tilde{G}_{14,7}(14) = \tilde{G}_{14,7,k}(14) &\simeq \left\langle \begin{pmatrix} 1 & 1 \\ 1 & 0 \end{pmatrix} \right\rangle \times_{\phi^{(7)}} \left\{ \begin{pmatrix} * & * \\ 0 & * \end{pmatrix} \right\}.
\end{split}
\end{equation}
Since the arguments are quite different, we will handle separately the levels of $G \in \mf{G}_{MT,2}^{\max}(0,14)$ and $G \in \mf{G}_{MT,3}^{\max}(0,14)$.

\subsubsection{The case $G \in \mf{G}_{MT,2}^{\max}(0,14)$:  quadratic entanglements.}

Let $E$ be an elliptic curve defined over $\mbq$.  By virtue of \eqref{tildeGlevel14groupsincase2}, we have
\begin{equation} \label{level14andlevel7tildeequivalence}
\begin{split}
\rho_{E}(G_\mbq) \, \dot\subseteq \, \tilde{G}_{14,1} \; &\Longleftrightarrow \; \rho_{E,7}(G_\mbq) \, \dot\subseteq \, \left\{  \begin{pmatrix} \pm 1 & * \\ 0 & * \end{pmatrix} \right\}, \\
\rho_{E}(G_\mbq) \, \dot\subseteq \,  \tilde{G}_{14,2} \; &\Longleftrightarrow \; \rho_{E,7}(G_\mbq) \, \dot\subseteq \, \left\{  \begin{pmatrix} * & * \\ 0 & \pm 1 \end{pmatrix} \right\}, \\
\rho_{E}(G_\mbq) \, \dot\subseteq \,  \tilde{G}_{14,3} \; &\Longleftrightarrow \; \rho_{E,7}(G_\mbq) \, \dot\subseteq \, \left\{  \begin{pmatrix} a & * \\ 0 & \pm a \end{pmatrix} : \; a \in (\mbz/7\mbz)^\times \right\}, \\
\rho_{E}(G_\mbq) \, \dot\subseteq \,  \tilde{G}_{14,4} \; &\Longrightarrow \; \rho_{E,7}(G_\mbq) \, \dot\subseteq \, \left\{  \begin{pmatrix} * & * \\ 0 & * \end{pmatrix} \right\}. \\
\end{split}
\end{equation}
Comparing \eqref{level14andlevel7tildeequivalence} with \eqref{tildeGlevel7groups} and considering \eqref{level7jinvariantstatement}, we are thus led to define the $j$-invariants $j_{14,i}(t) \in \mbq(t)$ by
\begin{equation} \label{level7jinvariantn1234}
\begin{split}
j_{14,i}(t) &:= j_{7,i}(t) \quad\quad \left( i \in \{1, 2, 3 \} \right), \\
j_{7,4}(t) &:= \frac{(t^2 + 245t + 2401)^3(t^2 + 13t + 49)}{t^7},
\end{split}
\end{equation}
where $j_{7,i}(t) \in \mbq(t)$ are as in \eqref{tildeGlevel7jinvariants}.  Combining results in \cite{zywina} with \eqref{level14andlevel7tildeequivalence}, for each $E$ over $\mbq$ with $j$-invariant $j_E$, we have
\begin{equation} \label{level14andlevel7modulistatement}
\begin{split}
\rho_{E}(G_\mbq) \, \dot\subseteq \, \tilde{G}_{14,i} \; &\Longleftrightarrow \; \exists t_0 \in \mbq \text{ for which } j_E = j_{14,i}(t_0) \quad \left(i \in \{ 1, 2, 3 \} \right), \\
\rho_{E}(G_\mbq) \, \dot\subseteq \, \tilde{G}_{14,4} \; &\Longrightarrow \rho_{E,7}(G_\mbq) \, \dot\subseteq \, \left\{ \begin{pmatrix} * & * \\ 0 & * \end{pmatrix} \right\}, \\
\rho_{E,7}(G_\mbq) \, \dot\subseteq \, \left\{ \begin{pmatrix} * & * \\ 0 & * \end{pmatrix} \right\} \; &\Longleftrightarrow \; \exists t_0 \in \mbq \text{ for which } j_E = j_{7,4}(t_0).
\end{split}
\end{equation}
We further define the Weierstrass coefficients $a_{4;14,i}(t)$, $a_{6;14,i}(t)$ for $i \in \{1, 2, 3\}$ (resp. for $i=4$ the coefficients $a_{4;7,4}(t)$ and $a_{6;7,4}(t)$) by \eqref{level7jinvariantn1234} and \eqref{defofa4anda6} and the elliptic curves $\mc{E}_{14,i}$ and $\mc{E}_{7,4}$ over $\mbq(t,D)$ by
\begin{equation*} 
\begin{split}
\mc{E}_{14,i} : \; &Dy^2 = x^3 + a_{4;14,i}(t)x + a_{6;14,i}(t) \quad\quad \left( i \in \{1, 2, 3 \} \right), \\
\mc{E}_{7,4} : \; &Dy^2 = x^3 + a_{4;7,4}(t)x + a_{6;7,4}(t),
\end{split}
\end{equation*}
By computing directly the discriminants $\gD_{\mc{E}_{14,i}}$ and $\gD_{\mc{E}_{7,4}}$, we find that
\begin{equation} \label{level14quadraticfieldson2side}
\begin{split}
\mbq\left( \sqrt{\gD_{\mc{E}_{14,i}}} \right) &= \mbq\left( \sqrt{t(t-1)(t^3 - 8t^2 + 5t + 1)} \right) \quad\quad\quad \left( i \in \{ 1, 2 \} \right), \\
\mbq\left( \sqrt{\gD_{\mc{E}_{14,3}}} \right) &= \mbq\left( \sqrt{-7(t^3 - 2t^2 - t + 1)(t^3 - t^2 - 2t + 1)} \right), \\
\mbq\left( \sqrt{\gD_{\mc{E}_{7,4}}} \right) &= \mbq\left( \sqrt{t} \right).
\end{split}
\end{equation}
We now note that the $j$-invariant functions $j_{14,i}(t) \in \mbq(t)$ satisfy
\[
j_{14,i}(t) = j_{7,4}\left( u_i(t) \right) \quad\quad \left( i \in \{1, 2, 3 \} \right),
\]
where
\[
u_1(t) := \frac{49t(t-1)}{t^3 - 8t^2 + 5t + 1}, \quad\quad
u_2(t) := \frac{t^3 - 8t^2 + 5t + 1}{t(t-1)}, \quad\quad
u_3(t) := -\frac{7(t^3 - t^2 - 2t + 1)}{t^3 - 2t^2 - t + 1}.
\]
Applying Lemma \ref{quadraticsubfieldsoflevel7lemma} with $u_i(t)$ in place of $t$, we find that
\[
\begin{split}
&\mbq(t,D)(\mc{E}_{14,1}[7])^{\ker \psi_7^{(1,1)}} = \mbq(t,D)\left( \sqrt{\frac{14D(t^2 - t + 1)(t^6 - 11t^5 + 30t^4 - 15t^3 - 10t^2 + 5t + 1)}{\begin{pmatrix} t^{12} - 18t^{11} + 117t^{10} - 354t^9 + 570t^8 - 486t^7 + \\ 273t^6 -  222t^5 + 174t^4 - 46t^3 - 15t^2 + 6t + 1 \end{pmatrix}}} \right), \\
&\mbq(t,D)(\mc{E}_{14,2}[7])^{\ker \psi_7^{(2,1)}} = \mbq(t,D)\left( \sqrt{\frac{-2D (t^2 - t + 1) (t^6 + 229t^5 + 270t^4 - 1695t^3 + 1430t^2 - 235t + 1)}{\begin{pmatrix} t^{12} - 522t^{11} - 8955t^{10} + 37950t^9 - 70998t^8 + 131562t^7 - \\ 253239t^6 + 316290t^5 - 218058t^4 + 80090t^3 - 14631t^2 + 510t + 1 \end{pmatrix} }} \right) \\
&\mbq(t,D)(\mc{E}_{14,3}[7])^{\ker \psi_7^{(3,1)}} = \mbq(t,D)\left( \sqrt{\frac{14D (t^2 - 3t - 3)  (t^2 - t + 1) (3t^2 - 9t + 5) (3t^4 - 4t^3 - 5t^2 - 2t - 1)  }{ (5t^2 - t - 1) (t^4 - 6t^3 + 17t^2 - 24t + 9) (9t^4 - 12t^3 - t^2 + 8t - 3)}} \right).
\end{split}
\]
Thus, by \eqref{level14quadraticfieldson2side}, \eqref{descriptionofgroupslevel14withquadraticfibering} and \eqref{psifiberingmapsmod7}, we are led to the twist parameters
\[
\begin{split}
d_{14,1,1}(t) &:= \frac{14 \begin{pmatrix} t^{12} - 18t^{11} + 117t^{10} - 354t^9 + 570t^8 - 486t^7 + \\ 273t^6 -  222t^5 + 174t^4 - 46t^3 - 15t^2 + 6t + 1 \end{pmatrix} (t^2 - t + 1)}{t(t-1)(t^3 - 8t^2 + 5t + 1)(t^6 - 11t^5 + 30t^4 - 15t^3 - 10t^2 + 5t + 1)}, \\
d_{14,2,1}(t) &:= \frac{\begin{pmatrix} t^{12} - 522t^{11} - 8955t^{10} + 37950t^9 - 70998t^8 + 131562t^7 - \\ 253239t^6 +  316290t^5 - 218058t^4 + 80090t^3 - 14631t^2 + 510t + 1 \end{pmatrix} (t^2 - t + 1)}{-2t(t-1)(t^3 - 8t^2 + 5t + 1)(t^6 + 229t^5 + 270t^4 - 1695t^3 + 1430t^2 - 235t + 1)}, \\
d_{14,3,1}(t) &:= \frac{-2 (t^2 - 3t - 3)  (t^2 - t + 1) (3t^2 - 9t + 5) (3t^4 - 4t^3 - 5t^2 - 2t - 1) (t^3 - 2t^2 - t + 1) }{ (5t^2 - t - 1) (t^4 - 6t^3 + 17t^2 - 24t + 9) (9t^4 - 12t^3 - t^2 + 8t - 3) (t^3 - t^2 - 2t + 1)},
\end{split}
\] 
and
\[
d_{14,i,2}(t) := -7d_{14,i,1}(t) \quad\quad\quad \left( i \in \{ 1, 2, 3 \} \right). 
\]
Defining the elliptic curves $\mc{E}_{14,i,k}$ over $\mbq(t)$ by
\[
\mc{E}_{14,i,k} : \; d_{14,i,k}(t) y^2 = x^3 + a_{4;14,i}(t) x + a_{6;14,i}(t) \quad\quad \left( i \in \{1, 2, 3 \}, \, k \in \{1, 2 \} \right),
\]
we see that, for each $E$ over $\mbq$, we have
\[
\rho_{E}(G_\mbq) \, \dot\subseteq \, G_{14,i,k} \; \Longleftrightarrow \; \exists t_0 \in \mbq \text{ for which } E \simeq_{\mbq} \mc{E}_{14,i,k}\left( t_0 \right) \quad\quad \begin{pmatrix} i \in \{1, 2, 3 \} \\ k \in \{1, 2 \} \end{pmatrix}.
\]
Finally, we see from \eqref{level14quadraticfieldson2side}, \eqref{descriptionofgroupslevel14withquadraticfibering} and \eqref{psifiberingmapsmod7}, that, defining the elliptic curve $\mc{E}_{14,4,1}$ over $\mbq(u,D)$ by
\[
\mc{E}_{14,4,1} : \; D y^2 = x^3 + a_{4;7,4}(-7u^2) x + a_{6;7,4}(-7u^2),
\]
we have, for any elliptic curve $E$ over $\mbq$,
\[
\rho_{E}(G_\mbq) \, \dot\subseteq \, G_{14,4,1} \; \Longleftrightarrow \; \exists u_0,D_0 \in \mbq \text{ for which } E \simeq_{\mbq} \mc{E}_{14,4,1}\left( u_0,D_0 \right).
\]

\subsubsection{The case $G \in \mf{G}_{MT,3}^{\max}(0,14)$:  cubic entanglements.}

By \eqref{tildeGlevel14groupsincase3}, we have
\[
\tilde{G}_{14,i}(14) \subseteq \left\langle \begin{pmatrix} 1 & 1 \\ 1 & 0 \end{pmatrix} \right\rangle \times \left\{ \begin{pmatrix} * & * \\ 0 & * \end{pmatrix} \right\} \subseteq \GL_2(\mbz/2\mbz) \times \GL_2(\mbz/7\mbz) \quad\quad \left( i \in \{5, 6, 7 \} \right).
\]
As outlined in \cite{zywina}, we have
\begin{equation}
\begin{split} \label{levels2and7models}
\rho_{E,2}(G_\mbq) \, \dot\subseteq \, \left\langle \begin{pmatrix} 1 & 1 \\ 1 & 0 \end{pmatrix} \right\rangle \; &\Longleftrightarrow \; \exists t_0 \in \mbq \text{ for which } j_E = t_0^2 + 1728, \\
\rho_{E,7}(G_\mbq) \, \dot\subseteq \, \left\{ \begin{pmatrix} * & * \\ 0 & * \end{pmatrix} \right\} \; &\Longleftrightarrow \; \exists t_0 \in \mbq \text{ for which } j_E = j_{7,4}(t_0).
\end{split}
\end{equation}
where $j_{7,4}(t)$ as in \eqref{level7jinvariantn1234}.  We further define the Weierstrass coefficients $a_{4;7,4}(t)$, $a_{6;7,4}(t) \in \mbq(t)$ as usual by \eqref{defofa4anda6}, the twist parameters $d_{7,1}'(t), d_{7,2}'(t) \in \mbq(t)$ by
\begin{equation} \label{defofdsub76anddsub77}
\begin{split}
d_{7,1}'(t) &:= \frac{(t^4 - 490t^3 - 21609t^2 - 235298t - 823543)(t^2 + 13t + 49)}{14(t^2 + 245t + 2401)}, \\
d_{7,2}'(t) &:= \frac{(t^4 - 490t^3 - 21609t^2 - 235298t - 823543)(t^2 + 13t + 49)}{- 2(t^2 + 245t + 2401)}
\end{split}
\end{equation}
and the elliptic curves $\mc{E}_{7,6}', \mc{E}_{7,7}'$ over $\mbq(t)$ by
\[
\mc{E}_{7,i}' : \; d_{7,i}'(t) y^2 = x^3 + a_{4;7,4}(t) x + a_{6;7,4}(t) \quad\quad \left( i \in \{ 1, 2 \} \right),
\]
As demonstrated in \cite{zywina}, for any elliptic curve $E$ over $\mbq$ we have
\begin{equation} \label{relevantlevel7twists}
\begin{split}
\rho_{E,7}(G_\mbq) \, \dot\subseteq \, \left\{ \begin{pmatrix} a^2 & * \\ 0 & * \end{pmatrix} : \; a \in (\mbz/7\mbz)^\times \right\} \; &\Longleftrightarrow \; \exists t_0 \in \mbq \text{ for which } E \simeq_\mbq \mc{E}_{7,1}'(t_0), \\
\rho_{E,7}(G_\mbq) \, \dot\subseteq \, \left\{ \begin{pmatrix} * & * \\ 0 & d^2 \end{pmatrix} : \; d \in (\mbz/7\mbz)^\times \right\} \; &\Longleftrightarrow \; \exists t_0 \in \mbq \text{ for which } E \simeq_\mbq \mc{E}_{7,2}'(t_0).
\end{split}
\end{equation}
To first obtain a model for the modular curve corresponding to the level $14$ group 
\[
\left\langle \begin{pmatrix} 1 & 1 \\ 1 & 0 \end{pmatrix} \right\rangle \times \left\{ \begin{pmatrix} * & * \\ 0 & * \end{pmatrix} \right\}, 
\]
we consider the equation
\begin{equation} \label{sexpressionequaltotexpression}
s^2 + 1728 = \frac{(t^2 + 245t + 2401)^3(t^2 + 13t + 49)}{t^7},
\end{equation}
which is a singular conic.  Resolving the singularities in MAGMA, we are led to the substitutions 
\begin{equation} \label{sandtintermsofu}
t = \frac{1}{u^2}, \quad\quad s = \frac{823543u^8 + 235298u^6 + 21609u^4 + 490u^2 - 1}{u}; 
\end{equation}
this gives rise to the $j$-invariant $j_{14}'(u) \in \mbq(u)$, Weierstrass coefficients $a_{4;14}'(u)$, $a_{6,14}'(u) \in \mbq(u)$ and twist parameters $d_{14,1}'(u)$, $d_{14,2}'(u) \in \mbq(u)$, given by
\begin{equation} \label{jinvariantsandWeierstrasscoefficientslevel14}
\begin{split}
j_{14}'(u) &= j_{7,4}(1/u^2) = \frac{(49u^4 + 13u^2 + 1)(2401u^4 + 245u^2 + 1)^3}{u^2}, \\
a_{4;14}'(u) &= a_{4;7,4}(1/u^2), \quad\quad a_{6;14}'(u) = a_{6;7,4}(1/u^2), \\
d_{14,1}'(u) &= d_{7,1}'(1/u^2), \quad\quad\;\; d_{14,2}'(u) = d_{7,2}'(1/u^2),
\end{split}
\end{equation}
(where $d_{7,i}'(t)$ are as in \eqref{defofdsub76anddsub77}) and to the elliptic curves $\mc{E}_{14,5}'$ over $\mbq(u,D)$ and $\mc{E}_{14,6}'$ and $\mc{E}_{14,7}'$ over $\mbq(u)$, defined by
\begin{equation} \label{definitionofmcEsub145primeandmcEsub14iprime}
\begin{split}
&\mc{E}_{14,5}' : \; D y^2 = x^3 + a_{4;14}'(u) x + a_{6;14}'(u), \\
&\mc{E}_{14,*,i}' : \; d_{14,i}'(u) y^2 = x^3 + a_{4;14}'(u) x + a_{6;14}'(u) \quad\quad \left( i \in \{ 1, 2 \} \right).
\end{split}
\end{equation}
By \eqref{levels2and7models} and \eqref{relevantlevel7twists}, for any elliptic curve $E$ over $\mbq$ we have
\begin{equation} \label{twistedmodelslevel14forlateruse}
\begin{split}
\rho_{E,14}(G_\mbq) \, \dot\subseteq \, \left\langle \begin{pmatrix} 1 & 1 \\ 1 & 0 \end{pmatrix} \right\rangle \times \left\{ \begin{pmatrix} * & * \\ 0 & * \end{pmatrix} \right\} \; &\Longleftrightarrow \; \exists u_0, D_0 \in \mbq \text{ for which } E \simeq_\mbq \mc{E}_{14,5}'(u_0,D_0), \\
\rho_{E,14}(G_\mbq) \, \dot\subseteq \,  \left\langle \begin{pmatrix} 1 & 1 \\ 1 & 0 \end{pmatrix} \right\rangle \times  \left\{ \begin{pmatrix} a^2 & * \\ 0 & * \end{pmatrix} \right\} \; &\Longleftrightarrow \; \exists u_0 \in \mbq \text{ for which } E \simeq_\mbq \mc{E}_{14,*,1}'(u_0), \\
\rho_{E,14}(G_\mbq) \, \dot\subseteq \,  \left\langle \begin{pmatrix} 1 & 1 \\ 1 & 0 \end{pmatrix} \right\rangle \times  \left\{ \begin{pmatrix} * & * \\ 0 & d^2 \end{pmatrix} \right\} \; &\Longleftrightarrow \; \exists u_0 \in \mbq \text{ for which } E \simeq_\mbq \mc{E}_{14,*,2}'(u_0).
\end{split}
\end{equation}
Fixing an elliptic curve $E$ over $\mbq$ satisfying $\rho_{E,14}(G_\mbq) \, \dot\subseteq \, \left\langle \begin{pmatrix} 1 & 1 \\ 1 & 0 \end{pmatrix} \right\rangle \times \left\{ \begin{pmatrix} * & * \\ 0 & * \end{pmatrix} \right\}$, we will next identify conditions under which
\[
\mbq(E[2]) \subseteq \mbq(E[7]).
\]
By \eqref{twistedmodelslevel14forlateruse}, such an elliptic curve $E$ must satisfy $E \simeq_\mbq \mc{E}_{14,5}'(u_0,D_0)$ for some $u_0, D_0 \in \mbq$.
We define the homomorphisms $\eta_i : B(7) \longrightarrow \left( (\mbz/7\mbz)^\times \right)^2$ for $i \in \{ 1, 2, 3, 4 \}$ by
\begin{equation} \label{defofetasubis}
\eta_1(g) := \det(g)^2, \quad \eta_2(g) := \phi_7^{(7)}(g), \quad \eta_3(g) := \phi_7^{(6)}(g), \quad \eta_4(g) := \phi_7^{(5)}(g),
\end{equation}
where $\phi_7^{(i)}$ are as in \eqref{phifiberingmapsmod7}.
The following lemma specializes Lemma \ref{gettingattheothercubicfieldslemma} to the present case, allowing us to exhibit explicit polynomials for generators of each of the four cyclic cubic subfields $\mbq(u,D)\left(\mc{E}_{14,5}'[7] \right)^{\ker \eta_i} \subseteq \mbq(u,D)\left( \mc{E}_{14,5}'[7] \right)$.  
Define the polynomials $f_i(x) \in \mbq(u)[x]$ by
\begin{equation} \label{defoffgandh}
\begin{split}
f_1(x) &= x^3 +x^2 - 2x - 1, \\
f_2(x) &= x^3 - T_1(u) x^2 + R_2(u) x - S_3(u), \\
f_3(x) &= x^3 - T_1(u) x^2 + T_2(u) x - R_3(u), \\
f_4(x) &= x^3 - T_1(u) x^2 + T_2(u) x - T_3(u),
\end{split}
\end{equation}
where
\begin{equation} \label{cycliccubiccoefficients}
\begin{split}
T_1(u) &:= 3\left( u^4 + \frac{13}{49}u^2 + \frac{1}{49} \right), \\
R_2(u) &:= 3\left( u^4 + \frac{13}{49}u^2 + \frac{1}{49} \right) \left( u^4 + \frac{13}{49}u^2 + \frac{33}{2401} \right), \\
S_3(u) &:= \left( u^4 + \frac{13}{49}u^2 + \frac{1}{49} \right) \left( u^8 + \frac{26}{49}u^6 + \frac{219}{2401}u^4 + \frac{778}{117649}u^2 + \frac{881}{5764801} \right), \\
T_2(u) &:= 3\left( u^4 + \frac{13}{49}u^2 + \frac{1}{49} \right) \left( u^4 + \frac{13}{49}u^2 - \frac{9}{343} \right), \\
R_3(u) &:= \left( u^4 + \frac{13}{49}u^2 + \frac{1}{49} \right) \left( u^8 + \frac{26}{49}u^6 - \frac{69}{2401}u^4 - \frac{506}{16807}u^2 - \frac{3289}{823543} \right), \\
T_3(u) &:= \left( u^4 + \frac{13}{49}u^2 + \frac{1}{49} \right) \left( u^8 + \frac{26}{49}u^6 - \frac{69}{2401}u^4 - \frac{506}{16807}u^2 - \frac{223}{117649} \right).
\end{split}
\end{equation}
\begin{lemma} \label{explicitcubicsubfieldsatlevel7lemma}
Let $\mc{E}_{14,5}'$ be the elliptic curve over $\mbq(u,D)$ defined by \eqref{definitionofmcEsub145primeandmcEsub14iprime}.  
The four cyclic cubic subfields of $\mbq(u,D)\left( \mc{E}_{14,5}'[7] \right)$ are as follows.  For each $i \in \{1, 2, 3, 4 \}$, the field
\[
\mbq(u,D)\left( \mc{E}_{14,5}'[7] \right)^{\ker \eta_i},
\]
where $\eta_i$ is as in \eqref{defofetasubis}, is equal to the splitting field of $f_i(x)$, where $f_i(x)$ is defined by \eqref{defoffgandh} and \eqref{cycliccubiccoefficients}.  
\end{lemma}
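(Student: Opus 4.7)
I would imitate the strategy used for the analogous lemma at level $m=7$ (the lemma preceding this one, which identified the four cyclic cubic subfields of $\mbq(t,D)(\mc{E}_7[7])$), transported to the present setting via the $\mbq(u,D)$-isomorphism $\mc{E}_7|_{t=1/u^2} \xrightarrow{\sim} \mc{E}_{14,5}'$ given by $(x,y)\mapsto (x/D,\,y/D^2)$. Under this isomorphism, $\mbq(u,D)(\mc{E}_{14,5}'[7]) = \mbq(u,D)(\mc{E}_7|_{t=1/u^2}[7])$, and with compatible $\mbz/7\mbz$-bases on both sides the Galois representations $\rho_{\mc{E}_{14,5}',7}$ and $\rho_{\mc{E}_7|_{t=1/u^2},7}$ are literally equal as homomorphisms into $\GL_2(\mbz/7\mbz)$. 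Since $\rho_{\mc{E}_{14,5}',7}(G_{\mbq(u,D)})$ is generically conjugate to $B(7)$, the field $\mbq(u,D)(\mc{E}_{14,5}'[7])$ contains exactly four cyclic cubic subfields over $\mbq(u,D)$, one for each order-$3$ character of $B(7)$, namely $\det^2 = \psi_{7,1}^2\psi_{7,2}^2$, $\psi_{7,1}^2$, $\psi_{7,2}^2$, and $\psi_{7,1}^2\psi_{7,2}^{-2} = \psi_{7,1}^2\psi_{7,2}^4$. Noting that $\eta_1=\det^2$, $\eta_2=\psi_{7,1}^2$, $\eta_3=\psi_{7,2}^2$, and $\eta_4 = (a/d)^2 = \psi_{7,1}^2\psi_{7,2}^4$, these four characters exhaust the possibilities. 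Crucially, each $\eta_i$ is a square of a character into $(\mbz/7\mbz)^\times$, hence twist-invariant: the subfield $\mbq(u,D)(\mc{E}_{14,5}'[7])^{\ker \eta_i}$ is unchanged if we replace $\mc{E}_{14,5}'$ by any of its twists, so these subfields can be described by polynomials with coefficients in $\mbq(u)$ alone.

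\textbf{First steps.} For $\eta_1$, Lemma \ref{weilpairinglemma} gives $\det \rho_{\mc{E}_{14,5}',7} = \chi_7$, so the fixed field of $\ker \eta_1$ is the unique cubic subfield of $\mbq(u,D)(\mu_7)$, namely $\mbq(u,D)(\zeta_7+\zeta_7^{-1})$, whose minimal polynomial is $f_1(x) = x^3+x^2-2x-1$. For $\eta_2$, I would compute the irreducible cubic factor of the $7$-division polynomial of $\mc{E}_{14,5}'$ directly from the Weierstrass equation; its splitting field is $\mbq(u,D)(x(P))$ for $P$ a generator of the $G_{\mbq(u,D)}$-stable cyclic subgroup $\langle P\rangle \subseteq \mc{E}_{14,5}'[7]$, which by definition equals $\mbq(u,D)(\mc{E}_{14,5}'[7])^{\ker \psi_{7,1}^2}$. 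One then performs a substitution $x = c(u,D)\,X$ (with $c$ a suitable $\mbq(u,D)$-rational function absorbing the $D$-dependence forced by the twist $Dy^2 = x^3 + a_4 x + a_6$) to convert this cubic into a monic polynomial with coefficients in $\mbq(u)[X]$, and verifies that the outcome is $f_2(X)$.

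\textbf{The remaining two subfields.} Having identified $f_1$ and $f_2$, I would invoke Lemma \ref{gettingattheothercubicfieldslemma} applied to the pair $(f_1, f_2)$ (replacing $f_1$ with $x^3 - x^2 - 2x + 1$ if needed, to match sign conventions on the $x^2$-coefficient) to produce two polynomials whose splitting fields exhaust the remaining two cubic subfields; these should, up to the specific formulas \eqref{cycliccubiccoefficients}, be $f_3$ and $f_4$. To decide which is which, I would mimic the level-$7$ argument: compute the isogenous curve $\mc{E}_{14,5}'/\langle P\rangle$, extract the cubic factor of its $7$-division polynomial (whose splitting field is $\mbq(u,D)(\mc{E}_{14,5}'[7])^{\ker \psi_{7,2}^2} = \mbq(u,D)(\mc{E}_{14,5}'[7])^{\ker \eta_3}$), rescale to clear $D$, and verify via Lemma \ref{settingcubicfieldsequaltoeachotherlemma} (i.e.\ by solving the system \eqref{abcequations}) that its splitting field coincides with $\mbq(u,D)_{f_3}$. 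The remaining subfield, attached to $\eta_4$, must then be $\mbq(u,D)_{f_4}$.

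\textbf{Main obstacle.} The conceptual structure is straightforward; the difficulty is bookkeeping. The cubic factor of the $7$-division polynomial of $\mc{E}_{14,5}'$ has coefficients involving $D$ and polynomials in $u$ of degree up to $8$, with denominators supported on the factor $s_{14}(u) := 823543u^8 + 235298u^6 + 21609u^4 + 490u^2 - 1$ and powers of $7$ up to $7^{10}$; showing that an appropriate rescaling $x \mapsto c(u,D)X$ produces the precise formulas in \eqref{cycliccubiccoefficients} (with their carefully-placed denominators $49$, $2401$, $16807$, $117649$, $823543$, $5764801$) requires substantial but routine algebraic verification. The same remark applies to the isogenous curve computation, and to confirming via Lemma \ref{settingcubicfieldsequaltoeachotherlemma} the matching of splitting fields. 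These are the steps I would hand off to a computer algebra system (MAGMA, per the paper's conventions) rather than write out by hand.
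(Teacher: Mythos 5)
Your proposal is correct and uses the same conceptual apparatus as the paper: the $\mbq(u,D)$-rational twist isomorphism identifying the $7$-division field of $\mc{E}_{14,5}'$ with that of $\mc{E}_{7}$ at $t=1/u^2$, the dictionary $\eta_1 = \det{}^2$, $\eta_2 = \psi_{7,1}^2$, $\eta_3 = \psi_{7,2}^2$, $\eta_4 = \psi_{7,1}^2\psi_{7,2}^4$, the Weil pairing for $f_1$, and the division-polynomial and isogeny machinery from the preceding subsection. What you do differently is re-derive everything downstream of that transport: you propose to factor the $7$-division polynomial of $\mc{E}_{14,5}'$ afresh, compute the isogenous curve afresh, apply Lemma~\ref{gettingattheothercubicfieldslemma} to a new pair of cubics, and then re-run Lemma~\ref{settingcubicfieldsequaltoeachotherlemma}. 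The paper's proof is instead a one-line observation that the preceding level-$7$ cyclic-cubic-subfield lemma, specialized at $t := 1/u^2$ and followed by variable rescalings $x \mapsto g(u)x$ to clear denominators and normalize the sign of the leading coefficient, already produces the stated $f_1,\dots,f_4$ (concretely, $f_T|_{t=1/u^2}$, $f_R|_{t=1/u^2}$, $f_{R'}|_{t=1/u^2}$ rescale exactly to $f_2$, $f_3$, $f_4$, and $f_{\cyc,7}^+$ is unchanged). Having already noted in your opening paragraph that the Galois representations of the two curves literally coincide under the twist isomorphism, you could have finished by pure formula manipulation rather than by rebuilding the level-$7$ argument from scratch; both routes are valid, but yours repeats as fresh computation what the paper dispatches as bookkeeping.
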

\begin{proof}
Setting $t := 1/u^2$ in Lemma \ref{gettingattheothercubicfieldslemma} and performing variable substitutions of the form $x \mapsto g(u)x$ proves the lemma.
\end{proof}
Turning back to our elliptic curve 
\[
E :  y^2 = x^3 + D_0^2 a_{4;14,5}(u_0)x + D_0^3 a_{6;14,5}(u_0) \quad\quad \left( u_0, D_0 \in \mbq \right)
\]
over $\mbq$ satisfying $\rho_{E,14}(G_\mbq) \, \dot\subseteq \, \left\langle \begin{pmatrix} 1 & 1 \\ 1 & 0 \end{pmatrix} \right\rangle \times \left\{ \begin{pmatrix} * & * \\ 0 & * \end{pmatrix} \right\}$, we have the polynomials
\[
\psi_{E,2}(x) = x^3 + D_0^2 a_{4;14,5}(u_0)x + D_0^3a_{6;14,5}, \quad f_i(x) \quad\quad \left( i \in \{1, 2, 3, 4 \} \right),
\]
(where $f_i(x)$ is as in \eqref{defoffgandh}); we would like to determine conditions under which their splitting fields agree.  
To illustrate how we use the above results to compute explicit models of the modular curves $X_G$ associated to groups $G \in \mf{G}_{MT,3}^{\max}(0,14)$, we will go through the details for the first of the groups in \eqref{descriptionofgroupslevel14withcubicfibering}; the other computations are done similarly.  We wish to find a rational function $g(v) \in \mbq(v)$ so that, defining the elliptic curve $\mc{E}_{14,5,1}$ over $\mbq(v,D)$ by
\begin{equation} \label{mcE1451writtenout}
\mc{E}_{14,5,1} : \; Dy^2 = x^3 + a_{4;14}'(g(v)) x + a_{6;14}'(g(v)),
\end{equation}
we have, for each elliptic curve $E$ over $\mbq$,
\[
\rho_{E}(G_\mbq) \, \dot\subseteq \, G_{14,5,1} \; \Longleftrightarrow \; \exists v_0, D_0 \in \mbq \text{ for which } E \simeq_\mbq \mc{E}_{14,5,1}(v_0,D_0).
\]
In other words, we need 
\[
\begin{split}
\mbq\left( \mc{E}_{14,5,1}[2] \right) &= \mbq\left(\mc{E}_{14,5,1}[7] \right)^{\ker \phi_7^{(5)}} \\
&= \mbq\left(\mc{E}_{14,5,1}[7] \right)^{\ker \eta_4}.
\end{split}
\]
By Lemma \ref{explicitcubicsubfieldsatlevel7lemma} we are led to apply Lemma \ref{settingcubicfieldsequaltoeachotherlemma} to the polynomials
\[
\begin{split}
f_S(x) &= x^3 + a_{4;14}'(u) x + a_{6;14}'(u), \\
f_T(x) &= x^3 - T_1(u) x^2 + T_2(u) x - T_3(u),
\end{split}
\]
where the coefficient functions $T_i(u)$ are as in \eqref{cycliccubiccoefficients} (the twist parameter $D$ occurring in \eqref{mcE1451writtenout} has been absorbed into the variable).  Setting $S_1(u) := 0$, $S_2(u) := a_{4;14}'(u)$ and $S_3(u) := -a_{6;14}'(u)$, the condition \eqref{abcequations} now reads
\begin{equation} \label{newabcequations}
\begin{split}
T_1(u) = &-2a S_2(u) + 3c, \\
T_2(u) = &a^2 S_2(u)^2 - 3abS_3(u) - 4acS_2(u) + b^2 S_2(u) + 3c^2, \\
T_3(u) = &a^3 S_3(u)^2 + a^2b S_2(u) S_3(u) + a^2c S_2(u)^2 - 3abc S_3(u) \\
& - 2ac^2 S_2(u) + b^3 S_3(u) + b^2 c S_2(u) + c^3.
\end{split}
\end{equation}
Setting $c := (T_1(u) + 2aS_2(u))/3$, the first equation above is satisfied.  Inserting this into the second equation, we obtain a quadratic equation of the form
\begin{equation} \label{secondequationconic}
A(u) b^2 + B(u) ab + C(u)a^2 + c(u) = 0.
\end{equation}
Viewing the left-hand side as a quadratic polynomial in $b$ with coefficients in $\mbq(a,u)$, its discriminant 
\[
\gD(a,u) := \left( B(u) a \right)^2 - 4A(u) \left( C(u)a^2 + c(u) \right)
\]
is equal to
\begin{equation} \label{originaldiscriminant}
-\frac{2^{14}3^{11} u^2\left( u^4 + \frac{13}{49}u^2 + \frac{1}{49} \right)^2\left(u^4 + \frac{5}{49}u^2 + \frac{1}{2401}\right)^6}{7^{14} \left(u^8 + \frac{2}{7}u^6 + \frac{9}{343}u^4 + \frac{10}{16807}u^2 - \frac{1}{823543} \right)^6} a^2 + \frac{2^{8}3^{4} \left( u^4 + \frac{13}{49}u^2 + \frac{1}{49} \right)^2 \left( u^4 + \frac{5}{49}u^2 + \frac{1}{2401} \right)^3}{7^{3} \left(u^8 + \frac{2}{7}u^6 + \frac{9}{343}u^4 + \frac{10}{16807}u^2 - \frac{1}{823543} \right)^2};
\end{equation}
we would like this to be a perfect square.  Under the substitution 
\begin{equation} \label{atildefroma}
\tilde{a} := \frac{2^3 3^3 u \left(u^4 + \frac{5}{49}u^2 + \frac{1}{2401}\right)^2 a}{7^3 \left(u^8 + \frac{2}{7}u^6 + \frac{9}{343}u^4 + \frac{10}{16807}u^2 - \frac{1}{823543} \right)^2}, 
\end{equation}
we see that $\gD(a,u)$ is equal modulo $\left( \mbq(a,u)^\times \right)^2$ to 
\[
- 3 \tilde{a}^2 + 7^5 \left( u^4 + \frac{5}{49}u^2 + \frac{1}{2401} \right).
\]
Further substituting $u = \tilde{u}/7$ and setting this expression equal to a perfect square, we arrive at the equation
\[
X^2 + 3 \tilde{a}^2 = 7 \left( \tilde{u}^4 + 5\tilde{u}^2 + 1 \right),
\]
which we view as a conic over $\mbq(u)$.  Since $7 = 2^2 + 3 \cdot 1^2$ and $\tilde{u}^4 + 5\tilde{u}^2 + 1 = (\tilde{u}^2+1)^2 + 3\tilde{u}^2$ are each represented by the left-hand norm form, we discover the $\mbq(u)$-rational point
\[
(X,\tilde{a}) = \left( 2\tilde{u}^2 - 3\tilde{u} + 2, (\tilde{u} + 1)^2 \right)
\]
on this conic.  Projecting from this point, we arrive at the $\mbq(u,v)$-rational point
\begin{equation} \label{defofXandtildea}
\begin{split}
X &:= 2 \frac{\tilde{u}^6 - \frac{3}{2}\tilde{u}^5 - \tilde{u}^4v + 6\tilde{u}^4 - \frac{15}{2}\tilde{u}^3 + \frac{1}{7}\tilde{u}^2v^2 - 5\tilde{u}^2v + 6\tilde{u}^2 - \frac{3}{14}\tilde{u}v^2 - \frac{3}{2}\tilde{u} + \frac{1}{7}v^2 - v + 1}{\tilde{u}^4 - \frac{4}{7}\tilde{u}^2v + 5\tilde{u}^2 + \frac{6}{7}\tilde{u}v + \frac{1}{7}v^2 - \frac{4}{7}v + 1}, \\
\tilde{a} &:= \frac{(\tilde{u}+1)^2\left( \tilde{u}^4 + 5\tilde{u}^2 - \frac{1}{7}v^2 + 1 \right)}{\tilde{u}^4 - \frac{4}{7}\tilde{u}^2v + 5\tilde{u}^2 + \frac{6}{7}\tilde{u}v + \frac{1}{7}v^2 - \frac{4}{7}v + 1} \\
&= \frac{(7u+1)^2\left( 16807u^4 + 1715u^2 - v^2 + 7 \right)}{16807u^4 - 196u^2v + 1715u^2 + 42uv + v^2 - 4v + 7}.
\end{split}
\end{equation}
Inserting this into \eqref{atildefroma}, we find that
\[
a = a(u,v) = \frac{7^3 (7u+1)^2\left( 16807u^4 + 1715u^2 - v^2 + 7 \right) \left(u^8 + \frac{2}{7}u^6 + \frac{9}{343}u^4 + \frac{10}{16807}u^2 - \frac{1}{823543} \right)^2}{2^3 3^3 u \left(u^4 + \frac{5}{49}u^2 + \frac{1}{2401}\right)^2\left( 16807u^4 - 196u^2v + 1715u^2 + 42uv + v^2 - 4v + 7 \right)};
\] 
inserting this into \eqref{originaldiscriminant} (or alternatively working from the expression for $X$ in \eqref{defofXandtildea}), we find that the discriminant $\gD(u,a(u,v)) \in \mbq(u,v)$ of the original quadratic \eqref{secondequationconic} is now equal to
\[
\left( \frac{2^5 3^2\left( u^4 + \frac{13}{49}u^2 + \frac{1}{49} \right) \left( u^4 + \frac{5}{49}u^2 + \frac{1}{2401} \right)\begin{pmatrix} u^6 - \frac{3}{14}u^5 - \frac{1}{49}u^4v + \frac{6}{49}u^4 - \frac{15}{686}u^3 + \frac{1}{16807}u^2v^2 - \frac{5}{2401}u^2v \\ + \frac{6}{2401}u^2 - \frac{3}{235298}uv^2 - \frac{3}{33614}u + \frac{1}{823543}v^2 - \frac{1}{117649}v + \frac{1}{117649} \end{pmatrix}}{7^2 \left( u^4 - \frac{4}{343}u^2v + \frac{5}{49}u^2 + \frac{6}{2401}uv + \frac{1}{16807}v^2 - \frac{4}{16807}v + \frac{1}{2401} \right) \left(u^8 + \frac{2}{7}u^6 + \frac{9}{343}u^4 + \frac{10}{16807}u^2 - \frac{1}{823543} \right)} \right)^2.
\]
In particular, we have $\gD(u,v) = \gD(u,v)^2 \in (\mbq(u,v)^\times)^2$, and applying the quadratic formula to \eqref{secondequationconic}, we obtain functions
\[
b_{\pm}(u,v) := \frac{-B(u) \pm \gD(u,v)}{2A(u)} \in \mbq(u,v).
\]
By construction, the first two equations of \eqref{newabcequations} are satisfied when $a = a(u,v)$, $b = b_{\pm}(u,v)$ and $c := (T_1(u) + 2a(u,v)S_2(u))/3$.  We now insert these rational functions into the third equation in \eqref{newabcequations}.  For instance, choosing to insert $b_{+}(u,v)$, gathering all terms to one side and factoring into irreducible polynomials leads to an equation of the form
\begin{equation} \label{expressioninu}
\left( u^4 + \frac{13}{49}u^2 + \frac{1}{49} \right) f_1(u,v) f_2(u,v) = 0,
\end{equation}
where
\[
\begin{split}
f_1(u,v) := &u^{13} - \frac{1}{7}u^{12} - \frac{1}{7^2}u^{11}v + \frac{15}{7^2}u^{11} - \frac{15}{7^3}u^{10} + \frac{1}{7^5}u^9v^2 - \frac{16}{7^4}u^9v + \frac{78}{7^4}u^9 - \frac{3}{7^6}u^8v^2 
- \frac{1}{7^5}u^8v  \\
&- \frac{78}{7^5}u^8 + \frac{1}{7^8}u^7v^3 + \frac{18}{7^7}u^7v^2 - \frac{87}{7^6}u^7v +
 \frac{155}{7^6}u^7 + \frac{6}{7^9}u^6v^3 - \frac{32}{7^8}u^6v^2 - \frac{10}{7^7}u^6v - \frac{155}{7^7}u^6 \\
 &+ \frac{1}{7^{10}}u^5v^3 + 
 \frac{81}{7^9}u^5v^2 - \frac{172}{7^8}u^5v + \frac{78}{7^8}u^5 +  \frac{55}{7^{11}}u^4v^3 - \frac{81}{7^{10}}u^4v^2 - 
 \frac{27}{7^9}u^4v - \frac{78}{7^9}u^4 + \frac{1}{7^{12}}u^3v^3 \\
 & + \frac{88}{7^{11}}u^3v^2 - \frac{61}{7^{10}}u^3v + \frac{15}{7^{10}}u^3 + \frac{55}{7^{13}}u^2v^3 + \frac{18}{7^{12}}u^2v^2 - \frac{10}{7^{11}}u^2v - \frac{15}{7^{11}}u^2 +  \frac{8}{7^{14}}uv^3 + \frac{15}{7^{13}}uv^2 \\
 & - \frac{6}{7^{12}}uv + \frac{1}{7^{12}}u - \frac{1}{7^{14}}v^3 + \frac{1}{7^{13}}v^2 - 
  \frac{1}{7^{13}}v - \frac{1}{7^{13}}
\end{split}
\]
and $f_2(u,v) \in \mbq[u,v]$ is another polynomial of degree $13$.  The polynomial equation $f_1(u,v) = 0$ defines a singular conic $\mf{S}$ that is found (by a computation in MAGMA) to be birational to the smooth conic
\[
C : \; r^2 - \frac{9}{49}s^2 + 600250r + 32242s + 90392079680 = 0;
\]
we denote by $\tau : \mf{S} \longrightarrow C$ be the birational map produced by our MAGMA calculation.  Projecting from the rational point $(r_0,s_0) = (-300125,184877)$ gives rise to a isomorphism $\mb{P}^1(w) \rightarrow C$ with coordinate functions
\[
\begin{split}
r &= r(w) =  - \frac{300125(9w^2 - 47w - 3920)}{(3w - 79)(3w + 46)}, \\
s &= s(w) =  - \frac{84035(w^2 - 11w + 8624)}{(3w - 79)(3w + 46)}.
\end{split}
\]
Furthermore, composing this isomorphism with $\tau^{-1} : C \longrightarrow \mf{S}$, we obtain
\begin{equation} \label{defofuoft}
u = u_5(w) := - \frac{w^3 + 546w^2 - 10003w - 205807}{13w^3 - 777w^2 - 43414w + 504259}.
\end{equation}
We define the elliptic curve $\mc{E}_{14,5,1}$ over $\mbq(w,D)$ by
\begin{equation} \label{defofmcE145}
\mc{E}_{14,5,1} : \; Dy^2 = x^3 + a_{4,14}'\left(  u_5(w) \right) x + a_{6,14}'\left( u_5(w) \right),
\end{equation}
where the Weierstrass coefficients $a_{4,14}'(u), a_{6,14}'(u) \in \mbq(u)$ are as in \eqref{jinvariantsandWeierstrasscoefficientslevel14} and $u_5(w) \in \mbq(w)$ is as in \eqref{defofuoft}.
It follows from our discussion that, for each elliptic curve $E$ over $\mbq$, we have
\begin{equation} \label{conditiononmcE145}
\exists w_0, D_0 \in \mbq \text{ for which } E \simeq_\mbq \mc{E}_{14,5,1}(w_0,D_0) \; \Longrightarrow \; \rho_{E}(G_\mbq) \, \dot\subseteq \, G_{14,5,1}.
\end{equation}
Now suppose we instead consider the singular conic $\mf{S}_2$ defined by $f_2(u,v) = 0$ (where $f_2(u,v)$ is as in \eqref{expressioninu}), and obtain a degree three function $u_5^{(2)}(w) \in \mbq(w)$ similar to \eqref{defofuoft}, and thus to an elliptic curve $\mc{E}_{14,5,1}^{(2)}$ over $\mbq(w,D)$ as in \eqref{defofmcE145} with $u_5(w)$ replaced by $u_5^{(2)}(w)$.  The elliptic curve $\mc{E}_{14,5,1}^{(2)}$ then satisfies property \eqref{conditiononmcE145}, and by considering the degrees of the associated $j$-invariants, it follows that $u_5^{(2)}(w) = u_5(\mu(w))$, where $\mu(w)$ is a linear fractional transformation, i.e. an automorphism of $\mb{P}^1$.  The same is true if we instead use the function $b_{-}(u,v)$ in place of $b_{+}(u,v)$ and consider any irreducible factor resulting from the third equation of \eqref{newabcequations}.  We have thus established that, for any elliptic curve $E$ over $\mbq$,
\begin{equation*} 
\rho_{E}(G_\mbq) \, \dot\subseteq \, G_{14,5,1} \; \Longleftrightarrow \; \exists w_0, D_0 \in \mbq \text{ for which } E \simeq_\mbq \mc{E}_{14,5,1}(w_0,D_0).
\end{equation*}

The arguments and computations that lead to explicit models associated to the groups $G_{14,6,k}$ and $G_{14,7,k}$ are similar, and we skip most of the details, only summarizing the results.  An analogous computation for the group $\tilde{G}_{14,6}(14)$ involves applying Lemma \ref{settingcubicfieldsequaltoeachotherlemma} to the polynomials
\[
\begin{split}
f_S(x) &:= x^3 + a_{4;14}'(u)x + a_{6;14}'(u) \\
f_T(x) &:= x^3 - T_1(u) x^2 + T_2(u) x - R_3(u),
\end{split}
\]
where $T_1(u)$, $T_2(u)$ and $R_3(u)$ are as in \eqref{cycliccubiccoefficients}.  Continuing as above, we are led to the rational function
\begin{equation} \label{defofusub6}
u_6(w) := - \frac{4(w+2)(w+25)(5w+33)}{71w^3 + 357w^2 - 5243w - 23513},
\end{equation}
and we define the elliptic curve $\mc{E}_{14,6}$ over $\mbq(w,D)$ by
\begin{equation} \label{defofmcEsub146}
\mc{E}_{14,6} : \; D y^2 = x^3 + a_{4;14}'\left( u_6(w) \right) x + a_{6;14}'\left( u_6(w) \right).
\end{equation}
For each elliptic curve $E$ over $\mbq$, we have
\begin{equation} \label{containedinGtilde146}
\rho_{E}(G_\mbq) \, \dot\subseteq \, \tilde{G}_{14,6} \; \Longleftrightarrow \; \exists w_0, D_0 \in \mbq \text{ for which } E \simeq_\mbq \mc{E}_{14,6}(w_0,D_0).
\end{equation}
Considering \eqref{relevantlevel7twists} and \eqref{descriptionofgroupslevel14withcubicfibering}, we are led to define the twist families 
$\mc{E}_{14,6,1}$ and $\mc{E}_{14,6,2}$ over $\mbq(w)$ by
\begin{equation*} 
\mc{E}_{14,6,k} : \; d_{14,k}'(u_6(w)) y^2 = x^3 + a_{4;14}'\left( u_6(w) \right) x + a_{6;14}'\left( u_6(w) \right) \quad\quad \left( k \in \{ 1, 2 \} \right),
\end{equation*}
where $d_{14,k}'(u) := d_{7,k}'(1/u^2)$ and $d_{7,k}'(t)$ is defined by \eqref{defofdsub76anddsub77}.  By \eqref{containedinGtilde146} and \eqref{twistedmodelslevel14forlateruse}, for any elliptic curve $E$ over $\mbq$ we have
\[
\rho_{E}(G_\mbq) \, \dot\subseteq \, G_{14,6,k} \; \Longleftrightarrow \; \exists w_0 \in \mbq \text{ for which } E \simeq_\mbq \mc{E}_{14,6,k}(w_0) \quad\quad \left( k \in \{ 1, 2 \} \right).
\]
Similarly, the group $\tilde{G}_{14,7}(14)$ leads us to apply Lemma \ref{settingcubicfieldsequaltoeachotherlemma} to the polynomials
\[
\begin{split}
f_S(x) &:= x^3 + a_{4;14}'(u)x + a_{6;14}'(u) \\
f_T(x) &:= x^3 - T_1(u) x^2 + R_2(u) x - S_3(u),
\end{split}
\]
where $T_1(u)$, $R_2(u)$ and $S_3(u)$ are as in \eqref{cycliccubiccoefficients}.  Continuing as above, we are led to the rational function
\begin{equation} \label{defofusub7}
u_7(w) :=  \frac{91w^3 - 42w^2 - 28w + 8}{28w(w-2)(5w-2)},
\end{equation}
and we define the elliptic curve $\mc{E}_{14,7}$ over $\mbq(w,D)$ by
\begin{equation} \label{defofmcEsub147}
\mc{E}_{14,7} : \; D y^2 = x^3 + a_{4;14}'\left( u_7(w) \right) x + a_{6;14}'\left( u_7(w) \right).
\end{equation}
For each elliptic curve $E$ over $\mbq$, we have
\begin{equation} \label{containedinGtilde147}
\rho_{E}(G_\mbq) \, \dot\subseteq \, \tilde{G}_{14,7} \; \Longleftrightarrow \; \exists w_0, D_0 \in \mbq \text{ for which } E \simeq_\mbq \mc{E}_{14,7}(w_0,D_0).
\end{equation}
We define the twist families 
$\mc{E}_{14,7,1}$ and $\mc{E}_{14,7,2}$ over $\mbq(w)$ by
\begin{equation*} 
\mc{E}_{14,7,k} : \; d_{14,k}'(u_7(w)) y^2 = x^3 + a_{4;14}'\left( u_7(w) \right) x + a_{6;14}'\left( u_7(w) \right) \quad\quad \left( k \in \{ 1, 2 \} \right),
\end{equation*}
where $d_{14,k}'(u)$ is as before.  By \eqref{containedinGtilde146} and \eqref{twistedmodelslevel14forlateruse}, for any elliptic curve $E$ over $\mbq$ we have
\[
\rho_{E}(G_\mbq) \, \dot\subseteq \, G_{14,7,k} \; \Longleftrightarrow \; \exists w_0 \in \mbq \text{ for which } E \simeq_\mbq \mc{E}_{14,7,k}(w_0) \quad\quad \left( k \in \{ 1, 2 \} \right).
\]

\medskip

\subsection{The level $m = 28$}

We have $\mf{G}_{MT}^{\max}(0,28) = \{ G_{28,1,1}, G_{28,2,1}, G_{28,2,2}, G_{28,3,1}, G_{28,3,2} \}$, where $G_{28,i,k}(28) \subseteq \GL_2(\mbz/28\mbz)$ are given by
\begin{equation*} 
\begin{split}
G_{28,1,1}(28) &= \left\langle  \begin{pmatrix} 5 & 19 \\ 21 & 8 \end{pmatrix}, \begin{pmatrix} 9 & 3 \\ 14 & 1 \end{pmatrix}, \begin{pmatrix} 1 & 10 \\ 0 & 17 \end{pmatrix} \right\rangle \simeq \GL_2(\mbz/4\mbz)_{\chi_4 = \ve} \times_{\psi^{(1,1)}} \left\{ \begin{pmatrix} * & * \\ 0 & * \end{pmatrix} \right\}, \\
G_{28,2,1}(28) &= \left\langle \begin{pmatrix} 5 & 9 \\ 19 & 10 \end{pmatrix}, \begin{pmatrix} 2 & 23 \\ 13 & 5 \end{pmatrix}, \begin{pmatrix} 27 & 10 \\ 14 & 11 \end{pmatrix} \right\rangle\simeq \pi_{\GL_2}^{-1} \left( \left\langle \begin{pmatrix} 1 & 1 \\ 1 & 0 \end{pmatrix} \right\rangle \right) \times_{\psi^{(2,1)}} \left\{ \begin{pmatrix} * & * \\ 0 & * \end{pmatrix} \right\}, \\
G_{28,2,2}(28) &= \left\langle  \begin{pmatrix} 4 & 19 \\ 11 & 9 \end{pmatrix}, \begin{pmatrix} 3 & 12 \\ 2 & 5 \end{pmatrix}, \begin{pmatrix} 22 & 9 \\ 9 & 19 \end{pmatrix} \right\rangle \simeq \pi_{\GL_2}^{-1} \left( \left\langle \begin{pmatrix} 1 & 1 \\ 1 & 0 \end{pmatrix} \right\rangle \right) \times_{\psi^{(2,2)}} \left\{ \begin{pmatrix} * & * \\ 0 & * \end{pmatrix} \right\}, \\
G_{28,3,1}(28) &= \left\langle  \begin{pmatrix} 20 & 1 \\ 5 & 7 \end{pmatrix}, \begin{pmatrix} 15 & 14 \\  2 & 11 \end{pmatrix}, \begin{pmatrix} 7 & 12 \\ 10 & 21 \end{pmatrix} \right\rangle \simeq \pi_{\GL_2}^{-1} \left( \left\langle \begin{pmatrix} 1 & 1 \\ 1 & 0 \end{pmatrix} \right\rangle \right) \times_{\psi^{(3,1)}} \left\{ \begin{pmatrix} * & * \\ 0 & * \end{pmatrix} \right\}, \\
G_{28,3,2}(28) &= \left\langle  \begin{pmatrix} 17 & 5 \\ 3 & 24 \end{pmatrix}, \begin{pmatrix} 26 & 19 \\ 1 & 23 \end{pmatrix}, \begin{pmatrix} 0 & 13 \\ 27 & 5 \end{pmatrix} \right\rangle \simeq \pi_{\GL_2}^{-1} \left( \left\langle \begin{pmatrix} 1 & 1 \\ 1 & 0 \end{pmatrix} \right\rangle \right) \times_{\psi^{(3,2)}} \left\{ \begin{pmatrix} * & * \\ 0 & * \end{pmatrix} \right\},
\end{split}
\end{equation*}
and $G_{28,i,k} = \pi_{\GL_2}^{-1}(G_{28,i,k}(28))$.  In all cases, the representations of the groups on the right-hand are to be understood via the Chinese Remainder Theorem as subgroups of $\GL_2(\mbz/4\mbz) \times \GL_2(\mbz/7\mbz)$, and as before, we are making the usual use of the abbreviation
$
\GL_2(\mbz/4\mbz)_{\chi_4 = \ve} := \left\{ g \in \GL_2(\mbz/4\mbz) : \chi_4(\det g) = \ve(g \mod 2 ) \right\}.
$
In the fibered products $\psi^{(i,k)}$, the underlying homomorphisms are as follows: the maps $\psi_4^{(i,k)}$ are defined by
\begin{equation} \label{definitionoflevel28fiberings}
\begin{split}
\psi_4^{(1,1)} :  \GL_2(\mbz/4\mbz)_{\chi_4 = \ve} \longrightarrow \{ \pm 1 \}, \quad\quad &\ker \psi_4^{(1,1)} = \left\langle  \begin{pmatrix} 3 & 2 \\ 0 & 3 \end{pmatrix}, \begin{pmatrix} 3 & 3 \\ 1 & 0 \end{pmatrix}, \begin{pmatrix} 1 & 1 \\ 0 & 3 \end{pmatrix} \right\rangle, \\
\psi_4^{(i,k)} : \pi_{\GL_2}^{-1} \left( \left\langle \begin{pmatrix} 1 & 1 \\ 1 & 0 \end{pmatrix} \right\rangle \right)  \longrightarrow \left( \mbz/7\mbz \right)^\times, \quad\quad &\ker \psi_4^{(i,k)} = \left\langle  \begin{pmatrix} 1 & 2 \\ 2 & 1 \end{pmatrix}, \begin{pmatrix} 3 & 2 \\ 0 & 3 \end{pmatrix}, \begin{pmatrix} 3 & 0 \\ 0 & 3 \end{pmatrix} \right\rangle \quad \begin{pmatrix} i \in \{2, 3 \} \\ k \in \{1, 2 \} \end{pmatrix},
\end{split}
\end{equation}
Note that, by Corollary \ref{keycorollaryforinterpretationofentanglements}, we need only specify the kernels of these automorphisms, since if we post-compose (say) 
$\psi_4^{(i,k)}$ by an automorphism of $(\mbz/7\mbz)^\times$, the resulting fibered product group would be $\GL_2(\mbz/28\mbz)$-conjugate to the original group.
On the ``$7$ side,'' the maps $\psi_7^{(1,1)}$, $\psi_7^{(2,1)}$, $\psi_7^{(2,2)}$, $\psi_7^{(3,1)}$ and $\psi_7^{(3,2)}$ are defined by
\begin{equation} \label{fiberingsinlevel28case}
\begin{split}
\psi_7^{(1,1)} : &\left\{ \begin{pmatrix} * & * \\ 0 & * \end{pmatrix} \right\} \longrightarrow \{ \pm 1 \}, \quad\quad\quad\;\; \psi_7^{(1,1)}\left( \begin{pmatrix} a & b \\ 0 & d \end{pmatrix} \right) := \left( \frac{ad}{7} \right), \\
\psi_7^{(2,1)} : &\left\{ \begin{pmatrix} * & * \\ 0 & * \end{pmatrix} \right\} \longrightarrow \left( \mbz/7\mbz \right)^\times, \quad\quad \psi_7^{(2,1)}\left( \begin{pmatrix} a & b \\ 0 & d \end{pmatrix} \right) := a^3 d^2, \\
\psi_7^{(2,2)} : &\left\{ \begin{pmatrix} * & * \\ 0 & * \end{pmatrix} \right\} \longrightarrow \left( \mbz/7\mbz \right)^\times, \quad\quad \psi_7^{(2,2)}\left( \begin{pmatrix} a & b \\ 0 & d \end{pmatrix} \right) := d, \\
\psi_7^{(3,1)} : &\left\{ \begin{pmatrix} * & * \\ 0 & * \end{pmatrix} \right\} \longrightarrow \left( \mbz/7\mbz \right)^\times, \quad\quad \psi_7^{(3,1)}\left( \begin{pmatrix} a & b \\ 0 & d \end{pmatrix} \right) := a, \\
\psi_7^{(3,2)} : &\left\{ \begin{pmatrix} * & * \\ 0 & * \end{pmatrix} \right\} \longrightarrow \left( \mbz/7\mbz \right)^\times, \quad\quad \psi_7^{(3,2)}\left( \begin{pmatrix} a & b \\ 0 & d \end{pmatrix} \right) := a^2 d^3.
\end{split}
\end{equation}
We note that $-I \notin G_{28,i,k}$ for each $i \in \{1, 2, 3 \}$ and $k \in \{ 1, 2 \}$, and we have $\tilde{G}_{28,2,1} = \tilde{G}_{28,2,2} = \tilde{G}_{14,6}$, and $\tilde{G}_{28,3,1} = \tilde{G}_{28,3,2} = \tilde{G}_{14,7}$.  In particular, denoting by $\tilde{G}_{28,2}$ the common value of the two groups $\tilde{G}_{28,2,k}$ and by $\tilde{G}_{28,3}$ the common value of the two groups $\tilde{G}_{28,3,k}$, we see that each of the groups $\tilde{G}_{28,2}$ and $\tilde{G}_{28,3}$ have $\GL_2$-level $14$, whereas $\tilde{G}_{28,1} := \tilde{G}_{28,1,1}$ has $\GL_2$-level $28$.  Precisely, we have
\begin{equation} \label{unfiberedlevel28groups}
\begin{split}
\tilde{G}_{28,1}(28) = \tilde{G}_{28,1,1}(28) &\simeq \GL_2(\mbz/4\mbz)_{\chi_4 = \ve} \times \left\{ \begin{pmatrix} * & * \\ 0 & * \end{pmatrix} \right\}, \\
\tilde{G}_{28,2}(14) = \tilde{G}_{28,2,k}(14) &\simeq \left\langle \begin{pmatrix} 1 & 1 \\ 1 & 0 \end{pmatrix} \right\rangle \times_{\phi^{(6)}} \left\{ \begin{pmatrix} * & * \\ 0 & * \end{pmatrix} \right\}, \\
\tilde{G}_{28,3}(14) = \tilde{G}_{28,3,k}(14) &\simeq \left\langle \begin{pmatrix} 1 & 1 \\ 1 & 0 \end{pmatrix} \right\rangle \times_{\phi^{(7)}} \left\{ \begin{pmatrix} * & * \\ 0 & * \end{pmatrix} \right\},
\end{split}
\end{equation}
where the fibering maps $\phi_2^{(6)}$ and $\phi_2^{(7)}$ are isomorphisms $\left\langle \begin{pmatrix} 1 & 1 \\ 1 & 0 \end{pmatrix} \right\rangle \longrightarrow \left( (\mbz/7\mbz)^\times \right)^2$ and $\phi_7^{(6)}$, $\phi_7^{(7)}$ are as in \eqref{phifiberingmapsmod7}.  By \eqref{tildeGlevel14groupsincase3}, it is then natural to define the elliptic curves $\mc{E}_{28,2}$ and $\mc{E}_{28,3}$ over $\mbq(w,D)$ by
\[
\mc{E}_{28,2} := \mc{E}_{14,6}, \quad\quad \mc{E}_{28,3} := \mc{E}_{14,7},
\]
where $\mc{E}_{14,6}$ is as in \eqref{defofmcEsub146} and  $\mc{E}_{14,7}$ is as in \eqref{defofmcEsub147}.  By
\eqref{containedinGtilde146} and \eqref{containedinGtilde147}, for each elliptic curve $E$ over $\mbq$ we have
\[
\begin{split}
\rho_{E}(G_\mbq) \, \dot\subseteq \, \tilde{G}_{28,2} \; &\Longleftrightarrow \; \exists w_0, D_0 \in \mbq \text{ for which } E \simeq_\mbq \mc{E}_{28,2}(w_0,D_0), \\
\rho_{E}(G_\mbq) \, \dot\subseteq \, \tilde{G}_{28,3} \; &\Longleftrightarrow \; \exists w_0, D_0 \in \mbq \text{ for which } E \simeq_\mbq \mc{E}_{28,3}(w_0,D_0).
\end{split}
\]
We note by \eqref{definitionoflevel28fiberings} that, for each $i \in \{2, 3\}$ and $k \in \{1, 2\}$, $\ker \psi_4^{(i,k)} \subseteq \SL_2(\mbz/4\mbz)$, and it follows that
\[
\mbq(u,D)\left( \mc{E}_{28,i}[4] \right)^{\ker \psi_4^{(i,k)}} = \mbq(u,D)\left( \mc{E}_{28,i}[2], i \right) \quad\quad \begin{pmatrix} i \in \{ 2, 3 \} \\ k \in \{ 1, 2 \} \end{pmatrix}.
\]
On the other hand, a computation shows that
\[
\mbq(u,D)\left( \mc{E}_{28,i}[7] \right)^{\ker \psi_7^{(i,k)}} = \mbq(u,D)\left( \mc{E}_{28,i}[7] \right)^{\ker \phi_7^{(4+i)}} \cdot \mbq(u,D) \left( \sqrt{D f_k(u)} \right) \quad\quad \begin{pmatrix} i \in \{ 2, 3 \} \\ k \in \{ 1, 2 \} \end{pmatrix},
\]
where
\[
\begin{split}
f_1(u) &:= \frac{-14(49u^4 + 13u^2 + 1)(2401u^4 + 245u^2 + 1)}{823543u^8 + 235298u^6 + 21609u^4 + 490u^2 - 1}, \\
f_2(u) &:= \frac{2(49u^4 + 13u^2 + 1)(2401u^4 + 245u^2 + 1)}{823543u^8 + 235298u^6 + 21609u^4 + 490u^2 - 1}.
\end{split}
\]
This leads us to define the twist parameters $d_k(u) \in \mbq(u)$ by
\[
\begin{split}
d_1(u) &:= -f_1(u) = \frac{14(49u^4 + 13u^2 + 1)(2401u^4 + 245u^2 + 1)}{823543u^8 + 235298u^6 + 21609u^4 + 490u^2 - 1}, \\
d_2(u) &:= -f_2(u) =  \frac{-2(49u^4 + 13u^2 + 1)(2401u^4 + 245u^2 + 1)}{823543u^8 + 235298u^6 + 21609u^4 + 490u^2 - 1},
\end{split}
\]
and the elliptic curves $\mc{E}_{28,i,k}$ over $\mbq(w)$ by
\[
\begin{split}
&\mc{E}_{28,2,k} : d_k(u_6(w)) y^2 = x^3 + a_{4;14,4}'(u_6(w)) x + a_{6;14,4}'(u_6(w)), \\
&\mc{E}_{28,3,k} : d_k(u_7(w)) y^2 = x^3 + a_{4;14,4}'(u_7(w)) x + a_{6;14,4}'(u_7(w)),
\end{split}
\]
where $a_{4;14,4}'(u)$, $a_{6;14,4}'(u)$ are as in \eqref{jinvariantsandWeierstrasscoefficientslevel14}, $u_6(w)$ is as in \eqref{defofusub6} and $u_7(w)$ is as in \eqref{defofusub7}.  By the above discussion taken together with Corollary \ref{keycorollaryforinterpretationofentanglements}, for each elliptic curve $E$ over $\mbq$, we have
\[
\rho_E(G_\mbq) \, \dot\subseteq \, G_{28,i,k} \; \Longleftrightarrow \; \exists w_0 \in \mbq \text{ for which } E \simeq_\mbq \mc{E}_{28,i,k}(w_0) \quad\quad \begin{pmatrix} i \in \{ 2, 3 \} \\ k \in \{ 1, 2 \} \end{pmatrix}.
\]

To handle the group $G_{28,1,1}$, we first recall that, as detailed in \cite{sutherlandzywina}, one has
\begin{equation} \label{chi4equalsepsilonjinvariant}
\rho_{E,4}(G_\mbq) \, \dot\subseteq \, \GL_2(\mbz/4\mbz)_{\chi_4 = \ve} \; \Longleftrightarrow \; \exists t_0 \in \mbq \text{ for which } j_E = -t_0^2 + 1728.
\end{equation}
This, together with \eqref{unfiberedlevel28groups} and \eqref{level14andlevel7modulistatement}, leads us to the equation
\begin{equation} \label{level28singularjequation}
-s^2 + 1728 = \frac{(t^2 + 245t + 2401)^3(t^2 + 13t + 49)}{t^7} =: j_{7,4}(t)
\end{equation}
which is quite close to \eqref{sexpressionequaltotexpression}.  The replacement $u \mapsto iu$ in \eqref{sandtintermsofu} leads us to the substitutions
\begin{equation} \label{tandsforchi4eqepsilon}
t = - \frac{1}{u^2}, \quad\quad s = \frac{823543u^8 - 235298u^6 + 21609u^4 - 490u^2 - 1}{u},
\end{equation}
which satisfy the equation \eqref{level28singularjequation}.  We set 
\[
j_{28,1}(u) := j_{7,4}(-1/u^2),
\]
where $j_{7,4}(t)$ is as in \eqref{level28singularjequation}, we define $a_{4;28,1}(u), a_{6;28,1}(u) \in \mbq(u)$ as usual by \eqref{defofa4anda6} and finally the elliptic curve $\mc{E}_{28,1}$ over $\mbq(u,D)$ by
\[
\mc{E}_{28,1} : \; Dy^2 = x^3 + a_{4;28,1}(u) x + a_{6;28,1}(u).
\]
By \eqref{unfiberedlevel28groups}, \eqref{chi4equalsepsilonjinvariant} and \eqref{level14andlevel7modulistatement}, we see that, for any elliptic curve $E$ over $\mbq$, we have
\[
\rho_{E}(G_\mbq) \, \dot\subseteq \, \tilde{G}_{28,1} \; \Longleftrightarrow \; \exists u_0, D_0 \in \mbq \text{ for which } E \simeq_\mbq \mc{E}_{28,1}(u_0,D_0).
\]
Applying the substitution \eqref{tandsforchi4eqepsilon} to Lemma \ref{identifyingthesubfieldslevel4lemma} and using \eqref{fiberingsinlevel28case}, we find that
\[
\rho_{E}(G_\mbq) \, \dot\subseteq \, G_{28,1,1} \; \Longleftrightarrow \; 
\begin{matrix} 
\exists u_0, D_0 \in \mbq \text{ for which } E \simeq_\mbq \mc{E}_{28,1}(u_0,D_0) \text{ and } \\
\mbq\left( \sqrt{\frac{D_0u_0(49u_0^4 - 13u_0^2 + 1)(2401u_0^4 - 245u_0^2 + 1)}{823543u_0^8 - 235298u_0^6 + 21609u_0^4 - 490u_0^2 - 1}} \right) = \mbq\left( \sqrt{-7} \right),
\end{matrix}
\]
and this leads us to the condition
\[
D = d_{28,1,1}(u) := \frac{-7u(49u^4 - 13u^2 + 1)(2401u^4 - 245u^2 + 1)}{823543u^8 - 235298u^6 + 21609u^4 - 490u^2 - 1}.
\]
We define the elliptic curve $\mc{E}_{28,1,1}$ over $\mbq(u)$ by 
\[
\mc{E}_{28,1,1} : d_{28,1,1}(u) y^2 = x^3 + a_{4;28,1}(u) x + a_{6;28,1}(u).
\]
We have verified that, for any elliptic curve $E$ over $\mbq$,
\[
\rho_{E}(G_\mbq) \, \dot\subseteq \, G_{28,1,1} \; \Longleftrightarrow \; \exists u_0 \in \mbq \text{ for which } E \simeq_\mbq \mc{E}_{28,1,1}(u_0).
\]

\section{Tables of $j$-invariants and twist parameters associated to $G \in \mf{G}_{MT}^{\max}(0)$} \label{tablesection}

We first define the auxiliary rational functions $f_{m,i}(t)$, $g_{m,i}(t)$ and $h_{m,i}(t)$ in Table \ref{tableofauxiliaryrationalfunctions}.  These functions allow us to express some of the $j$-invariants in the subsequent tables in a reasonably compact way (these functions would otherwise not have fit on the page).  Next, we list the j-invariants $j_{m,i}(t) \in \mbq(t)$ in Table \ref{masterlistofjinvariants}.  Finally, in Table \ref{masterlistoftwistparameters} we define the relevant twist parameters $d_{m,i,k} \in \mbq(t,D)$.

\begin{table}[H]
\[
\begin{array}{|c||c|c|c|} \hline
 & & & \\ [-.75 em]
(m,i) & 
f_{m,i}(t) & 
g_{m,i}(t) & 
h_{m,i}(t) \\
 & & & \\ [-.75 em]
\hline \hline 
& & & \\ [-.75 em]
(9,1) & \frac{(t+3)^3(t+27)}{t} & \frac{729}{t^3-27} &  \frac{-6(t^3 - 9t)}{t^3 + 9t^2 - 9t - 9} \\ 
& & & \\ [-.75 em]
\hline 
& & & \\ [-.75 em]
(9,2) &  & t(t^2 + 9t + 27) & \frac{-3(t^3 + 9t^2 - 9t - 9)}{t^3 + 3t^2 - 9t - 3}  \\
& & & \\ [-.75 em]
\hline 
& & & \\ [-.75 em]
(9,3) &  & t^3 & \frac{3(t^3 + 3t^2 - 9t - 3)}{t^3 - 3t^2 - 9t + 3} \\
& & & \\ [-.75 em]
\hline 
& & & \\ [-.75 em]
(10,3) &  t^3(t^2 + 5t + 40) & \frac{3t^6 + 12t^5 + 80t^4 + 50t^3 - 20t^2 - 8t + 8}{(t-1)^2(t^2 + 3t + 1)^2} & \\
& & & \\ [-.75 em]
\hline 
& & & \\ [-.75 em]
(14,1) &  \frac{(49t^4 + 13t^2 + 1)(2401t^4 + 245t^2 + 1)^3}{t^2} & & \\
& & & \\ [-.75 em]
\hline 
& & & \\ [-.75 em]
(14,2) & \frac{(49t^4 + 13t^2 + 1)(823543t^8 + 235298t^6 + 21609t^4 + 490t^2 - 1)}{-14t^8(2401t^4 + 245t^2 + 1)} &  &  \\
& & & \\ [-.75 em]
\hline
& & & \\ [-.75 em]
(14,5) & & - \frac{t^3 + 546t^2 - 10003t - 205807}{13t^3 - 777t^2 - 43414t + 504259} & \\
& & & \\ [-.75 em]
\hline
& & & \\ [-.75 em]
(14,6) & & - \frac{4(t+2)(5t+33)(t+25)}{71t^3 + 357t^2 - 5243t - 23513} & \\
& & & \\ [-.75 em]
\hline
& & & \\ [-.75 em]
(14,7) & & \frac{91t^3 - 42t^2 - 28t + 8}{28t(t-2)(5t-2)} & \\
& & & \\ 
\hline
\end{array}
\]
\vspace{.1in}
\caption{ Some auxiliary rational functions }
\label{tableofauxiliaryrationalfunctions}
\end{table}

\begin{table}[H]
\[
\begin{array}{|c|c|} 
\hline
& \\ [-.75em]
j_{2,1}(t) := 256\frac{(t+1)^3}{t} & j_{3,1}(t) := 27\frac{(t+1)(t+9)^3}{t^3} \\
& \\ [-.75em]
\hline 
& \\ [-.75em]
j_{4,1}(t) := -t^2 + 1728 & j_{5,1}(t) := \frac{(t^4 - 12t^3 + 14t^2 + 12t + 1)^3}{t^5(t^2 - 11t - 1)} \\
& \\ [-.75em]
\hline
& \\ [-.75em]
j_{5,2}(t) :=  \frac{(t^4 + 228t^3 + 494t^2 - 228t + 1)^3}{t(t^2 - 11t - 1)^5} &  j_{6,1}(t) :=  2^{10}3^3t^3(1-4t^3) \\
& \\ [-.75em]
\hline
& \\ [-.75em]
j_{6,2}(t) := \frac{-27(t^2-9)^3(t^2-1)}{t^6} & j_{6,3}(t) :=  27\frac{(t+1)(t+9)^3}{t^3} \\
& \\ [-.75em]
\hline
& \\ [-.75em]
j_{7,1}(t) := \frac{(t^2 - t + 1)^3(t^6 - 11t^5 + 30t^4 - 15t^3 - 10t^2 + 5t + 1)^3}{t^7(t-1)^7(t^3 - 8t^2 + 5t + 1)}  & \\
& \\ [-.75em]
\hline
& \\ [-.75em]
 j_{7,2}(t) :=  \frac{(t^2 - t + 1)^3(t^6 + 229t^5 + 270t^4 - 1695t^3 + 1430t^2 - 235t + 1)^3}{t(t-1)(t^3 - 8t^2 + 5t + 1)^7} & \\
& \\ [-.75em]
\hline
& \\ [-.75em]
j_{7,3}(t) :=  - \frac{(t^2 - 3t - 3)^3(t^2 - t + 1)^3(3t^2 - 9t + 5)^3(5t^2 - t - 1)^3}{(t^3 - 2t^2 - t + 1)(t^3 - t^2 - 2t + 1)^7} & j_{8,1}(t) := \frac{-4(t^2+2t-2)^3(t^2+10t-2)}{t^4} \\
& \\ [-.75em]
\hline
& \\ [-.75em]
j_{9,1}(t) :=  f_{9,1} \left( g_{9,1} \left( h_{9,1}(t) \right) \right) & j_{9,2}(t) :=  f_{9,1} \left( g_{9,2} \left( h_{9,2}(t) \right) \right) \\
& \\ [-.75em]
\hline
& \\ [-.75em]
j_{9,3}(t) :=  f_{9,1} \left( g_{9,3} \left( h_{9,3}(t) \right) \right) & \\
& \\ [-.75em]
\hline
& \\ [-.75em]
 j_{9,4}(t) :=  \frac{3^7(t^2-1)^3(t^6 + 3t^5 + 6t^4 + t^3 - 3t^2 + 12t + 16)^3(2t^3 + 3t^2 - 3t - 5)}{(t^3 - 3t - 1)^9} & \\
& \\ [-.75em]
\hline
& \\ [-.75em]
j_{10,1}(t) := \frac{(t^4 - 12t^3 + 14t^2 + 12t + 1)^3}{t^5(t^2 - 11t - 1)} & j_{10,2}(t) := \frac{(t^4 + 228t^3 + 494t^2 - 228t + 1)^3}{t(t^2 - 11t - 1)^5} \\
& \\ [-.75em]
\hline
& \\ [-.75em]
j_{10,3}(t) :=  f_{10,3}\left( g_{10,3}(t) \right) & j_{12,1}(t) := - \frac{(t^2 - 27)(t^2 - 3)^3}{t^2} \\
& \\ [-.75em]
\hline
& \\ [-.75em]
j_{12,2}(t) := - \frac{(36t^2 - 27)(36t^2 - 3)^3}{36t^2} & j_{12,3}(t) := - \frac{(4t^2 - 27)(4t^2 - 3)^3}{4t^2} \\
& \\ [-.75em]
\hline
& \\ [-.75em]
j_{12,4}(t) := \frac{(27t^2 + 1)(243t^2+1)^3}{t^2} &  \\
& \\ [-.75em]
\hline
& \\ [-.75em]
j_{14,1}(t) := j_{7,1}(t) & j_{14,2}(t) := j_{7,2}(t) \\
& \\ [-.75em]
\hline
& \\ [-.75em]
j_{14,3}(t) := j_{7,3}(t) & \\
& \\ [-.75em]
\hline
& \\ [-.75em]
j_{14,4}(t) :=  - \frac{(49t^4 - 1715t^2 + 2401)^3(49t^4 - 91t^2 + 49)}{823543t^{14}} & j_{14,5}(t) :=  f_{14,1}(g_{14,5}(t)) \\
& \\ [-.75em]
\hline
& \\ [-.75em]
j_{14,6}(t) :=  f_{14,1}(g_{14,6}(t)) & j_{14,7}(t) :=  f_{14,1}(g_{14,7}(t)) \\
& \\ [-.75em]
\hline
& \\ [-.75em]
j_{28,1}(t) := - \frac{(49t^4 - 13t^2 + 1)(2401t^4 - 245t^2 + 1)^3}{t^2} & j_{28,2}(t) := f_{14,1}(g_{14,6}(t)) \\
& \\ [-.75em]
\hline
& \\ [-.75em]
j_{28,3}(t) :=  f_{14,1}(g_{14,7}(t)) & \\ 
& \\
\hline
\end{array}
\]
\vspace{.1in}
\caption{ $j$-invariants associated to maximal genus zero missing trace groups}
\label{masterlistofjinvariants}
\begin{center}
(see Table \ref{tableofauxiliaryrationalfunctions} for the definitions of $f_{m,i}(t)$, $g_{m,i}(t)$ and $h_{m,i}(t)$)
\end{center}
\end{table}
 
\newpage

\begin{table}[H]
\[
\begin{array}{|c|c|c|} 
\hline
& & \\ [-.75em]
d_{2,1,1} :=  D & d_{3,1,1} :=  \frac{6(t+1)(t+9)}{t^2 - 18t - 27} & d_{3,1,2} :=  - 3 d_{3,1,1} \\
& & \\ [-.75em]
\hline 
& & \\ [-.75em]
d_{4,1,1} := t(t^2-1728) & d_{5,1,1} := - \frac{(t^2+1)(t^4 - 18t^3 + 74t^2 + 18t + 1)}{2(t^4 - 12t^3 + 14t^2 + 12t + 1)} & d_{5,1,2} := 5 d_{5,1,1} \\
& & \\ [-.75em]
\hline
& & \\ [-.75em]
& d_{5,2,1} :=  - \frac{(t^2+1)(t^4 - 522t^3 - 10006t^2 + 522t + 1)}{2(t^4 + 228t^3 + 494t^2 - 228t + 1)} & d_{5,2,2} := 5 d_{5,2,1} \\
& & \\ [-.75em]
\hline
& & \\ [-.75em]
d_{6,1,1} := D & d_{6,2,1} := D & d_{6,3,1} := -t d_{3,1,1} \\
& & \\ [-.75em]
\hline
& & \\ [-.75em]
d_{6,3,2} := 3t d_{3,1,1} & & \\
& & \\
\hline
\end{array}
\]
\[
\begin{array}{|c|}
\hline
\\ [-.75em]
d_{7,1,1} :=  - \frac{t^{12} - 18t^{11} + 117t^{10} - 354t^9 + 570t^8 - 486t^7 + 273t^6 - 222t^5 + 174t^4 - 46t^3 - 15t^2 + 6t + 1}{2(t^2 - t + 1)(t^6 - 11t^5 + 30t^4 - 15t^3 - 10t^2 + 5t + 1)} \\
\\ [-.75em]
\hline
\\ [-.75em]
d_{7,2,1} :=  - \frac{t^{12} - 522t^{11} - 8955t^{10} + 37950t^9 - 70998t^8 + 131562t^7 - 253239t^6 + 316290t^5 - 218058t^4 + 80090t^3 - 14631t^2 + 510t + 1}{2(t^2 - t + 1)(t^6 + 229t^5 + 270t^4 - 1695t^3 + 1430t^2 - 235t + 1)} \\
\\ [-.75em]
\hline
\\ [-.75em]
d_{7,3,1} := \frac{7(t^4 - 6t^3 + 17t^2 - 24t + 9)(3t^4 - 4t^3 - 5t^2 - 2t - 1)(9t^4 - 12t^3 - t^2 + 8t - 3)}{2(t^2 - 3t - 3)(t^2 - t + 1)(3t^2 - 9t + 5)(5t^2 - t - 1)} \\
\\
\hline
\end{array}
\]
\[
\begin{array}{|c|c|c|}
\hline
& & \\ [-.75em]
d_{7,1,2} := -7 d_{7,1,1} & d_{7,2,2} := -7 d_{7,2,1} & d_{7,3,2} :=  -7d_{7,3,1} \\
& & \\ [-.75em]
\hline
\hline
& & \\ [-.75em]
d_{8,1,1} := \frac{(t^2 + 2t - 2)(t^2 + 10t - 2)}{(t^2 + 2)(t^2 + 8t - 2)} & d_{9,1,1} := D & d_{9,2,1} := D \\
& & \\ [-.75em]
\hline
& & \\ [-.75em]
d_{9,3,1} := D & d_{9,4,1} := D & \\
& & \\ [-.75em]
\hline
& & \\ [-.75em]
d_{10,1,1} := \frac{-2t(t^2 - 11t - 1)(t^4 - 12t^3 + 14t^2 + 12t + 1)}{(t^2+1)(t^4 - 18t^3 + 74t^2 + 18t + 1)} & d_{10,1,2} := 5 d_{10,1,1} &  \\
& & \\ [-.75em]
\hline
& & \\ [-.75em]
d_{10,2,1} := \frac{-2t(t^2-11t-1)(t^4 + 228t^3 + 494t^2 - 228t + 1)}{(t^2 + 1) (t^4 - 522t^3 - 10006t^2 + 522t + 1)} & d_{10,2,2} := 5d_{10,2,1} & d_{10,3,1} := D \\
& & \\ [-.75em]
\hline
& & \\ [-.75em]
d_{12,1,1} := -\frac{3t(t^2-27)(t^2-3)}{t^4-18t^2-27} & d_{12,2,1} := D &  d_{12,3,1} :=  D \\
& & \\ [-.75em]
\hline
& & \\ [-.75em]
d_{12,4,1} :=  \frac{6(27t^2+1)(243t^2+1)}{19683t^4 + 486t^2 - 1}  & d_{12,4,2} := -3d_{12,4,1} & \\
& & \\
\hline
\end{array}
\]
\[
\begin{array}{|c|}
\hline
\\ [-.75em]
d_{14,1,1} := \frac{t^{12} - 18t^{11} + 117t^{10} - 354t^9 + 570t^8 - 486t^7 + 273t^6 - 222t^5 + 174t^4 - 46t^3 - 15t^2 + 6t + 1}{14t(t-1)(t^2-t+1)(t^3-8t^2+5t+1)(t^6 - 11t^5 + 30t^4 - 15t^3 - 10t^2 + 5t + 1)} \\
\\ [-.75em]
\hline
\\ [-.75em]
d_{14,2,1} := \frac{t^{12} - 522t^{11} - 8955t^{10} + 37950t^9 - 70998t^8 + 131562t^7 - 253239t^6 + 316290t^5 - 218058t^4 + 80090t^3 - 14631t^2 + 510t + 1}{-2t(t-1)(t^2-t+1)(t^3-8t^2+5t+1)(t^6 + 229t^5 + 270t^4 - 1695t^3 + 1430t^2 - 235t + 1)} \\
\\ [-.75em]
\hline
\\ [-.75em]
d_{14,3,1} :=  \frac{-2(t^4 - 6t^3 + 17t^2 - 24t + 9)(3t^4 - 4t^3 - 5t^2 - 2 - 1)(9t^4 - 12t^3 - t^2 + 8t - 3)}{(t^2 - 3t-3)(t^2 - t + 1)(3t^2 - 9t + 5)(5t^2 - t - 1)(t^3 - 2t^2 - t + 1)(t^3 - t^2 - 2t + 1)} \\
\\
\hline
\end{array}
\]
\[
\begin{array}{|c|c|c|}
\hline
& & \\ [-.75em]
d_{14,1,2} := -7d_{14,1,1} & d_{14,2,2} := -7d_{14,2,1} & d_{14,3,2} := -7d_{14,3,1} \\
& & \\ [-.75em]
\hline
\hline
& & \\ [-.75em]
d_{14,4,1} := D & d_{14,5,1} := D & d_{14,6,1} := f_{14,2}(g_{14,6}(t)) \\
& & \\ [-.75em]
\hline
& & \\ [-.75em]
d_{14,6,2} := -7 d_{14,6,1} & d_{14,7,1} :=  f_{14,2}(g_{14,7}(t)) & d_{14,7,2} := -7 d_{14,7,1} \\
& & \\ [-.75em]
\hline
& & \\ [-.75em]
d_{28,1,1} := \frac{-7t(49t^4 - 13t^2 + 1)(2401t^4 - 245t^2 + 1)}{823543t^8 - 235298t^6 + 21609t^4 - 490t^2 - 1} & d_{28,2,1} := - d_{14,6,1} & d_{28,2,2} := -7 d_{28,2,1} \\
& & \\ [-.75em]
\hline
& & \\ [-.75em]
 d_{28,3,1} :=  - d_{14,7,1}  & d_{28,3,2}  :=  -7 d_{28,3,1} & \\
& & \\ 
\hline
\end{array}
\]

\vspace{.1in}
\caption{ Twist parameters associated to maximal genus zero missing trace groups }
\label{masterlistoftwistparameters}
\end{table}

\section{Concluding remarks} \label{concludingremarkssection}

Serre's open image theorem may be stated as follows: for each number field $K$ and each elliptic curve $E$ defined over $K$, there is a constant $C_{E,K} > 0$ such that, for each prime $\ell \geq C_{E,K}$, we have $\rho_{E,\ell}(G_K) = \GL_2(\mbz/\ell\mbz)$.  Serre's uniformity question asks whether the constant $C_{E,K}$ above may be chosen independent of $E$, i.e. it asks whether, for each number field $K$ there exists a constant $C_K$ such that, for each elliptic curve $E$ defined over $K$ and for each prime $\ell \geq C_K$, we have $\rho_{E,\ell}(G_K) = \GL_2(\mbz/\ell\mbz)$.  In case $K = \mbq$, it is generally believed that Serre's uniformity question has an affirmative answer with $C_\mbq = 41$.  Furthermore, assuming that it does, then as a corollary of \cite[Lemma 4.10]{jonesconductorbound}, we may see that, for any non-CM elliptic curve $E$ over $\mbq$,
$
\ds \level_{\SL_2}\left( \rho_E(G_\mbq) \right)$ divides $ \prod_{\ell \leq C_\mbq} \ell^\infty.
$
This, taken together with Proposition \ref{twotothekproposition} implies that that
\begin{equation} \label{usefulbound}
G \in \mf{G}_{MT}^{\max} \; \Longrightarrow \; \level_{\GL_2}\left( G \right) \text{ divides } \prod_{\ell \leq C_\mbq} \ell^\infty,
\end{equation}
Recent work establishes vertical bounds on the largest possible $n$ for which $\ell^n$ divides $m_G$ when $G = \rho_{E}(G_\mbq)$ when $\ell = 2$ (see \cite{rousedzb}) and it is of general interest to establish such vertical bounds for all $\ell$; clearly such bounds are relevant to the problem of determining the set $\mf{G}_{MT}^{\max}$, even conditionally on an affirmative answer to Serre's uniformity question.  In any event, together with the tables of Cummins-Pauli, \eqref{usefulbound} reduces the determination of $\mf{G}_{MT}^{\max}(g)$ to a targeted computer search, for any fixed genus $g$.
It would be interesting to extend the results in the present paper to $g = 1$, in the spirit of Sutherland-Zywina, to find all modular curves (or Weierstrass models in case $-I \notin G$) that have infinitely many rational points.  We leave this to future work.

\end{document}